\title{Graph Invariants with Connections to the Feynman Period in $\phi^4$ Theory.}
\author{Iain Crump}
\date{April 7, 2017}
\keywords{Feynman period, graph invariant, $c_2$ invariant, Hepp bound, matrix permanent, extended graph permanent.}
\newtheorem{theorem}{Theorem}
\newtheorem*{theorem*}{Theorem}
\newtheorem{corollary}[theorem]{Corollary}
\newtheorem{proposition}[theorem]{Proposition}
\newtheorem{lemma}[theorem]{Lemma}
\newtheorem{conjecture}{Conjecture}
\newtheorem*{conjecture*}{Conjecture}
\newtheorem{question}[conjecture]{Question}
\newtheorem*{question*}{Question}
\newtheorem*{Menger}{Menger's Theorem}
\newtheorem*{wilson}{Wilson's Theorem}
\theoremstyle{definition}
\newtheorem*{example}{Example}
\newtheorem{definition}[theorem]{Definition}
\newtheorem{remark}[theorem]{Remark}
\def\V{\mathfrak{v}}
\def\E{\mathfrak{e}}
\def\lcm{\text{lcm}}
\def\cut{\setminus}
\def\contract{/}
\newcommand{\2}[1]{\mathbf{2}^{(#1)}}
\begin{document}

\frontmatter
\maketitle{}
\makecommittee{}

\begin{abstract}

Feynman diagrams in scalar $\phi^4$ theory have as their underlying structure $4$-regular graphs. In particular, any $4$-point $\phi^4$ graph can be uniquely derived from a $4$-regular graph by deleting a single vertex. The Feynman integral is encoded by the structure of the graph and is used to determine amplitudes of particle interactions. The Feynman period is a simplified version of the Feynman integral. The period is of special interest, as it maintains much of the important number theoretic information from the Feynman integral. It is also of structural interest, as it is known to be preserved by a number of graph theoretic operations. In particular, the vertex deleted in constructing the $4$-point graph does not affect the Feynman period, and it is invariant under planar duality and the Schnetz twist, an operation that redirects edges incident to a $4$-vertex cut. Further, a $4$-regular graph may be produced by identifying triangles in two $4$-regular graphs and then deleting these edges. The Feynman period of this graph with a vertex deleted is equal to the product of the Feynman periods of the two smaller graphs with one vertex deleted each. These operations currently explain all known instances of non-isomorphic $4$-point $\phi^4$ graphs with equal periods.

With this in mind, other graph invariants that are preserved by these operations for $4$-point $\phi^4$ graphs are of interest, as they may provide insight into the Feynman period and potentially the integral. In this thesis the extended graph permanent is introduced; an infinite sequence of residues from prime order finite fields. It is shown that this sequence is preserved by these three operations, and has a product property. Additionally, computational techniques will be established, and an alternate interpretation will be presented as the point count of a novel graph polynomial.

Further, the previously existing $c_2$ invariant and Hepp bound are examined, two graph invariants that are conjectured to be preserved by these graph operations. A combinatorial approach to the $c_2$ invariant is introduced.  \end{abstract}

\begin{acknowledgements} I would like to thank Francis Brown, Matt DeVos, Erik Panzer, Scott Sitar, and of course Karen Yeats for their helpful notes and ideas, and fantastic support.  \end{acknowledgements}

\addtoToC{Table of Contents}\tableofcontents\clearpage
\addtoToC{List of Figures}\listoffigures

%
%

\mainmatter%

\chapter{Introduction}
\label{Introduction}

\section{Background}\label{introbackground}

A goal of a quantum field theory is to understand the various interactions of the fundamental particles of which the universe is composed (see, for example, \cite{browngen}, \cite{qftnotes}, and  \cite{qftbook}). Each such theory restricts its allowable particle types and immediate interactions between these fundamental particles based on experimental observations. These interactions are encoded in \emph{Feynman diagrams}, fundamentally these are multigraphs with specific edge types, representing the different types of particles. Further, these diagrams allow \emph{external edges}, edges that connect to one vertex only. These external edges represent particles as they enter or exit the system. We may therefore view edges in the diagram as pairings of half-edges, while external edges are half-edges that remain unpaired.

The following example uses a framework that can be found in \cite{karensbook}.

\begin{example} Quantum electrodynamics is the quantum theory of electromagnetism. It has three half-edge types; a front-half fermion, a back-half fermion, and a photon. Edges may be constructed by combining front- and back-half fermions to create a fermion edge,  \begin{tikzpicture}
\begin{feynman}
\vertex(a);
\vertex[right=1cm of a](b);
\diagram*{
  (a) --[fermion](b),
 };
\end{feynman}
\end{tikzpicture}, or by combining two photon half-edges,  \begin{tikzpicture}
\begin{feynman}
\vertex(a);
\vertex[right=1cm of a](b);
\diagram*{
  (a) --[photon](b),
 };
\end{feynman}
\end{tikzpicture}. There is one interaction type, consisting of one of each half-edge types. A fermion edge oriented in the direction one is reading represents an electron, and against represents a positron. Reading Figure~\ref{qedex} left to right, then, an electron and positron combining to form a photon, which propagates for a time before splitting into an electron and positron again. \end{example}

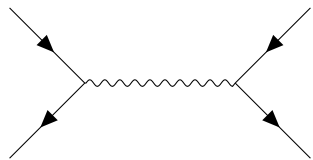
\begin{figure}[h] \centering
\begin{tikzpicture}
\begin{feynman}
\vertex(a);
\vertex[right=4cm of a](b);
\vertex[right=1cm of a](v1);
\vertex[right=3cm of a](v2);
\vertex[above=1cm of a](i1);
\vertex[below=1cm of a](i2);
\vertex[above=1cm of b](f1);
\vertex[below=1cm of b](f2);
\diagram*{
{[edges=fermion]
  (i1) -- (v1) --(i2),
  (f1) --(v2) --(f2),},
  (v1) --[photon](v2),
 };
\end{feynman}
\end{tikzpicture}
\caption{A Feynman diagram in quantum electrodynamics. All fermion type edges here are unpaired half-edges.}
\label{qedex}
\end{figure}

This thesis will focus on $\phi^4$ theory\footnote{Here, $\phi$ is the usual name of the field. The field appears to the forth power in the Lagrangian, giving the $4$-valent vertices and also the name $\phi^4$.}, which has allowable half-edge \begin{tikzpicture}
\begin{feynman}
\vertex(a);
\vertex[right=.5cm of a](b);
\diagram*{
  (b) --(a),
 };
\end{feynman}
\end{tikzpicture}, an undecorated and undirected edge, and interaction \begin{tikzpicture}
\begin{feynman}
\vertex(a);
\vertex[right=.3cm of a](v1);
\vertex[right=.3cm of v1](b);
\vertex[above=.3cm of v1](c);
\vertex[below=.3cm of v1](d);
\diagram*{
  (v1) --(a),
  (v1) --(b),
  (v1) --(c),
  (v1) --(d),
 };
\end{feynman}
\end{tikzpicture}. In particular, a $k$-point $\phi^4$ graph has precisely $k$ external edges, and when considering the motivating physics we restrict to $4$-point graphs in $\phi^4$ theory. These graphs can be uniquely derived from $4$-regular graphs by deleting a single vertex and all incident half-edges, leaving the remaining half-edges unpaired.

\def\diff{\mathrm{d}}

The Feynman diagram encodes the Feynman integral. Let $D$ be the dimension of space-time appropriate to the physical theory. Both quantum electrodynamics and $\phi^4$ theory, for example, are $4$-dimensional physical theories, corresponding the standard three space and one time dimensions. The \emph{loop number} (also known as the first Betti number) of a Feynman diagram $G$, $h_G$, is the minimum number of edges that must be removed to produce an acyclic graph. Suppose the Feynman diagram $G$ has $n$ internal edges, $m$ external edges, and each internal edge $e_j$ is assigned an orientation and has momentum $q_j$ and mass $m_j$. We impose momentum conservation at each vertex; the sum of momenta in must be equal to the sum of momenta out. This is akin to the graph theoretic notion of flows that will arise in numerous places throughout this thesis. As a result, we may express momenta flowing through internal lines as a linear combination of the independent loop momenta $k_1, ... , k_{h_G} \in \mathbb{R}^D$ and the external momenta $p_1 , ... , p_m \in \mathbb{R}^D$ as $$ q_i = \sum_{j=1}^{h_G} \rho_{ij}k_j + \sum_{j=1}^m \sigma_{ij}p_j, \hspace{1cm} \rho_{ij},\sigma_{ij} \in \{0, \pm1\} ,$$ where the $\sigma$ and $\rho$ values are determined by the direction of these internal lines and the orientation of these edges. From Equation 1.11 in \cite{bigintegralcite}, the scalar Feynman integral as a potentially divergent formal integral expression is $$ \int_{(\mathbb{R}^D)^{h_G}} \prod_{r=1}^{h_G} \diff^Dk_r \prod_{j=1}^n \frac{1}{(q_j^2-m_j^2)}.$$  The first step to make a convergent integral is to regularize. Two common choices are to raise each factor in the denominator to a non-integer parameter, or to take $D$ as $D - 2 \epsilon$ (see \cite{bogner}). Multiplication of vectors in $\mathbb{R}^D$ is taken to be the dot product, following standard physics notations. Non-scalar field theories result in more complicated Feynman integrals, as every edge and vertex will be represented by more complicated factors in the integral. However, $\phi^4$ theory is a scalar field theory, so this representation is sufficient for our needs. 

After parametrization\footnote{In literature, this is known as Schwinger parametrization or Feynman parametrization.} the Feynman integral is  (Equation 1.12 in \cite{bigintegralcite}) $$ \int_{x_i \geq 0} \left. \frac{\mathcal{U}_G^{h_G(D/2)-n}}{\Psi_G^{(h_G+1)(D/2)-n}} \right|_{x_n=1} \prod_{i=1}^{n-1} \diff x_i $$ (see \cite{erikthesis}, Chapter 2, for a detailed explanation of how the first integral is translated to this form). In the denominator, $\Psi$ is the \emph{Kirchhoff polynomial} (also known as the first Symanzik polynomial). Let $T_G$ be the set of spanning trees of diagram $G$, and for all $e \in E(G)$ assign variable $x_e$, the \emph{Schwinger parameter}. The Kirchhoff polynomial of $G$ is $$ \Psi_G = \sum_{T \in T_G} \left( \prod_{e \notin T} x_e \right). $$ This was introduced in \cite{ueber} as a tool for understanding electrical systems. Similarly, $\mathcal{U}$ is the second Symanzik polynomial. The sum in the second Symanzik polynomial is over $F_G$, the set of spanning forests with precisely two connected components. Using conventions from \cite{bogner}, and writing $P_{T_i}$ as the set of external momenta in tree $T_i$, $$\mathcal{U}_G = \sum_{(T_1, T_2) \in F_G} \left( \prod_{e \notin (T_1 \cup T_2)} x_e \right) \left( \sum_{p_j \in P_{T_1}} \sum_{p_k \in P_{T_2}} p_jp_k   \right)  + \Psi_G \sum_{i=1}^n x_i m_i^2. $$ 


It is of interest that the Feynman integral tends to diverge. \emph{Renormalization}, given mathematical foundation in \cite{renormpaper}, is the method by which this integral, as a part of a larger computation, is made to match experimentally observed values. A brief historical review can be found in \cite{renorm}.

\section{Feynman periods}\label{introperiods}

\subsection{Derivation from the Feynman integral}\label{subintroperiods}

Our interests lie in the Feynman period. For a diagram $G$ with $n$ internal edges, the Feynman period is $$\int_{x_i \geq 0} \frac{1}{\Psi^{D/2}|_{x_{n}=1}} \prod_{i=1}^{n-1} \diff x_i ,$$ a residue of the Feynman integral of $G$ viewed as a Feynman diagram in massless scalar field theory, with all external parameters and masses set to zero. As a result, we may ignore completely all external edges, and the Feynman diagrams truly are graphs or multigraphs. Throughout, standard graph theory terminology and notation will be assumed, following \cite{generalgraphtheory}. The period is an important object (see, for example, \cite{BlEsKr,Brbig,marcolli}). When the Feynman integral diverges, the period itself is the coefficient at infinity (see Section 6.2.3 in \cite{qftbook}). When the period converges, it does so independent of renormalization scheme. It is also understood precisely when the period is convergent, which we describe now.

Recall the loop number of a graph $G$, $h_G$, is the minimum number of edges that must be removed to produce an acyclic graph. A $\phi^4$ graph is \emph{primitive} (often \emph{primitive log divergent} in the literature) if $|E(G)| = 2h_G$ and for all proper subgraphs $H \subset G$, $|E(H)| > 2h_H$. Any subgraph that defies this relation is called a \emph{subdivergence}. For a $4$-regular graph, a subdivergence is a non-trivial $2$- or $4$-edge cut.

\begin{theorem*}[Proposition 5.2 in \cite{BlEsKr}] The period of a $\phi^4$ graph converges if and only if the graph is primitive. \end{theorem*}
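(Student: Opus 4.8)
The plan is to analyze convergence of the parametric integral $\int_{x_i \geq 0} \Psi_G^{-D/2}|_{x_n=1}\, \prod_{i=1}^{n-1}\diff x_i$ by the standard sector-decomposition / power-counting argument, which reduces convergence to a combinatorial condition on the exponents of $\Psi_G$ along coordinate subspaces. The first step is to recall the matroidal structure of the Kirchhoff polynomial: for any subset $S \subseteq E(G)$, the ``leading'' behaviour of $\Psi_G$ as the variables $\{x_e : e \in S\}$ are uniformly scaled to zero is governed by the exponent $\sum_{e \in S}$ of ... more precisely, writing $\Psi_G$ in the Dodgson/minor form, the degree of $\Psi_G$ in the variables indexed by $S$, taken at the relevant vertex of the Newton polytope, is $h_G - h_{G\cut S} = |S| - (\text{rank drop})$, which in the connected case where $G\cut S$ stays connected equals $h_G - h_{G/S}$... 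I would set this up carefully using that $\Psi_G$ has degree $h_G$ and is homogeneous, and that restricting to a subgraph $H$ (spanned by edge set $E(H)$) picks out the monomials supported on $E(G)\cut E(H)$ times $\Psi_H$ evaluated on the complementary variables.

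Next, I would run the power-counting: with $D = 4$ (the physical dimension relevant here, though the argument works verbatim for general even $D$ with $2h_G = |E(G)|$ replaced by the appropriate superficial-degree condition), near the coordinate subspace where exactly the variables in a subset $S$ vanish, the integral behaves like $\int_0^{\cdot} t^{\,|S| - 1 - (D/2)\,h_{\langle S\rangle}}\,\diff t$ up to a factor regular and nonzero at $t=0$, where $\langle S \rangle$ is the subgraph on edge set $S$. This integral converges near $t=0$ precisely when $|S| > (D/2)\, h_{\langle S\rangle}$, i.e. with $D=4$, when $|E(H)| > 2 h_H$ for the subgraph $H = \langle S\rangle$; and the overall integral over $x_i \geq 0$ with $x_n=1$ has no divergence at infinity in the remaining variables exactly when the analogous strict inequality holds for all the ``co-subgraphs'' — but by the defining relation $|E(G)| = 2h_G$ these large-$x$ regions are dual to small-$x$ regions for complementary subgraphs, so they impose the same family of conditions. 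Collecting all sectors, the integral converges if and only if $|E(H)| > 2h_H$ for every nonempty proper subgraph $H$, with equality allowed only for $H = G$ itself; and $|E(G)| = 2h_G$ is exactly the borderline (logarithmic) case that makes the period — a residue — the right finite object to extract. This is precisely the definition of primitive.

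The main obstacle is making the sector-decomposition power-counting rigorous: one must verify that the full nonnegative-orthant integral genuinely decomposes (e.g. via Hepp sectors / blow-ups along coordinate subspaces) into pieces each of which has a single scaling variable, that the Kirchhoff polynomial factorizes in the claimed leading form on each sector with a remainder bounded away from zero, and that no cancellations or subtle boundary effects spoil the naive power count — in particular that nested subdivergences do not interact to change the convergence verdict. The cleanest route is to cite the Hepp-sector / Speer-sector machinery (as in the Bogner–Weinzierl and Brown references already in the bibliography) rather than reprove it, and then just identify the convergence condition coming out of that machinery with primitivity; since the statement is quoted as Proposition 5.2 of \cite{BlEsKr}, I would in fact present the above as the conceptual skeleton and defer the analytic details of sector convergence to that reference, emphasizing only the translation between the matroid/subgraph combinatorics and the exponents that appear.
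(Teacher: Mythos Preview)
The paper does not prove this statement; it is quoted verbatim as ``Proposition~5.2 in \cite{BlEsKr}'' and used without any argument. There is therefore nothing in the paper to compare your sketch against. Your outline is the standard Hepp/Speer sector-decomposition power-counting argument, which is indeed the route taken in the cited reference, and your identification of the convergence condition with the subgraph inequalities $|E(H)| > 2h_H$ is correct in spirit. Since the thesis itself simply defers to \cite{BlEsKr}, the appropriate comparison is just that: the paper cites the result, and your proposal sketches the proof that lives in that citation.
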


Despite being a simplification of the Feynman integral, the period is still difficult to compute. Many can be expressed as multiple zeta values; sums and products of $$\zeta ( s_1 , ... , s_k) = \sum_{n_1 > \cdots > n_k > 0} \frac{1}{n_1^{s_1} \cdots n_k^{s_k}}$$ over $\mathbb{Q}$ for positive integers $n_i,s_i$, $1 \leq i \leq k$. These are objects of mathematical interest, due to the algebraic and number theoretic properties they possess, and their connections to other mathematical objects (see for example \cite{hoffman} and \cite{zudilin}). Computations for $4$-point $\phi^4$ graphs up to seven loops are presented by Broadhurst and Kreimer in \cite{knotsnumbers}, up to eight loops by Schnetz in \cite{Sphi4}, and for an important and infinite family of $\phi^4$ graphs by Brown and Schnetz in \cite{BSZigzag}. Both \cite{knotsnumbers} and \cite{Sphi4} use numeric techniques to produce numbers that match with consistently high degrees of accuracy, while \cite{BSZigzag} relies on recently developed tools to establish the equality with mathematical rigor. One way to deal with this computational difficulty of the integral Feynman period is to create large families of graphs with equal periods. To do this, we rely on graph operations that preserve the period. Four such operations are known for primitive $4$-point $\phi^4$ graphs.

\subsection{Operations that preserve the Feynman period}\label{introinvariance}

As observed in Section~\ref{introbackground}, any $4$-point $\phi^4$ graph can be uniquely created from a $4$-regular graph by deleting a vertex. This operation is called \emph{decompletion}, and the unique way a vertex can be added back to a $4$-point $\phi^4$ graph is \emph{completion}. 

\begin{theorem*}[Theorem 2.7 in \cite{Sphi4}] The period of a $4$-point $\phi^4$ graph is invariant under completion followed by decompletion. \end{theorem*}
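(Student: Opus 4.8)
The plan is to abandon the parametric (Schwinger) representation in favour of the \emph{position-space} representation of the period, in which independence of the deleted vertex is built into the integrand. Write $G=\widetilde G$ for the (unique) completion of a given primitive $4$-point $\phi^4$ graph $G_0$, so that $G$ is $4$-regular on $N$ vertices with $2N$ edges and $G_0=G\setminus v$ for the completion vertex $v$; the statement to prove is that $P(G\setminus v)$ does not depend on which vertex of $G$ is removed. The key steps are: (1) rewrite the period as a position-space integral over vertex positions of $G$; (2) observe that the integrand is a symmetric function of these positions, depending only on $G$; (3) read off invariance from translation- and inversion-invariance of that integral.

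First I would pass from the parametric form $\int\Psi^{-D/2}$, specialised to $D=4$, to momentum space, $\int\prod_{i=1}^{h_{G_0}}\tfrac{\diff^4 k_i}{\pi^2}\prod_{e\in E(G_0)}q_e^{-2}$, which is legitimate because the period discards all masses and external momenta, and then Fourier-transform each propagator, $q_e^{-2}\mapsto \pi^2|x_a-x_b|^{-2}$. Carrying out the loop integrations against these kernels attaches a point $x_u\in\mathbb{R}^4$ to each (now integrated) vertex $u$ and the factor $|x_{a(e)}-x_{b(e)}|^{-2}$ to each edge; crucially, because $D=4$ and every vertex of $G$ is $4$-valent, the integration at each vertex is conformally invariant (the uniqueness relation $4\cdot\tfrac{D-2}{2}=D$). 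Gauge-fixing this conformal symmetry by sending one chosen vertex $v_\infty$ to infinity and pinning another, $v_0$, at the origin leaves a finite integral
\[
P(G\setminus v)=\int_{(\mathbb{R}^4)^{N-2}}\ \prod_{u\neq v_\infty,\,v_0}\diff^4x_u\ \prod_{\substack{\{a,b\}\in E(G)\\ v_\infty\notin\{a,b\}}}\frac{1}{|x_a-x_b|^{2}},
\]
over the remaining $N-2$ vertices. Here primitivity of $G_0$ is exactly the hypothesis that forces absolute convergence: a divergent subintegral over a proper subfamily of the $x_u$ corresponds precisely to a subdivergence of $G$, i.e.\ a non-trivial $2$- or $4$-edge cut.

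The conclusion is then immediate. The integrand $\prod_{\{a,b\}}|x_a-x_b|^{-2}$ is a symmetric function of the configuration $(x_u)_{u\in V(G)}$ and depends only on the completion $G$; the deleted vertex enters solely as the choice of which vertex plays the role of $v_\infty$ and which plays the role of $v_0$. Translation invariance of Lebesgue measure (the substitution $x_u\mapsto x_u-x_{v'}$) shows $v_0$ can be replaced by any vertex, and conformal inversion invariance of the integrand — this is where $D=4$ and $4$-valence are used — shows $v_\infty$ can be replaced by any vertex. Hence $P(G\setminus v)=P(G\setminus v')$ for all $v,v'\in V(G)$. Since completing $G\setminus v$ and then decompleting at an arbitrary vertex $v'$ yields $G\setminus v'$, while decompleting ``at $v$'' returns $G\setminus v$ itself, the period is unchanged by completion followed by decompletion.

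The main obstacle is step (1): making the Fourier/conformal passage rigorous rather than formal, and in particular deriving the absolute-convergence statement cleanly from primitivity (equivalently, that no proper subconfiguration of the $x_u$ produces a logarithmic or worse divergence), so that the changes of variables in step (3) are legal. A route that stays inside the parametric world is available — expand $\Psi_G$ according to $\deg_T(v)$ to express $\Psi_{G\setminus v}$ through spanning-forest polynomials of $G$, then apply the Cremona involution $x_e\mapsto x_e^{-1}$, which for $D=4$ carries the period integrand to its dual — but bookkeeping the four edges incident to $v$ makes this noticeably more cumbersome, so I would present the position-space argument as the main line and only sketch the parametric alternative in a remark.
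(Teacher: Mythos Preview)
The paper does not actually prove this theorem; it is stated in Section~\ref{introinvariance} as background and attributed to Schnetz~\cite{Sphi4} (Theorem~2.7 there), with no argument given here. So there is no ``paper's own proof'' to compare against.

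That said, your proposal is essentially Schnetz's original argument: pass to the position-space representation of the period, use that in $D=4$ each $4$-valent vertex makes the integrand conformally invariant, gauge-fix by sending one vertex to infinity and pinning another at the origin, and then observe that the resulting integral depends only on the completed graph $G$ with the choice of which vertex went to infinity. Translation invariance lets you move the pinned vertex and conformal inversion lets you move the vertex at infinity, giving the claimed independence. Your identification of primitivity with absolute convergence of the position-space integral (via the absence of non-trivial $2$- or $4$-edge cuts) is also correct and is exactly how Schnetz handles it. The only caveat is the one you flag yourself: the passage from the parametric to the position-space form, and the convergence/Fubini justifications needed to make the changes of variables legal, require care---but these are carried out in~\cite{Sphi4}, so the sketch is sound.
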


\noindent As the graphs in the set of decompletions of a $4$-regular graph are not necessarily isomorphic, this can produce a number of non-isomorphic graphs with equal periods.

\begin{example} Consider the graphs drawn in Figure~\ref{completionex}. Graph $\Gamma$ is $4$-regular, while $G_1$ and $G_2$ are decompletions of $\Gamma$. As $G_1$ and $G_2$ have different numbers of edges in parallel, they are clearly non-isomorphic. 

\begin{figure}[h]
  \centering
      \includegraphics[scale=0.80]{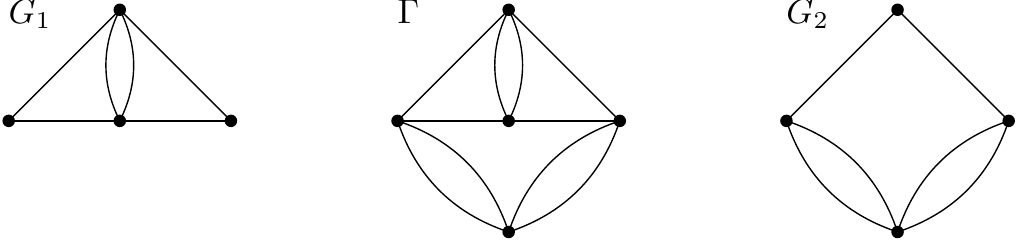}
  \caption{An example of completion followed by decompletion producing non-isomorphic $4$-point $\phi^4$ graphs.}
\label{completionex}
\end{figure}

  \end{example}

The Schnetz twist is another operation known to preserve the period. Shown in Figure~\ref{twist}, we partition the edges of a completed graph across a four-vertex cut. On one side of this cut, we then change the ends of edges of the form $\{v_i,w\}$, exchanging $v_1$ with $v_2$ and $v_3$ with $v_4$, using the labeling from this figure. We assume that both graphs are $4$-regular. 

\begin{theorem*}[Theorem 2.11 in \cite{Sphi4}] If two $4$-regular graphs differ by a Schnetz twist, then any pair of decompletions of these graphs have equal periods. \end{theorem*}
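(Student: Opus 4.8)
The plan is to reduce to a statement about the completed graphs and then exploit conformal symmetry. By the completion/decompletion invariance already stated (Theorem~2.7 in \cite{Sphi4}), any two decompletions of a fixed $4$-regular graph have the same period, so it is enough to show that the common period $P(\Gamma)$ of the decompletions of $\Gamma$ equals that of $\Gamma'$. I would do this through the configuration-space representation of the period, which is standard for primitive $\phi^4$ graphs: assign a point $x_v\in\mathbb{R}^4$ to each vertex $v$, a factor $1/(x_u-x_w)^2$ to each edge $\{u,w\}$, integrate over all the positions, and divide by the volume of the conformal group (made rigorous by pinning a few points). This representation treats all vertices on an equal footing --- which is why completion/decompletion invariance holds --- and, crucially, it makes conformal symmetry visible.

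The core of the argument is a local analysis at the four-vertex cut $\{v_1,v_2,v_3,v_4\}$. Writing $L$ and $R$ for the two sides, integrate out the $R$-side vertices with $x_{v_1},\dots,x_{v_4}$ held fixed and call the result $f_R(x_{v_1},\dots,x_{v_4})$; the period is then a functional of $f_R$ and of the analogous $f_L$ (which absorbs any edges internal to $\{v_1,\dots,v_4\}$), and depends on $\Gamma$ only through this pair. The key point is that $f_R$ is conformally covariant: because every internal vertex of the $R$-block is a $\phi^4$ interaction vertex, hence has degree $4$, applying any conformal transformation to the $R$-side variables together with $x_{v_1},\dots,x_{v_4}$ leaves the internal integrations unchanged and produces only a weight factor at each $v_i$ of weight $\delta_i:=\deg_R(v_i)$. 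Therefore $f_R$ factors as an explicit weight prefactor times a function of the conformal cross-ratios of the four points $x_{v_1},\dots,x_{v_4}$ alone. Now three elementary facts conclude the proof: (i) the twist changes $f_R$ into its precomposition with the permutation $(v_1v_2)(v_3v_4)$ of its four arguments; (ii) since the twist swaps the $R$-edges of $v_1$ with those of $v_2$ and of $v_3$ with those of $v_4$, preservation of $4$-regularity forces $\deg_R(v_1)=\deg_R(v_2)$ and $\deg_R(v_3)=\deg_R(v_4)$, so one may take the weight prefactor to be $\bigl((x_{v_1}-x_{v_2})^2\bigr)^{-\delta_1}\bigl((x_{v_3}-x_{v_4})^2\bigr)^{-\delta_3}$, which is fixed by $(v_1v_2)(v_3v_4)$; and (iii) the permutation $(v_1v_2)(v_3v_4)$ lies in the Klein four-subgroup of $S_4$, which acts trivially on every cross-ratio of four points. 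Hence the twist leaves $f_R$ unchanged as a function of $x_{v_1},\dots,x_{v_4}$, while $f_L$ is untouched, so $P(\Gamma)=P(\Gamma')$.

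The main obstacle is the analytic bookkeeping supporting this picture. One must justify that the configuration-space integral is finite for primitive graphs --- this is where the absence of subdivergences enters --- and, more delicately, that the partial integral $f_R$ is itself convergent for generic boundary positions, so that $f_R$ really is a function to which conformal covariance applies. One must also make precise what is meant by a function of the cross-ratios alone: in dimension $4$ there are two independent conformal invariants of four points, so the proof needs the sharpened statement that the Klein four-group fixes both, combined with the manifestly invariant choice of weight prefactor above --- this, and only this, is where the balance $\delta_1=\delta_2$, $\delta_3=\delta_4$ is used. Finally one should treat the degenerate configurations (an empty side, or the four vertices failing to form a genuine separation), which I expect to be routine base cases. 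A parametric-integral proof is also conceivable, trading conformal covariance for a Cremona-type change of variables on the Schwinger parameters of the $R$-side, but the convergence and degree bookkeeping there looks at least as heavy, so I would present the configuration-space argument as the primary route.
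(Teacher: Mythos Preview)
The paper does not contain a proof of this statement: it is quoted in the introduction as Theorem~2.11 of \cite{Sphi4} and used only as background motivation. (What the thesis proves in Section~\ref{secschnetz} is the analogous statement for the \emph{extended graph permanent}, via a purely combinatorial tag-flipping bijection on the incidence matrix --- a different invariant and a different argument.) So there is no ``paper's own proof'' to compare against.

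Evaluated on its own, your outline is essentially Schnetz's original proof from \cite{Sphi4}, and the key mechanism is correct. The period in position space is a conformal integral; integrating out the internal $R$-vertices leaves a conformally covariant four-point function of the cut vertices; the $4$-regularity constraint on both sides of the twist forces $\deg_R(v_1)=\deg_R(v_2)$ and $\deg_R(v_3)=\deg_R(v_4)$, so the explicit weight prefactor can be chosen symmetric in each swapped pair; and the double transposition $(v_1v_2)(v_3v_4)$ fixes the two conformal cross-ratios $\frac{x_{12}^2x_{34}^2}{x_{13}^2x_{24}^2}$ and $\frac{x_{14}^2x_{23}^2}{x_{13}^2x_{24}^2}$. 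That is the heart of the argument, and you have it right.

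Your list of analytic obligations is accurate. The finiteness of the full position-space integral for primitive graphs is standard (equivalent to the parametric statement you already quote, Proposition~5.2 of \cite{BlEsKr}); the more delicate point is indeed the convergence of the partial $R$-integral for generic positions of the four boundary points, which in Schnetz's treatment follows from the same subdivergence-freeness applied to the $R$-block viewed as a four-point amplitude. One small sharpening: your claim that the entire Klein four-group acts trivially on the cross-ratios is true for the pair above, but you only need the single element $(v_1v_2)(v_3v_4)$, and it is cleanest to verify that one permutation directly rather than invoke the whole subgroup.
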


\begin{figure}[h]
  \centering
      \includegraphics[scale=0.80]{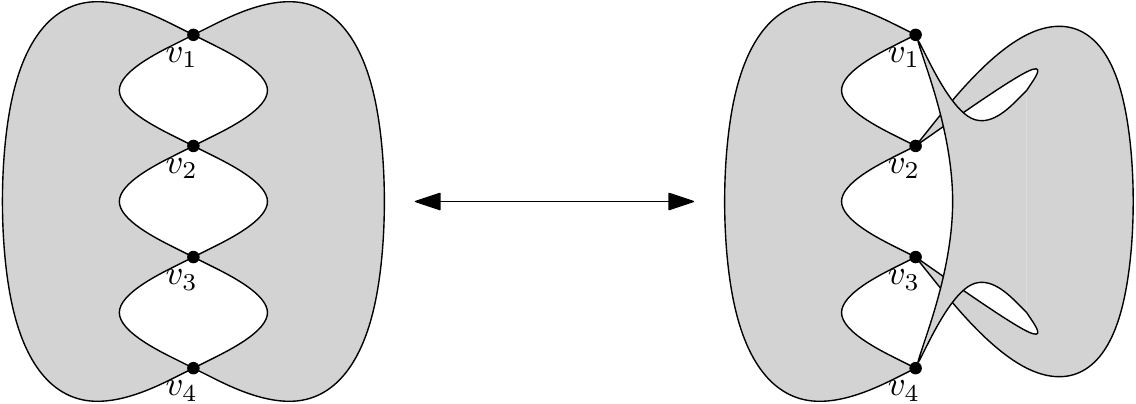}
  \caption{The Schnetz twist. If both graphs are $4$-regular, then all decompletions of these two graphs have equal Feynman periods.}
\label{twist}
\end{figure}

\begin{example} The completed $\phi^4$ graphs in Figure~\ref{schnetzex} use the naming convention from \cite{Sphi4}. There is a $4$-vertex cut, aligned vertically in the centre, and the two graphs differ by a Schnetz twist, performed on the edges to the right of the cut. These are non-isomorphic; $P_{7,4}$ contains five triangles, $P_{7,7}$ contains six. \end{example}

\begin{figure}[h]
  \centering
      \includegraphics[scale=0.80]{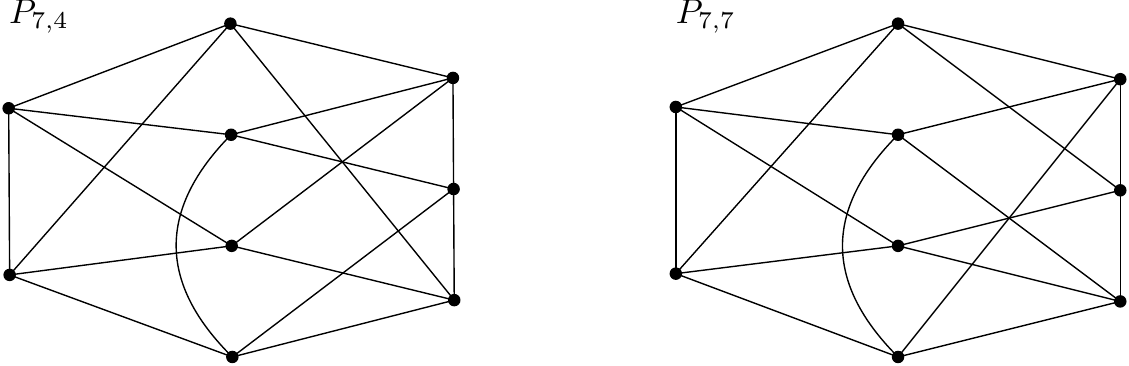}
  \caption{A Schnetz twist on a graph with seven loops.}
\label{schnetzex}
\end{figure}

Finally, the dual of a connected planar graph is a well-known graphic operation. Importantly, a $4$-point $\phi^4$ graph $G$ has $|E(G)| = 2|V(G)|-2$. Recall Euler's polyhedral formula; setting $F(G)$ as the set of faces for an embedding of a connected graph $G$, $|V(G)| -|E(G)| +|F(G)| = 2$. If $G$ is planar, then dual $G^*$ has $|V(G^*)| = |F(G)| = 2-|V(G)|+|E(G)|$. Then, $|E(G)| = |V(G^*)|+|V(G)|-2$, and hence $|V(G)|=|V(G^*)|$. As $|E(G)| = |E(G^*)|$ also, both $G$ and $G^*$ have the same vertex to edge relationship. 

\begin{theorem*}[Theorem 2.13 in \cite{Sphi4}, used as early as \cite{knotsnumbers}] Suppose graph $G$ and dual $G^*$ are $4$-point $\phi^4$ graphs. The periods of $G$ and $G^*$ are equal.  \end{theorem*}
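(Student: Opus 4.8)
The plan is to prove the duality invariance of the Feynman period by exploiting the Matrix–Tree theorem and a change of variables in the parametric integral. First I would recall that for a connected planar graph $G$ with planar dual $G^*$, there is a canonical bijection $e \mapsto e^*$ between $E(G)$ and $E(G^*)$, and that this bijection sends spanning trees of $G$ to complements of spanning trees of $G^*$: if $T$ is a spanning tree of $G$, then $\{e^* : e \notin T\}$ is a spanning tree of $G^*$. Translating this into the Kirchhoff polynomial, and using that $|E(G)| = |E(G^*)|$ with $h_G = |V(G^*)| - 1 = |F(G)|-1$ while $h_{G^*} = |V(G)|-1$, one obtains the classical identity
\begin{equation*}
\Psi_{G^*}(x_{e_1}, \dots, x_{e_n}) = \left( \prod_{i=1}^n x_{e_i} \right) \Psi_G\!\left( \frac{1}{x_{e_1}}, \dots, \frac{1}{x_{e_n}} \right).
\end{equation*}
I would either cite this standard fact (it appears in the electrical-networks literature going back to Kirchhoff, and in the physics literature on Symanzik polynomials) or give the short spanning-tree-complement argument.

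Next I would plug this identity into the parametric period integral. Writing $n = |E(G)| = 2h_G$ (both $G$ and $G^*$ are primitive $4$-point $\phi^4$ graphs, so $D = 4$ and the exponent $D/2 = 2$ gives $\Psi^{D/2} = \Psi^2$, with the integrand $\Psi_G^{-2}$ over $n-1$ variables after setting $x_{e_n}=1$), I would perform the change of variables $x_{e_i} \mapsto 1/x_{e_i}$ for $i = 1, \dots, n-1$. Under this substitution $\diff x_{e_i} \mapsto -x_{e_i}^{-2}\,\diff x_{e_i}$, the region $x_{e_i} \geq 0$ is preserved (with the orientation flip absorbed), and the Jacobian contributes $\prod_{i=1}^{n-1} x_{e_i}^{-2}$. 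One then checks that the homogeneity of $\Psi_G$ — it is homogeneous of degree $h_G$ — combines with the extra monomial factor $\prod x_{e_i}$ from the duality identity (evaluated at $x_{e_n}=1$) so that all powers of $x_{e_i}$ cancel exactly, leaving precisely $\Psi_{G^*}^{-2}|_{x_{e_n}=1}$ in the integrand. This is the step where the specific value $D=4$ and the primitivity condition $n = 2h_G$ are essential: the exponent bookkeeping only closes up when $(h_G+1)(D/2) - n = D/2$, i.e.\ the period residue exponent, matches on both sides.

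The main obstacle I anticipate is the careful exponent accounting in that change of variables — tracking the Jacobian, the dehomogenization at $x_{e_n}=1$, the degree of $\Psi_G$, and the monomial from the duality identity simultaneously, and confirming they cancel rather than merely cancelling "up to a power of the omitted variable." A secondary subtlety is justifying that the improper integral is genuinely invariant under $x \mapsto 1/x$ on $[0,\infty)$, which is where convergence (guaranteed by primitivity, via the cited Proposition 5.2 of \cite{BlEsKr}, applied to both $G$ and $G^*$) is used; since $G^*$ is also a $4$-point $\phi^4$ graph with the same vertex–edge relation, it too is primitive, so its period converges and the formal manipulation is legitimate. Finally I would remark that the choice of which edge to set to $1$ is immaterial by the projective nature of the integrand, so the statement is well-posed regardless of the labelling.
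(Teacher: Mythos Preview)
The paper does not actually prove this statement; it is quoted as a known result from \cite{Sphi4} (and \cite{knotsnumbers}) in the background section, with no argument supplied. The thesis later proves the analogous invariance for the \emph{extended graph permanent} (Corollary~\ref{phi4dual}), but the period duality itself is simply cited.

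That said, your proposed argument is correct and is exactly the standard proof one finds in the cited literature: the spanning-tree complement bijection gives $\Psi_{G^*}(x_1,\dots,x_n) = \bigl(\prod_i x_i\bigr)\,\Psi_G(1/x_1,\dots,1/x_n)$, and the change of variables $x_i \mapsto 1/x_i$ on $[0,\infty)^{n-1}$ turns one period integral into the other. One small overstatement in your write-up: the exponent cancellation does \emph{not} actually require $n = 2h_G$. Setting $x_n = 1$ in the duality identity and substituting, the factor $\bigl(\prod_{i=1}^{n-1} x_i\bigr)^2$ coming from $\Psi_G^{-2}$ cancels the Jacobian $\prod_{i=1}^{n-1} x_i^{-2}$ exactly, with no residual power, for any $G$. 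The $\phi^4$ hypothesis enters only to ensure that both integrals converge (primitivity of $G$ and of $G^*$) and that both sides genuinely are ``periods'' in the sense of the definition. So the step you flagged as the main obstacle is in fact clean; the real content is the convergence, which you handle correctly.
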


\begin{example}  Again using the naming convention from \cite{Sphi4}, the graphs in Figure~\ref{planarex} are decompletions of $P_{7,5}$ and $P_{7,10}$, respectively. Both are $4$-point $\phi^4$ graphs. These are non-isomorphic; the vertices of degree three in $P_{7,5}$ are an independent set, while those in $P_{7,10}$ are not.  \end{example}

\begin{figure}[h]
  \centering
      \includegraphics[scale=0.80]{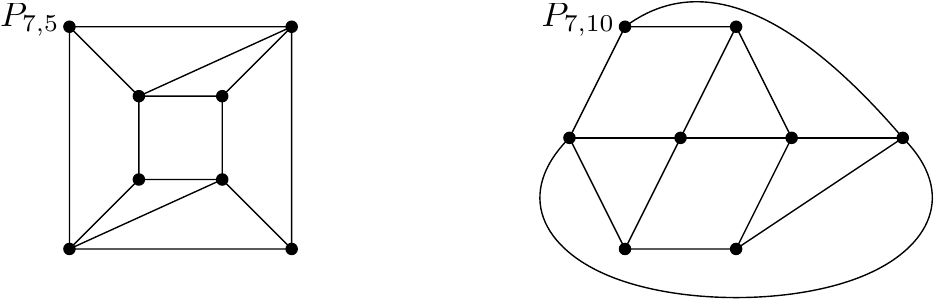}
  \caption{An example of two non-isomorphic $4$-point $\phi^4$ graphs that are dual to one another.}
\label{planarex}
\end{figure}

Lastly, while not period preserving itself, decompleted graphs with $2$-vertex cuts have an important property with regards to the period. Split the graph $G$ into minors $G_1$ and $G_2$ as in Figure~\ref{2cut} and assume that $G$, $G_1$, and $G_2$ are all $4$-point $\phi^4$ graphs. External half-edges must be distributed as shown. The operation is shown for completed graphs in Figure~\ref{2cutcompleted}.

\begin{figure}[h]
  \centering
      \includegraphics[scale=1.20]{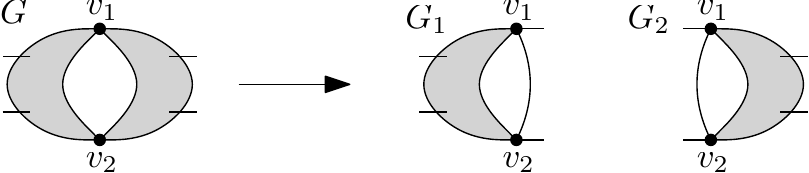}
  \caption{Operation on a two-vertex cut. If all graphs are $4$-point $\phi^4$, then the period of $G$ is equal to the product of the periods of $G_1$ and $G_2$.}
\label{2cut}
\end{figure}

\begin{theorem*}[Theorem 2.10 in \cite{Sphi4}] Suppose graphs $G$, $G_1$, and $G_2$ in Figure~\ref{2cut} are all $4$-point $\phi^4$ graphs. The period of $G$ is equal to the product of the periods of $G_1$ and $G_2$.   \end{theorem*}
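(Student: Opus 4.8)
The plan is to pass to the projective form of the period, factor the Kirchhoff polynomial across the two-vertex cut, and then separate the integral by a homogeneity argument that succeeds precisely because all three graphs are log-divergent. Since $\phi^4$ theory has $D=4$, a primitive graph $G$ with $n=|E(G)|=2h_G$ edges has
$$ P(G) \;=\; \int_{\Delta_G} \frac{\Omega_G}{\Psi_G^{\,2}}, $$
where $\Delta_G\subset\mathbb{P}^{n-1}$ is the positive coordinate simplex and $\Omega_G$ the projective volume form: this equals the affine expression of Section~\ref{subintroperiods} after setting any one Schwinger parameter to $1$, it converges because $G$ is primitive (Proposition 5.2 in \cite{BlEsKr}), and, the integrand being a well-defined projective form, it may equally be computed over any slice $\{\sum_{f\in S}x_f=1\}$ with $\emptyset\neq S\subseteq E(G)$. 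The structural input is that Figure~\ref{2cut} realizes $G$ as the two-sum $G_1\oplus_2 G_2$ along a common edge $e=\{a,b\}$ (on completions, Figure~\ref{2cutcompleted}, this is gluing two triangles and deleting their edges); in particular $e\in E(G_1)\cap E(G_2)$, $e\notin E(G)$, and $E(G)=E(\Gamma_1)\sqcup E(\Gamma_2)$ where $\Gamma_i:=G_i\setminus e$.

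The next step is the deletion--contraction bookkeeping for $e$. Writing $\Psi_{\Gamma_i}^{a,b}$ for the spanning-two-forest polynomial of $\Gamma_i$ in which $a$ and $b$ lie in distinct components (equivalently $\Psi_{G_i/e}$), deletion--contraction inside $G_i$ gives $\Psi_{G_i}=x_e\Psi_{\Gamma_i}+\Psi_{\Gamma_i}^{a,b}$, and the classical two-sum factorization gives
$$ \Psi_G \;=\; \Psi_{\Gamma_1}\,\Psi_{\Gamma_2}^{a,b} \;+\; \Psi_{\Gamma_1}^{a,b}\,\Psi_{\Gamma_2}, $$
which follows from deletion--contraction on $e$ in the auxiliary graph $G_1\cup_e G_2$ (the $e$-edges identified), or from a direct spanning-tree bijection. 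Now integrate the lone free variable $x_e$ out of $P(G_i)$: using the slice $\{\sum_{f\in E(\Gamma_i)}x_f=1\}$, so that $x_e$ runs over $(0,\infty)$, together with $\int_0^\infty(x_e a+b)^{-2}\,dx_e=(ab)^{-1}$ for $a,b>0$, one obtains the reduced representation
$$ P(G_i) \;=\; \int_{\Delta_{\Gamma_i}} \frac{\Omega_{\Gamma_i}}{\Psi_{\Gamma_i}\,\Psi_{\Gamma_i}^{a,b}}, $$
which is a genuine projective integrand because $\deg\Psi_{\Gamma_i}+\deg\Psi_{\Gamma_i}^{a,b}=(h_{G_i}-1)+h_{G_i}=2h_{G_i}-1=|E(\Gamma_i)|$.

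Finally, I would separate $P(G)$. On the slice $\{\sum_{f\in E(G)}x_f=1\}$, write the $\Gamma_1$-coordinates as $s\hat{\mathbf x}$ with $\hat{\mathbf x}$ ranging over the $E(\Gamma_1)$-simplex, the $\Gamma_2$-coordinates as $t\hat{\mathbf y}$ likewise, and $s,t>0$ with $s+t=1$; the Lebesgue Jacobian contributes $s^{|E(\Gamma_1)|-1}t^{|E(\Gamma_2)|-1}$, while by homogeneity
$$ \Psi_G(s\hat{\mathbf x},t\hat{\mathbf y}) \;=\; s^{\,h_{G_1}-1}t^{\,h_{G_2}-1}\big(s\,\Psi_{\Gamma_1}^{a,b}(\hat{\mathbf x})\Psi_{\Gamma_2}(\hat{\mathbf y})+t\,\Psi_{\Gamma_1}(\hat{\mathbf x})\Psi_{\Gamma_2}^{a,b}(\hat{\mathbf y})\big). $$
The powers of $s$ and $t$ in numerator and denominator cancel identically --- this is the crux --- because $|E(\Gamma_i)|-1=2h_{G_i}-2=2(h_{G_i}-1)$, i.e.\ because each $G_i$ is log-divergent; the leftover one-dimensional integral $\int_0^1(s\alpha+(1-s)\beta)^{-2}\,ds=(\alpha\beta)^{-1}$ with $\alpha=\Psi_{\Gamma_1}^{a,b}\Psi_{\Gamma_2}$ and $\beta=\Psi_{\Gamma_1}\Psi_{\Gamma_2}^{a,b}$ factors the integrand as $(\Psi_{\Gamma_1}\Psi_{\Gamma_1}^{a,b})^{-1}(\Psi_{\Gamma_2}\Psi_{\Gamma_2}^{a,b})^{-1}$. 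Since all integrands are positive and the total is finite, Tonelli lets one split the iterated integral, and comparison with the reduced representation above yields $P(G)=P(G_1)\,P(G_2)$.

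I expect the main obstacle to be combinatorial rather than analytic: one must nail down the two deletion--contraction identities with the spanning-tree/forest bookkeeping exactly right, and check that each rational integrand carries the homogeneity degree its ambient projective space demands. Beyond that, the single genuinely indispensable ingredient is the cancellation of the $s$- and $t$-powers in the last step, which is where primitivity of $G$, $G_1$ and $G_2$ enters in full force, not merely to guarantee convergence.
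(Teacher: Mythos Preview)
The paper does not actually prove this statement: it is quoted in the introduction as Theorem~2.10 of \cite{Sphi4} (Schnetz) without proof, serving only as background for the period-preserving operations that motivate the extended graph permanent. So there is no in-paper proof to compare against.

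That said, your argument is the standard one and it is correct. You recover exactly the proof that appears in Schnetz's paper: the two-sum factorization of the Kirchhoff polynomial, the reduction of each $P(G_i)$ by integrating out the glue-edge variable, and the radial/homogeneity change of variables that isolates a single $\int_0^1(s\alpha+t\beta)^{-2}\,ds$ factor. Your identification of the log-divergence condition $|E(\Gamma_i)|-1=2(h_{G_i}-1)$ as the mechanism that cancels the $s,t$ powers is precisely the point, and your degree bookkeeping on $\Psi_{\Gamma_i}$ and $\Psi_{\Gamma_i}^{a,b}$ is right. The only places to be careful when writing this out fully are (i) justifying the freedom to choose the slice $\{\sum_{f\in S}x_f=1\}$ for arbitrary nonempty $S$, which is a standard consequence of the projective form being closed and of the correct total degree, and (ii) the Jacobian $s^{|E(\Gamma_1)|-1}t^{|E(\Gamma_2)|-1}$ for the product-of-simplices change of variables, which is elementary but worth a line. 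Neither is a gap in your outline.
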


\begin{figure}[h]
  \centering
      \includegraphics[scale=0.90]{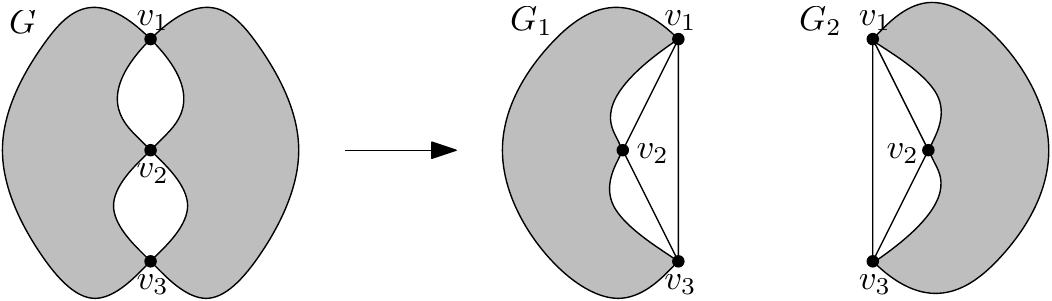}
  \caption{Operation on a three-vertex cut. If all graphs are in $4$-regular graphs, then the period of any decompletion of $G$ is equal to the product of the periods of decompletions of both $G_1$ and $G_2$.}
\label{2cutcompleted}
\end{figure}

\noindent It is therefore possible to produce non-isomorphic $4$-point $\phi^4$ graphs with equal periods by merging smaller graphs in this manner; either merging the same two graphs at different pairs of edges, or merging different graphs that differ by an aforementioned graph operation. From a collection of graphs with known periods, it is possible to produce an infinite family of graphs with easily computed periods, unlike the previous structural operations.

\begin{example} In Figure~\ref{mergingex}, the graph $G_1$ is a decompletion of $P_{6,2}$ and $G_2$ is a decompletion of $P_{3,1} = K_5$. Two ways of merging these graphs are shown below. These are again non-isomorphic, as the number of triangles is different between the two graphs. \end{example}

\begin{figure}[h]
  \centering
      \includegraphics[scale=0.7]{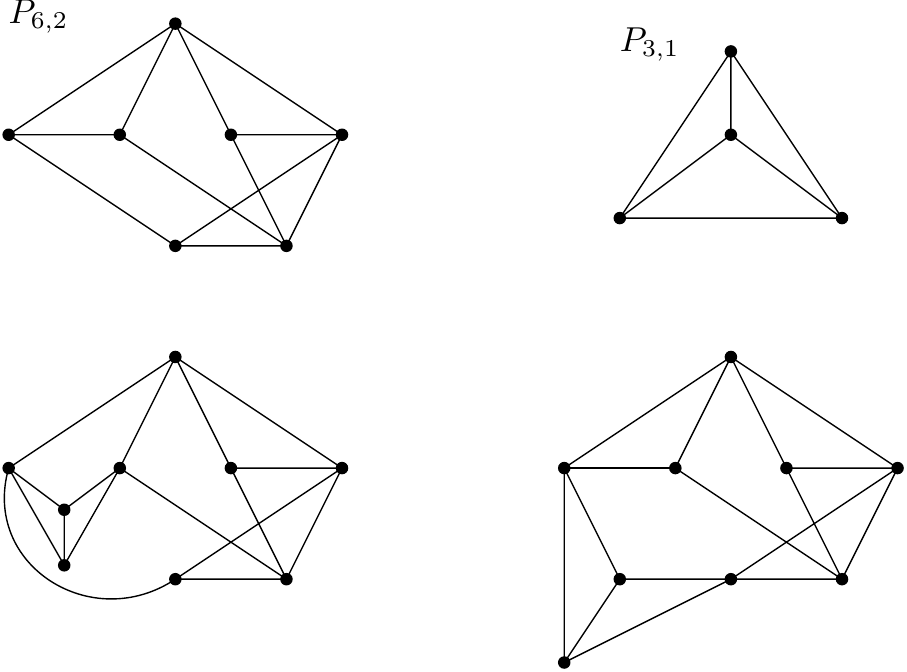}
  \caption{Using the $2$-vertex cut property to produce two non-isomorphic graphs with equal periods.}
\label{mergingex}
\end{figure}

The four operations listed above explain all currently known instances of $4$-point $\phi^4$ graphs with equal periods. Any non-trivial graph invariant that is preserved by these is therefore of interest, as it may provide insight into the Feynman period. The creation of such an invariant is one of the main topics of this thesis.

\section{Outline of the thesis}\label{introoutline}

Chapters~\ref{hepp} and~\ref{c2} introduce the Hepp bound and $c_2$ invariant, respectively. These are non-trivial invariants conjectured to be preserved by all operations known to preserve the period. A novel graph-theoretic interpretation of the $c_2$ invariant is introduced in Section~\ref{graphicc2}.

Chapter~\ref{chmatroid} provides a brief introduction to matroid theory, and some of the tools therein that will be of use. Those familiar with the subject may safely skip this chapter.

We previously created the extended graph permanent, and introduce it here in Chapter~\ref{Extended}. For an arbitrary graph, the extended graph permanent is a sequence of residues from (an infinite subset of) prime order finite fields. Constructed from the matrix permanent, Section~\ref{egppermanent} lays the foundation and establishes the required linear algebra tools, and Section~\ref{egpegp} introduces the invariant. Useful to our understanding of the extended graph permanent as a combinatorial object, we introduce a graphic formulation of this object in Section~\ref{graphicegp}. Section~\ref{signambiguity} briefly discusses a natural sign ambiguity that exists in this invariant. Section~\ref{nowherezero} explores a connection between the extended graph permanent and nowhere-zero flows.

In Chapter~\ref{Invariance} we prove that the extended graph permanent is in fact preserved by all previously mentioned operations known to preserve the period. 

\begin{theorem*} Let $\Gamma$ be a $4$-regular graph. 
\begin{itemize}
\item For $v,w\in V(\Gamma)$, the extended graph permanent of $\Gamma - v$ is equal to the extended graph permanent of $\Gamma-w$ (Theorem~\ref{egpcompletion}).
\item If $\Gamma$ and $\Gamma'$ are both $4$-regular graphs that differ by a Schnetz twist, then any decompletions of $\Gamma $ and $\Gamma'$ will have equal extended graph permanents (Proposition~\ref{schnetz}).
\item For $v \in V(\Gamma)$, let $G = \Gamma - v$. If $G$ is planar and $G^*$ is its planar dual, then $G$ and $G^*$ have equal extended graph permanents (Corollary~\ref{phi4dual}).
\item If $G=\Gamma -v$ has a $2$-vertex cut that can be split into two $4$-point $\phi^4$ minors as in Figure~\ref{2cut}, then the extended graph permanent of $G$ is the additive inverse of the (term-by-term) product of the extended graph permanents of $G_1$ and $G_2$ (Theorem~\ref{2vertexcut}).
\end{itemize}
\end{theorem*}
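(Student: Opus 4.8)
The four statements will be proved separately, but all rest on the same two ingredients: the graphic reformulation of the extended graph permanent from Section~\ref{graphicegp}, which rewrites each residue as a normalized count modulo $p$ of edge-labelled configurations on the graph (the permanent analogue of the spanning-tree count that computes $\Psi_G$ and, via its point count, the $c_2$ invariant), together with the linear-algebra lemmas of Section~\ref{egppermanent} governing how the relevant matrix permanents behave modulo $p$. In each case the graph operation is shown to induce a bijection between the configurations being counted on the two sides, and the only work is to track how the normalizing powers of $p$ and the signs transform.

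I expect completion/decompletion invariance (Theorem~\ref{egpcompletion}) to be the main obstacle, precisely because two decompletions $\Gamma-v$ and $\Gamma-w$ of the same $4$-regular $\Gamma$ need not be isomorphic, so a direct comparison of graphs is impossible. The plan is instead to give a description of the permanent matrix of $\Gamma-v$ that refers only to $\Gamma$ (for instance through the cycle space of $\Gamma$ together with the fixed basepoint data of the construction), with the rows and columns indexed by the edges at $v$ being the only freedom; one then shows that passing from the choice at $v$ to the choice at $w$ is effected by an invertible change of coordinates on the appropriate $\mathbb{F}_p$-space. The delicate step is that permanents are \emph{not} invariant under arbitrary row and column operations, so one must verify that exactly the moves needed here lie in the permanent-preserving class isolated in Section~\ref{egppermanent}, exploiting the rigid structure of the incidence matrices of a $4$-regular graph. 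Once this "completed" form of the invariant is in hand, the remaining three results become comparatively short.

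For the Schnetz twist (Proposition~\ref{schnetz}) I would use the completed form directly: the twist merely applies the fixed involution $v_1\leftrightarrow v_2$, $v_3\leftrightarrow v_4$ to the endpoints of the edges on one side of a $4$-vertex cut, so it permutes the coordinates of the permanent matrix without changing its permanent, and hence carries configurations of $\Gamma$ bijectively to configurations of $\Gamma'$ term by term. For planar duality (Corollary~\ref{phi4dual}) the plan is to pass through the permanent analogue of the Kirchhoff duality identity $\Psi_{G^*}(x)=\bigl(\prod_e x_e\bigr)\Psi_G(1/x)$, which holds because the spanning trees of $G$ are the complements of the spanning trees of $G^*$ (equivalently, the cycle space of $G$ is the cut space of $G^*$); since $|E(G)|=|E(G^*)|$ and $|V(G)|=|V(G^*)|$, as recorded in the introduction, the normalizing exponents agree and the point counts of the two governing polynomials match with no sign, giving equality of the extended graph permanents.

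Finally, for the $2$-vertex-cut product formula (Theorem~\ref{2vertexcut}) the strategy is to show that the cycle space of $G$ is, up to a single shared dimension coming from the cut, the direct sum of the cycle spaces of $G_1$ and $G_2$, so that the permanent polynomial of $G$ factors as the product of those of $G_1$ and $G_2$ times one extra linear factor supplied by the identified pair of cut edges. Point counts of products of polynomials in disjoint variable sets multiply, so after dividing by the appropriate power of $p$ one recovers the term-by-term product of the extended graph permanents of $G_1$ and $G_2$; the leftover linear factor, equivalently a parity in the dimension and normalization bookkeeping, accounts for the overall additive inverse. I would pin this sign down on the smallest nontrivial instance, such as a split in which one side is a decompletion of $K_5$, and I expect that sign together with the careful treatment of the shared cut edge to be the only subtle points in this part.
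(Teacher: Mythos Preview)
Your Schnetz-twist argument has a concrete gap, and the duality and $2$-vertex-cut plans rest on identities that belong to the Kirchhoff polynomial and the $c_2$ invariant, not to the extended graph permanent.

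For the Schnetz twist you assert that swapping $v_1\leftrightarrow v_2$, $v_3\leftrightarrow v_4$ on one side of the cut ``permutes the coordinates of the permanent matrix.'' It does not. In the reduced signed incidence matrix, re-attaching an edge from $v_1$ to $v_2$ moves a nonzero entry from the row indexed by $v_1$ to the row indexed by $v_2$ within that edge's column; this is neither a row nor a column permutation, and there is no general reason the permanent should be preserved. The paper's proof (Proposition~\ref{schnetz}) takes $v_3$ as special and $v_4$ as the decompletion vertex, works in the tagging model of Section~\ref{graphicegp}, and shows by a careful count of tags crossing the cut that \emph{reversing every tag on the right side} carries contributions of $\Gamma$ bijectively and value-preservingly to contributions of $\Gamma'$. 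The tag reversal is the whole content; the vertex relabelling alone does nothing.

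For duality and the $2$-vertex cut you invoke the Kirchhoff identity $\Psi_{G^*}(x)=\bigl(\prod_e x_e\bigr)\Psi_G(1/x)$, cycle-space decompositions, ``normalizing powers of $p$,'' and ``point counts of products of polynomials.'' But the extended graph permanent is defined directly as $\mathrm{Perm}(\mathbf{1}_{k\times 1}\otimes M_G)\bmod p$; there is no Kirchhoff polynomial, no division by powers of $p$, and no point count in its definition (the point-count description in Chapter~\ref{hyperpower} is of a different polynomial $\widetilde F_{G,v'}$, is derived \emph{from} the permanent, and is not available at this stage). The paper proves duality (Proposition~\ref{dual} and Corollary~\ref{phi4dual}) by row-reducing $M_G$ to $[I_r\mid A]$, invoking the matroid-dual form $[-A^T\mid I_{n-r}]$ for $M_{G^*}$ (Theorem~\ref{oxleydual}), and matching factorial prefactors from cofactor expansion along the identity blocks via Lemma~\ref{anotherwilsoncor} and Corollary~\ref{wilsoncor}. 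It proves the $2$-vertex-cut formula (Theorem~\ref{2vertexcut}) by setting one cut vertex as special, doing cofactor expansion along all copies of the row for the other cut vertex, using Corollary~\ref{pigeon} to kill every term that does not split evenly between the two sides, and reading off the sign from Wilson's theorem $(2k)!\equiv -1\pmod{2k+1}$. Your decompletion plan is closer in spirit, but the paper's argument (Theorem~\ref{egpcompletion}) is again a direct bijection on taggings: choose $w$ as special for $\Gamma-v$ and $v$ as special for $\Gamma-w$, extend to $\Gamma^{[n]}$, and simultaneously reverse every tag and every edge orientation.
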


\noindent This of course leads naturally to the following conjecture.

\begin{conjecture*}[Conjecture~\ref{obviousconjecture}] If two $4$-point $\phi^4$ graphs have equal periods, then they have equal extended graph permanents. \end{conjecture*}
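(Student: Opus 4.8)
The natural strategy is to leverage the theorem above, which shows that the extended graph permanent is fixed by completion/decompletion, by the Schnetz twist, and by planar duality, and that it behaves across a $2$-vertex cut exactly as the period does, up to a sign that is in any case absorbed by the invariant's intrinsic sign ambiguity (Section~\ref{signambiguity}). If one knew the purely combinatorial fact that any two primitive $4$-point $\phi^4$ graphs with equal periods are joined by a finite sequence of these four operations, the conjecture would follow at once by induction on the length of such a sequence. So the first thing I would attempt is to establish --- or locate in the literature --- this stronger ``generation'' statement.

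That statement is itself open, and I regard it as the main obstacle. There is currently no mechanism ruling out ``accidental'' period coincidences not witnessed by any composition of the known moves, and for such a hypothetical pair the reduction yields no information whatsoever; even for coincidences that the moves do explain, the argument needs an effective bound on the number of moves together with a canonical decomposition into $2$-cut pieces, neither of which is presently available. (The same gap is why the analogous statements for the Hepp bound and the $c_2$ invariant also remain conjectural.) What the theorem does give unconditionally is the conjecture restricted to pairs that arise from the four operations, which covers every period coincidence known at present.

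A second, more ambitious route is to bypass the combinatorics and relate the extended graph permanent to the period \emph{arithmetically}. Since the extended graph permanent is, as developed later in the thesis, a sequence of point counts of a graph polynomial over prime fields, it lies in the same family as the $c_2$ invariant, which is conjectured to be period-determined and which is given a combinatorial description here (Section~\ref{graphicc2}). I would look for a $c_2$-style congruence expressing the extended graph permanent modulo $p$ in terms of point counts of the Kirchhoff polynomial $\Psi_G$; if the resulting counting function could be shown to be controlled by the same motive that governs the period, the conjecture would reduce to a refinement of the period-determinacy conjecture for such counts. Realistically this is a research program rather than a proof: my honest expectation is that the conjecture is entangled with the open classification of period coincidences, and that progress will come either from new period-preserving operations that the extended graph permanent is also shown to respect, or from a genuine number-theoretic bridge between prime-field point counts and periods.
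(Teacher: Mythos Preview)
The statement is a \emph{conjecture}, and the paper does not prove it; there is no proof to compare against. Your proposal correctly recognises this and is not a proof but an outline of two plausible attack routes, both of which the paper itself implicitly or explicitly endorses.

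Your first route --- reducing the conjecture to the assertion that every period coincidence is generated by the four known operations --- is exactly the structural picture the paper works with, and you correctly flag the obstruction: the generation statement is open, and indeed the paper notes (via the Hepp bound, Conjecture~\ref{heppconjecture}) that there are pairs such as $P_{8,30},P_{8,36}$ and $P_{8,31},P_{8,35}$ conjectured to have equal periods which are \emph{not} explained by the four moves. The paper checks that these pairs nonetheless have equal extended graph permanents, which is evidence for the conjecture but also evidence against the generation statement being the right lever.

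Your second route --- an arithmetic bridge via point counts --- is close in spirit to the paper's own speculation in the conclusion, where the polynomial $g$ underlying the permanent is shown to be a scalar caricature of the momentum-space Feynman integrand $\tilde g$. The paper stops short of any congruence linking the extended graph permanent to $\Psi_G$-counts or to $c_2$, and treats this as a direction for future work rather than a path to a proof.

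In short: there is no gap to name because there is no claimed proof; your assessment that the conjecture is entangled with the classification of period coincidences, and that progress likely requires either new period-preserving moves or a genuine arithmetic link, matches the paper's own conclusions.
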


\noindent This connection to the Feynman period is further hinted at in Theorem~\ref{subdivthm}, which proves a similar term-by-term product property for $4$-regular graphs with $4$-edge cuts, corresponding to graphs with subdivergences.

Chapter~\ref{egpcomp} develops computational methods for finding the extended graph permanent. Specifically, permanent values are difficult to compute, and the extended graph permanent produces sequences that, even for small graphs, quickly require residues modulo prime $p$ of the permanents of obscenely large matrices. In this chapter, we develop methods for finding closed forms for all values in the sequence for any graph. We further find  closed forms for the extended graph permanent for the family of trees in Section~\ref{treesnshit}, wheels in Section~\ref{wheels}, and zig-zag graphs in section~\ref{zigzagcomp}. In producing these sequences we also observe a possible connection to the $c_2$ invariant.

In Chapter~\ref{hyperpower} we find a novel graph polynomial $\widetilde{F}_{G}$ such that the extended graph permanent can be represented as a point count over this polynomial. For function $f$ and prime $p$, let $[f]_p$ be the number of roots of $f$ over finite field $\mathbb{F}_p$.

\begin{theorem*}[Corollary~\ref{maincor}] Let $G$ be a $4$-point $\phi^4$ graph. The extended graph permanent of $G$ at $p$ is $$ \begin{cases} [\widetilde{F}_{G}]_p \pmod{p} \text{ if } |E(G)| \equiv 0 \pmod{4} \\ -[\widetilde{F}_{G}]_p \pmod{p} \text{ otherwise} \end{cases} .$$ \end{theorem*}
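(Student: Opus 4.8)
The plan is to obtain this Corollary from the chapter's main representation theorem together with the standard point-count dictionary over $\mathbb{F}_p$, treating $\widetilde{F}_{G}$ throughout as the \emph{given} polynomial constructed earlier in Chapter~\ref{hyperpower}, not as an object I am free to reshape. Recall that the extended graph permanent of $G$ at $p$, which I write $\mathrm{egp}_p(G)$, is defined in Chapter~\ref{Extended} as the residue modulo $p$ of a matrix permanent built from $G$, with the matrix size growing with $p$. The full bridge is the chain \emph{permanent} $\to$ \emph{power sum of $\widetilde{F}_{G}$} $\to$ \emph{point count}: the heavy step, already carried out in the chapter's main theorem, identifies $\mathrm{egp}_p(G)$ modulo $p$ with a power sum $\pm\sum_{x\in\mathbb{F}_p^{N}}\widetilde{F}_{G}(x)^{\,p-1}$ over the affine space $\mathbb{F}_p^{N}$ on the variables of $\widetilde{F}_{G}$; the remaining step, which is the content of the Corollary itself, converts this power sum into $[\widetilde{F}_{G}]_p$ and fixes the sign in terms of $|E(G)|\bmod 4$.

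The conversion is the Chevalley--Warning identity. For any polynomial $f$ in $N\ge 1$ variables over $\mathbb{F}_p$, Fermat's little theorem gives $f(x)^{p-1}=1$ when $f(x)\ne 0$ and $f(x)^{p-1}=0$ when $f(x)=0$, so that
\[ [\,f\,]_p \;=\; \#\{x\in\mathbb{F}_p^{N}: f(x)=0\} \;\equiv\; \sum_{x\in\mathbb{F}_p^{N}}\bigl(1-f(x)^{p-1}\bigr) \;\equiv\; -\sum_{x\in\mathbb{F}_p^{N}}f(x)^{p-1}\pmod p, \]
where the last step uses $\sum_{x\in\mathbb{F}_p^{N}}1=p^{N}\equiv 0$, valid precisely because $N\ge 1$. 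Substituting $f=\widetilde{F}_{G}$ into the main theorem's power-sum expression immediately yields $\mathrm{egp}_p(G)\equiv \pm\,[\widetilde{F}_{G}]_p\pmod p$, so that all that remains is to determine the overall sign.

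For the sign I would track two contributions, the sign carried by the main theorem's power-sum formula and the extra $(-1)$ produced by the Chevalley--Warning step, and show they combine to $(-1)^{|E(G)|/2}$. This is well defined because $G$ is a $4$-point $\phi^4$ graph, whence $|E(G)|=2|V(G)|-2$ is even; moreover $(-1)^{|E(G)|/2}$ equals $+1$ exactly when $|E(G)|\equiv 0\pmod 4$ and $-1$ when $|E(G)|\equiv 2\pmod 4$, which is precisely the dichotomy in the statement. I expect the main obstacle to be exactly this bookkeeping: one must reconcile the sign and the fixed-variable normalization (the $x_n=1$ convention inherited from the period) of the main theorem with the clean $N$-variable form of Chevalley--Warning, checking in particular that fixing a variable leaves at least one free variable so that the $p^{N}\equiv 0$ cancellation still holds. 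One level beneath the Corollary, the genuinely hard identification of the permanent of the (size growing with $p$) matrix with $\sum_{x}\widetilde{F}_{G}(x)^{p-1}$ is where the real work lies: it proceeds through a standard expansion of the permanent as a coefficient in a product of linear forms, whose top-monomial coefficient one recognises, after reducing the resulting multinomial coefficients modulo $p$ by Lucas' theorem, as the power sum of the given polynomial $\widetilde{F}_{G}$. Granting that identification, the Corollary follows from the two short steps above.
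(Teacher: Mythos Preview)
Your high-level plan is right in spirit---use the chapter's main theorem and then sort out the sign---but the sign step is where the actual content lives, and your proposal does not supply it. The main theorem does not hand you a power sum with a mere $\pm$ in front; it gives
\[
\text{GPerm}^{[p]}(G)\;\equiv\; r!^{\,L}\,[\widetilde{F}_{G,v'}]_p \pmod p,
\]
with the point count already present (the Chevalley--Warning conversion is absorbed into Theorem~\ref{chevwarn}, so there is no additional $(-1)$ to harvest afterward). For a $4$-point $\phi^4$ graph one has $\V=2$, $r=n$ where $p=2n+1$, and $L=|E(G)|$, so the prefactor is $n!^{|E(G)|}$. This is not a sign by bookkeeping; you must \emph{evaluate} it modulo $p$. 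The paper does this via Corollary~\ref{wilsoncor} (a direct consequence of Wilson's theorem): $n!^2\equiv (-1)^{n+1}\pmod p$, and since $|E(G)|=2(|V(G)|-1)$ is even one gets $(n!^2)^{|E(G)|/2}$, which collapses to $\pm 1$ according to $|E(G)|\bmod 4$. Your write-up never mentions Wilson's theorem, and without it there is no mechanism by which ``tracking two $\pm 1$ contributions'' produces the claimed $(-1)^{|E(G)|/2}$.

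Two smaller points. First, the remark about an ``$x_n=1$ convention inherited from the period'' is misplaced: that normalization belongs to the period integral and plays no role in the construction of $\widetilde{F}_{G,v'}$ or in Theorem~\ref{chevwarn}. Second, the underlying identification you sketch does not go through Lucas' theorem in this paper; the $r!^L$ factor arises combinatorially from Proposition~\ref{polyextension} (counting how the block structure redistributes the monomial), and the passage to the point count is Theorem~\ref{chevwarn}. Granting the main theorem as a black box is fine for a corollary, but then the only thing left to prove is the evaluation of $n!^{|E(G)|}\pmod p$, and that is exactly the piece your proposal leaves out.
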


\noindent A number of extended graph permanent sequences also appear to relate to modular forms, as the $p^\text{th}$ Fourier coefficient modulo $p$. This is discussed in greater detail in Section~\ref{modformscoeffs}. It is interesting to note that in all observed instances, the loop number of the graph is equal to the weight of the modular form, the level of the modular form is a power of two, and in the Dirichlet character decomposition these all fall into subspaces of dimension $1$.

Finally, we conclude with Chapter~\ref{conclusion}. Here, we summarize the main results of this thesis, and indicate areas of potential future interest.

\chapter{The Hepp bound}
\label{hepp}

The Hepp bound was introduced by Panzer in 2016. The properties and results discussed here are due to Panzer unless otherwise stated (\cite{ErikEmail}). New material on the Hepp bound has also been published in \cite{Schhepp}.

For a graph $G$, the Hepp bound $\mathcal{H}(G)$ is an upper bound for the period, created by replacing the Kirchhoff polynomial in the period formula with the maximal monomial at all points of integration; $$\mathcal{H}(G) = \int_{x_i \geq 0} \frac{\prod_{i=1}^{|E(G)|-1}\diff x_i}{\left( \max_{T \in T_G}  \left\{ \prod_{e \notin T} x_e \right\} \right)^2|_{x_{|E(G)|} = 1}} \in \mathbb{Q}.$$ By integrating over smaller denominators, this naturally creates an upper bound for the Feynman period.

\begin{example} The \emph{banana graph}, $P_{1,1}$ using the naming convention from \cite{Sphi4}, is the unique graph with two vertices, and two edges in parallel between them. The Kirchhoff polynomial for this graph is $\Psi_{P_{1,1}} = x_1 + x_2$. The Hepp bound of this graph is therefore \begin{align*} \mathcal{H}(P_{1,1})= \int_{x_1 \geq 0} \frac{\diff x_1}{\max \{1,x_1 \}^2} &= \int_{x_1=0}^1 \frac{\diff x_1}{1} + \int_{x_1=1}^\infty \frac{\diff x_1}{x_1^2} \\ &= 1 + (-x_1)^{-1}|_1^\infty \\ &= 1 + (0-(-1)) = 2. \end{align*} From \cite{galois}, the period of this graph is $1$. \end{example}

Computational complexity rises quickly. The next smallest primitive $4$-point $\phi^4$ graph, $K_4$, has sixteen summands in its Kirchhoff polynomial. A more graphic and structural interpretation of the Hepp bound makes this computation easier. To do this, we define $\mathcal{F}(G)$ to be the set of maximal chains of bridgeless subgraphs of $G$; $$\mathcal{F}(G) = \{ \emptyset = \gamma_0 \neq \gamma_1 \subsetneq \gamma_2 \subsetneq \cdots \subsetneq \gamma_{h_1(G)} = G: \text{ each } \gamma_i \text{ is bridgeless}\}.$$ The length of these chains and  the fact that $h_1(\gamma_i) = i$ follows from the required maximality.

\begin{proposition}[\cite{ErikEmail}]\label{heppcomp} Let $G$ be a graph and $\mathcal{F}_G$ the set of maximal bridgeless chains. Define $\omega(\gamma) = |E(\gamma)| - 2h_1(\gamma)$. Then, $$ \mathcal{H}(G) = \sum_{[\gamma] \in \mathcal{F}(G)} \frac{1}{\omega(\gamma_1) \cdots \omega(\gamma_{h_1(G)-1})} \prod_{k=1}^{h_1(G)}(|E(\gamma_k)|-|E(\gamma_{k-1})|) .$$ \end{proposition}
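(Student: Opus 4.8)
The plan is to compute the Hepp bound integral directly by decomposing the domain of integration $\{x_i \geq 0\}$ according to the ordering of the Schwinger parameters, and to recognize that each simplicial cone in this decomposition is indexed by a maximal bridgeless chain. The key observation is that the function $\max_{T \in T_G}\{\prod_{e \notin T} x_e\}$ — the maximal monomial of $\Psi_G$ — is a piecewise-monomial function of the $x_e$, and its region of linearity (in logarithmic coordinates) is governed by which edges are ``large.'' More precisely, I would first argue that if the edges of $G$ are ordered $x_{e_1} \geq x_{e_2} \geq \cdots \geq x_{e_{|E(G)|}}$, then the complement of a maximum spanning tree (with respect to these weights) consists of the ``smallest'' edges in a matroid-greedy sense, so that $\max_{T}\{\prod_{e \notin T} x_e\} = \prod_{e \notin T^*} x_e$ where $T^*$ is obtained greedily. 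This maximum monomial is constant in form on each chamber of the fan cut out by the hyperplanes $x_{e_i} = x_{e_j}$, and these chambers correspond bijectively to total orders refining a flag of subgraphs.

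The second step is to pass from total orders on edges to maximal bridgeless chains. Given a total order, define $\gamma_i$ to be generated in the natural filtration; one checks that the subgraphs that actually appear as ``supports of the large edges'' at a jump in the rank of the cycle matroid are exactly the bridgeless subgraphs, and that a maximal bridgeless chain $[\gamma]$ collects together all the total orders that induce it. The multiplicity with which a chain arises — and the combinatorial factor $\prod_{k}(|E(\gamma_k)| - |E(\gamma_{k-1})|)$ counting the orderings of the edges newly added at each step — comes from the fact that within a block $E(\gamma_k) \setminus E(\gamma_{k-1})$ the edges may be permuted freely without changing the bridgeless flag (since none of them is a bridge of $\gamma_k$ over $\gamma_{k-1}$, reordering them does not change the support jumps). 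I would then perform the iterated integral on a single cone $x_{e_1} \geq \cdots \geq x_{e_{|E(G)|}} = 1$: with the maximal monomial fixed, this integral factors into nested one-dimensional integrals $\int_1^\infty x^{-1-\omega} \diff x = 1/\omega$, producing the product $1/(\omega(\gamma_1)\cdots\omega(\gamma_{h_1(G)-1}))$, the exponents being exactly $\omega(\gamma_i) = |E(\gamma_i)| - 2h_1(\gamma_i)$ at each intermediate stage (the top term $\gamma_{h_1(G)} = G$ contributing a convergent integral only because $\omega(G) = 0$ is treated as the final normalization $x_{|E(G)|} = 1$, and $\gamma_0 = \emptyset$ contributes nothing). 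Summing over cones and collecting those that refine the same chain gives the claimed formula.

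The main obstacle I expect is the first step: carefully establishing that the maximum of $\Psi_G$ over monomials, as a function on the positive orthant, really is piecewise a single monomial whose chamber structure is governed by the bridgeless-subgraph flags, and in particular that the ``jumps'' align with $h_1(\gamma_i) = i$ and with bridgelessness rather than with some coarser or finer matroid stratification. This is essentially a statement about the greedy algorithm for maximum spanning trees interacting with the cycle matroid, and one has to rule out degenerate behaviour on the measure-zero walls $x_{e_i} = x_{e_j}$ (which do not affect the integral but do affect the bookkeeping of multiplicities). A secondary technical point is justifying the exchange of the integral with the finite sum over chambers and checking convergence stage by stage, using primitivity or at least $\omega(\gamma_i) > 0$ for $0 < i < h_1(G)$, which follows from bridgelessness together with the chain being strictly increasing. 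Once the chamber decomposition is in hand, the remaining computation is the routine nested-integral evaluation sketched above.
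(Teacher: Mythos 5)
The paper does not actually prove this proposition (it is quoted from Panzer), so I assess your argument on its own merits. Your overall strategy --- decompose the orthant into Hepp sectors $x_{e_1}\geq\cdots\geq x_{e_{|E(G)|}}$, observe that on each sector the maximal monomial is fixed and determined greedily, then regroup sectors by flags of subgraphs --- is the right kind of approach, but the two quantitative claims at its core are false. (Also a small slip: the maximizing tree is the \emph{minimum} spanning tree for the given ordering, so the monomial consists of the \emph{large} edges, each closing a cycle with smaller ones, not the smallest edges.) The serious problem is your evaluation of a single order cone. Setting $v_j=x_{e_j}/x_{e_{j+1}}$ and normalizing the smallest variable, the cone integral factorizes into $|E(G)|-1$ one-dimensional integrals and equals $\prod_{k=2}^{|E(G)|}\omega\bigl(\{e_k,\dots,e_{|E(G)|}\}\bigr)^{-1}$: one factor of $1/\omega$ for \emph{every} proper nonempty initial segment of the ordering (taken from the small end), and these segments are arbitrary subgraphs, generally neither bridgeless nor members of the chain. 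It does not collapse to the $h_1(G)-1$ factors $1/\omega(\gamma_i)$ you claim. Concretely, in $K_4$ with the order $x_{12}\geq x_{13}\geq x_{14}\geq x_{23}\geq x_{24}\geq x_{34}$ the sector contributes $\tfrac{1}{1\cdot 2\cdot 1\cdot 2\cdot 1}=\tfrac14$, whereas the induced bridgeless chain (triangle $\{23,24,34\}$, then $K_4$ minus the edge $12$, then $K_4$) has $\omega(\gamma_1)=\omega(\gamma_2)=1$, so your per-cone value would be $1$.

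Because the per-cone values are not constant on the set of orderings inducing a given chain, the numerator $\prod_k\bigl(|E(\gamma_k)|-|E(\gamma_{k-1})|\bigr)$ cannot arise as a multiplicity count, and your proposed count is internally inconsistent anyway: ``permuting freely within a block'' of $m$ new edges would give $m!$ orderings, not $m$, and the fibre over a chain is not a product of block-wise permutations in the first place (edges outside $\gamma_k$ may be interleaved arbitrarily low as long as they do not close a cycle too early, and the jump segments need not even be bridgeless --- one must pass to their cyclic cores). The actual content of the proposition is exactly the missing resummation: summing $\prod_{S}\omega(S)^{-1}$ over all orderings whose jump-segment cores give a fixed bridgeless flag, the factors attached to the non-jump segments must telescope to $\prod_k\bigl(|E(\gamma_k)|-|E(\gamma_{k-1})|\bigr)$, leaving only $\omega(\gamma_1),\dots,\omega(\gamma_{h_1(G)-1})$ in the denominator; equivalently one proves a one-step recursion over maximal bridgeless subgraphs with $h_1$ one smaller and inducts. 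That identity is the real work and is absent from your proposal. Two further points of hygiene: positivity of $\omega$ on the intermediate segments is exactly primitivity (absence of subdivergences), not a consequence of bridgelessness of the chain, and the scale-invariance/normalization step uses $\omega(G)=0$, i.e.\ $|E(G)|=2h_1(G)$, so the statement should be read for primitive log-divergent graphs.
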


From a graph theoretic standpoint, these chains resemble ear decompositions (see Section 5.3 in \cite{banddagain}). The key distinction is that the subgraphs in the chains may be disconnected. Any subgraph that has fewer connected components than its predecessor must add two edge-disjoint paths between these connected components.

\begin{example} Consider a decompletion of graph $P_{3,1} = K_5$.  There are, up to symmetries, two possible maximal chains of bridgeless subgraphs; $$ \raisebox{-0.4\height}{\includegraphics{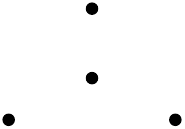}} \subsetneq  \raisebox{-0.4\height}{\includegraphics{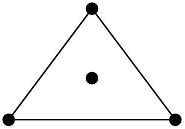}} \subsetneq \raisebox{-0.4\height}{\includegraphics{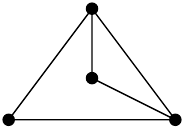}} \subsetneq \raisebox{-0.4\height}{\includegraphics{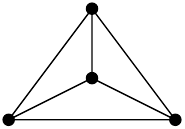}}$$ and 
$$ \raisebox{-0.4\height}{\includegraphics{hepp0}} \subsetneq  \raisebox{-0.4\height}{\includegraphics{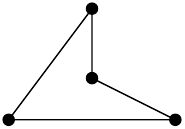}} \subsetneq \raisebox{-0.4\height}{\includegraphics{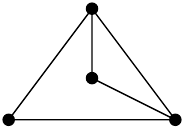}} \subsetneq \raisebox{-0.4\height}{\includegraphics{heppfull}} .$$ There are twelve chains of the first type, contributing $\frac{3\cdot 2 \cdot 1}{1 \cdot 1} = 6$ to the sum, and six chains of the second type, contributing $\frac{4 \cdot 1 \cdot 1}{2 \cdot 1} = 2$ to the sum. Using Proposition~\ref{heppcomp}, it follows that $\mathcal{H}(P_{3,1}) = 12 \cdot 6+6 \cdot 2 = 84$. Broadhurst proved in \cite{Broadhurst} that the period of this graph is $6\zeta(3) \approx 7.2$. \end{example}

As an upper bound, the Hepp bound does not appear to be incredibly precise. However, plotting the periods of primitive $4$-point $\phi^4$ graphs against these Hepp bounds for graphs up to eleven loops, there was an exceptionally strong correlation at every loop number. Panzer observed that the period of a graph $G$ was approximately $\frac{1.767}{4.493^{h_1(G)}}\mathcal{H}(G)^{4/3}$, with error approximately one percent. The data thus inspires the following conjecture.

\begin{conjecture}[\cite{ErikEmail}, Conjecture 3.2 in \cite{Schhepp}]\label{heppconjecture} For primitive $\phi^4$ graphs, the Hepp bounds are equal if and only if the periods are equal.   \end{conjecture}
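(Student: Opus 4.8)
The plan is to split the biconditional and route each implication through the four period-preserving operations of Section~\ref{introinvariance} --- decompletion, the Schnetz twist, planar duality, and the $2$-vertex cut product. The first milestone I would aim for is to upgrade ``the Hepp bound is preserved by these operations'' from conjecture to theorem, working from the combinatorial formula of Proposition~\ref{heppcomp} rather than from the integral, and treating the proofs of Chapter~\ref{Invariance} for the extended graph permanent as a template. For decompletion one needs the right-hand side of that formula to be independent of the deleted vertex of the $4$-regular completion; for the Schnetz twist one wants a bijection between the maximal bridgeless chains of the two graphs preserving each $\omega(\gamma_k)$ and each difference $|E(\gamma_k)|-|E(\gamma_{k-1})|$, built from the fact that the twist is supported on a $4$-vertex cut; for planar duality one can either feed the reciprocal relation $\Psi_G(x)\leftrightarrow\Psi_{G^*}(1/x)$ into a change of variables $x_e\mapsto 1/x_e$ directly in the integral, or use the matroid-duality correspondence of Chapter~\ref{chmatroid} on chains, since $\omega(\gamma)=|E(\gamma)|-2h_1(\gamma)$ is assembled from edge count and first Betti number, both of which transform controllably under duality; and for the $2$-vertex cut the chain sum should factor into a product, as Theorem~\ref{2vertexcut} does for the extended graph permanent. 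Related results already appear in~\cite{Schhepp}, so this milestone is plausibly reachable.

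Granting Hepp-invariance, the ``$\Leftarrow$'' direction (equal periods $\Rightarrow$ equal Hepp bounds) reduces to showing that any two primitive $4$-point $\phi^4$ graphs with equal period are connected by a sequence of the four operations. That is precisely the open statement recorded in the abstract (``these operations currently explain all known instances''), so realistically this direction can only be proved \emph{conditionally} on it. For the ``$\Rightarrow$'' direction (equal Hepp bounds $\Rightarrow$ equal periods) one can again try to reduce to the operations: since period invariance under all four operations is a genuine theorem, it would suffice to prove that equal Hepp bound forces equivalence under the operations. But this ``completeness of the Hepp bound'' is itself unproved, and, unlike the period-completeness statement, there is no strong structural reason to expect a single rational number to determine the operation-equivalence class.

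An alternative, operation-free attack on ``$\Rightarrow$'' would be to exhibit a common refinement from which both the period and $\mathcal{H}(G)$ can be read off: one regards $\mathcal{H}(G)$ as the leading tropical contribution to the parametric Feynman integral (the effect of replacing $\Psi_G$ by its maximal monomial) and tries to show the full period is reconstructible from enough of the tropical and motic data that also governs $\mathcal{H}$. The caveat is that the empirical relation $P(G)\approx \frac{1.767}{4.493^{h_1(G)}}\mathcal{H}(G)^{4/3}$ is only approximate, so no literal formula $P=f(\mathcal{H})$ can hold; the refinement would have to be strictly finer than the number $\mathcal{H}(G)$ yet still determined by it among primitive graphs, which is a demanding requirement and where I expect genuinely new ideas to be needed.

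The main obstacle, common to both directions, is therefore the completeness question: proving that equal period, respectively equal Hepp bound, implies equivalence under the four operations. This is a core open problem for $\phi^4$ periods, and I do not expect the Hepp bound by itself to settle it. What the plan can realistically deliver is (i) the Hepp-invariance statements for the four operations, converting part of the standing conjecture into a theorem, and (ii) the biconditional conditionally on operation-completeness, together with the observation that it is supported by the computational data through eleven loops --- compelling evidence, but not a proof.
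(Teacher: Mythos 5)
The statement you were asked to prove is not proved in the paper at all: Conjecture~\ref{heppconjecture} is stated as a conjecture of Panzer (\cite{ErikEmail}, \cite{Schhepp}), and the only support the paper offers is computational --- the plots of periods against Hepp bounds through eleven loops and the approximate scaling $P(G)\approx \frac{1.767}{4.493^{h_1(G)}}\mathcal{H}(G)^{4/3}$ --- together with two partial invariance results: the Hepp bound is preserved by planar duality, and the $2$-vertex join of Figure~\ref{2cut} satisfies $\frac{\mathcal{H}(G)}{2}=\frac{\mathcal{H}(G_1)}{2}\frac{\mathcal{H}(G_2)}{2}$, with invariance under completion/decompletion and the Schnetz twist explicitly left conjectural. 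So there is no paper proof to compare yours against, and your own text correctly recognizes that what you have is a research plan rather than a proof.

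Measured as a proof attempt, the gap is exactly where you say it is, and it is fatal to both directions: even granting your milestone (i), the ``$\Leftarrow$'' direction needs the statement that equal periods force equivalence under the four operations, which is precisely the open claim behind ``these operations currently explain all known instances,'' and the ``$\Rightarrow$'' direction needs an even less plausible completeness statement for a single rational invariant; neither is available, and conditioning on them does not establish the biconditional. Two smaller corrections to your milestone: duality and the $2$-vertex join are not open --- they are the two propositions recorded in Chapter~\ref{hepp} --- so the genuinely missing invariances are decompletion and the Schnetz twist; and for the $2$-vertex cut you should be careful that what holds is a product property with the normalization by $2$, so ``preservation'' there means compatibility of products of known periods, not equality of Hepp bounds per se (this is enough for the use you make of it, but it should be said). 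With those caveats, your assessment --- partial invariance results plus strong numerical evidence, no route to the full equivalence --- is an accurate description of the state of the conjecture as the paper presents it.
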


\noindent This is unique among invariants studied here. For both the $c_2$ invariant (Conjecture~\ref{c2doingitsthing} in Chapter~\ref{c2}) and the extended graph permanent (Conjecture~\ref{obviousconjecture} in Chapter~\ref{Extended}), it is conjectured only that if two graphs have equal period, then they have equal $c_2$ invariant and extended graph permanent. In both cases, the converses of these statements are known to be false.

Of course, Conjecture~\ref{heppconjecture} implies that the Hepp bound must be preserved by completion followed by decompletion, planar duality, and the  Schnetz twist. Further, joining two graphs at a pair of edges as in Figure~\ref{2cut} should result in some sort of product property. Two of these properties have been established.

\begin{proposition}[Panzer] If the graph $G$ is planar, and $G^*$ its planar dual, are both $4$-point graphs in $\phi^4$ theory, then $G$ and $G^*$ have equal Hepp bounds.  \end{proposition}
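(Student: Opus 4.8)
The plan is to exploit the combinatorial formula for the Hepp bound in Proposition~\ref{heppcomp}, which expresses $\mathcal{H}(G)$ purely in terms of maximal chains of bridgeless subgraphs together with the function $\omega(\gamma) = |E(\gamma)| - 2h_1(\gamma)$. Since planar duality is a bijection between the edges of $G$ and the edges of $G^*$, it suffices to show that this bijection carries the set $\mathcal{F}(G)$ of maximal bridgeless chains onto the set $\mathcal{F}(G^*)$ in a way that preserves, term by term, both the "gap" factors $|E(\gamma_k)| - |E(\gamma_{k-1})|$ and the $\omega$-values $\omega(\gamma_i)$ appearing in the denominator.

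First I would recall the matroid-theoretic picture: subgraphs of $G$ correspond to subsets of $E(G)$, and a subgraph $\gamma$ is bridgeless (coloop-free in the cycle matroid restricted to $E(\gamma)$) precisely when, dually, the complementary edge set $E(G)\setminus E(\gamma)$ in $G^*$ spans $G^*$ — i.e.\ is a spanning subgraph. More usefully, I would use the standard fact that planar duality sends the cycle matroid of $G$ to the cocycle (bond) matroid of $G^*$, so bridgeless subgraphs of $G$ biject with \emph{spanning connected} subgraphs of $G^*$ via complementation of edge sets. The second step is to reverse the chain: if $\emptyset \neq \gamma_1 \subsetneq \cdots \subsetneq \gamma_{h}= G$ is a maximal bridgeless chain in $G$ (here $h = h_1(G)$), then the complementary edge sets, read from the top down, $\emptyset = E(G^*)\setminus E(G) \subsetneq E(G^*)\setminus E(\gamma_{h-1}) \subsetneq \cdots \subsetneq E(G^*)\setminus E(\gamma_1) \subsetneq E(G^*)$, should form a maximal chain in $\mathcal{F}(G^*)$ once one checks that "spanning connected subgraph" of a planar graph containing at least one cycle is the right dual notion to "bridgeless", and that maximality is preserved (the loop numbers increase by exactly one at each step, using $h_1(\gamma_i)=i$ from the excerpt and the $|V|=|V^*|$, $|E|=|E^*|$ relations established for $\phi^4$ graphs in Section~\ref{introinvariance}).

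The third step is the arithmetic bookkeeping. Writing $\gamma_i^{*}$ for the subgraph of $G^*$ on edge set $E(G^*)\setminus E(\gamma_{h-i})$, I would compute $\omega(\gamma_i^{*}) = |E(\gamma_i^{*})| - 2h_1(\gamma_i^{*})$ and show it equals $\omega(\gamma_{h-i})$. For this I use $|E(\gamma_i^{*})| = |E(G)| - |E(\gamma_{h-i})|$, the dual Euler relation to convert $h_1(\gamma_i^{*})$ into the number of independent cycles of the complement, and the identity $|E(G)| = 2h_1(G)$ valid for primitive $\phi^4$ graphs; a short manipulation should collapse everything to $\omega(\gamma_{h-i})$. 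Likewise the consecutive-gap factor $|E(\gamma_k^{*})| - |E(\gamma_{k-1}^{*})| = |E(\gamma_{h-k+1})| - |E(\gamma_{h-k})|$, so the product over $k$ is merely reindexed. Hence each chain of $G$ contributes the same summand as its dual chain of $G^*$, and summing gives $\mathcal{H}(G) = \mathcal{H}(G^*)$.

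The main obstacle is the second step: verifying cleanly that complementation-plus-reversal is a genuine bijection $\mathcal{F}(G) \to \mathcal{F}(G^*)$ — in particular that "bridgeless" dualizes exactly to the condition cutting out the chains of $\mathcal{F}(G^*)$, and that \emph{maximality} (equivalently, that each inclusion step raises $h_1$ by precisely one, with no room to insert an intermediate subgraph) is respected under the duality. This is where the matroid language from Chapter~\ref{chmatroid} does the real work; once the bijection is in hand, the numerics of Step~3 are routine given the edge/vertex count identities already recorded for $4$-point $\phi^4$ graphs.
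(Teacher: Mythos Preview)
The paper does not prove this proposition; it is stated as a result of Panzer without argument. So there is no ``paper's proof'' to compare against, but your proposed route has a genuine gap in Step~2.

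Your central claim---that edge-complementation carries bridgeless subgraphs of $G$ to bridgeless subgraphs of $G^*$---is false. The matroid duality you invoke actually says: a subset $S\subseteq E(G)$ is coloop-free in $M(G)|_S$ (i.e.\ $\gamma=G[S]$ is bridgeless) if and only if $E\setminus S$ is a \emph{flat} of $M(G^*)$. Flats and bridgeless subgraphs are different objects, so complementation-plus-reversal does not send $\mathcal{F}(G)$ into $\mathcal{F}(G^*)$. Concretely, take $G=K_4$ (planar, self-dual, a $4$-point $\phi^4$ graph with $h_1=3$) and let $\gamma_1$ be a $4$-cycle, say on edges $\{12,23,34,41\}$. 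This is bridgeless with $h_1(\gamma_1)=1$ and extends to a maximal chain $\gamma_1\subsetneq\gamma_1\cup\{13\}\subsetneq K_4$. But $E(\gamma_1)$ is not a flat of $M(K_4)$ (both diagonals $13,24$ lie in its closure), so its complement in $K_4^*$ is \emph{not} bridgeless---it corresponds to a pair of disjoint edges, each a bridge. Thus the term-by-term matching you outline in Step~3 cannot be set up. A clean argument avoids Proposition~\ref{heppcomp} altogether: the spanning-tree complement bijection for planar duals gives $\max_T\prod_{e\notin T}x_e=\bigl(\prod_e x_e\bigr)\max_{T^*}\prod_{e\notin T^*}x_e^{-1}$, and the substitution $x_e\mapsto 1/x_e$ (with $x_n=1$ fixed) in the defining integral produces exactly $\mathcal{H}(G^*)$, the Jacobian $\prod_{i<n}x_i^{-2}$ cancelling against $\bigl(\prod_e x_e\bigr)^{-2}$. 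If you want to stay with the combinatorial formula, you would need to pass instead to the finer sum over edge orderings from which Proposition~\ref{heppcomp} is derived; there the identity $\omega_{G^*}(E\setminus S)=\omega_G(S)$, valid precisely because $|E|=2h_1(G)$ in $\phi^4$, makes duality transparent under order reversal.
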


\begin{proposition}[\cite{ErikEmail}] Suppose the graph $G$ is made from joining graphs $G_1$ and $G_2$, as in Figure~\ref{2cut}. If these graphs are all $4$-point $\phi^4$, $$ \frac{\mathcal{H}(G)}{2} = \frac{\mathcal{H}(G_1)}{2} \frac{\mathcal{H}(G_2)}{2} . $$ \end{proposition}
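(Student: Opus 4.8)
The plan is to use the combinatorial formula for the Hepp bound from Proposition~\ref{heppcomp}, which expresses $\mathcal{H}(G)$ as a sum over maximal chains of bridgeless subgraphs, and to show that such chains for $G$ decompose into pairs of chains for $G_1$ and $G_2$ in a way that respects the weights. First I would set up notation for the $2$-vertex cut: $G$ is obtained from the $4$-point $\phi^4$ graphs $G_1$ and $G_2$ by identifying a pair of edges (equivalently, by the completed picture of Figure~\ref{2cutcompleted}, gluing along a triangle and deleting its edges). The key structural fact is that a subgraph $\gamma \subseteq G$ is bridgeless if and only if its restrictions $\gamma_1 = \gamma \cap G_1$ and $\gamma_2 = \gamma \cap G_2$ are bridgeless in $G_1$ and $G_2$ respectively, together with a compatibility condition at the cut (either both sides ``use'' the cut or neither does). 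Crucially, because $G$, $G_1$, $G_2$ are all primitive $\phi^4$, the quantity $\omega(\gamma) = |E(\gamma)| - 2h_1(\gamma)$ should split additively, $\omega(\gamma) = \omega(\gamma_1) + \omega(\gamma_2) - (\text{correction})$, and the edge increments $|E(\gamma_k)| - |E(\gamma_{k-1})|$ along a chain for $G$ should be exactly the increments for the induced chains on $G_1$ and $G_2$.

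The main steps, in order: (1) Prove the bijective correspondence between maximal bridgeless chains of $G$ and interleavings of a maximal bridgeless chain of $G_1$ with one of $G_2$ — i.e., a chain $\emptyset \subsetneq \gamma_1 \subsetneq \cdots \subsetneq \gamma_{h_1(G)} = G$ projects to chains on each side, and conversely any pair of chains can be interleaved (in $\binom{h_1(G_1)+h_1(G_2)}{h_1(G_1)}$-many ways, roughly, accounting for the shared cut structure). (2) Check that $h_1(G) = h_1(G_1) + h_1(G_2) - 1$ (or the appropriate constant, using that the gluing merges one independent cycle), so the lengths match up. (3) Verify the weight factorization term by term: for a chain of $G$ arising from chains $[\gamma^{(1)}]$, $[\gamma^{(2)}]$, show that $$\frac{1}{\omega(\gamma_1)\cdots\omega(\gamma_{h_1(G)-1})}\prod_{k}(|E(\gamma_k)|-|E(\gamma_{k-1})|)$$ equals the product of the corresponding expressions for $[\gamma^{(1)}]$ and $[\gamma^{(2)}]$, up to the combinatorial factor counting interleavings, which is precisely what is needed to absorb the ``$/2$'' normalizations. (4) Sum over all chains and collect: the sum over interleavings of the product of weights, divided appropriately, yields $\frac{\mathcal{H}(G_1)}{2}\cdot\frac{\mathcal{H}(G_2)}{2}$ on one side and $\frac{\mathcal{H}(G)}{2}$ on the other.

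The hard part will be step (3) combined with the bookkeeping in step (1): getting the interleaving count to match exactly the weights $\omega(\gamma_i)$. The $\omega$ values at intermediate stages of a chain in $G$ depend on \emph{how} the two sub-chains are interleaved (since $\omega$ is roughly additive but one must track whether the cut is ``active''), and one has to show that when you sum $\prod_k (\text{edge increment}) / \prod_i \omega(\gamma_i)$ over all interleavings, the result telescopes into a clean product. This is the classical shuffle-product identity $$\sum_{\text{shuffles}} \frac{1}{a_1(a_1+a_2)\cdots} = \left(\prod \frac{1}{a_i}\right)\left(\prod\frac{1}{b_j}\right)\cdot(\text{binomial})$$ in disguise, and the normalization by $2$ (the Hepp bound of the ``banana'' $P_{1,1}$) is exactly the value that makes the edge-case of the recursion consistent. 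I would therefore isolate this as a self-contained combinatorial lemma about shuffles of weighted chains, prove it by induction on $h_1(G_1)+h_1(G_2)$, and then feed it into the Hepp bound formula. A sanity check at the end: verify the identity on a small example, e.g.\ $G_1 = G_2 = K_4$ (so $\mathcal{H}(K_4)$ is known and $G$ is an explicit six-loop graph), to confirm the constant is $2$ and not some other power.
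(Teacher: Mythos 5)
First, a point of comparison: the thesis does not prove this proposition at all — it is quoted from Panzer (\cite{ErikEmail}; see also \cite{Schhepp}), and the surrounding text explicitly leaves the remaining Hepp-bound properties conjectural. So there is no proof in the paper to measure your argument against, and your proposal has to stand on its own. As written it is a strategy outline whose decisive step is missing: everything is delegated to the ``self-contained combinatorial lemma about shuffles of weighted chains'' in your steps (1) and (3), which is neither stated precisely nor proved, and the structural facts you assume on the way to it are not correct as stated.

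Concretely: bridgelessness does not restrict across the cut. In the construction of Figure~\ref{2cut}, each $G_i$ contains an edge $ab$ between the two cut vertices that is \emph{not} an edge of $G$, and an edge of $\gamma\subseteq G$ on the $G_1$-side can lie on no cycle of $\gamma\cap(G_1\text{-side})$ yet be a non-bridge of $\gamma$ because the cycle through it crosses to the $G_2$-side. The correct correspondence must augment the side-restrictions by this virtual edge, and whether (and at which step of the chain) it is added depends on whether the \emph{other} side currently joins $a$ to $b$ — i.e.\ on the interleaving itself — so the map from chains of $G$ to pairs of chains of $G_1,G_2$ is not a clean shuffle bijection. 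The lengths also do not match a shuffle: $h_1(G)=h_1(G_1)+h_1(G_2)-1$, one fewer step than a shuffle of the two chains, and this off-by-one is exactly where the normalization $2=\mathcal{H}(P_{1,1})$ has to come from. Finally, $\omega$ is not additive across the cut: writing $\gamma_1',\gamma_2'$ for the side-restrictions, one has $h_1(\gamma)=h_1(\gamma_1')+h_1(\gamma_2')+1$ when both sides join $a$ to $b$, hence $\omega(\gamma)=\omega(\gamma_1')+\omega(\gamma_2')-2$ there, while adding the virtual edge to a side shifts its $\omega$ by $1-2c$ according to whether that side already connects the cut vertices; so the denominators $\omega(\gamma_i)=|E(\gamma_i)|-2i$ are not the partial sums to which the ``classical shuffle-product identity'' applies, and the identity you invoke does not hold verbatim. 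Your step (2) (the loop-number relation) and the identification of $2$ as $\mathcal{H}(P_{1,1})$ are correct, and the $K_4$ sanity check is sensible, but it tests the statement, not the proof; until the interleaving lemma is formulated with the virtual-edge bookkeeping and the length mismatch handled, and then proved (your proposed induction is plausible but unexecuted), this remains a plan rather than a proof.
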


\noindent All other invariance properties remain conjectural.

Conjecture~\ref{heppconjecture}, and additionally the approximation given prior, suggest that there is a surprising usefulness to the Hepp bound. The fact that the upper bound lacks precision but scales fantastically to a very precise estimate indicates the value of future research. In particular, the Hepp bound is a considerably more easily computed value than the period itself; the Hepp bound can be computed in exponential time, while much of the difficulty in computing the period arises from the absence of an algorithm. In this regard, a number of graphs have known period and Hepp bounds equal to graphs with periods that have not been resolved. It follows immediately from Conjecture~\ref{heppconjecture} that the periods of $P_{8,31}$ and $P_{8,35}$ are conjectured to be equal, and that periods of $P_{8,30}$ and $P_{8,36}$ are conjectured to be equal.

\chapter{The $c_2$ Invariant}
\label{c2}

\section{The $c_2$ invariant and a conjectured connection to the Feynman period}\label{c2introduction}

The $c_2$ was first introduced in \cite{qftoverfq} and further developed in \cite{BrS}. Recall from the introduction that the Kirchhoff polynomial for a graph $G$ is  $$ \Psi_G = \sum_{T \in T_G} \left( \prod_{e \notin T} x_e \right) \in \mathbb{F}[x_1, ... , x_{|E(G)|}],$$ where $x_e$ is the Schwinger parameter for each edge $e \in E(G)$, $T_G$ is the set of spanning trees of $G$, and $\mathbb{F}$ can be any field. We define the \emph{point count} of a function $f$ over finite field $\mathbb{F}_p$, denoted $[f]_p$, to be the number of roots of $f$ in $\mathbb{F}_p$. Let $\mathcal{P}$ be the increasing sequence of all prime integers. Let $G$ be a graph such that $|V(G)| \geq 3$. It can be shown that $[\Psi_G]_p$ is divisible by $p^2$ (see \cite{qftoverfq}). We define the \emph{$c_2$ invariant} for $G$ at prime $p$ as $c_2(G)_p \equiv \frac{[\Psi_G]_p}{p^2} \pmod{p}$, and the $c_2$ invariant of $G$ as the sequence of residues $c_2(G) = \left( c_2(G)_p \right)_{p \in \mathcal{P}}$.

\begin{example} Consider the graph $K_3$. As any two edges form a spanning tree, $\Psi_{K_3} = x_1 + x_2 + x_3$. For prime $p$, any values $x_1, x_2 \in \mathbb{F}_p$ will force a unique value for $x_3$ as a solution to $\Psi_{K_3} =0$ in $\mathbb{F}_p$. Hence, $[\Psi_{K_3}]_p = p^2$, and the $c_2$ invariant for $K_3$ is equal to $1$ for all primes. \end{example}

It is known that the Kirchhoff polynomial for a graph $G$ can also be represented as the determinant of a matrix. Applying an arbitrary orientation to the edges of $G$, let $M_G^* = [m_{i,j}]$ be a signed incidence matrix of $G$, where rows are indexed by vertices and columns are indexed by edges; $$ m_{v,e} =  \begin{cases} 1, & \text{ if } h(e)=v  \\ -1, & \text{ if } t(e)=v \\ 0, & \text{ otherwise}  \end{cases} ,$$ where $h(e)$ is the head of edge $e$ and $t(e)$ is the tail.  Some authors use opposite signs for entries, but this is ultimately arbitrary in the construction. We create the matrix $M_G$ from $M^*_G$ by deleting a row associated to an arbitrary vertex; call this the \emph{reduced signed incidence matrix}. Let $A$ be the diagonal matrix with entries $x_e$ for $e \in E(G)$, edges ordered to align with $M_G$. Define the \emph{modified Laplacian matrix} to be $$K_G = \left[ \begin{array}{c|c} A & M_G^T \\ \hline -M_G & {\bf 0} \end{array} \right].$$ 

\begin{proposition}\label{poly=matrix} With notation as defined prior, $\det (K_G) = \Psi_G$, regardless of choice of orientation or row deleted in constructing $M_G$. \end{proposition}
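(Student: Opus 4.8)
The plan is to expand $\det(K_G)$ via cofactors and recognize the resulting sum as the Kirchhoff polynomial, using the matrix–tree theorem as the bridge. First I would use the block structure
\[
K_G = \left[ \begin{array}{c|c} A & M_G^T \\ \hline -M_G & {\bf 0} \end{array} \right],
\]
where $A$ is diagonal with entries $x_e$, and apply the formula for the determinant of a matrix in terms of the matrix entries. Since the bottom-right block is zero and $M_G$ has one row per non-deleted vertex, a nonzero term in the Leibniz expansion must pair each of the $|V(G)|-1$ rows of $-M_G$ with a distinct column from the edge-indexed block, and pair the remaining edge-columns with the diagonal $x_e$ entries of $A$. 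Thus each nonzero term selects a subset $S$ of $|V(G)|-1$ edges to "match" against the vertex-rows (contributing a minor of $M_G$ on those columns, times a sign), with the complementary edges contributing $\prod_{e \notin S} x_e$. This already produces a sum over $(|V(G)|-1)$-subsets $S$ of $E(G)$ with coefficient $\pm(\text{square of a maximal minor of }M_G)$ times $\prod_{e\notin S}x_e$.

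Next I would identify the coefficient. A standard fact (essentially the Cauchy–Binet/matrix–tree computation) is that a maximal square submatrix of the reduced signed incidence matrix $M_G$, obtained by selecting $|V(G)|-1$ edge-columns, has determinant $\pm 1$ if those edges form a spanning tree of $G$ and $0$ otherwise. Combining with the bookkeeping of signs from the previous step — here I would either track the permutation signs directly or, more cleanly, invoke a Schur-complement/Laplace-expansion identity — the coefficient of $\prod_{e\notin S}x_e$ collapses to $1$ exactly when $S$ is a spanning tree and $0$ otherwise. Summing over all spanning trees then gives $\det(K_G) = \sum_{T\in T_G}\prod_{e\notin T}x_e = \Psi_G$, as claimed.

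Finally, for independence of the choices: the entries of $M^*_G$ only change sign when an edge orientation is reversed, so reversing orientations multiplies a column of $M_G$ and the corresponding row by $-1$; this leaves every maximal minor's absolute value (hence its square) unchanged, and one checks the overall sign of $\det(K_G)$ is likewise unaffected. Deleting a different row of $M^*_G$ amounts to choosing a different root vertex, which again changes maximal minors only by a sign (this is the usual observation underpinning the matrix–tree theorem), so the point-count polynomial $\Psi_G$ is the same.

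\textbf{Main obstacle.} The delicate part is the sign bookkeeping: showing that all the $\pm$ signs from the Leibniz/Laplace expansion of the block matrix, together with the $(\det)^2$ of the minors, conspire to give $+1$ uniformly across spanning trees. I expect to handle this most smoothly by reducing to the Schur-complement identity $\det(K_G) = \det(A)\det(M_G A^{-1} M_G^T)$ on the open set where all $x_e \neq 0$ (so $\det(K_G) = \left(\prod_e x_e\right)\det\!\left(M_G\,\mathrm{diag}(x_e^{-1})\,M_G^T\right)$), recognizing the latter as a weighted Laplacian minor, and then applying the weighted matrix–tree theorem; the general identity of polynomials follows since both sides are polynomials agreeing on a Zariski-dense set.
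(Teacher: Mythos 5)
Your proposal is correct and follows essentially the same route as the paper: expand $\det(K_G)$ by Leibniz/cofactors using the diagonal block $A$, reduce the coefficient of $\prod_{e\notin S}x_e$ to the square of a maximal minor of $M_G$, and invoke Kirchhoff's unimodularity lemma (minor $=\pm1$ iff $S$ is a spanning tree, $0$ otherwise). The sign bookkeeping you flag as the main obstacle dissolves immediately in this setup, since the remaining submatrix is block anti-diagonal, $\det\left[\begin{smallmatrix} \mathbf{0} & M_G^T(S) \\ -M_G(S) & \mathbf{0}\end{smallmatrix}\right] = \det(M_G(S))^2 \geq 0$, which is exactly how the paper closes the argument; your Schur-complement fallback $\det(K_G)=\det(A)\det(M_G A^{-1} M_G^T)$ plus the weighted matrix--tree theorem and Zariski density is a valid, if heavier, alternative.
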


The following lemma was first presented by Kirchhoff in \cite{ueber}. The proof of Proposition~\ref{poly=matrix} that follows is from \cite{Brbig}. 

\begin{lemma}[\cite{ueber}]\label{kirkslemma} Let $G$ be a graph and $I \subseteq E(G)$ such that $|I| = h_G$, the loop number of $G$. Let $M_G(I)$ denote the square matrix obtained from the previously defined reduced signed incidence matrix $M_G$ by deleting the columns of $M_G$ indexed by elements of $E(G)-I$. Then, $$ \det (M_G(I)) = \begin{cases} \pm 1 \text{ if } I \text{ is a spanning tree of } G \\ 0 \text{ otherwise}   \end{cases}  .$$  \end{lemma}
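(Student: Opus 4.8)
The plan is to split the statement along its own case distinction and treat each case with the combinatorial structure of the oriented incidence matrix. First I would record the dimension bookkeeping: $M_G$ has $|V(G)|-1$ rows, and $M_G(I)$ retains the $|I| = h_G$ columns indexed by $I$, so $M_G(I)$ is genuinely square exactly when $h_G = |V(G)|-1$, equivalently $|E(G)| = 2|V(G)|-2$ — the $\phi^4$ edge count in force here. This squareness is not merely cosmetic: it forces $|I| = |V(G)|-1$, and an edge set with $|V(G)|-1$ edges on $|V(G)|$ vertices is either acyclic — hence, having exactly one component, a spanning tree — or else contains a cycle. Thus the dichotomy ``$I$ is a spanning tree / otherwise'' coincides with ``$I$ is acyclic / $I$ contains a cycle,'' and I can handle the two cases by those properties.

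For the ``otherwise'' case, suppose $I$ contains a cycle $C$. Fixing a cyclic traversal of $C$ and assigning to each edge $e \in C$ the coefficient $+1$ or $-1$ according to whether its reference orientation agrees with the traversal, the corresponding signed sum of columns of the full signed incidence matrix $M_G^*$ vanishes: each vertex of $C$ receives $+1$ from its outgoing cycle edge and $-1$ from its incoming one, and all other entries are zero. Restricting to the rows kept in the reduced matrix $M_G$ leaves this relation intact, so the columns of $M_G(I)$ indexed by $C$ are linearly dependent and $\det(M_G(I)) = 0$.

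For the spanning-tree case I would prove the slightly sharper claim that the reduced signed incidence matrix of any tree has determinant $\pm 1$, and then note that when $I$ is a spanning tree, $M_G(I)$ is exactly the reduced incidence matrix of the tree $(V(G), I)$, since every non-tree edge lies in $E(G)-I$ and has been deleted. I would induct on $|V(G)|$. The base case is the $0 \times 0$ matrix coming from a single vertex and empty tree, with determinant $1$. For the inductive step, a tree on at least two vertices has at least two leaves, so I may choose a leaf $\ell$ distinct from the vertex $v_0$ whose row was removed in forming $M_G$. In $M_G(I)$ the row of $\ell$ has a single nonzero entry $\pm 1$, in the column of the unique tree edge $e$ at $\ell$; Laplace expansion along that row gives $\det(M_G(I)) = \pm 1 \cdot \det(M')$, where $M'$ deletes row $\ell$ and column $e$ and is precisely the reduced incidence matrix of the smaller tree $(V(G)-\ell,\, I-e)$ with the same deleted vertex $v_0$. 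The induction hypothesis then gives $\det(M') = \pm 1$, so $\det(M_G(I)) = \pm 1$.

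Each step is elementary, so the work lies in the bookkeeping. The part needing the most care is the orientation argument in the cycle case — verifying that the $\pm 1$ coefficients read off from the traversal genuinely cancel at every vertex despite the arbitrary reference orientations on individual edges — together with confirming in the inductive step that $M'$ really is the reduced incidence matrix of a genuine smaller tree, so that the hypothesis applies verbatim rather than after relabelling.
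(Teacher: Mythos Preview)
The paper does not supply its own proof of this lemma; it is quoted with a citation to Kirchhoff and then used as a black box in the proof of the next proposition. Your argument is the standard one and is correct: the signed-sum-around-a-cycle dependence kills the determinant whenever $I$ contains a cycle, and leaf-stripping induction shows that the reduced incidence matrix of any tree has determinant $\pm 1$. Both steps are carried out carefully, including the check that the minor $M'$ obtained after expanding along a leaf row is exactly the reduced incidence matrix of the smaller tree with the same deleted vertex.

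One comment on your opening bookkeeping. You correctly observe that, as literally stated, the hypothesis $|I| = h_G$ forces $|E(G)| = 2|V(G)|-2$ if $M_G(I)$ is to be square, and you read this as the ambient $\phi^4$ constraint. In fact this appears to be a slip in the paper: the lemma is invoked immediately afterward to prove the Kirchhoff-polynomial identity for arbitrary graphs, with no edge-count restriction, and the intended hypothesis is $|I| = |V(G)|-1$, the size of a spanning tree. With that reading your proof goes through verbatim for any connected $G$, and the $\phi^4$ restriction you flagged dissolves rather than being a genuine hypothesis.
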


We further require, for the proof of Proposition~\ref{poly=matrix}, the Leibniz formula for the determinant of an $n \times n$ matrix $A = [a_{i,j}]$, $$\det (A) = \sum_{\sigma \in S_n} \text{sgn} (\sigma) \prod_{i = 1}^n a_{i,\sigma(i)},$$ where the sum is over all elements of the symmetric group $S_n$, and $\text{sgn}(\sigma)$ is the signature of $\sigma$.

\begin{proof}[Proof of Proposition~\ref{poly=matrix}] From the shape of the modified Laplacian matrix $K_G$, and using the Leibniz formula for the determinant, $$ \det (K_G) = \sum_{I \subseteq E(G)} \prod_{i \notin I} x_i \det \left[ \begin{array}{cc} \bf{0} & M_G^T(I) \\ -M_G(I) & \bf{0} \end{array} \right] = \sum_{\substack{I \subseteq E(G) \\ |I|= h_G}} \prod_{i \notin I} x_i \det(M_G(I))^2.$$ The restriction in the summation comes from the fact that if $|I| \neq h_G$, the columns of the matrix are not independent, and therefore the determinant vanishes. From Lemma~\ref{kirkslemma}, $\det(M_G(I))^2 =1$ if $I$ is a spanning tree and zero otherwise. Therefore, $\det (K_G) = \sum_{T \in T_G} \left( \prod_{e \notin T} x_e \right)  = \Psi_G.$  \end{proof}

\begin{example} One possible reduced signed incidence matrix for the graph $K_3$ is $$M_{K_3} = \left[ \begin{array}{ccc} 1 & 1 & 0 \\ -1 & 0 & 1 \end{array} \right] .$$ A modified Laplacian for this graph is therefore $$ K_{K_3} = \left[ \begin{array}{ccc|cc} x_1 & 0 & 0 & 1 & -1 \\ 0 & x_2 & 0 & 1 & 0 \\  0 & 0 & x_3 & 0 & 1 \\ \hline -1 & -1 & 0 & 0 & 0 \\ 1 & 0 & -1 & 0 & 0 \end{array}  \right]  .$$ The determinant of this matrix is $x_1 + x_2 + x_3$, which agrees with the Kirchhoff polynomial. \end{example}

As stated in the introduction, it is believed that the $c_2$ invariant is preserved by all operations known to preserve the period. Conjecture~\ref{c2doingitsthing} then follows.

\begin{conjecture}[Conjecture 5 in \cite{BrS}] \label{c2doingitsthing}If two $4$-point $\phi^4$ graphs have equal periods, then they have equal $c_2$ invariants. \end{conjecture}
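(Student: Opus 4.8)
The plan is to confront the full implication by factoring it through two independent statements: a \emph{local} statement that the $c_2$ invariant is unchanged by each period-preserving operation, and a \emph{global} completeness statement that every coincidence of periods is realized by such operations. Formally, I would first fix the meta-conjecture that any two primitive $4$-point $\phi^4$ graphs with equal periods are connected by a finite sequence of the four known period-preserving moves — completion followed by decompletion, planar duality, the Schnetz twist, and the product move on a $2$-vertex cut as in Figure~\ref{2cut}, together with the inverses of these moves. Granting completeness, the conjecture reduces entirely to the local statement, and the implication follows by transporting the $c_2$ invariant along any connecting sequence.

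First I would establish the local statement for each operation, using the matrix description $\det(K_G) = \Psi_G$ from Proposition~\ref{poly=matrix} and the combinatorial formulation of Section~\ref{graphicc2} to track how the point count $[\Psi_G]_p$ transforms. For decompletion the count is insensitive to the choice of deleted vertex after division by $p^2$; for planar duality one matches $\Psi_{G^*}$ to $\Psi_G$ up to a monomial change of variables arising from the bijection between spanning trees of $G$ and of $G^*$; for the Schnetz twist one exhibits a linear change of coordinates on the Schwinger parameters across the $4$-vertex cut that preserves $[\Psi_G]_p$ to the relevant order; and for the $2$-cut the product structure of spanning trees yields a product (up to sign) of point counts, matching the period's product rule. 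Each is a finite, checkable computation in $\mathbb{F}_p$, and several are already available in the literature cited here.

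The hard part — and the genuine content of the conjecture — is the global completeness statement, and I expect this to be the decisive obstacle. There is at present no structural classification of period coincidences: the four operations are only known to \emph{explain all currently observed} equalities, not to be exhaustive, and proving exhaustiveness appears to require a structural reconstruction of the period from the graph that is unavailable and is plausibly as hard as the conjecture itself. I would therefore regard the reduction above as conditional and treat completeness as the true target for future work, rather than claiming an unconditional proof.

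As a conceptually independent (but currently even less tractable) route, one could attempt the implication directly through arithmetic geometry: interpret $c_2(G)_p$, via the Grothendieck--Lefschetz trace formula and the counting results underlying its definition, as a Frobenius trace on a distinguished graded piece of the cohomology of the graph hypersurface $\{\Psi_G = 0\}$, and interpret the period as the de Rham--Betti period of the same piece. Both would then be realizations of a common motive attached to $G$, so that equal periods should force agreement of the $\ell$-adic realizations, hence equal $c_2$. The obstacle here is that equality of a single period does not force equality of motives without invoking Grothendieck's period conjecture, and even then the passage between Hodge and \'{e}tale realizations is delicate; this route is best viewed as the heuristic reason to believe the conjecture rather than a feasible proof strategy.
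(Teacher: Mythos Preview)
The statement is a \emph{conjecture}, and the paper does not prove it. So there is no ``paper's own proof'' to compare against; the right question is whether your proposal constitutes progress.

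Your decomposition into a local part (invariance of $c_2$ under each period-preserving move) and a global completeness part is standard and reasonable. But you misjudge the status of the local part. You write that each local invariance is ``a finite, checkable computation in $\mathbb{F}_p$, and several are already available in the literature.'' This is not so: the paper states explicitly, immediately after the conjecture, that ``It remains an open problem that the $c_2$ invariant is preserved by completion followed by decompletion and the Schnetz twist.'' Only planar duality (Theorem~39 of \cite{Dor}) and the $2$-vertex cut case (Proposition~\ref{2cutsinc2}) are established. Your sketches for decompletion (``the count is insensitive to the choice of deleted vertex after division by $p^2$'') and for the Schnetz twist (``a linear change of coordinates on the Schwinger parameters across the $4$-vertex cut'') are not arguments---they are restatements of what one would like to prove, and no one has found such a change of variables. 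Indeed, Section~\ref{graphicc2} of the paper is an attempt to build exactly the structural tools you invoke, and it explicitly reports that ``inclusion-exclusion produced no useful results'' toward decompletion invariance.

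So your reduction does not yield a conditional proof contingent only on the global completeness meta-conjecture: both halves are open, and the local half for decompletion and Schnetz is the more actively pursued and more concrete open problem. Your second route through motives and the period conjecture is, as you yourself note, a heuristic rather than a strategy. The honest conclusion is that the conjecture remains open on all fronts.
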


It remains an open problem that the $c_2$ invariant is preserved by completion followed by decompletion and the Schnetz twist. Duality is established in \cite{Dor}.

\begin{theorem}[Theorem 39 in \cite{Dor}] If planar graph $G$ and its dual $G^*$ are both $4$-point $\phi^4$ graphs, then $c_2(G) = c_2(G^*)$. \end{theorem}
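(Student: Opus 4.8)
The plan is to reduce the statement to a congruence of point counts and then use the classical bijection between spanning trees of a plane graph and cospanning trees of its dual. Since $c_2(G)_p \equiv [\Psi_G]_p/p^2 \pmod p$, it suffices to prove that $[\Psi_G]_p \equiv [\Psi_{G^*}]_p \pmod{p^3}$ for every prime $p$. Identifying $E(G)$ with $E(G^*)$ in the standard way, the map $T \mapsto (E\setminus T)^*$ is a bijection between spanning trees of $G$ and spanning trees of $G^*$, so
$$ \Psi_{G^*}(x) \;=\; \sum_{T\in T_G}\prod_{e\in T}x_e \;=\; \Big(\prod_{e\in E}x_e\Big)\,\Psi_G\big((x_e^{-1})_{e\in E}\big); $$
in matroid language this is $M(G^*) = M(G)^*$ together with the Cremona relation $\Psi_{M^*}(x) = (\prod_e x_e)\,\Psi_M(x^{-1})$, valid as rational functions. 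For $4$-point $\phi^4$ graphs $|E(G)| = 2h_G$, so $M(G)$ and $M(G)^*$ have the same rank $|E(G)|/2$; I expect this dimension balance to be exactly what makes duality hold here, since it fails for general plane graphs.

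The first real step is to stratify $\mathbb{F}_p^{|E|}$ by the support $W$ of a point, writing $N^\times(H)$ for the number of zeros of $\Psi_H$ with all coordinates nonzero. On the open torus the Cremona involution $x\mapsto x^{-1}$ is a bijection and the factor $\prod_e x_e$ is a unit, so it carries $\{\Psi_G = 0\}$ onto $\{\Psi_{G^*}=0\}$ there: the torus contributions to the two counts are equal on the nose. For a stratum with support $W \subsetneq E$, a point lies on $\{\Psi_G=0\}$ precisely when $E\setminus W$ contains a cycle of $G$ (then the whole stratum does, contributing $(p-1)^{|W|}$) or $E\setminus W$ is acyclic and $x|_W$ is a torus zero of the contraction $\Psi_{G/(E\setminus W)}$; by the Cremona relation in the variables $W$ and the matroid identity $(M(G)/(E\setminus W))^* = M(G^*)|_W$, this last count equals $N^\times(G^*|_W)$, where $G^*|_W$ is the spanning subgraph of $G^*$ with edge set $W$. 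Since "$E\setminus W$ acyclic in $G$" is equivalent to "$G^*|_W$ connected", this gives $[\Psi_G]_p = \sum_{W:\,G^*|_W\text{ connected}} N^\times(G^*|_W) + \sum_{W:\,G^*|_W\text{ disconnected}}(p-1)^{|W|}$, and symmetrically $[\Psi_{G^*}]_p$ is the same expression with $G$ in place of $G^*$. The theorem is thus equivalent to the assertion that the explicit functional $\Phi(H) := \sum_{W:\,H|_W\text{ conn}} N^\times(H|_W) + \sum_{W:\,H|_W\text{ disc}}(p-1)^{|W|}$ is unchanged modulo $p^3$ under planar duality. That target is more combinatorial: the second sum is of Tutte-polynomial type (counting disconnected spanning subgraphs by number of edges, with $T_{G^*}(x,y)=T_G(y,x)$), while the first recurses into strictly smaller subgraphs, so I would attack it by induction on $|E(H)|$, using the structural $c_2$-reduction lemmas of Brown and Schnetz~\cite{BrS} to dispose of the degenerate minors at each step.

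The hard part is closing this induction — showing the accumulated boundary discrepancy between $G$ and $G^*$ vanishes modulo $p^3$. Two things prevent a purely formal argument: the Cremona relation holds only off the coordinate hyperplanes, so the degenerate strata genuinely have to be handled by hand; and the naive "$W\leftrightarrow W$" pairing of strata is wrong, because planar duality interchanges contraction and deletion (the stratum of $[\Psi_G]_p$ indexed by $W$ involves a minor of $G$ dual to the subgraph $G^*|_W$, not to the corresponding minor of $G^*$). Indeed the fully formal version, "$[\Psi_M]_p = [\Psi_{M^*}]_p$ for every matroid $M$", is already false for the cycle matroid of a tree, so the argument must actually invoke $|E(G)| = 2h_G$, i.e.\ $\operatorname{rk} M(G) = \operatorname{rk} M(G)^*$, to force the Tutte-type term and the inductive error terms to collapse modulo $p^3$. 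As an alternative I would also try to bypass the stratification via the denominator reduction of Brown and Schnetz~\cite{BrS}: $c_2(G)_p$ can be computed, up to a sign depending only on $|E(G)|$ and hence identical for $G$ and $G^*$, from the point count modulo $p$ of a polynomial built from Dodgson polynomials of $G$, and Dodgson polynomials transform predictably under duality (essentially by complementing the deletion/contraction index sets); if the reduced polynomials for $G$ and $G^*$ then agree up to relabelling, the result follows — with the standard caveat that the cases where the reduction does not run for five steps must be treated separately.
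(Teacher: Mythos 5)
The paper does not actually prove this statement: it is imported verbatim as Theorem 39 of \cite{Dor}, so there is no in-paper argument to compare against, and your proposal has to stand on its own. Its sound parts are the reduction of $c_2(G)=c_2(G^*)$ to the congruence $[\Psi_G]_p \equiv [\Psi_{G^*}]_p \pmod{p^3}$, the complementation/Cremona identity $\Psi_{G^*}(x) = \bigl(\prod_e x_e\bigr)\Psi_G(x^{-1})$, and the resulting exact equality of the two counts on the open torus $(\mathbb{F}_p^\times)^{E}$. Those observations are correct but standard, and they are not where the difficulty of the theorem lies.

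The genuine gap is everything off the torus, which is exactly where the divisibility by $p^2$ and the mod-$p^3$ comparison are decided. Your stratification only repackages the problem: the identities $[\Psi_G]_p = \Phi(G^*)$ and $[\Psi_{G^*}]_p = \Phi(G)$ turn the theorem into ``$\Phi(G) \equiv \Phi(G^*) \pmod{p^3}$'', a statement of precisely the same depth as the one being proved, and the proposed induction on $|E(H)|$ is never equipped with an actual inductive step -- you yourself note that the strata of $G$ and $G^*$ are indexed incompatibly because duality exchanges deletion and contraction, and that the Cremona bijection fails on the coordinate hyperplanes, but you do not resolve either obstruction, nor do you show where the hypothesis $|E(G)| = 2h_G$ (which your tree counterexample shows must enter) is actually used to make the boundary discrepancy vanish modulo $p^3$. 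The fallback route fares no better: ``if the reduced Dodgson polynomials of $G$ and $G^*$ agree up to relabelling, the result follows'' is essentially the entire content of the theorem in the denominator-reduction formulation (and is close to how \cite{Dor} in fact proceeds), yet it is asserted rather than established, and the exceptional graphs for which the reduction does not run the required number of steps are explicitly left untreated. As written, the argument proves equality of torus point counts only, which is far weaker than $c_2(G) = c_2(G^*)$; the key duality statement for the degenerate strata (equivalently, for the Dodgson/5-invariant data) is the missing idea.
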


Lastly, $4$-point $\phi^4$ graphs with two-vertex cuts do not have a product property, but two graphs constructed using this method will have equal $c_2$ invariants.

\begin{proposition}[Proposition 16 in \cite{BSModForms}] \label{2cutsinc2}Let $G$ be a graph with a $2$-vertex cut. Suppose we may split the graph into two minors $G_1$ and $G_2$ across this cut, as in Figure~\ref{2cut}. If $G$, $G_1$, and $G_2$ are all $4$-point $\phi^4$ graphs, then $c_2(G)_p \equiv 0 \pmod{p}$ for all primes $p$.  \end{proposition}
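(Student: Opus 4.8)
I would prove the sharper statement $[\Psi_G]_p\equiv 0\pmod{p^3}$ for every prime $p$, from which $c_2(G)_p=[\Psi_G]_p/p^2\equiv 0\pmod p$ follows at once. Write $\{u,v\}$ for the two-vertex cut. In the situation of Figure~\ref{2cut} each $G_i$ carries a distinguished edge $e_i$ joining the two images of $u,v$, and $G$ is obtained by gluing $G_1\setminus e_1$ to $G_2\setminus e_2$ along $\{u,v\}$, so $E(G)=E(G_1\setminus e_1)\sqcup E(G_2\setminus e_2)$. Put $\alpha_i=\Psi_{G_i\setminus e_i}$ and $\beta_i=\Psi_{G_i/e_i}$, so deletion--contraction gives $\Psi_{G_i}=x_{e_i}\alpha_i+\beta_i$, and $\beta_i$ is also the generating polynomial of spanning two-forests of $G_i\setminus e_i$ separating $u$ from $v$. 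Splitting a spanning tree of $G$ across the cut, exactly one half is a spanning tree of the corresponding $G_i\setminus e_i$ while the other is such a two-forest (and conversely); matching complementary-edge monomials gives
\[
\Psi_G=\alpha_1\beta_2+\beta_1\alpha_2,
\]
where $\alpha_1,\beta_1$ use only the $G_1\setminus e_1$ variables $x$, and $\alpha_2,\beta_2$ only the disjoint $G_2\setminus e_2$ variables $y$.

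Next I would rewrite $[\Psi_G]_p=\#\{(x,y):\alpha_1(x)\beta_2(y)+\beta_1(x)\alpha_2(y)=0\}$ by stratifying on whether $\alpha_1(x)=0$ and whether $\alpha_2(y)=0$. When both vanish the equation is automatic; when exactly one vanishes it forces the corresponding $\beta_i$ to vanish as well; and on the open stratum $\alpha_1(x)\neq0\neq\alpha_2(y)$ it reads $\beta_2(y)/\alpha_2(y)=-\beta_1(x)/\alpha_1(x)$, so its contribution is $\sum_{\mu\in\mathbb{F}_p}\bigl([\beta_1+\mu\alpha_1]_p-z_1\bigr)\bigl([\beta_2-\mu\alpha_2]_p-z_2\bigr)$, where $z_i:=\#\{\alpha_i=0=\beta_i\}$. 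Collecting the strata exhibits $[\Psi_G]_p$ as an explicit integer combination of the numbers $[\alpha_i]_p$, $z_i$, $p^{|E(G_i)|-1}$ and $[\beta_i\pm\mu\alpha_i]_p$; I would also record that $\sum_{\mu}[\beta_i\pm\mu\alpha_i]_p=[\Psi_{G_i}]_p$, obtained by fibring the point count of $\Psi_{G_i}=x_{e_i}\alpha_i+\beta_i$ over the value of $x_{e_i}$.

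Then I would feed in the relevant divisibilities. Since $G_1,G_2$ are primitive $\phi^4$ graphs, $|E(G_i)|=2h_{G_i}$, so $\deg\alpha_i=h_{G_i}-1$ and $\deg\beta_i=h_{G_i}$ in $2h_{G_i}-1$ variables ($e_i$ is not a bridge, primitive $\phi^4$ graphs being bridgeless); hence by Chevalley--Warning $p\mid[\alpha_i]_p$ and $p\mid[\beta_i\pm\mu\alpha_i]_p$ for every $\mu$. Because $G_i\setminus e_i$ and $G_i$ each have at least three vertices, the $p^2$-divisibility of Kirchhoff point counts recalled before the definition of $c_2$ gives $p^2\mid[\alpha_i]_p$ and $p^2\mid[\Psi_{G_i}]_p$. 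Counting solutions of $x_{e_i}\alpha_i+\beta_i=0$ yields $[\Psi_{G_i}]_p=\bigl(p^{|E(G_i)|-1}-[\alpha_i]_p\bigr)+p\,z_i$, and comparing $p$-adic valuations in this identity forces $p\mid z_i$. Substituting everything into the expression from the previous paragraph, each term is divisible by $p^3$: the products containing a copy of $[\alpha_i]_p$, $[\Psi_{G_i}]_p$, or $p^{|E(G_i)|-1}$ (recall $|E(G_i)|\ge 4$) carry $p^2$ or more there and an extra factor of $p$ elsewhere; the diagonal sum $\sum_\mu[\beta_1+\mu\alpha_1]_p[\beta_2-\mu\alpha_2]_p$ is a sum of $p$ products of two quantities each divisible by $p$; and the remaining term $p\,z_1z_2$ is divisible by $p^3$ since $p\mid z_1,z_2$. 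Therefore $[\Psi_G]_p\equiv 0\pmod{p^3}$.

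The step I expect to be the crux is $p\mid z_i$: here $\deg\alpha_i+\deg\beta_i=(h_{G_i}-1)+h_{G_i}=2h_{G_i}-1$ is \emph{exactly} the number of variables, so Chevalley--Warning applied to the system $\{\alpha_i=0,\ \beta_i=0\}$ falls just short of bounding $z_i$ --- this borderline failure is precisely the phenomenon that makes the $c_2$ invariant nonzero in general --- and the missing power of $p$ has to be extracted indirectly, from the known $p^2$-divisibility of $[\Psi_{G_i}]_p$ and $[\alpha_i]_p$ via the identity above. Everything else is routine $p$-adic bookkeeping, and the gluing formula of the first paragraph is a standard spanning-tree decomposition (it can also be deduced from the determinant description of $\Psi$).
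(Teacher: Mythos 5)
Your overall strategy is sound, and most of the bookkeeping checks out: the factorization $\Psi_G=\alpha_1\beta_2+\beta_1\alpha_2$ across the two-vertex cut is the standard spanning-tree/two-forest decomposition, the stratified count of $[\Psi_G]_p$ is set up correctly, the identity $[\Psi_{G_i}]_p=(p^{|E(G_i)|-1}-[\alpha_i]_p)+p\,z_i$ does force $p\mid z_i$ once one invokes the $p^2$-divisibility of $[\Psi_{G_i}]_p$ and $[\alpha_i]_p$, and the mixed terms $[\alpha_1]_p[\alpha_2]_p$, $z_i(p^{N_j}-[\alpha_j]_p)$, $z_2[\Psi_{G_1}]_p$, $z_1[\Psi_{G_2}]_p$, $p\,z_1z_2$ are all visibly divisible by $p^3$. (Note the thesis itself does not prove this proposition; it only cites Proposition 16 of \cite{BSModForms}, so there is no in-paper argument to compare against, and the published proof runs through different machinery.) Your extra hypotheses (primitivity, $e_i$ not a bridge, $|V(G_i)|\geq 3$) are harmless given the edge--vertex count of $4$-point $\phi^4$ graphs and the external-edge distribution of Figure~\ref{2cut}.

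The genuine gap is the term you dismiss in half a sentence: $\sum_{\mu\in\mathbb{F}_p}[\beta_1+\mu\alpha_1]_p\,[\beta_2-\mu\alpha_2]_p$. Chevalley--Warning makes each factor divisible by $p$, so each summand is divisible by $p^2$; but a sum of $p$ terms each divisible by $p^2$ is in general only divisible by $p^2$, not $p^3$ --- the number of summands contributes no extra factor of $p$. Writing $[\beta_1+\mu\alpha_1]_p=pa_\mu$ and $[\beta_2-\mu\alpha_2]_p=pb_\mu$, you need $p\mid\sum_\mu a_\mu b_\mu$, and the only constraints you have recorded, $\sum_\mu a_\mu\equiv\sum_\mu b_\mu\equiv 0\pmod p$ (from $p^2\mid[\Psi_{G_i}]_p$), do not imply this (e.g.\ $p=3$, $a_\mu=b_\mu=(1,2,0)$ gives $\sum a_\mu b_\mu=5$). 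So the step you call routine is actually a second crux, on a par with $p\mid z_i$. It can be repaired, but it needs a new input: since $\Psi_{G_i}=x_{e_i}\alpha_i+\beta_i$ is homogeneous, rescaling all variables by $\mu^{-1}$ shows $[\beta_i+\mu\alpha_i]_p=[\beta_i+\alpha_i]_p$ for every $\mu\neq 0$; hence $a_\mu$ and $b_\mu$ each take only two values (at $\mu=0$ and at $\mu\neq0$), the relation $\sum_\mu a_\mu\equiv a_0-a_*\equiv 0\pmod p$ (and likewise for $b$) gives $a_0\equiv a_*$, $b_0\equiv b_*$, and then $\sum_\mu a_\mu b_\mu\equiv a_0b_0-a_*b_*\equiv 0\pmod p$ as required. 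With that observation inserted, your argument closes; without it, the claimed $p^3$-divisibility of the diagonal sum is unjustified.
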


\noindent Similarly, subdivergences also result in the $c_2$ invariant vanishing.

\begin{theorem}[Theorem 38 in \cite{propsc2}] \label{subdivc2} If a graph $G$ contains a subdivergence, then $c_2(G)_p \equiv 0 \pmod{p}$ for all primes $p$. \end{theorem}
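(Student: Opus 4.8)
The goal is to show that if $G$ contains a subdivergence — a proper subgraph $H \subsetneq G$ with $|E(H)| \leq 2h_H$ — then the point count $[\Psi_G]_p$ is divisible by $p^3$, so that $c_2(G)_p \equiv 0 \pmod p$. The natural approach is to exploit the structure of the Kirchhoff polynomial relative to the edge set $I = E(H)$ of the subdivergence. Recall that $\Psi_G$, as a polynomial in the Schwinger variables, is linear in each variable, and more importantly it has a useful expansion along any subset of edges. For a subgraph $H$ with edge set $I$, one has the classical contraction-deletion factorization: the coefficient of the top-degree monomial $\prod_{e \in I} x_e$ in $\Psi_G$ (viewed as a polynomial in the variables indexed by $I$) is $\Psi_{G \cut I}$, and the constant term is $\Psi_{G \contract I} \cdot (\text{something})$ — more precisely one wants the two-variable-set expansion $\Psi_G = \sum \Psi_{G\contract(I\setminus J)\cut J}^{\text{appropriate}} \prod \cdots$. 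The key structural input is that a subdivergence $H$ with $|E(H)| = 2h_H$ (the critical case; the superficially divergent case $|E(H)|<2h_H$ is easier or vacuous in the $\phi^4$ setting) behaves like a primitive graph in its own right, so $\Psi_H$ itself has point count divisible by $p^2$.

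First I would set up the spanning-forest polynomial machinery (Dodgson / spanning forest polynomials as in \cite{Brbig}, \cite{BrS}), writing $\Psi_G$ in terms of $\Psi_H$-type polynomials and the polynomials of $G$ with $H$ contracted. The cleanest route is probably the one used for analogous vanishing results: choose the edge variables of the subdivergence $H$ as the "last" variables, and perform a sequence of variable eliminations (linear in each $x_e$, so each elimination either fixes a variable or contributes a factor of $p$ to the count) until one is left counting zeros of a product/resultant of Dodgson polynomials associated to $H$ and to $G\contract E(H)$. Then invoke the fact that $H$, being at the divergence threshold, contributes an extra factor of $p$ beyond the generic $p^2$ already guaranteed for $G$ by the result of \cite{qftoverfq} cited earlier. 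Counting the factors of $p$ carefully should yield $p^3 \mid [\Psi_G]_p$.

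Alternatively — and this may be the slicker proof — one can use the denominator reduction / Chevalley–Warning style argument: realize $[\Psi_G]_p \equiv (\text{sum over } \mathbb{F}_p \text{ of a power of } \Psi_G)$ via the standard $1 - f^{p-1}$ trick, then integrate out the subdivergence variables first. Because $H$ sits inside $G$ with $|E(H)| = 2h_H$, the partial sum over the $|E(H)|$ subdivergence variables is itself divisible by a high power of $p$ (this is essentially the statement that the "sub-integral" is log-divergent, translated into finite-field language), and one extra power of $p$ propagates to the full sum. This mirrors how, physically, a subdivergence makes the period diverge; the finite-field shadow of that divergence is the extra divisibility.

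The main obstacle, in either approach, is bookkeeping the exact power of $p$ through the variable eliminations: one must show that no "accidental" degeneracy loses a factor, i.e. that the relevant Dodgson polynomials attached to $H$ and to $G\contract E(H)$ are not identically zero and that the $2$-adic... (rather, $p$-adic) valuation of the point count adds rather than merely being bounded. Handling the edge cases — a subdivergence that is a $2$-edge cut (two parallel edges, i.e. $h_H = 1$, $|E(H)|=2$) versus a genuine $4$-edge-cut subdivergence with more loops — will require separate, though routine, treatment; the $2$-edge-cut case overlaps with Proposition~\ref{2cutsinc2} and should be checked for consistency. I would expect the write-up to reduce, after the right change of variables, to a short computation once the spanning-forest identities are in hand.
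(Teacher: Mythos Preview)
The paper does not prove this statement; it is quoted as Theorem~38 of \cite{propsc2} and used as background for the $c_2$ discussion. There is therefore no proof in the paper to compare your proposal against.

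That said, as a standalone attempt your write-up is a strategy outline rather than a proof. You correctly identify the target ($p^3 \mid [\Psi_G]_p$) and the relevant tools (contraction--deletion structure of $\Psi_G$, Dodgson/spanning-forest polynomials, denominator reduction), and both routes you sketch are in the right spirit of the literature. But the actual work --- setting up the factorization of $\Psi_G$ along $E(H)$, proving the requisite Dodgson identities, and tracking the $p$-adic valuation through the elimination --- is precisely what you flag as ``the main obstacle'' and then defer. A referee would regard this as a plan, not a proof: the non-trivial content of the cited theorem lies exactly in the bookkeeping you leave open, and in particular in showing that the subdivergence contributes an \emph{additional} factor of $p$ beyond the two already guaranteed by \cite{qftoverfq}. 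If you want to complete the argument, you should consult \cite{propsc2} directly; the proof there does proceed via denominator reduction and Dodgson polynomial identities, so your second route is closer to the published one.
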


\section{Graphic interpretation of the $c_2$ invariant}\label{graphicc2}

In my attempts to work with the $c_2$ invariant, especially with the hope of proving that two graphs that differ by completion followed by decompletion had equal $c_2$ invariants, I believed it would be useful to work with a more structural interpretation of the invariant. In this section we construct this interpretation. This is original work.

Throughout this section, assume that $p$ is prime. Let $G$ be a connected graph. By Proposition~\ref{poly=matrix}, we may consider a zero of the Kirchhoff polynomial of $G$ over $\mathbb{F}_p$ as an assignment of values to the Schwinger parameters such that the modified Laplacian matrix has determinant zero. Hence, this is an assignment of values to the Schwinger parameters such that there is a nontrivial linear combination of the rows of $K_G$ that sums to the zero vector over $\mathbb{F}_p$. Fix such a linear combination, and let $w_e$ be the coefficient given to the row indexed by edge $e \in E(G)$. Since $G$ is connected and a row of $M_G$ was deleted in the construction of $K_G$, the rows of $M_G$ are linearly independent, and hence at least one row in the first $|E(G)|$ must receive a nonzero coefficient in this linear combination. Whence, we may consider these coefficients on the first $|E(G)|$ rows as weights on the edges of $G$. Specifically, consider the signed incidence matrix $M_G^T$ in the upper right block. Denoting the head and tail of edge $e$ and $h(e)$ and $t(e)$, respectively,  $$\sum_{\substack{e \in E(G)\\h(e) = v}} w_e - \sum_{\substack{e \in E(G)\\t(e) = v}} w_e = 0$$ for all vertices $v \in V(G)$. 

\begin{definition} Let $G$ be a directed graph and $\mathcal{G}$ an abelian group. Let $\phi: E(G) \rightarrow \mathcal{G}$ be a weighting of the edges. We define the associated \emph{boundary function} $\partial \phi: V(G) \rightarrow \mathcal{G}$ by $$(\partial_v)\phi = \sum_{\substack{e \in E(G)\\h(e) = v}} \phi(e) - \sum_{\substack{e \in E(G)\\t(e) = v}} \phi(e). $$ We say that $\phi$ is a \emph{$\mathcal{G}$ flow} if, for all $v \in V(G)$,  $(\partial_v) \phi = 0$ in $\mathcal{G}$. \end{definition}

\noindent Hence, a linear combination of the matrix rows that sums to the zero vector can be thought of as assigning weights to the edges of the associated graph that creates a non-trivial $\mathbb{F}_p$ flow. As stated in the introduction, this is comparable to momentum conservation in Feynman graphs. See \cite{genflows} for a more general introduction to flows.

\begin{example} Consider the graph $K_4$, with orientation as drawn below. The modified Laplacian is included with this graph, vertex labels are assigned arbitrarily and edges are ordered lexicographically. $$\raisebox{-0.4\height}{\includegraphics[scale=0.8]{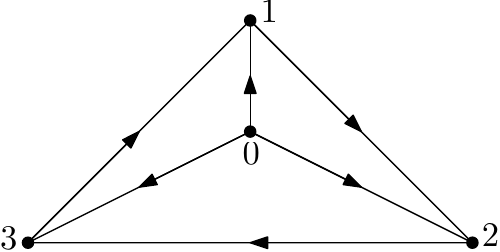}}, \hspace{1cm} K_{K_4} = \left[ \begin{array}{cccccc|ccc} x_1&&&&&&1&0&0 \\ &x_2&&&&&0&1&0 \\ &&x_3&&&&0&0&1 \\ &&&x_4&&&-1&1&0 \\ &&&&x_5&&1&0&-1 \\ &&&&&x_6&0&-1&1 \\ \hline -1&0&0&1&-1&0&&& \\ 0&-1&0&-1&0&1&&& \\ 0&0&-1&0&1&-1&&& \end{array} \right] $$ Considering only the upper right block, we may create a linear combination in $\mathbb{F}_5$ that sums to the zero vector as  $$\begin{array}{c} 4\\3\\3\\2\\3\\0   \end{array} \left[ \begin{array}{ccc} 1&0&0 \\ 0&1&0 \\ 0&0&1 \\ -1&1&0 \\ 1&0&-1 \\ 0&-1&1 \end{array} \right].$$ Treating these coefficients as weights on the edges, this linear combination does indeed translate to an $\mathbb{F}_5$ flow on the graph, $$\raisebox{-0.4\height}{\includegraphics{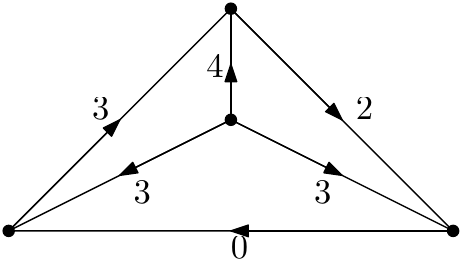}} .$$ \end{example}

Again, fix a linear combination of the rows of a modified Laplacian matrix that sum to the zero vector over $\mathbb{F}_p$. Consider now the coefficients on the last $|V(G)|-1$ rows in this linear combination. These may similarly be used to describe weights on the vertices; for $v\in V(G)$ denote this weight $w_v$, and again let $w_e$ be the weight on edge $e \in E(G)$. Then, for edge $e = (a,b) \in E(G)$, the Schwinger parameter for the edge must be a value that balances the equation $$w_ex_e + w_a - w_b = 0,$$ where $w_v =0$ for the vertex that was deleted in the creation of $M_G$. Consider traveling around a cycle in the underlying unoriented graph; adding $w_ex_e + w_a - w_b$ when moving with the underlying orientation of the edges and subtracting when not. This will necessarily sum to zero. As a result of the orientation determining whether each term is added or subtracted, all vertex weights will cancel. 

\begin{definition} Let $G$ be an oriented graph, and $\mathcal{G}$ an abelian group. Let $\tau :E(G) \rightarrow \mathcal{G}$ be a weighting of the edges, and $\mathcal{C}$ the collection of cycles in the underlying unoriented graph. Let $C \in \mathcal{C}$ be arbitrary. Traveling around $C$, if the orientation of an edge $e$ agrees with the direction of travel we put $e \in C_+$, and $e \in C_-$ otherwise. The \emph{boundary function} of cycle $C$ given $\tau$ is $$(\partial_C)\tau = \sum_{e \in C_+}\tau(e) - \sum_{e \in C_-}\tau(e). $$ We say that $\tau$ is a \emph{$\mathcal{G}$-tension} if for all $C \in \mathcal{C}$, $(\partial_C) \tau = 0$. \end{definition}

See, for example, \cite{ten1} and \cite{ten2}.

We see then that the Schwinger parameters are values that, multiplying edge-by-edge, turn a non-trivial $\mathbb{F}_p$ flow into a $\mathbb{F}_p$ tension. In the other direction, consider an $\mathbb{F}_p$ flow $\phi:E(G) \rightarrow \mathbb{F}_p$ and an assignment of Schwinger parameters $\mathbf{x}:E(G) \rightarrow \mathbb{F}_p$ such that $e\mapsto \mathbf{x}(e) \phi(e)$ is an $\mathbb{F}_p$ tension. We may create coefficients for the rows indexed by vertices uniquely by equation $w_ex_e + w_a - w_b$ and the fact that we may think of the row removed in creating the matrix $M_G$ as assigning weight zero to that vertex. This then defines the coefficients of a linear combination of the row vectors of this modified Laplacian that sums to the zero vector. Hence, this matrix has determinant zero.  Returning to this product on individual edges of flows and Schwinger parameters, a map of Schwinger parameters $\mathbf{x}:E(G) \rightarrow \mathbb{F}_p$ is a zero in $\Psi_G$ if and only if there exists a non-trivial $\mathbb{F}_p$ flow $\phi: E(G) \rightarrow \mathbb{F}_p$ such that $e \mapsto \mathbf{x}(e) \phi(e)$ is a $\mathbb{F}_p$ tension. Note that by this construction, we are able to ignore the vertex weights and may consider only the edges. We will call such maps \emph{Schwinger solutions} (or \emph{Schwinger solutions to flow $\phi$}).

\begin{example} Consider the graph and $\mathbb{F}_5$ flow established in the previous example; $$\raisebox{-0.4\height}{\includegraphics{c2flow}} .$$ We consider creating Schwinger parameters such that for each edge, the product of the Schwinger parameter and the weight of the flow creates a $\mathbb{F}_5$-tension in the graph. Such a collection of Schwinger parameters is given in parentheses below, including the original flow and the product for each edge in $\mathbb{F}_5$. The variable $x$ may be any value in $\mathbb{F}_5$, as the edge has weight zero in the flow. $$\raisebox{-0.4\height}{\includegraphics[scale=0.8]{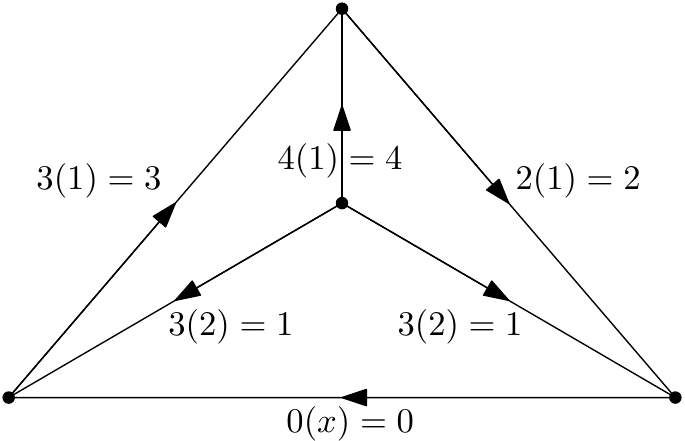}} $$  A quick check reveals that this is an $\mathbb{F}_5$ tension.

Further, we may check that this variable assignment to Schwinger parameters does create a zero of $\Psi_{K_4}$ in $\mathbb{F}_5$. That is, \begin{align*} \Psi_{K_4} &= x_4x_5x_6 + x_1x_3x_4 + x_2x_3x_6 + x_1x_2x_5 + x_1x_3x_5 + x_1x_3x_6 + x_2x_3x_5 + x_2x_3x_4 \\ &\hspace{1cm}  + x_1x_2x_4 + x_1x_2x_6 + x_2x_4x_6 + x_2x_4x_5 + x_1x_4x_6 + x_1x_5x_6 + x_3x_5x_6 + x_3x_4x_5 \\ &= 1 \cdot 1 \cdot x  + 1 \cdot 2 \cdot 1 + 2 \cdot 2 \cdot x  + 1 \cdot 2 \cdot 1 + 1 \cdot 2 \cdot 1  + 1 \cdot 2 \cdot x + 2 \cdot 2 \cdot 1 + 2 \cdot 2 \cdot 1  \\ &\hspace{1cm}  + 1 \cdot 2 \cdot 1  + 1 \cdot 2 \cdot x  + 2 \cdot 1 \cdot x  + 2 \cdot 1 \cdot 1 + 1 \cdot 1 \cdot x  + 1 \cdot 1 \cdot x  + 2 \cdot 1 \cdot x  + 2 \cdot 1 \cdot 1 \\ 
&= 0 .\end{align*} \end{example}

The following is a well-known result regarding the number of $\mathbb{F}_p$ flows in a graph.

\begin{proposition}\label{numberflows} A connected graph $G$ has $p^{|E(G)| - (|V(G)|-1)} = p^{h_1(G)}$ $\mathbb{F}_p$ flows. \end{proposition}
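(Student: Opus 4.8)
The plan is to count $\mathbb{F}_p$ flows of $G$ directly as the kernel of the boundary map, exploiting the linear-algebraic description already set up in the section. Recall that an $\mathbb{F}_p$ flow is an edge weighting $\phi \in \mathbb{F}_p^{E(G)}$ with $(\partial_v)\phi = 0$ for all $v \in V(G)$; equivalently, $\phi$ lies in the kernel of the linear map $\partial : \mathbb{F}_p^{E(G)} \to \mathbb{F}_p^{V(G)}$ whose matrix is the full signed incidence matrix $M_G^*$ (transposed appropriately, so that $\partial\phi = M_G^* \phi$ with edges indexing columns). Since the number of flows is the cardinality of this kernel, and a subspace of $\mathbb{F}_p^{E(G)}$ of dimension $d$ has exactly $p^d$ elements, it suffices to show $\dim \ker \partial = h_1(G) = |E(G)| - (|V(G)| - 1)$.

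First I would invoke the rank–nullity theorem: $\dim \ker \partial = |E(G)| - \operatorname{rank}(M_G^*)$. So the whole statement reduces to the claim that $\operatorname{rank}(M_G^*) = |V(G)| - 1$ for a connected graph $G$. This is exactly the content that underlies the construction of the reduced signed incidence matrix $M_G$: the rows of $M_G^*$ sum to zero (each edge contributes $+1$ and $-1$), so the rank is at most $|V(G)| - 1$; and deleting one row yields $M_G$, whose rows are linearly independent — a fact already asserted in the text just before the definition of the boundary function ("Since $G$ is connected and a row of $M_G$ was deleted in the construction of $K_G$, the rows of $M_G$ are linearly independent"). Hence $\operatorname{rank}(M_G^*) = \operatorname{rank}(M_G) = |V(G)| - 1$, and therefore $\dim \ker \partial = |E(G)| - (|V(G)| - 1) = h_1(G)$.

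To keep the argument self-contained I would include a short justification of the independence of the rows of $M_G$ rather than merely citing the earlier remark: suppose a linear combination $\sum_{v} c_v (\text{row}_v)$ vanishes, where the sum ranges over all vertices except the deleted one $v_0$; set $c_{v_0} = 0$. Then for every edge $e = (a,b)$ we get $c_a - c_b = 0$, so $c$ is constant on each connected component; since $G$ is connected and $c_{v_0} = 0$, all $c_v = 0$. Combining everything, the connected graph $G$ has exactly $p^{h_1(G)}$ flows over $\mathbb{F}_p$.

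The argument has no real obstacle — it is a standard rank computation for the incidence matrix — so the only thing to be careful about is bookkeeping: making sure the boundary map is set up with the right domain and codomain (edges to vertices), that the "$-1$" in the exponent is correctly accounted for by the one redundant row, and that the counting step ($|{\ker}| = p^{\dim}$) is stated cleanly over the prime field $\mathbb{F}_p$.
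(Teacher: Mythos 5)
Your proof is correct, but it takes a genuinely different route from the paper's. You compute the number of flows as the cardinality of the kernel of the boundary map $\partial:\mathbb{F}_p^{E(G)}\to\mathbb{F}_p^{V(G)}$ given by the full signed incidence matrix, reduce via rank--nullity to showing $\operatorname{rank}(M_G^*)=|V(G)|-1$, and verify this by the standard independence argument for the reduced incidence matrix of a connected graph (which is valid over any field, including $\mathbb{F}_p$ with $p=2$, since it never divides by a scalar). The paper instead argues combinatorially: fix a spanning tree $T$, assign arbitrary $\mathbb{F}_p$ values to the $|E(G)|-(|V(G)|-1)$ non-tree edges, and show these extend uniquely to the tree edges by repeatedly assigning values at leaves of $T$, with consistency of the final edge guaranteed because $\sum_{v\in V(G)}(\partial_v)\phi=0$. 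Your linear-algebraic version is shorter given the matrix machinery already set up in the section, and it makes explicit the identification of flows with the nullspace of the incidence matrix, which the paper itself exploits later (for instance in the nowhere-zero flow discussion). The paper's constructive argument, on the other hand, is more elementary, works verbatim for flows valued in any finite abelian group (no field structure needed), and its ``free values on non-tree edges, then propagate'' technique is reused almost immediately in the proof of Proposition~\ref{schwingercount} and again in the fundamental-cut discussion of the final chapter, which is presumably why the author chose it.
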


\begin{proof} Let $T$ be the edges of a spanning tree of $G$. Assign edges in $E(G)-T$ values from $\mathbb{F}_p$ arbitrarily. We will show that edges in $T$ can be assigned weights back uniquely.

Any vertex incident to precisely one edge that has not received a value may have that value assigned uniquely to maintain the flow property. Removing edges from $T$ as values are assigned, we may give values to the leaves of $T$ in this manner until only a single edge has no weight assigned, call it $e = (a,b)$. At this point, two vertices are incident to a single edge with no assigned Schwinger value, and we must check that the desired assignments agree. Summing over edges in $G-e$ with assigned weights, say $\phi: E(G-e) \rightarrow \mathbb{F}_p$, we know that $\sum_{v \in V(G)} (\partial_v)\phi = 0$, as each weight is added once as an head and subtracted once as a tail. As all other vertices $v$ have $(\partial_v)\phi=0$, $(\partial_a)\phi = - (\partial_b)\phi$, and a weight can be added uniquely to $e$, completing the flow in $G$. \end{proof}

A similar method may be used to count the number of Schwinger solutions for a fixed modular $\mathbb{F}_p$ flow. The following lemma and corollary will help us prove this property.

\begin{lemma}\label{latetothegame} Let $G$ be a graph. The sequence of arcs of a closed walk in $G$ can be decomposed into a collection of (possibly trivial) cycles that maintain the direction created by the walk and a set of edges which are traversed an equal number of times in opposite directions. \end{lemma}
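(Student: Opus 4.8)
The plan is to prove this by passing to the directed multigraph that records the arcs of the walk and decomposing it into directed cycles. Write the closed walk as $W = v_0 v_1 \cdots v_k = v_0$, so that $W$ traverses a multiset $\vec A$ of arcs, each arc being an edge of $G$ together with the direction in which $W$ crosses it at that step. Let $D$ be the directed multigraph on vertex set $V(G)$ whose arcs are exactly the elements of $\vec A$, counted with multiplicity. Since $W$ is closed, at every vertex the number of arcs of $\vec A$ leaving it equals the number entering it, so every vertex of $D$ has equal in-degree and out-degree. By the standard fact that the arcs of such a balanced directed multigraph can be partitioned into arc-disjoint directed cycles (proved, as usual, by repeatedly peeling off a directed cycle), $\vec A$ splits into directed cycles $C_1, \dots, C_m$.

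Next I would classify each $C_t$. A directed cycle visits each vertex at most once, so if $C_t$ has length at least $3$ its vertices, and hence its edges of $G$, are pairwise distinct, and $C_t$ is an honest cycle of $G$ traversed coherently in the walk's own direction; if $C_t$ has length $1$ it is a loop of $G$, a degenerate cycle; and if $C_t$ has length $2$ it consists of two arcs between one pair of vertices, one in each direction, which are either two distinct parallel edges of $G$ (a digon, again a cycle) or the two orientations of a single edge $e$ of $G$. Put every $C_t$ of the first three kinds into the collection of cycles, and set aside every $C_t$ of the last kind. Each set-aside $C_t$ contributes to its edge $e$ exactly one traversal in each direction, so summing over all set-aside cycles, the resulting set of edges is traversed an equal number of times in each of the two directions, as required; together with the cycles this partitions $\vec A$.

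The only real work lies in (i) the cycle decomposition of a balanced directed multigraph, which I would either cite from \cite{generalgraphtheory} or prove by a one-line induction on $|\vec A|$ (locate the first repeated vertex of $W$, peel off the sub-walk between its two occurrences, which is a cycle of $G$ or, in the single-edge case, a back-and-forth pair, and recurse on the shorter closed walk obtained by deleting those arcs), and (ii) the multigraph bookkeeping above, i.e.\ checking that a directed cycle of $D$ is either a cycle of $G$ or a single antiparallel pair. I expect the length-$2$ and loop cases to be the main nuisance, since they are exactly where ``cycle of the arc-digraph'' and ``cycle of $G$'' can come apart; everything else is immediate. I would also be careful throughout to decompose the \emph{sequence} (multiset) of arcs rather than merely the underlying edge set, since the intended application needs the back-and-forth edges to cancel in a signed boundary sum.
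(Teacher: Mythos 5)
Your proposal is correct and follows essentially the same route as the paper: pass to the directed multigraph of arcs, observe that a closed walk makes every vertex balanced, and decompose into arc-disjoint directed cycles, with the antiparallel pairs coming from a single edge forming the cancelling set. Your explicit treatment of the length-$1$ and length-$2$ cases is just a more careful spelling-out of what the paper asserts when it says the leftover after removing a maximal collection of directed cycles consists of pairs $(a,b)$ and $(b,a)$.
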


\begin{proof} For a closed walk on graph $G$, create directed multigraph $G'$ on the same vertex set by adding a directed edge to $G'$ for each edge in the closed walk in $G$ following the direction of the walk. By construction, every vertex has equal in- and out-degree. Removing a maximal collection of directed cycles containing disjoint edges from $G$ must leave nothing but edges in pairs $(a,b)$ and $(b,a)$, which completes the proof. \end{proof}

\begin{corollary}\label{cyclesinoppositedirections} Let $G$ be a directed graph and $w:E(G) \rightarrow \mathbb{F}_p$ an edge weighting for some prime $p$. Define the weight of any walk in the underlying graph as the sum of the weights of edges in the walk that agree with the orientation, minus the sum of weights of edges that do not. If every cycle has weight zero in $\mathbb{F}_p$, then so does every closed walk. \end{corollary}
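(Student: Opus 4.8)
The plan is to reduce the statement about arbitrary closed walks to the statement about cycles, using Lemma~\ref{latetothegame}. Given a closed walk $W$ in the underlying graph of $G$, I would first apply Lemma~\ref{latetothegame} to decompose the sequence of arcs of $W$ into a collection of (possibly trivial) directed cycles $C_1, \dots, C_k$ — each traversed in a fixed direction — together with a set $S$ of edges, where each edge of $S$ is traversed equally often in each of its two directions.

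Next I would compute the weight of $W$ by summing over this decomposition. The weight of a walk, as defined, is additive over a partition of its arc multiset into sub-walks: each arc contributes $+w(e)$ if it agrees with the orientation of $e$ and $-w(e)$ otherwise, and this contribution depends only on the arc, not on which piece of the decomposition it is assigned to. Hence the weight of $W$ equals $\sum_{i=1}^{k} (\text{weight of } C_i) + (\text{contribution of the edges in } S)$. For each edge $e \in S$, the two opposite traversals contribute $+w(e)$ and $-w(e)$ respectively, and these occur in equal numbers, so the total contribution of $S$ is $0$ in $\mathbb{F}_p$. Each $C_i$ is a cycle (or trivial, contributing $0$), so by hypothesis its weight is $0$ in $\mathbb{F}_p$. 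Therefore the weight of $W$ is $0$ in $\mathbb{F}_p$, as required.

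The only genuinely delicate point is making precise that the ``weight of a closed walk'' is well defined on the arc multiset and additive under the decomposition supplied by Lemma~\ref{latetothegame} — that is, that passing from the cyclic walk to its constituent directed cycles preserves, for each edge, the net count of forward-minus-backward traversals. This is essentially bookkeeping: the decomposition in Lemma~\ref{latetothegame} is by construction a partition of the arcs of $W$, so no arc is created or destroyed, and the sign attached to an arc is intrinsic to that arc. I expect this step to be the main (though modest) obstacle, since everything else is an immediate application of the hypothesis and of $1 + (-1) = 0$ in $\mathbb{F}_p$.
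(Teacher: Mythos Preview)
Your proposal is correct and takes essentially the same approach as the paper: invoke Lemma~\ref{latetothegame} to decompose the closed walk into cycles (each of weight zero by hypothesis) plus edges traversed equally in both directions (which cancel). The paper's proof is a one-line ``follows immediately from Lemma~\ref{latetothegame}, as the decomposition presented therein necessarily sums to zero,'' so you have simply spelled out the bookkeeping that the paper leaves implicit.
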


\begin{proof} This follows immediately from Lemma~\ref{latetothegame}, as the decomposition presented therein necessarily sums to zero. \end{proof}

\begin{proposition}\label{schwingercount} Fix an $\mathbb{F}_p$ flow for a graph $G$. Suppose a connected spanning subgraph that contains all edges of weight zero in the flow must have at least $k$ edges. There are $p^k$ Schwinger solutions to this flow. \end{proposition}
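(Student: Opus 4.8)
The plan is to count Schwinger solutions to a fixed flow $\phi$ by the same ``assign freely on a co-tree, extend uniquely'' strategy used in Proposition~\ref{numberflows}, but with the roles of flows and tensions interchanged and with the zero-weight edges of $\phi$ requiring care. Recall that $\mathbf{x}$ is a Schwinger solution to $\phi$ precisely when $e \mapsto \mathbf{x}(e)\phi(e)$ is an $\mathbb{F}_p$-tension; equivalently, there are vertex potentials $w_v$ (with $w_v = 0$ at the deleted vertex) such that $w_e x_e + w_a - w_b = 0$ for every edge $e = (a,b)$, where $w_e := \phi(e)$. So fixing a Schwinger solution amounts to fixing these potentials, subject to the constraint that whenever $\phi(e) = 0$ the equation degenerates to $w_a = w_b$, forcing the two endpoints of a zero-weight edge to share a potential.

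First I would reformulate the problem in terms of the vertex potentials $w : V(G) \to \mathbb{F}_p$. Given such a $w$ with $w$ vanishing at the deleted vertex, each edge $e=(a,b)$ of nonzero flow weight determines $x_e = (w_b - w_a)/\phi(e)$ uniquely, while each edge of zero flow weight imposes $w_a = w_b$ and leaves $x_e$ completely free. Thus the set of Schwinger solutions is in bijection with (choices of potential $w$ constant on each connected component of the subgraph $G_0$ of zero-weight edges, vanishing at the deleted vertex) $\times$ ($\mathbb{F}_p$ for each zero-weight edge, contributing the free $x_e$'s). Here is where the hypothesis enters: a connected spanning subgraph containing all zero-weight edges must have at least $k$ edges, and since $G_0$ already contains all zero-weight edges, the minimal such subgraph is obtained by adding edges to connect up the components of $G_0$; if $G_0$ has $c$ components then this minimum is $|E(G_0)| + (c-1) = k$.

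Next I would count the two factors. The potential $w$ is determined by its value on each of the $c$ components of $G_0$; one of these values is pinned to $0$ (the component containing the deleted vertex), and the remaining $c-1$ are free in $\mathbb{F}_p$ — and conversely every such choice yields a valid tension, because on nonzero-weight edges $x_e$ is simply read off, with no consistency obstruction (there is no cycle condition to check, exactly as in the proof of Proposition~\ref{numberflows}, or alternatively because Corollary~\ref{cyclesinoppositedirections} guarantees any cycle constraint is automatically met once we work with a potential). That gives $p^{c-1}$ choices of $w$. The free Schwinger parameters on zero-weight edges contribute $p^{|E(G_0)|}$. Hence the total is $p^{|E(G_0)| + (c-1)} = p^k$, as claimed.

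The main obstacle I anticipate is pinning down the combinatorial identity $|E(G_0)| + (c-1) = k$ cleanly and arguing that the minimum in the hypothesis is genuinely achieved this way — i.e. that no connected spanning subgraph containing $E(G_0)$ can have fewer than $|E(G_0)| + c - 1$ edges, which follows since contracting each component of $G_0$ leaves $c$ vertices that must be connected, needing at least $c-1$ further edges. A secondary point requiring a line of justification is that every assignment of component-potentials really does extend to a bona fide tension (not merely to a consistent edge labeling): this is where one invokes that on the nonzero-weight part there are no independent cycles beyond those already spanned, or simply that a function arising from a vertex potential is automatically a tension. Once these two points are nailed down, the count is immediate.
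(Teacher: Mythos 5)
Your argument is correct, but it takes a genuinely different route from the paper's. The paper stays entirely on the edge side: it fixes a minimal connected spanning subgraph containing all zero-weight edges (with exactly $k$ edges), assigns the Schwinger parameters of those $k$ edges arbitrarily, notes that every cycle inside this minimal subgraph consists only of zero-weight edges so the tension condition is vacuous there, and then argues by induction -- invoking Corollary~\ref{cyclesinoppositedirections} to handle an edge that closes several cycles at once -- that each remaining edge (necessarily of nonzero flow weight) has its Schwinger parameter uniquely and consistently determined; this gives $p^k$ directly without ever mentioning potentials. You instead pass to vertex potentials: a Schwinger solution corresponds bijectively to a potential constant on each component of the zero-weight subgraph $G_0$ and vanishing at the deleted vertex ($p^{c-1}$ choices, where $c$ is the number of components) together with a free value on each of the $|E(G_0)|$ zero-weight edges, and you finish with the separate combinatorial identity $k=|E(G_0)|+(c-1)$, proved by contracting the components of $G_0$. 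Both proofs are sound under the same reading the paper uses (that $k$ is the minimum, and that $G$ is connected as assumed throughout the section). Your version buys a transparent product structure on the solution set, so the exponent is visible at a glance, at the price of establishing the tension--potential correspondence (the direction tension $\Rightarrow$ potential is where path-independence, i.e.\ Corollary~\ref{cyclesinoppositedirections}, enters; potential $\Rightarrow$ tension is telescoping, as you say) and of the extra counting identity; the paper's version avoids potentials entirely but needs the inductive consistency check in their place.
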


\begin{proof} Assign values to the Schwinger parameters of these $k$ edges arbitrarily. Treat tension weights on edges as the product of the Schwinger parameters and the values assigned by this fixed flow. By construction, all Schwinger parameters that have not received a value must correspond to nonzero weights in the flow. Every edge that has not received a Schwinger parameter value will necessarily create a cycle with the edges that have. As such, its tension weight is unique, and so the Schwinger parameter is unique.

We will prove that it is possible to assign values to the remaining Schwinger parameters to create a tension by induction. The base case -- the initial assignment of Schwinger values to the $k$ edges -- cannot create any cycles that break the tension property, as all cycles must be made entirely of weight zero edges. Suppose then that at some stage of the process the assignment of a values to the Schwinger parameters has created only cycles that have the tension property. Consider the next assignment of a Schwinger parameter value. If only a single new cycle will be completed, the assignment is immediate, so we suppose multiple cycles will be completed. Suppose the edge whose Schwinger parameter is being assigned a value is $\{a,b\}$. Then, these two cycles currently create two $(a,b)$-paths in the graph on edges that have received Schwinger parameter values. We may therefore create a closed walk in the graph by walking the edges of one $(a,b)$-path followed by the edges of the other traversed in reverse. By Corollary~\ref{cyclesinoppositedirections}, this walk may be decomposed into cycles and edges traversed an equal number of times in opposite directions. By induction then, all cycles sum to zero, and the remaining edges cancel. Hence, this closed walk must also sum to zero. Therefore, these two $(a,b)$-paths sum to equal values in $\mathbb{F}_p$, and the Schwinger value can be uniquely assigned to $\{a,b\}$. By induction, this completes the proof. \end{proof}

Similarly, we may count the number of flows that have a particular mapping of the Schwinger parameters as a Schwinger solution.

\begin{proposition} Let $G$ be a graph. Fix an assignment of values in $\mathbb{F}_p$ to the Schwinger parameters. The set of $\mathbb{F}_p$ flows for which this assignment forms a Schwinger solution forms a vector space. \end{proposition}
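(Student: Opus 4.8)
The plan is to show that the set of $\mathbb{F}_p$ flows $\phi$ for which a fixed Schwinger parameter assignment $\mathbf{x}$ is a Schwinger solution is the solution set of a homogeneous system of linear equations over $\mathbb{F}_p$, hence a subspace of the ambient space $\mathbb{F}_p^{E(G)}$. First I would recall the two defining conditions: $\phi$ is a Schwinger solution to $\mathbf{x}$ precisely when (i) $\phi$ is an $\mathbb{F}_p$ flow, i.e.\ $(\partial_v)\phi = 0$ for every vertex $v$, and (ii) the edge-weighting $e \mapsto \mathbf{x}(e)\phi(e)$ is an $\mathbb{F}_p$-tension, i.e.\ $(\partial_C)\bigl(\mathbf{x}\phi\bigr) = 0$ for every cycle $C$ in the underlying unoriented graph. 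The key observation is that \emph{both} conditions are linear in the unknown $\phi$, since $\mathbf{x}$ is fixed: condition (i) is manifestly linear, and in condition (ii) each term $\mathbf{x}(e)\phi(e)$ is a fixed scalar times $\phi(e)$, so $(\partial_C)(\mathbf{x}\phi) = \sum_{e \in C_+} \mathbf{x}(e)\phi(e) - \sum_{e \in C_-}\mathbf{x}(e)\phi(e)$ is a linear functional of $\phi$.

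Concretely, I would set up the system as follows. Let $\mathbf{1}$ denote the identically-$1$ flow so that $\phi$ ranges over $\mathbb{F}_p^{E(G)}$. The flow conditions (i) can be written as $M_G^{*}\,\phi = 0$ using the (full) signed incidence matrix $M_G^*$, and the tension conditions (ii) can be written as $B\,D_{\mathbf{x}}\,\phi = 0$, where $D_{\mathbf{x}}$ is the diagonal matrix with entries $\mathbf{x}(e)$ and $B$ is a matrix whose rows are the signed incidence vectors of a spanning set of cycles (for instance the fundamental cycles with respect to some spanning tree; by Corollary~\ref{cyclesinoppositedirections}, or directly by linearity of $\partial_C$ in $C$, checking these fundamental cycles suffices to force $(\partial_C)(\mathbf{x}\phi)=0$ for all cycles $C$). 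The solution set is then $\ker\begin{bmatrix} M_G^* \\ B D_{\mathbf{x}} \end{bmatrix}$, which is a linear subspace of $\mathbb{F}_p^{E(G)}$; in particular it is nonempty since it contains the zero (trivial) flow, and it is closed under addition and scalar multiplication. That is exactly the assertion.

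The only subtlety — and the step I would treat most carefully — is making rigorous the claim that it suffices to impose the tension condition on a \emph{finite} spanning family of cycles rather than on all cycles, and more precisely that the boundary functional $C \mapsto (\partial_C)(\mathbf{x}\phi)$ behaves linearly under the natural operations on cycles (concatenation, reversal, and decomposition of closed walks) so that vanishing on fundamental cycles propagates to all cycles. This is essentially the content of Lemma~\ref{latetothegame} and Corollary~\ref{cyclesinoppositedirections}, applied here with the edge weighting $e \mapsto \mathbf{x}(e)\phi(e)$: a closed walk decomposes into cycles plus edges traversed equally often in both directions, so if every cycle has weight zero then so does every closed walk, and in particular the difference of any two fundamental cycles. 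Granting that, linearity of everything in $\phi$ is immediate and the vector space structure follows with no further work; I do not expect any genuine obstacle beyond bookkeeping. One may also remark that the dimension of this space equals $|E(G)| - \operatorname{rank}\bigl[\begin{smallmatrix} M_G^* \\ B D_{\mathbf{x}}\end{smallmatrix}\bigr]$, which ties back to the point counts $[\Psi_G]_p$ underlying the $c_2$ invariant, but the bare statement needs only the subspace claim just established.
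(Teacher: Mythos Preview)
Your proof is correct and rests on the same observation as the paper's: with $\mathbf{x}$ fixed, both the flow condition and the tension condition are linear in $\phi$, so the set in question is closed under addition and scalar multiplication. The paper simply verifies these two closures directly in two lines, whereas you package the same linearity as a kernel computation; your detour through fundamental cycles is unnecessary for the bare subspace claim (an arbitrary intersection of subspaces is still a subspace), though harmless.
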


\begin{proof} Let $\phi_1$ and $\phi_2$ be modular $p$ flows and $x \in \mathbb{F}_p^{|E(G)|}$ a Schwinger solution to both flows. Let $c$ be a constant in $\mathbb{F}_p$. Summing around cycle $C$, $$ (\partial_C)(\phi_1 + \phi_2) = \sum_{e \in C_+}(\phi_1(e) + \phi_2(e))x_e - \sum_{e \in C_-}(\phi_1(e) + \phi_2(e))x_e = (\partial_C)\phi_1 + (\partial_C)\phi_2 =0   $$ and $$(\partial_C)(c\phi_1) =  \sum_{e \in C_+}c\phi_1(e)x_e - \sum_{e \in C_-}\phi_1(e)x_e = c(\partial_C)\phi_1 =0 .$$ Hence, the set of flows for which a fixed assignment of values to the Schwinger parameters is a Schwinger solution is a vector space. \end{proof}

\begin{proposition} Let $G$ be a graph with some arbitrary orientation on its edges. Create matrix $K$ by assigning numeric values in $\mathbb{F}_p$ to variables in the modified Laplacian matrix $K_G$. The nullspace of $K$ uniquely defines the set of $\mathbb{F}_p$ flows for which this assignment of edge weights is a Schwinger solution, and the number of such $\mathbb{F}_p$ flows is $p^{\text{Nullity}(K)}$. \end{proposition}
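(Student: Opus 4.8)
The statement asks us to show that, for a modified Laplacian $K_G$ with numeric values substituted for the Schwinger parameters yielding a matrix $K$, the nullspace of $K$ is in natural bijection with the set of $\mathbb{F}_p$ flows for which the chosen edge-weighting is a Schwinger solution, and hence this set has size $p^{\mathrm{Nullity}(K)}$. The approach is to read off the bijection directly from the block structure of $K_G$, reusing the dictionary developed earlier in this section: a null vector of $K$ decomposes as $(w_e)_{e\in E(G)}$ on the first $|E(G)|$ coordinates and $(w_v)_{v}$ on the last $|V(G)|-1$ coordinates, and the two blocks of equations $Kw = 0$ are exactly (i) the flow condition $M_G^T$-block: $\sum_{h(e)=v} w_e - \sum_{t(e)=v} w_e = 0$ for each retained vertex, wait — more precisely the lower block $-M_G w_{\mathrm{edge}} = 0$ says $(w_e)$ is an $\mathbb{F}_p$ flow, and (ii) the upper block $A\,w_{\mathrm{edge}} + M_G^T w_{\mathrm{vert}} = 0$ says $w_e x_e + w_{h(e)} - w_{t(e)} = 0$ for every edge, i.e. $e \mapsto x_e w_e$ is an $\mathbb{F}_p$ tension (setting $w_v = 0$ for the deleted vertex). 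This is precisely the content of the preceding discussion culminating in the definition of Schwinger solutions.

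First I would fix an orientation and write $w = (w_{\mathrm{edge}}, w_{\mathrm{vert}}) \in \mathbb{F}_p^{|E(G)|} \oplus \mathbb{F}_p^{|V(G)|-1}$ for an arbitrary vector, and expand $Kw = 0$ into the two block equations above. The lower block gives immediately that $w_{\mathrm{edge}}$ is an $\mathbb{F}_p$ flow $\phi$. Next I would define the map $\Phi \colon \ker K \to \{\text{flows with } x \text{ a Schwinger solution}\}$ by $w \mapsto w_{\mathrm{edge}}$; the upper block equation certifies that $e\mapsto x_e\,\phi(e)$ is a tension (using that summing $w_e x_e + w_{h(e)} - w_{t(e)}$ with signs around any cycle telescopes the vertex weights to zero, exactly as argued earlier), so $\Phi$ is well-defined. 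Surjectivity: given such a flow $\phi$, the earlier paragraph shows one can reconstruct the vertex weights $w_v$ uniquely from the relations $w_e x_e + w_a - w_b = 0$ along a spanning tree, with $w_v = 0$ on the deleted vertex, and this reconstruction is consistent precisely because $e\mapsto x_e\phi(e)$ is a tension — giving a preimage. Injectivity: if $\Phi(w) = 0$ as an edge-vector, then all $w_e = 0$, and then the upper block forces $w_{h(e)} = w_{t(e)}$ for every edge; since $G$ is connected and one vertex weight is pinned to $0$, all $w_v = 0$, so $w = 0$. Hence $\Phi$ is a linear isomorphism of $\ker K$ onto the flow set, and the count $p^{\mathrm{Nullity}(K)}$ follows.

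The only genuinely substantive point — and the one I would be careful to state cleanly rather than wave at — is the consistency of the vertex-weight reconstruction in the surjectivity step: one must verify that defining $w_v$ by following paths from the pinned vertex does not depend on the path, which is exactly the statement that $e\mapsto x_e\phi(e)$ has zero boundary on every cycle, and then (via Corollary~\ref{cyclesinoppositedirections}) on every closed walk, so path-independence holds on a connected graph. Everything else is bookkeeping on the block form of $K_G$, already implicit in Proposition~\ref{poly=matrix} and the surrounding text, so I would keep those parts terse and cite the earlier definitions and the flow/tension dictionary rather than re-deriving them. I do not anticipate a real obstacle here; the proposition is essentially a repackaging of the section's main construction as a statement about nullspaces.
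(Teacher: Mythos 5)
Your proof is correct and takes essentially the same route as the paper's: decompose a null vector of $K$ into its edge and vertex parts, read the lower block as the flow condition and the upper block as $x_e w_e + w_{h(e)} - w_{t(e)} = 0$, and show the projection onto the edge part is injective by propagating vertex weights outward from the deleted vertex, which is pinned to weight $0$. You are merely more explicit than the paper about surjectivity and the tension/path-independence check, which the paper delegates to the preceding discussion of Schwinger solutions.
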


\begin{proof} Let $\mathbf{x}$ be an element of the nullspace, so $K\mathbf{x} = \mathbf{0}$. We may, as before, think of the first $|E(G)|$ entries of $\mathbf{x}$ as weights on the edges, this time the result of the block $-M_G$. This must define a $\mathbb{F}_p$ flow, this time in the graph made from $G$ by reversing the direction of every edge. This weighting must then also give a $\mathbb{F}_p$ flow in $G$. To show that these vectors give unique flows, note that if $e = (a,b) \in E(G)$, and the elements in $\mathbf{x}$ corresponding to columns indexed by $e$ and $a$ are assigned, then the value corresponding to $b$ is uniquely determined. Then, as we may thing of the row deletion in the creation of $K_G$ as assigning that vertex weight $0$, this uniqueness propagates out from this vertex. Hence, if two vectors $\mathbf{x}$ and $\mathbf{y}$ are in the nullspace and correspond to the same flow, it must be the case that $\mathbf{x} = \mathbf{y}$. The number of flows is therefore immediate from the dimension of the nullspace. \end{proof}

Note that the previous two propositions allow for the trivial flow, which of course would be a solution to all Schwinger values.

\begin{proposition} Let $\phi_1, ... , \phi_k$ be $\mathbb{F}_p$ flows for a graph $G$. The intersection of sets of Schwinger solutions of these $\mathbb{F}_p$ flows has cardinality a power of $p$, and is a subspace of $V(|E(G)|,\mathbb{F}_p)$. \end{proposition}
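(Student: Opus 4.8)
The plan is to reduce this statement to the two preceding propositions rather than to argue from scratch. First I would observe that for a single $\mathbb{F}_p$ flow $\phi_i$, the set $S_i$ of Schwinger solutions to $\phi_i$ is exactly the set of vectors $\mathbf{x} \in \mathbb{F}_p^{|E(G)|}$ for which the edge-by-edge product $e \mapsto \mathbf{x}(e)\phi_i(e)$ is an $\mathbb{F}_p$-tension. Since the tension condition $(\partial_C)(\mathbf{x}\cdot\phi_i) = 0$ for each cycle $C$ is a system of \emph{linear} equations in the unknowns $\mathbf{x}(e)$ (the coefficients being the fixed values $\phi_i(e)$, up to sign depending on orientation), $S_i$ is an affine subspace; and because $\mathbf{x} = \mathbf{0}$ is always a solution, $S_i$ is in fact a linear subspace of $V(|E(G)|, \mathbb{F}_p)$. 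This is essentially the content already used in the proof of Proposition~\ref{schwingercount} (it exhibits $S_i$ as parametrized by $p^k$ free choices, hence as a subspace of dimension $k$), so I would cite that proposition directly for the single-flow case.

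Next I would take the intersection. Since each $S_i$ is a subspace of the finite-dimensional $\mathbb{F}_p$-vector space $V(|E(G)|, \mathbb{F}_p)$, the intersection $\bigcap_{i=1}^k S_i$ is again a subspace — intersections of subspaces are subspaces, with no further work. A subspace of an $\mathbb{F}_p$-vector space has cardinality $p^d$ where $d$ is its dimension, which gives the claimed "power of $p$" statement. Concretely: a vector $\mathbf{x}$ lies in the intersection precisely when $e \mapsto \mathbf{x}(e)\phi_i(e)$ is a tension for every $i = 1,\dots,k$ simultaneously, which is the union of the $k$ linear systems, hence still a homogeneous linear system, hence cuts out a subspace.

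The only genuine point to verify carefully is the claim that $S_i$ is linear (not merely affine) and that the cycle-tension equations really are linear in $\mathbf{x}$ with $\phi_i$ fixed — but this is immediate from the definition of the boundary function $(\partial_C)\tau = \sum_{e\in C_+}\tau(e) - \sum_{e\in C_-}\tau(e)$, which is $\mathbb{F}_p$-linear in $\tau$, composed with the $\mathbb{F}_p$-linear map $\mathbf{x} \mapsto (\,e \mapsto \mathbf{x}(e)\phi_i(e)\,)$. There is no real obstacle here; the "hard part," such as it is, is purely bookkeeping: making sure the orientation signs in $C_+$ versus $C_-$ are handled consistently so that the equations defining $S_i$ are genuinely the pullback of the tension condition, and noting that it suffices to impose the condition on a cycle basis rather than all cycles (which follows from linearity of $\partial$). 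Once that is in place, the proposition is a one-line consequence of "an intersection of subspaces is a subspace" together with $|W| = p^{\dim W}$.
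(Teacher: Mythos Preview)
Your proposal is correct and takes essentially the same approach as the paper: both arguments observe that the Schwinger solutions to each flow are cut out by the homogeneous linear equations $(\partial_C)(\mathbf{x}\cdot\phi_j)=0$ over all cycles $C$, so the simultaneous solution set is a linear subspace of $\mathbb{F}_p^{|E(G)|}$ with cardinality $p^{\dim}$. The paper simply writes out this combined linear system explicitly, while you factor it as an intersection of subspaces $S_i$; this is an organizational difference only.
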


\begin{proof} For each $\mathbb{F}_p$ flow and each cycle in $G$, create an equation as follows. For cycle $C = e_1,e_2,...,e_n$, add equation $$(-1)^{c_{e_1}}w_{j,1}x_1 + (-1)^{c_{e_2}}w_{j,2}x_2 + \cdots + (-1)^{c_{e_n}}w_{j,n}x_n = 0,$$ where the $c_{e_i}=0$ if the edge is oriented in the direction the cycle is traversed and $c_{e_i}=1$ otherwise, and $\phi_j(e_i) = w_{j,i}$ are from flow $\psi_j$. Simultaneously solving for all cycles as a system of equations, if $k$ is the number of free variables, then there are $p^k$ solutions.  \end{proof}

The goal with the preceding work was to find a structural interpretation of the $c_2$ invariant, possibly for computational simplicity, though ideally to prove that the $c_2$ invariant is unchanged by completion followed by decompletion. Ultimately, inclusion-exclusion produced no useful results.

We lastly explore some potentially useful results regarding the open problems for the $c_2$ invariant. By Proposition~\ref{2cutsinc2} and assuming decompletion invariance per Conjecture~\ref{c2doingitsthing}, if a completed graph $G$ has connectivity $3$, it has $c_2$ invariant zero for all primes. Hence, we may restrict our investigation to graphs with $4$-connected completion.

\begin{Menger}[Edge-connectivity version \cite{mengersthm}] A graph $G$ is $k$-edge-connected if and only if every pair of distinct vertices is connected by $k$ edge-disjoint paths. \end{Menger}

By Theorem~\ref{numberflows}, the fact that the number of $\mathbb{F}_p$ flows is completion invariant is trivial. It is interesting that there is a natural bijection between $\mathbb{F}_p$ flows for these decompletions that follows from Menger's Theorem, as we may assume that the graph has no subdivergences, and hence no $4$-edge cuts.

Let $v$ and $w$ be distinct vertices in the completed graph. By Menger's Theorem, we may fix four edge-disjoint paths between these vertices. Consider one such path and a $\mathbb{F}_p$ flow in $G-v$. Adding vertex $v$ back to the graph and assigning all incident edges weight zero, we then preserve the flow structure of the graph. If the edge $e$ on this fixed path incident to $w$ has weight $w_e$, we may then subtract $w_e$ from this edges weight, and all weights on edges on this path oriented along the path in the same direction as $e$, and add it to all other edges. By construction, vertices along this path still have the flow property, and edges incident to $v$ may now have nonzero weight. If we do this for all paths, a quick check of the four possible cases of pairs of oriented edges incident to $w$ and $v$ on these paths reveals that vertex $v$ also has the flow property. This therefore creates a flow where all edges incident to $w$ have weight zero. As such, this can be turned into a $\mathbb{F}_p$ flow in $G-w$.  Clearly, this operation is bijective. Unfortunately, the change in the distribution of cycles and additionally of edges of weight zero in the flow makes this unlikely to be of use in proving decompletion invariance for the $c_2$ invariant.

A similar method will be of use in later work, in particular Theorem~\ref{egpcompletion}.

Consider now an oriented planar graph. With the orientation, we may distinguish between the sides of an edge, and hence in creating the planar dual may canonically orient the edges of the dual, for example by demanding all edges in the dual travel from the right to the left of the edge in the original graph. Note that with this method of creating the dual, $(G^*)^* \neq G$, as the orientation of all edges is reversed. For our purposes, though, this is not a hindrance.

\begin{lemma}\label{runningoutoflabelideas} Let $G$ be an oriented planar graph and suppose $\tau:E(G) \rightarrow \mathbb{F}_p$. If every facial cycle $C$ has $(\delta_C)\tau =0$, then $\tau$ is a $\mathbb{F}_p$ tension.  \end{lemma}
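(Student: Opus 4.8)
The plan is to reduce the statement for an arbitrary cycle to the assumed statement for facial cycles by exploiting planarity together with the linearity of the boundary functional. First, since every cycle of $G$ lies inside a single block of $G$ and each block inherits a plane embedding, I may assume $G$ is $2$-connected, so that every face is bounded by a cycle and the hypothesis $(\partial_C)\tau = 0$ applies to all facial cycles. It is convenient to encode a cycle $C$, once a direction of traversal is fixed, by the vector $z_C \in \mathbb{F}_p^{E(G)}$ with $e$-th coordinate $+1$ when $e \in C_+$, $-1$ when $e \in C_-$, and $0$ otherwise; then $(\partial_C)\tau = \langle z_C , \tau \rangle$, so the lemma follows once I show that each $z_C$ is an $\mathbb{F}_p$-linear combination of the facial vectors $z_F$.

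Fix a plane embedding of $G$. The cycle $C$ bounds a closed disc $\Delta$ in the plane, and I would let $F_1, \dots, F_m$ be the faces of $G$ contained in $\Delta$, each facial cycle oriented so that its disc lies to a consistent side. Every edge of $G$ drawn in the interior of $\Delta$ borders exactly two of the $F_i$ and is traversed once in each direction in $\sum_i z_{F_i}$, so it cancels; every edge of $C$ borders exactly one $F_i$ and is traversed in the direction agreeing with $C$. Therefore $\sum_{i=1}^m z_{F_i} = z_C$, and applying $\tau$ gives $(\partial_C)\tau = \sum_{i=1}^m (\partial_{F_i})\tau = 0$ by hypothesis.

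To make the edge-counting inside $\Delta$ rigorous I would instead argue by induction on the number of faces enclosed by $C$. If $C$ encloses a single face it \emph{is} a facial cycle and there is nothing to prove. Otherwise the open disc bounded by $C$ meets $G$, and since $G$ is connected there is a path $P$ in $G$ joining two distinct vertices of $C$ with interior strictly inside $\Delta$; this $P$ splits $C$ into two cycles $C_1$ and $C_2$, each using the edges of $P$ once and in opposite directions, each enclosing strictly fewer faces, with $z_{C_1} + z_{C_2} = z_C$ since the edges of $P$ cancel. By the induction hypothesis $(\partial_{C_1})\tau = (\partial_{C_2})\tau = 0$, whence $(\partial_C)\tau = 0$.

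The main obstacle is precisely this planarity bookkeeping in the inductive step: producing the splitting path $P$ and verifying that the two resulting closed walks are genuine cycles enclosing fewer faces with the claimed cancellation along $P$. This is the combinatorial content of the Jordan curve theorem for plane graphs; for it I would lean on the standard fact that the cycle space of a plane graph is generated by the boundaries of its bounded faces (see \cite{generalgraphtheory}), after which the lemma is immediate from linearity of $(\partial_\bullet)\tau$.
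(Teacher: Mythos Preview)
Your proposal is correct and follows essentially the same approach as the paper: both arguments rest on the standard fact that the bounded facial cycles generate the cycle space of a plane graph, then use linearity of $(\partial_\bullet)\tau$ together with the cancellation of interior edges traversed twice in opposite directions. Your treatment is more careful---you handle the reduction to the $2$-connected case and give an explicit induction on the number of enclosed faces---whereas the paper simply cites the cycle-space basis fact and the cancellation in a single sentence.
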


\begin{proof} As the bounded facial cycles of a planar graph can be used as a basis for the cycle space, the proof of this is similar to that of Proposition~\ref{schwingercount}; an arbitrary cycle $C$ can be represented as a binary sum of facial cycles. Traversing all of these facial cycles clockwise, then, this sum is zero, as all facial cycles have the tension property, and all edges in a facial cycle but not in $C$ are traversed twice, but in opposite directions, and hence cancel. \end{proof}

\begin{theorem} Let $G$ be a planar graph, and fix a planar orientation. Suppose $\phi:E(G) \rightarrow \mathbb{F}_p$ is an $\mathbb{F}_p$ flow. Preserving edge weights from $\phi$ and using the above canonical orientation of the dual, $\phi$ defines an $\mathbb{F}_p$ tension in $G^*$.  \end{theorem}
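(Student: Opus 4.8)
The plan is to exploit the classical duality between flows and tensions: a flow in a planar graph $G$ is, edge-for-edge, a tension in the dual $G^*$, and vice versa. By Lemma~\ref{runningoutoflabelideas}, it suffices to check the tension condition on the facial cycles of $G^*$, so I would only verify that $\phi$ satisfies $(\delta_C)\phi = 0$ for every facial cycle $C$ of $G^*$. The key observation is that the facial cycles of $G^*$ correspond exactly to the vertices of $G$: the face of $G^*$ surrounding a dual vertex $f^*$ (which is a face $f$ of $G$) is bounded precisely by the dual edges $e^*$ for $e$ on the boundary of $f$; dually, the facial cycle of $G^*$ that ``goes around'' a vertex $v$ of $G$ is made up of exactly the dual edges $e^*$ for $e$ incident to $v$.

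First I would make precise the correspondence between orientations. Using the canonical orientation of the dual introduced just before the statement — each dual edge $e^*$ crosses $e$ from the right of $e$ to the left of $e$ — I would check that, as one traverses the facial cycle of $G^*$ encircling a vertex $v$ of $G$ (say counterclockwise around $v$), a dual edge $e^*$ is traversed in agreement with its canonical orientation precisely when $e$ has $v$ as its head (or precisely when $v$ is its tail — whichever the sign conventions force; this is a local picture at $v$ that I would verify by drawing the two cases of an edge pointing into versus out of $v$). This is the one genuinely geometric point in the argument, and it is where the careful bookkeeping of ``right-to-left'' lives.

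Granting that correspondence, the computation is immediate: for the facial cycle $C_v$ of $G^*$ around $v$,
\[
(\delta_{C_v})\phi = \sum_{e^*\in (C_v)_+}\phi(e) - \sum_{e^*\in (C_v)_-}\phi(e) = \sum_{\substack{e\in E(G)\\ h(e)=v}}\phi(e) - \sum_{\substack{e\in E(G)\\ t(e)=v}}\phi(e) = (\partial_v)\phi = 0,
\]
where the last equality is the flow hypothesis on $\phi$ in $G$. Since this holds for every vertex $v$ of $G$, and the facial cycles of $G^*$ are exactly the $C_v$, Lemma~\ref{runningoutoflabelideas} gives that $\phi$ (with the same edge weights, read on the edges of $G^*$) is an $\mathbb{F}_p$ tension in $G^*$.

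I expect the main obstacle to be purely notational rather than conceptual: nailing down, once and for all, the sign dictionary relating the canonical right-to-left orientation of dual edges, the chosen orientation of each $e$, the head/tail of $e$ at $v$, and the direction in which the facial cycle $C_v$ winds around $v$. If the ``obvious'' choice of traversal direction produces an overall sign, one simply traverses $C_v$ the other way (or notes that a global sign flip of a tension is still a tension), so the conclusion is robust. A minor additional point to address is the boundary/unbounded face and vertices of degree issues (loops, parallel edges), but since the paper's graphs are connected and the cycle-space basis argument in Lemma~\ref{runningoutoflabelideas} already handles planar duality at that level, no new difficulty arises there.
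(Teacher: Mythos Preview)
Your proposal is correct and follows essentially the same approach as the paper's proof: reduce to facial cycles of $G^*$ via Lemma~\ref{runningoutoflabelideas}, identify each facial cycle of $G^*$ with the set of edges incident to a single vertex $v$ of $G$, and observe that the tension sum around that cycle equals $\pm(\partial_v)\phi = 0$. The paper handles the sign bookkeeping exactly as you anticipate, by simply choosing the traversal direction so that $C_+$ and $C_-$ line up with tails and heads at $v$.
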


\begin{proof} By construction, a facial cycle $C$ in the dual comes from the edges incident to a single vertex $v$ in the original graph. Therefore, we may travel around this cycle in a direction such that $\{e \in E(G):t(e)=v\} = C_+$ and $\{e\in E(G):h(e)=v\} = C_-.$ Immediately, this cycle has the tension property. By Lemma~\ref{runningoutoflabelideas} then, this is a $\mathbb{F}_p$ tension. \end{proof}

Another graph theoretic method for examining the $c_2$ invariant will be discussed later. Additional tools required to make use of this will be introduced in subsequent chapters, and this alternate method will be introduced in Chapter~\ref{conclusion}.

\chapter{A brief introduction to matroid theory}
\label{chmatroid}

As some matroid tools will be of use, we include here a short introduction to matroid theory. Notational conventions are taken from \cite{Oxl}.

First presented in \cite{whitneymatroid}, a \emph{matroid} is a pair $(E, \mathcal{I})$ such that $E$ is a finite set and $\mathcal{I}$ is a collection of subsets of $E$ with the following properties:

\begin{itemize}
\item $\emptyset \in \mathcal{I}$,
\item if $I \in \mathcal{I}$ and $I' \subseteq I$, then $I' \in \mathcal{I}$, and
\item if $I_1, I_2 \in \mathcal{I}$ and $|I_1|< |I_2|$, then there is an element $e \in I_2 - I_1$ such that $e \cup I_1 \in \mathcal{I}$.
\end{itemize}

\noindent We call $E$ the ground set. An element of $\mathcal{I}$ is said to be independent, and any other subset of $E$ is dependent. A minimally dependent set is a circuit. We may provide an alternate axiom set based on the collection of circuits, $\mathcal{C}$. For ground set $E$ and collection of subset $\mathcal{C}$ of $E$, $\mathcal{C}$ is the set of circuits of a matroid on $E$ if and only if (Corollary 1.1.5 in \cite{Oxl});

\begin{itemize}
\item $\emptyset \notin \mathcal{C}$,
\item if $C_1, C_2 \in \mathcal{C}$ and $C_1 \subseteq C_2$, then $C_1 = C_2$, and
\item if $C_1, C_2 \in \mathcal{C}$, $C_1 \neq C_2$, $e \in C_1 \cap C_2$, then there exists a $C_3 \in \mathcal{C}$ such that $C_3 \subseteq (C_1 \cup C_2) -e$.
\end{itemize}

From these, we may present two fundamental classes of matroids, constructed from matrices and graphs.

\begin{proposition}[Proposition 1.1.1 in \cite{Oxl}] Let $E$ be the set of columns of an $m \times n$ matrix over a field $\mathbb{F}$, and $\mathcal{I}$ the set of subsets of $E$ that are linearly independent in the vector space $V(m,\mathbb{F})$.  Then, $(E, \mathcal{I})$ is a matroid (the \emph{vector matroid}).   \end{proposition}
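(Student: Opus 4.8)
The plan is to verify the three matroid axioms directly for the pair $(E, \mathcal{I})$, where $E$ is the set of columns of the $m \times n$ matrix $A$ and $\mathcal{I}$ is the collection of subsets of $E$ that are linearly independent as vectors in $V(m, \mathbb{F})$. The first axiom is immediate: the empty set of vectors is (vacuously) linearly independent, so $\emptyset \in \mathcal{I}$. The second axiom is the standard fact that any subset of a linearly independent set of vectors is again linearly independent --- a nontrivial dependence among the vectors of $I'$ would be a nontrivial dependence among the vectors of $I \supseteq I'$ (padding with zero coefficients), contradicting $I \in \mathcal{I}$.

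The substantive step is the exchange axiom. Given $I_1, I_2 \in \mathcal{I}$ with $|I_1| < |I_2|$, I want to produce $e \in I_2 - I_1$ with $I_1 \cup \{e\} \in \mathcal{I}$. I would argue by contradiction: suppose that for every $e \in I_2 - I_1$ the set $I_1 \cup \{e\}$ is dependent. Since $I_1$ is independent, dependence of $I_1 \cup \{e\}$ forces $e$ to lie in the span of $I_1$; and elements of $I_2 \cap I_1$ trivially lie in $\operatorname{span}(I_1)$ as well. Hence every vector of $I_2$ lies in $\operatorname{span}(I_1)$, so $\operatorname{span}(I_2) \subseteq \operatorname{span}(I_1)$. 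But $I_2$ is an independent set of $|I_2|$ vectors, so $\dim \operatorname{span}(I_1) \geq \dim \operatorname{span}(I_2) = |I_2| > |I_1| = \dim \operatorname{span}(I_1)$, a contradiction. Therefore some $e \in I_2 - I_1$ satisfies $I_1 \cup \{e\} \in \mathcal{I}$.

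The main (and only real) obstacle is making the exchange step airtight, and it reduces entirely to the linear-algebra fact that a spanning set cannot be smaller than an independent set in a vector space --- equivalently, the well-definedness of dimension. Once that is in hand, all three axioms follow, and $(E, \mathcal{I})$ is a matroid; one observes as a byproduct that its rank function is $r(S) = \dim \operatorname{span}(S)$ for $S \subseteq E$.
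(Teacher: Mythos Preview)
Your proof is correct and is the standard argument. Note, however, that the paper does not actually prove this proposition: it is quoted without proof from Oxley's textbook as background material in a survey chapter on matroid theory, so there is no ``paper's own proof'' to compare against. Your verification of the three axioms (with the exchange axiom handled via the dimension inequality $\dim\operatorname{span}(I_1) \geq |I_2| > |I_1|$) is exactly the textbook approach one would find in Oxley or any introductory source.
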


\begin{proposition}[Proposition 1.1.7 in \cite{Oxl}] Let $G$ be a graph, $E$ the set of edges of $G$, and $\mathcal{C}$ the collection of edge sets of cycles of $G$. Then $\mathcal{C}$ is the set of circuits of a matroid on $E$ (the \emph{cycle matroid} of $G$).  \end{proposition}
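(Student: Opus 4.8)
The plan is to verify directly the three circuit axioms from the quoted characterization (Corollary 1.1.5 in \cite{Oxl}) for the family $\mathcal{C}$ of edge sets of cycles of $G$. Throughout I would use the basic fact that a cycle is a connected, $2$-regular subgraph, so its edge set is nonempty; this already settles the first axiom $\emptyset \notin \mathcal{C}$.

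For the second axiom I would show that no cycle properly contains another. Suppose $C_1 \subsetneq C_2$ with $C_1, C_2 \in \mathcal{C}$, and pick $e \in C_2 \setminus C_1$. Then $C_2$ has at least two edges (otherwise $C_2 = \{e\}$ forces $C_1 = \emptyset$, not a cycle), so $C_2$ contains no loop, and $C_2 - e$ is a path. But $C_1 \subseteq C_2 - e$, so the subgraph on edge set $C_1$ is a nonempty subgraph of a path, hence a disjoint union of nontrivial subpaths, and therefore has a vertex of degree at most $1$, contradicting that $C_1$ is $2$-regular. Hence $C_1 = C_2$.

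The third (circuit elimination) axiom is the crux, and for it I would use the symmetric difference. Given distinct $C_1, C_2 \in \mathcal{C}$ and $e \in C_1 \cap C_2$, put $D = (C_1 \cup C_2) \setminus (C_1 \cap C_2)$. Then $e \notin D$, so $D \subseteq (C_1 \cup C_2) - e$, and $D \neq \emptyset$ since $C_1 \neq C_2$. Viewing $D$ as a subgraph, every vertex $v$ has even degree, since $\deg_D(v) = \deg_{C_1}(v) + \deg_{C_2}(v) - 2\deg_{C_1 \cap C_2}(v)$ and each of $\deg_{C_1}(v), \deg_{C_2}(v)$ lies in $\{0, 2\}$. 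Then I would invoke the standard fact that a nonempty subgraph in which every vertex has even degree contains the edge set of a cycle: on its support every vertex has degree at least $2$, so taking a longest path, one of its endpoints has all incident edges leading back into the path (or else a loop, or a parallel edge), which closes up a cycle. Choosing such a cycle $C_3 \subseteq D \subseteq (C_1 \cup C_2) - e$ completes the verification, so $\mathcal{C}$ is the circuit family of a matroid on $E$.

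The only real obstacle is the third axiom, and within it the passage from ``$D$ has all degrees even'' to ``$D$ contains a cycle''; this must be phrased carefully enough to accommodate loops and parallel edges, since $G$ is permitted to be a multigraph, but the longest-path argument handles these cases uniformly. Everything else is bookkeeping once the defining properties of a cycle have been recorded.
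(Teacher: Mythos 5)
Your verification is correct: the three circuit axioms are checked soundly, the containment argument for the second axiom is valid, and the symmetric-difference plus ``every nonempty even subgraph contains a cycle'' argument for circuit elimination is the standard and complete way to handle it, including the multigraph cases of loops and parallel edges. Note that the thesis itself offers no proof of this statement --- it is quoted directly from Oxley (Proposition 1.1.7) as background --- and your argument is essentially the textbook proof that reference gives, so there is nothing to reconcile beyond observing that your write-up fills in a proof the paper deliberately omits.
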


It is of particular interest that the cycle matroid of a graph $G$ can be represented as a vector matroid over any field using the signed incidence matrix seen in Chapter~\ref{c2}. A matroid that can be represented as a vector matroid over every field is a \emph{regular matroid}. Seymour proved in \cite{moreseymour} that every regular matroid can be constructed from graphic matroids (matroids that may be represented as the cycle matroid of a graph), cographic matroids (the matroidal dual of graphic matroids, to be introduced shortly), and the matroid $R_{10}$ (which will be introduced in Section~\ref{compr10}), using a particular piecing operation.

Another alternate description of a matroid comes from \emph{bases}, maximal independent sets. Let $E$ be a set and $\mathcal{B}$ a set of subsets of $E$. Then $\mathcal{B}$ is a collection of bases of a matroid on $E$ if and only if (Corollary 1.2.5 in \cite{Oxl});
\begin{itemize}
\item $\mathcal{B}$ is non-empty, and 
\item if $B_1, B_2 \in \mathcal{B}$ and $x \in B_1 - B_2$, then there is an element $y \in B_2-B_1$ such that $(B_1-x) \cup y \in \mathcal{B}$.
\end{itemize}
The bases for a vector matroid are the bases for the column space of that matrix, and the bases for a cycle matroid are the sets of edges that induce spanning trees in the graph.

With this, we may develop a notion of duality in matroids.

\begin{theorem}[Theorem 2.1.1 in \cite{Oxl}] Let $M$ be a matroid with bases $\mathcal{B}$ and $\mathcal{B}^* = \{ E(M) - B : B \in \mathcal{B} \}$. Then $\mathcal{B}^*$ is the set of bases of a matroid on $E(M)$ (the \emph{dual} of $M$, $M^*$). \end{theorem}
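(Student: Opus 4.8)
The statement to prove is Theorem 2.1.1 from Oxley: given a matroid $M$ with bases $\mathcal{B}$, the set $\mathcal{B}^* = \{E(M) - B : B \in \mathcal{B}\}$ is the set of bases of a matroid on $E(M)$.

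The plan is to verify the two basis axioms stated just above the theorem in the excerpt. The first axiom — that $\mathcal{B}^*$ is non-empty — is immediate, since $\mathcal{B}$ is non-empty by assumption and complementation sends a non-empty family to a non-empty family. The work is entirely in the second (exchange) axiom: given $B_1^*, B_2^* \in \mathcal{B}^*$ and $x \in B_1^* - B_2^*$, I must produce $y \in B_2^* - B_1^*$ with $(B_1^* - x) \cup y \in \mathcal{B}^*$. First I would translate everything back to $\mathcal{B}$: write $B_1^* = E - B_1$ and $B_2^* = E - B_2$ with $B_1, B_2 \in \mathcal{B}$. Then $x \in B_1^* - B_2^*$ means $x \notin B_1$ and $x \in B_2$, and finding the desired $y$ amounts to finding $y \in B_1 - B_2$ such that $(B_2 - y) \cup x \in \mathcal{B}$, because the complement of $(B_2 - y) \cup x$ is exactly $(B_1^* - x) \cup y$. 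So the dual exchange axiom becomes a "reverse" exchange statement for the original bases: we are adding $x$ to $B_2$ and need to remove some $y$ from $B_1 \setminus B_2$.

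The key step is therefore to establish this reverse exchange property for the bases of a matroid. I would argue as follows. Since $x \in B_2$ and $x \notin B_1$, the set $B_1 \cup x$ contains a unique circuit $C$ (the fundamental circuit of $x$ with respect to $B_1$), and $x \in C$. I claim $C$ is not contained in $B_2 \cup x$: if it were, then since $C - x \subseteq B_1$ and $C \subseteq B_2 \cup x$ we would get $C - x \subseteq B_1 \cap B_2$, but $C - x \cup x = C$ would be a circuit inside $B_2 \cup x$ while $B_2$ is independent and adding $x \in B_2$ does nothing — contradiction (any circuit in $B_2 \cup x = B_2$ is impossible). Hence there is an element $y \in C$ with $y \notin B_2 \cup x$; since $C - x \subseteq B_1$, we have $y \in B_1 - B_2$ and $y \neq x$. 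Now I would check that $(B_1 \cup x) - y$ is a basis: it has the right cardinality ($|B_1|$), and it is independent because removing $y$ from the fundamental circuit $C$ destroys the only circuit in $B_1 \cup x$. Finally I would perform a symmetric maneuver to move from $(B_1 \cup x) - y$ to $(B_2 - y) \cup x$, or more directly apply the ordinary exchange axiom repeatedly to convert $(B_1 \cup x) - y$ into the desired basis $(B_2 - y) \cup x$; alternatively one can set up the argument so that $(B_2 - y) \cup x$ itself is shown directly to be a basis by the same fundamental-circuit reasoning applied with the roles adjusted. In Oxley's treatment this is done cleanly by one more application of the strong circuit/basis exchange, and I would follow that route.

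The main obstacle is getting the reverse-exchange step exactly right — in particular making sure the element $y$ we extract from the fundamental circuit $C$ genuinely lies in $B_1 - B_2$ and is distinct from $x$, and then verifying that the resulting set is both independent and of full rank so that it is actually a basis (not merely independent). This requires careful bookkeeping with fundamental circuits and the fact that all bases of a matroid have the same size; I expect this size equality and the uniqueness of the fundamental circuit to be the workhorses. Once the reverse-exchange property is in hand, translating back through complementation to conclude that $\mathcal{B}^*$ satisfies the basis axioms is routine.
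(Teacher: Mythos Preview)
The paper does not prove this theorem at all; it merely cites it as Theorem 2.1.1 in Oxley. So there is nothing in the paper to compare your argument against.

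That said, your argument is essentially the standard one, and the fundamental-circuit step is carried out correctly. There is, however, a bookkeeping slip in your translation. You write that the goal is to find $y \in B_1 - B_2$ with $(B_2 - y) \cup x \in \mathcal{B}$, claiming its complement is $(B_1^* - x) \cup y$. This is wrong: since $y \notin B_2$ and $x \in B_2$, the set $(B_2 - y) \cup x$ is just $B_2$ itself. The correct complement computation is
\[
E - \bigl((B_1^* - x) \cup y\bigr) = (B_1 \cup x) - y,
\]
so what you actually need is $(B_1 \cup x) - y \in \mathcal{B}$ for some $y \in B_1 - B_2$. You prove exactly this with the fundamental-circuit argument. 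The proof is therefore complete at that point; the final paragraph about a ``symmetric maneuver'' to reach $(B_2 - y) \cup x$ is chasing a phantom target and should be deleted. With the indices corrected, your proof is the standard one and is fine.
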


Suppose $M_1$ and $M_2$ are two matroids with ground sets $E(M_1)$ and $E(M_2)$, respectively. We say that $M_1$ and $M_2$ are isomorphic, $M_1 \cong M_2$, if there is a bijective mapping from $E(M_1)$ to $E(M_2)$ that preserves independence.

\begin{theorem}[Theorem 2.3.4 in \cite{Oxl}]\label{matdual} Suppose $G$ is a planar graph, and $G^*$ is the normal graph theoretic dual of $G$. The matroidal dual of $M(G)$, the cycle matroid of $G$, is isomorphic to the cycle matroid of $G^*$; $$ M(G^*) \cong M^*(G) .$$ \end{theorem}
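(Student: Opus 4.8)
The plan is to prove the slightly stronger statement that the \emph{bases} correspond: under the natural bijection $e \mapsto e^*$ between $E(G)$ and $E(G^*)$, a set $B \subseteq E(G)$ is a basis of $M^*(G)$ if and only if $\{e^* : e \in B\}$ is a basis of $M(G^*)$. Since a matroid is determined by its collection of bases, this gives $M(G^*) \cong M^*(G)$ at once. We may assume $G$ is connected (the graph dual is defined via a plane embedding, and under the standard convention $G$ connected forces $G^*$ connected); fix such an embedding. Recall that $B$ is a basis of $M^*(G)$ exactly when $E(G) \setminus B$ induces a spanning tree of $G$. So the heart of the matter is the claim: $T \subseteq E(G)$ induces a spanning tree of $G$ if and only if $\{e^* : e \notin T\}$ induces a spanning tree of $G^*$.

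First I would check that the cardinalities match, which reduces the claim to an acyclicity statement. By Euler's polyhedral formula $|V(G)| - |E(G)| + |F(G)| = 2$, and since $|V(G^*)| = |F(G)|$, a spanning tree of $G^*$ has $|V(G^*)| - 1 = |F(G)| - 1 = |E(G)| - |V(G)| + 1$ edges, which is precisely $|E(G) \setminus T|$ when $T$ is a spanning tree of $G$. A spanning subgraph of a connected graph that is acyclic and has one fewer edge than the number of vertices is automatically a spanning tree, so it suffices to show: if $T$ is a spanning tree of $G$, then the subgraph of $G^*$ with edge set $\{e^* : e \notin T\}$ contains no cycle. This is where planarity enters. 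A cycle $C^*$ in $G^*$ is a simple closed curve in the plane, so by the Jordan curve theorem it encloses a nonempty proper set $W$ of the faces of $G^*$, that is, of the vertices of $G$. In the standard dual embedding an edge $e$ of $G$ crosses $C^*$ exactly when $e^* \in C^*$, and this happens exactly when the two endpoints of $e$ lie on opposite sides of $C^*$; hence $\{e : e^* \in C^*\}$ is precisely the edge cut of $G$ separating $W$ from $V(G) \setminus W$. Since any nonempty edge cut of a connected graph meets every spanning tree, no such $C^*$ can be built from edges of $\{e^* : e \notin T\}$, and acyclicity follows.

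For the converse direction I would simply invoke the statement just proved with $G^*$ in place of $G$, using that the double planar dual $(G^*)^*$ is isomorphic to $G$ under $e^{**} \mapsto e$ (the reversal of edge orientations noted earlier in the excerpt is immaterial to the underlying cycle matroid). Concretely, if $\{e^* : e \in B\}$ induces a spanning tree of $G^*$, then applying the forward direction to $G^*$ shows that its dual-complement $\{e : e \notin B\}$ induces a spanning tree of $(G^*)^* \cong G$, i.e.\ $E(G) \setminus B$ spans a tree in $G$, so $B$ is a basis of $M^*(G)$. Combining the two directions yields the claimed bijection of bases and hence the theorem.

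The main obstacle is the single geometric lemma that a cycle in $G^*$ corresponds to a nonempty edge cut of $G$; this is the only place where planarity is genuinely used, and it rests on the Jordan curve theorem applied to the embedded cycle $C^*$. Everything else --- Euler's formula, the forest-counting remark, and the double-dual reduction for the converse --- is routine. It is worth noting in passing that the correspondence behaves correctly on degenerate edges (a loop of $G$ is a bridge of $G^*$ and vice versa, each forced out of or into every spanning tree consistently), so no separate treatment of loops or parallel edges is needed.
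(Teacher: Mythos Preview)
Your argument is correct and is essentially the standard textbook proof (via the spanning-tree/cotree correspondence and the cycle--cut duality coming from the Jordan curve theorem). Note, however, that the paper does not supply its own proof of this statement: it is quoted as Theorem~2.3.4 of \cite{Oxl} and used as a black box, so there is no in-paper argument to compare against.
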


There is a structural characterization of when two graphs have isomorphic cycle matroids. Suppose graph $G$ is disconnected, and $v_1,v_2 \in V(G)$ are vertices of $G$ in different connected components. \emph{Vertex identification} identifies these two vertices. Vertex cleaving is the opposite of this, splitting a connected component at a cut vertex. Both operations are shown in Figure~\ref{ident}. 

\begin{figure}[h]
  \centering
      \includegraphics[scale=1.0]{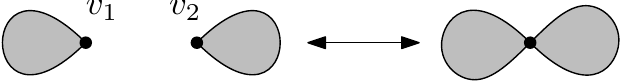}
  \caption{An example of vertex cleaving and vertex identification.}
\label{ident}
\end{figure}

The Whitney flip is an operation on a $2$-vertex cut. Suppose we have graphs $G_1$ and $G_2$, $v_1,v_2 \in V(G_1)$ and $w_1,w_2 \in V(G_2)$. Suppose we create graph $G$ by identifying $v_1$ with $w_1$ and $v_2$ with $w_2$, and we create graph $G'$ by identifying $v_1$ with $w_2$ and $v_2$ with $w_1$. Then, we say that $G$ and $G'$ differ by a Whitney flip. This is shown in Figure~\ref{flipnottwist}.

\begin{figure}[h]
  \centering
      \includegraphics[scale=1.0]{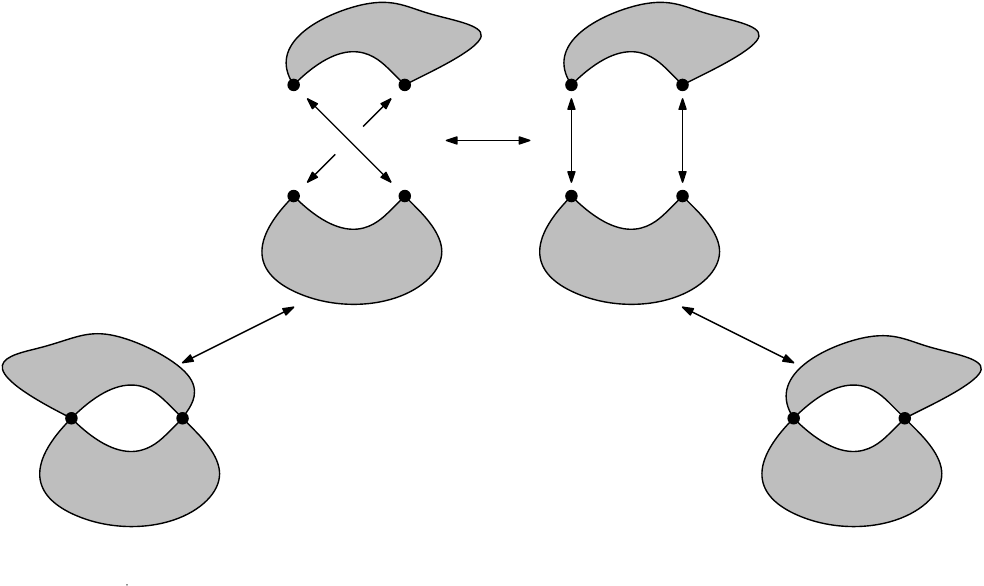}
  \caption{The Whitney flip.}
\label{flipnottwist}
\end{figure}

\newtheorem*{whitney2}{Whitney's $2$-isomorphism theorem}

\begin{whitney2}[\cite{whitneyisomorphism}] Suppose $G_1$ and $G_2$ are two graphs with no isolated vertices. The cycle matroids of $G_1$ and $G_2$ are isomorphic if and only if the graphs $G_1$ and $G_2$ differ by some sequence of vertex cleaving, vertex identification, and Whitney flip operations. \end{whitney2}

Of particular relevance, the Feynman integral is known to be unchanged by these graph operations  (see \cite{bogner}).

\chapter{The Extended Graph Permanent}
\label{Extended}

We move now to the extended graph permanent, which will be the focus of the remainder of this thesis. Content in the following chapters has appeared prior in collaboration with DeVos and Yeats in \cite{pppoaiftposim}, and I was able to expand upon this in \cite{potegp}. The creation of extended graph permanent was motivated by the work of Alon, Linial, and Meshulam in~\cite{AlLiMe}. This chapter introduces this invariant, though first we must introduce some preliminary tools.

\section{Properties of the matrix permanent}\label{egppermanent}

 For notational convenience we will use the \emph{Kronecker product} to construct block matrices. For matrices $A=[a_{i,j}]$ and $B$, $$A \otimes B = \left[ \begin{array}{ccc} a_{1,1}B & a_{1,2} B & \cdots \\ a_{2,1} B & a_{2,2} B & \cdots \\ \vdots && \end{array} \right].$$ We will denote the $n \times m$ matrix with all entries $t$ by $\mathbf{t}_{n \times m}$, or simply $\mathbf{t}_n$ if it is an $n \times n$ square. We denote the $n \times n$ identity matrix as $I_n$, or $I$ if dimension is clear from context.

\begin{definition}\label{permanentdef} Let $A=[a_{i,j}]$ be an $n$-by-$n$ matrix. The \emph{permanent} of $A$ is $$\text{Perm}(A) = \sum_{\sigma \in S_n} \prod_{i=1}^n a_{i,\sigma(i)},$$ where the sum is over all elements of the symmetric group $S_n$. \end{definition}

From this definition of the permanent, we recognize that it is the determinant with no signed factors, comparing to the Leibniz formula for the determinant. In fact, the permanent also can be computed using cofactor expansion, similar to the determinant. For $n \times n$ matrix $M = [m_{x,y}]$, let $N$ be matrix $M$ with row $i$ removed, and $N_j$ the matrix $N$ with column $j$ removed. Cofactor expansion along the $i^\text{th}$ row gives $$\text{Perm}(M) = \sum_{j=1}^n m_{i,j}\text{Perm}(N_j).$$ As $1 \equiv -1 \pmod{2}$, any square matrix $M$ has $\text{Perm}(M) \equiv \det (M) \pmod{2}$, which suggests that the permanent may have some interesting properties modulo integers.

\begin{remark} \label{rowops} From the definition of the permanent, it is clear that we may interchange two rows or columns without affecting the permanent. Further, multiplying a row or column by a constant results in the permanent being multiplied by that constant. \end{remark}

What happens when a multiple of one row is added to another is less clear, and in general not well-behaved. However, there is greater control when using matrices with multiple copies of each row, modulo one more than the number of copies. This will be examined in Proposition~\ref{redlemma} and Corollary~\ref{reduction}.

\begin{lemma}\label{comrow} For an $n \times n$ matrix $M = [m_{x,y}]$, if there is a set $\{a_1,a_2,...,a_k\}$ such that rows $r_{a_1}, r_{a_2}, ..., r_{a_k}$ are equal, there is a factor of $k!$ in the permanent of $M$. \end{lemma}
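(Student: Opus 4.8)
The plan is to prove Lemma~\ref{comrow} by exhibiting a natural action of the symmetric group $S_k$ on the set of permutations $S_n$ that partitions $S_n$ into orbits, each of size exactly $k!$, and on which the summand $\prod_{i=1}^n m_{i,\sigma(i)}$ is constant. This is the permanent analogue of the familiar fact that a matrix with two equal rows has determinant zero; the difference is that in the permanent there are no signs, so instead of the contributions cancelling in pairs they coalesce into blocks of equal size.

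First I would set up the group action. Fix the index set $\{a_1,\dots,a_k\}$ of the equal rows. For $\pi \in S_k$, regarded as a permutation of $\{a_1,\dots,a_k\}$, and $\sigma \in S_n$, define $\pi \cdot \sigma$ to be the permutation that agrees with $\sigma$ off the set $\{a_1,\dots,a_k\}$ and sends $a_j \mapsto \sigma(a_{\pi(j)})$ on that set. One checks this is a well-defined element of $S_n$ (it is a bijection because $\sigma$ restricted to $\{a_1,\dots,a_k\}$ is an injection into $\{\sigma(a_1),\dots,\sigma(a_k)\}$ and we are permuting the inputs), and that it defines a genuine left action of $S_k$ on $S_n$. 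The key point is that the action is free: if $\pi\cdot\sigma = \sigma$ then $\sigma(a_{\pi(j)}) = \sigma(a_j)$ for all $j$, and since $\sigma$ is injective this forces $\pi(j) = j$ for all $j$, i.e. $\pi$ is the identity. A free action of $S_k$ has all orbits of size $|S_k| = k!$, so $S_n$ decomposes into disjoint orbits each of cardinality $k!$.

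Next I would verify that the summand is invariant along each orbit. Because rows $r_{a_1}, \dots, r_{a_k}$ are all equal, $m_{a_j, c} = m_{a_{j'}, c}$ for every column $c$ and all $j, j'$. Hence for any $\pi$,
\[
\prod_{i=1}^n m_{i, (\pi\cdot\sigma)(i)} = \left(\prod_{i \notin \{a_1,\dots,a_k\}} m_{i,\sigma(i)}\right)\prod_{j=1}^k m_{a_j, \sigma(a_{\pi(j)})} = \left(\prod_{i \notin \{a_1,\dots,a_k\}} m_{i,\sigma(i)}\right)\prod_{j=1}^k m_{a_{\pi(j)}, \sigma(a_{\pi(j)})},
\]
where in the last step I replaced the outer row index $a_j$ by $a_{\pi(j)}$, which is legal precisely because all these rows coincide. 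Reindexing the final product by $j' = \pi(j)$ recovers $\prod_{j} m_{a_j,\sigma(a_j)}$, so the whole product equals $\prod_{i=1}^n m_{i,\sigma(i)}$. Thus the summand is constant on orbits. Summing over a set of orbit representatives and multiplying by the common orbit size $k!$ then gives $\text{Perm}(M) = k! \cdot \sum_{[\sigma]} \prod_{i=1}^n m_{i,\sigma(i)}$, exhibiting the claimed factor of $k!$.

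The main obstacle, such as it is, lies in the bookkeeping of the group action: one must be careful that $\pi\cdot\sigma$ really is a permutation (not merely a function) and that composition of actions is associative in the right order, and one must get the reindexing in the invariance step exactly right so that the manipulation ``replace the row label $a_j$ by $a_{\pi(j)}$'' is justified only by the equality of rows and nothing more. An alternative, slightly slicker route that avoids writing out the action explicitly is to observe that $\text{Perm}$ is multilinear in the rows and symmetric under row permutations (Remark~\ref{rowops}), and to expand $\text{Perm}(M)$ by writing each of the $k$ equal rows as a formal variable and then specializing; but the orbit argument above is the most transparent and self-contained, so that is the one I would carry out.
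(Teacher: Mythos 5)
Your proof is correct and is essentially the paper's argument made explicit: the paper factors out $k!$ by restricting to permutations with $\sigma(a_1)<\cdots<\sigma(a_k)$, which are exactly a set of representatives for the orbits of your $S_k$-action, and the invariance of the summand on each orbit is the same use of the equal rows. The only difference is that you spell out the group action and its freeness, whereas the paper states the reindexing in one line.
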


\begin{proof}We may write \begin{align*} \text{Perm}(M) &= \sum_{\sigma \in S_n} \prod_{i=1}^n m_{i,\sigma(i)} \\ &=k! \sum_{\sigma \in S_n^*} \prod_{i=1}^n m_{i,\sigma(i)}, \end{align*} where $S_n^*$ is the set of elements of the symmetric group such that $\sigma(a_1)<\sigma(a_2)< \cdots < \sigma(a_m)$, and the $m!$ term allows for further permutations of these elements.   \end{proof}

\begin{proposition}\label{redlemma} Let $M$ be a $n \times n$ matrix and $r_i$ and $r_j$ rows of $M$, $r_i \neq r_j$ as vectors. Suppose there are $k$ copies of $r_j$ in $M$.  Let $M'$ be a matrix derived from $M$ by adding a constant integer multiple of $r_j$ to $r_i$. Then $\text{Perm}(M) \equiv \text{Perm}(M') \pmod {k+1}$. \end{proposition}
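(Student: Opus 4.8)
The plan is to expand $\text{Perm}(M')$ by multilinearity in the $i^\text{th}$ row. Writing $r_i' = r_i + c\, r_j$ for the modified row (where $c$ is the integer multiple), the permanent is linear in each row, so
\[
\text{Perm}(M') = \text{Perm}(M) + c\,\text{Perm}(M''),
\]
where $M''$ is the matrix obtained from $M$ by replacing row $r_i$ with a copy of $r_j$. Thus it suffices to show $c\,\text{Perm}(M'') \equiv 0 \pmod{k+1}$, and for this it is enough to show $\text{Perm}(M'') \equiv 0 \pmod{k+1}$.

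The key observation is that $M''$ now contains $k+1$ identical rows: the $k$ original copies of $r_j$ together with the new copy placed in position $i$ (these are genuinely distinct row indices since $r_i \neq r_j$ means position $i$ was not already one of the $k$ copies). By Lemma~\ref{comrow}, having $k+1$ equal rows forces a factor of $(k+1)!$ in $\text{Perm}(M'')$. Since $(k+1)! = (k+1)\cdot k!$, this factor is in particular divisible by $k+1$, so $\text{Perm}(M'') \equiv 0 \pmod{k+1}$, and hence $\text{Perm}(M') \equiv \text{Perm}(M) \pmod{k+1}$ as claimed.

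The only point requiring a little care is the multilinearity step: one should note that row-linearity of the permanent follows directly from Definition~\ref{permanentdef}, since each term $\prod_{t=1}^n m_{t,\sigma(t)}$ contains exactly one factor from row $i$, so splitting that row as a sum splits the whole permanent additively, and pulling out the scalar $c$ is Remark~\ref{rowops}. I do not anticipate a real obstacle here; the main thing to get right is the bookkeeping that ensures $M''$ genuinely has $k+1$ (not merely $k$) equal rows, which is exactly where the hypothesis $r_i \neq r_j$ is used.
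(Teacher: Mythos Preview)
Your proof is correct and essentially identical to the paper's: the paper expands the permanent along row $i$ via cofactor expansion to obtain $\text{Perm}(M') = \text{Perm}(M) + c\,\text{Perm}(M'')$, then invokes Lemma~\ref{comrow} on the $k+1$ equal rows of $M''$. Your use of row-linearity is the same step phrased slightly differently, and your remark on where the hypothesis $r_i \neq r_j$ is used is a nice clarification that the paper leaves implicit.
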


\begin{proof}   Suppose that $$M = [m_{x,y}] = \left[ \begin{array}{c} r_1 \\ r_2 \\ \vdots  \end{array} \right], \text{ } M' = [{m'}_{x,y}] =  \left[ \begin{array}{c} r_1 \\ \vdots \\ r_i +cr_j  \\ \vdots  \end{array} \right].$$ Define $N$ as the matrix $M$ with row $i$ removed. We will use $N_t$ to denote the matrix $N$ with column $t$ removed. By cofactor expansion along the $i^\text{th}$ row, \begin{align*} \text{Perm}\left(M\right) &= \sum_{t=1}^{n} m_{i,t}  \text{Perm}\left(N_t\right),\\ \text{Perm}\left(M'\right) &= \sum_{t=1}^{n} {m'}_{i,t} \text{Perm}(N_t) \\ &=  \sum_{t=1}^{n}(m_{i,t} + cm_{j,t}) \text{Perm}(N_t) \\ &=  \text{Perm}\left(M\right) + c  \text{Perm} \left[ \begin{array}{c} r_1 \\ \vdots \\ r_{i-1} \\ r_j \\ r_{i+1} \\ \vdots  \end{array} \right]  .\end{align*} As this last matrix has $k+1$ copies of row $r_j$, it has permanent congruent to zero modulo $k+1$ by Lemma~\ref{comrow}.\end{proof}

\begin{corollary}\label{reduction} Suppose $M = \mathbf{1}_{k \times 1} \otimes K$ is a square block matrix for some matrix $K$, and $r_i$ and $r_j$ are rows of $M$ in a common block, $i \neq j$. Let $M'$ be a matrix derived from $M$ by adding a constant integer multiple of $r_j$ to $r_i$ in each block. Then $\text{Perm}(M) \equiv \text{Perm}(M') \pmod {k+1}$. \end{corollary}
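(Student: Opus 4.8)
The plan is to reduce Corollary~\ref{reduction} to Proposition~\ref{redlemma} by carrying out the row operation one block at a time. The key observation is that in the block matrix $M = \mathbf{1}_{k\times 1}\otimes K$, each row of $K$ appears exactly $k$ times: once in each of the $k$ vertically stacked copies of $K$. So if $r_i$ and $r_j$ are rows of $M$ lying in a common block (i.e.\ coming from rows $i'$ and $j'$ of $K$ with $i'\neq j'$, within the same copy), then within all of $M$ there are $k$ copies of the row $r_j$ (the copies of the $j'$-th row of $K$ across all blocks), and likewise $k$ copies of $r_i$.

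First I would set up a sequence of intermediate matrices $M = M^{(0)}, M^{(1)}, \ldots, M^{(k)} = M'$, where $M^{(\ell)}$ is obtained from $M^{(\ell-1)}$ by adding $c$ times the copy of $r_j$ in block $\ell$ to the copy of $r_i$ in block $\ell$. Thus $M^{(k)}$ has the multiple of $r_j$ added to $r_i$ in every block, which is exactly $M'$.

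Next, I would apply Proposition~\ref{redlemma} to each single step $M^{(\ell-1)} \to M^{(\ell)}$. The one subtlety is that Proposition~\ref{redlemma} requires the target row and the row being added to be \emph{distinct as vectors}, and requires counting the number of copies of the added row. For the step in block $\ell$: the row being added is the copy of $r_j$ in block $\ell$, and even after $\ell-1$ earlier modifications this row is unchanged (we only ever modify copies of $r_i$, never copies of $r_j$), so $M^{(\ell-1)}$ still contains all $k$ copies of $r_j$ intact. Hence the hypothesis "there are $k$ copies of $r_j$" is met, and Proposition~\ref{redlemma} gives $\text{Perm}(M^{(\ell-1)}) \equiv \text{Perm}(M^{(\ell)}) \pmod{k+1}$. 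One should also check $r_i \neq r_j$ as vectors is inherited — this holds because $i'\neq j'$ and, again, the copies of the $j'$-th row of $K$ are never altered while only some copies of the $i'$-th row are, so in $M^{(\ell-1)}$ the relevant two rows still differ (in the untouched later blocks, or one can just note the hypothesis $r_i\neq r_j$ from the statement transfers since the block-$\ell$ copies are both still their original values before the $\ell$-th step). Chaining these $k$ congruences gives $\text{Perm}(M)\equiv\text{Perm}(M')\pmod{k+1}$, completing the proof.

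The main obstacle — really the only thing to be careful about — is verifying that the hypotheses of Proposition~\ref{redlemma} remain valid at each intermediate step, specifically that no earlier row operation has destroyed a copy of the row $r_j$ being used or accidentally made $r_i$ equal to $r_j$. Since every operation in the chain modifies only a copy of $r_i$ (never a copy of $r_j$), the full set of $k$ copies of $r_j$ survives throughout, so this bookkeeping goes through cleanly; there is no heavier machinery required.
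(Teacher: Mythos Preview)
Your proposal is correct and is precisely the argument the paper leaves implicit (the corollary is stated without proof, as an immediate consequence of Proposition~\ref{redlemma}). Iterating the proposition once per block and chaining the $k$ congruences, together with your observation that the $k$ copies of $r_j$ survive every intermediate step since only copies of $r_i$ are ever modified, is exactly the intended deduction.
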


This construction -- matrices of the form $\mathbf{1}_{k \times 1} \otimes K$ and the permanent modulo $k+1$ -- will be the foundation of our invariant. Corollary~\ref{reduction} shows that row reduction techniques, applied to each block simultaneously, act on the permanent of this matrix as one would desire.

\begin{corollary}\label{oddcase} For non-prime $k + 1$, the permanent of any matrix $\mathbf{1}_{k \times 1} \otimes K$ where $K$ has at least two rows is zero modulo $k + 1$.   \end{corollary}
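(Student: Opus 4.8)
The plan is to exhibit a divisor of $\text{Perm}(M)$ that is visibly a multiple of $k+1$ whenever $k+1$ is composite, by using the repeated-row structure of $M$ more aggressively than a single application of Lemma~\ref{comrow} allows. Write $M = \mathbf{1}_{k\times 1}\otimes K$ with $K$ an $r\times rk$ matrix (the column count is forced by squareness), where $r\geq 2$; thus $M$ is $k$ stacked copies of $K$. Fix the first two \emph{row positions} of $K$. Their copies inside $M$ occupy two disjoint index sets $R_1,R_2\subseteq\{1,\dots,rk\}$, each of size $k$, and every row of $M$ indexed by $R_1$ equals every other row indexed by $R_1$, and likewise for $R_2$ — note this uses only the positions, so it holds even if the first two rows of $K$ coincide as vectors.

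First I would upgrade the orbit-counting idea already present in the proof of Lemma~\ref{comrow} so that it handles two disjoint blocks of equal rows simultaneously. Let $G$ be the group of permutations of $\{1,\dots,rk\}$ that permute the indices within $R_1$ among themselves, permute the indices within $R_2$ among themselves, and fix every other index; so $|G| = (k!)^2$ and $G$ acts freely on $S_{rk}$ by precomposition $\sigma\mapsto \sigma\circ\pi$. Reindexing the product shows $\prod_i m_{i,\sigma\pi(i)} = \prod_i m_{\pi^{-1}(i),\sigma(i)}$, and since $\pi^{-1}$ only moves indices within $R_1$ or within $R_2$, where all corresponding rows of $M$ agree, this equals $\prod_i m_{i,\sigma(i)}$. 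Hence the defining summand of the permanent is constant on each $G$-orbit, every orbit has size $(k!)^2$, and therefore $(k!)^2 \mid \text{Perm}(M)$.

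It then remains to verify the purely number-theoretic fact that $(k+1)\mid (k!)^2$ for every composite $k+1$ (the degenerate case $k+1=1$ being immediate). This rests on the standard statement that a composite integer $n$ divides $(n-1)!$ with the single exception $n=4$ (the exception is what makes primes behave differently here, in the spirit of Wilson's theorem): if $k+1\neq 4$ then $k+1\mid k!\mid (k!)^2$, and if $k+1=4$ then $4\mid 36 = (3!)^2$. Combining the two halves of the argument gives $(k+1)\mid (k!)^2\mid \text{Perm}(M)$, i.e.\ $\text{Perm}(M)\equiv 0 \pmod{k+1}$.

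The main obstacle is precisely the exceptional value $k+1=4$: there $k! = 6$ is not divisible by $4$, so the lone factor of $k!$ that Lemma~\ref{comrow} supplies is insufficient, and this is exactly what forces the refinement to two disjoint blocks and the stronger bound $(k!)^2$. The hypothesis that $K$ has at least two rows enters at exactly this point, since it is what guarantees two disjoint blocks of $k$ mutually equal rows are available.
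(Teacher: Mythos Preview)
Your proof is correct and follows essentially the same approach as the paper's. The paper's proof is simply the two-sentence version: it invokes Lemma~\ref{comrow} to claim $(k!)^2$ divides the permanent (relying on the two disjoint blocks of $k$ equal rows coming from the first two rows of $K$), then factors $k+1=ab$ with $a,b>1$ and notes that both $a$ and $b$ appear in $k!$, so $ab\mid (k!)^2$. Your version spells out the group action justifying the $(k!)^2$ divisibility more carefully and isolates the $k+1=4$ case explicitly, which is a welcome clarification since the paper's phrasing ``both appear in the product $k!$'' is terse when $a=b$.
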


\begin{proof} As $K$ has at least two rows, $k!^2$ is a factor in the permanent by Lemma~\ref{comrow}. Factoring $k+1 = ab$ where $a,b>1$, both appear in the product $k!$, and the result follows. \end{proof}

\section{The extended graph permanent}\label{egpegp}

\begin{definition} Let $G$ be a connected graph. Arbitrarily apply directions to the edges in $G$, and  let $M^*_G$ be the signed incidence matrix associated with this digraph; columns indexed by edges and rows by vertices. Select a vertex $v$ in $V(G)$ and delete the row indexed by $v$ in $M_G^*$. Call this new matrix $M_G$, and call $v$ the \emph{special vertex}. Define the \emph{fundamental block matrix of $M_G$} (or \emph{a fundamental block matrix of $G$}, dependent on the orientation and choice of $v$), $\overline{M}_G$, to be the smallest square matrix that can be created using blocks of $M_G$. That is, the smallest values of $m$ and $n$ such that $\mathbf{1}_{m \times n} \otimes M$ is square. Graphs with $|E(G)| = k(|V(G)|-1)$ for some $k \in \mathbb{N}$ will be of particular interest, so we define the \emph{$k$-matrix} of any matrix $M$ to be the block matrix $\mathbf{1}_{k \times 1} \otimes M$. \end{definition}

\begin{example} Consider the graph $K_3$, shown below. We select the marked vertex as the special vertex and orient as indicated. This results in the fundamental matrix $\overline{M}_G$.
$$G = \raisebox{-.48\height}{\includegraphics{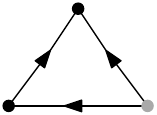}} \hspace{5mm} \overline{M}_G= \left[ \begin{array}{ccc|ccc} 1 & 0 & 1 & 1 & 0 & 1  \\ -1 & 1 & 0 & -1 & 1 & 0 \\ \hline 1 & 0 & 1 & 1 & 0 & 1  \\ -1 & 1 & 0 & -1 & 1 & 0 \\ \hline 1 & 0 & 1 & 1 & 0 & 1  \\ -1 & 1 & 0 & -1 & 1 & 0  \end{array} \right]$$

Similarly, for $K_4$, we produce the following fundamental matrix.

$$G = \raisebox{-.48\height}{\includegraphics[scale=.9]{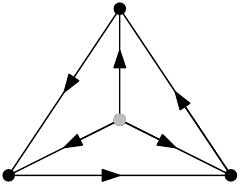}} \hspace{5mm} \overline{M}_G= \left[ \begin{array}{cccccc} 
1 & 0 & 0 & 1 & -1 & 0  \\ 
0 & 1 & 0 & -1 & 0 & 1 \\ 
0 & 0 & 1 & 0 & 1 & -1 \\ \hline
1 & 0 & 0 & 1 & -1 & 0  \\ 
0 & 1 & 0 & -1 & 0 & 1 \\ 
0 & 0 & 1 & 0 & 1 & -1  \end{array} \right]$$

\end{example}

Any decompleted $4$-regular graph will have a fundamental matrix that is a $2$-matrix, as for all graphs $G$ in this family, $|E(G)| = 2(|V(G)|-1)$. In fact, any fundamental block matrix is indeed a $k$-matrix, as $$\mathbf{1}_{m \times n} \otimes M = \mathbf{1}_{m \times 1} \otimes \left( \mathbf{1}_{1 \times n} \otimes M  \right).$$

\begin{proposition} \label{specialchoice} The choice of special vertex may only affect the overall sign of the permanent modulo $k+1$  in a $k$-matrix. If $k$ is odd, changing the special vertex results in a sign change. If $k$ is even, changing the special vertex has no effect on the permanent. \end{proposition}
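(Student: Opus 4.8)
The plan is to give an explicit sequence of operations, each permanent‑preserving modulo $k+1$ by the results of Section~\ref{egppermanent}, that carries the $k$-matrix built from the special vertex $v$ onto the $k$-matrix built from a different special vertex $w$, at the cost of a single negated row in each block. Write $r_u$ for the row of the full signed incidence matrix $M^*_G$ indexed by the vertex $u$. The one structural fact I will use is that each column of $M^*_G$ has exactly one entry $+1$ and one entry $-1$, so $\sum_{u \in V(G)} r_u = \mathbf{0}$; equivalently $r_v = -\sum_{u \ne v} r_u$.

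First I would fix $v$ as the special vertex and work inside the $k$-matrix $\mathbf{1}_{k \times 1}\otimes M_G$, whose rows are $k$ copies each of $\{r_u : u \in V(G)\setminus\{v\}\}$. In each block, successively add $r_u$ to the distinguished row (the one initially equal to $r_w$), ranging over all $u \in V(G)\setminus\{v,w\}$. Each step is an instance of Corollary~\ref{reduction} with multiplier $1$, and the modulus it produces is exactly $k+1$ because every row of the $k$-matrix occurs precisely $k$ times throughout; hence the permanent is unchanged modulo $k+1$ at each step. The hypothesis $r_i \ne r_j$ of Proposition~\ref{redlemma} never fails: every intermediate row has the form $r_w + \sum_{u\in S} r_u$ with $S \subseteq V(G)\setminus\{v,w\}$, and this equals another row only via a nontrivial linear dependence among $\{r_u : u \ne v\}$, which is impossible since those rows are linearly independent for connected $G$. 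After all the steps the distinguished row of each block equals $r_w + \sum_{u\ne v,w} r_u = \sum_{u\ne v} r_u = -r_v$, so the matrix is now $\mathbf{1}_{k \times 1}\otimes N$ with $N$ having rows $\{r_u : u \in V(G)\setminus\{v,w\}\} \cup \{-r_v\}$.

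Now I would compare $\mathbf{1}_{k \times 1}\otimes N$ with $\mathbf{1}_{k \times 1}\otimes M'_G$, where $M'_G$ is the reduced signed incidence matrix with special vertex $w$, whose rows are $\{r_u : u \in V(G)\setminus\{v,w\}\}\cup\{r_v\}$. Up to permuting rows within each block — a row permutation of the whole matrix, harmless for the permanent by Remark~\ref{rowops} — the only difference is that the copy of $r_v$ is negated in each of the $k$ blocks of $N$. Since negating a single row of a square matrix multiplies the permanent by $-1$ (Remark~\ref{rowops}), we get $\text{Perm}(\mathbf{1}_{k \times 1}\otimes N) = (-1)^k\,\text{Perm}(\mathbf{1}_{k \times 1}\otimes M'_G)$. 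Combining with the previous step, $\text{Perm}(\mathbf{1}_{k \times 1}\otimes M_G) \equiv (-1)^k\,\text{Perm}(\mathbf{1}_{k \times 1}\otimes M'_G) \pmod{k+1}$, which is the claimed sign change when $k$ is odd and equality when $k$ is even; any two choices of special vertex are related directly in this way.

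There is no serious obstacle here: the argument is short, and the only care needed is bookkeeping — verifying that the modulus appearing in Corollary~\ref{reduction} is $k+1$ at every step (it is, by the constant row multiplicities of a $k$-matrix) and that the successive row additions never produce a coincidence of rows forbidden by Proposition~\ref{redlemma}, which is handled by the linear independence of the rows of $M_G$.
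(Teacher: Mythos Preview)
Your proof is correct and follows essentially the same approach as the paper: use the relation $\sum_u r_u = 0$ to convert the row $r_w$ into $-r_v$ via successive row additions (invoking Corollary~\ref{reduction}), then account for the $k$ sign flips from negating one row per block. You are in fact more careful than the paper in verifying that the $r_i \neq r_j$ hypothesis of Proposition~\ref{redlemma} holds at each intermediate step, which the paper leaves implicit.
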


\begin{proof} For signed incidence matrix $M^*$, let $r_1, ... , r_n$ be the rows associated to vertices $1,...,n$ in the original graph $G$, and suppose vertex $i$ is the special vertex, $i \in \{1,...,n\}$. Then, $$r_i = -(r_1 + r_2 + \cdots + r_{i-1} + r_{i+1} + \cdots + r_n),$$ a property of the signed incidence matrix. For all blocks in $M$, we may therefore turn row $r_j$, $i \neq j$, into row $r_i$ using the above equation. By Remark~\ref{rowops} and Corollary~\ref{reduction}, only the multiplication of a row in each block by $-1$ affects the permanent modulo $k+1$, flipping the overall sign once for each block. This produces the $k$-matrix where $j$ was the special vertex. The permanent is unaffected if $k$ is even, and multiplied by $-1$ otherwise.\end{proof}

We use Proposition~\ref{specialchoice} and Wilson's Theorem to show that the permanent of a fundamental matrix is invariant under choice of special vertex.

\begin{wilson} A number $p$ is prime if and only if $(p-1)! \equiv -1\pmod{p}$. For a composite number $n>4$, $(n-1)! \equiv 0 \pmod{n}$. \end{wilson}

\begin{theorem}\label{specialinvariant} Suppose $G$ is a graph, $M$ is a reduced signed incidence matrix associated to $G$, and consider the square matrix $N=\mathbf{1}_r \otimes \overline{M}$. Suppose every row in $N$ appears $k$ times, where $k$ is some multiple of $r$ by construction. With a fixed orientation to the edges, the permanent of this $k$-matrix $N$ is invariant under choice of special vertex modulo $k+1$. \end{theorem}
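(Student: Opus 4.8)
The plan is to leverage Proposition~\ref{specialchoice} together with Wilson's Theorem. By Proposition~\ref{specialchoice}, switching the special vertex can change the permanent of a $k$-matrix only by an overall sign, and only when $k$ is odd; when $k$ is even the permanent is literally unchanged modulo $k+1$, so there is nothing to prove in that case. So the entire content of the theorem is to handle the situation where $k$ is odd, where a priori the permanent could be replaced by its negative when we change the special vertex. The idea is that for odd $k$, the permanent of such a $k$-matrix is forced to be congruent to $0$ modulo $k+1$, so that $x \equiv -x \pmod{k+1}$ is automatic and the sign ambiguity disappears.

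First I would observe that for any graph with at least two vertices (equivalently, $\overline{M}$ has at least two rows), the $k$-matrix $N = \mathbf{1}_r \otimes \overline{M}$ has every row repeated $k$ times, where $k$ is the number of blocks of $M$ stacked (counting the internal repetitions in $\overline{M}$). Since there are at least two distinct rows of $\overline{M}$, each appearing $k$ times, Lemma~\ref{comrow} gives a factor of $(k!)^2$ in $\mathrm{Perm}(N)$ — in fact one factor of $k!$ for each distinct row class, and we only need two of them. So $\mathrm{Perm}(N) \equiv 0 \pmod{d}$ whenever $d \mid (k!)^2$. Now split into two subcases according to whether $k+1$ is prime or composite. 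If $k+1$ is composite and $k+1 > 4$, Wilson's Theorem gives $k! \equiv 0 \pmod{k+1}$, so already $\mathrm{Perm}(N) \equiv 0 \pmod{k+1}$, and a fortiori the permanent is independent of the special vertex. (The small composite case $k+1 = 4$, i.e.\ $k = 3$, is covered directly since $(3!)^2 = 36$ is divisible by $4$; and Corollary~\ref{oddcase} already dispatches the composite case in general.)

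Next, the remaining case is $k+1 = p$ prime with $k$ odd, i.e.\ $p$ an odd prime. Here Wilson's Theorem gives $k! = (p-1)! \equiv -1 \pmod p$, so the factor $(k!)^2 \equiv 1 \pmod p$ tells us nothing directly. Instead I would argue as follows: by Proposition~\ref{specialchoice}, changing the special vertex multiplies $\mathrm{Perm}(N)$ by $(-1)^r$ modulo $p$, where $r$ is the number of stacked blocks. If $r$ is even there is again nothing to prove. If $r$ is odd, then... here is where I need to be slightly more careful — I want to conclude $\mathrm{Perm}(N) \equiv -\mathrm{Perm}(N) \pmod p$ forces $\mathrm{Perm}(N) \equiv 0$, but that is exactly the statement that needs proof, so this is circular unless I produce an independent reason the permanent vanishes. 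The cleanest route: observe that $k = r \cdot (\text{rows of }M)/(\text{rows of }\overline M)$... more simply, $k$ is a multiple of $r$, and if $r$ is odd while we need $k$ odd too. Actually the honest resolution is that the relevant sign in Proposition~\ref{specialchoice} is $(-1)^{(\text{number of blocks})}$ and the number of blocks equals $k$ divided by the number of repetitions already inside $\overline{M}$; I would track these multiplicities precisely to show the sign flips by $(-1)^k$ when $k$ is odd, i.e.\ it genuinely flips.

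The main obstacle, then, is precisely this odd-$k$, $p$-prime case, and the resolution must be genuinely independent of the sign argument. I expect the right fix is: since $\overline{M}$ is the \emph{smallest} square block matrix, and $N = \mathbf 1_r \otimes \overline M$ with every row appearing $k = r \cdot (\text{multiplicity in }\overline M)$ times, one shows that in fact the number of blocks of $M$ appearing in $N$ is itself congruent to $k$ or has the same parity, so Proposition~\ref{specialchoice}'s sign is $(-1)^k = -1$; combined with the fact that we must \emph{also} show the two special-vertex permanents are actually equal (not just negatives), the only way out is $\mathrm{Perm}(N) \equiv 0 \pmod p$. To get that vanishing honestly for odd prime $p$ one can invoke a parity/involution argument on the permutations $\sigma \in S_n$ contributing to $\mathrm{Perm}(N)$: pairing each $\sigma$ with $\sigma$ composed with a transposition of two equal rows' column-images gives a fixed-point-free involution whenever the orbit structure is non-degenerate, and the surviving diagonal terms organize into blocks of size divisible by $p$. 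I would verify that this pairing has no fixed points here because $N$ has at least two identical rows and the entries come from $\{0,\pm1\}$ with the block structure preventing a term from being its own partner modulo $p$ — this bookkeeping is the part I expect to be fiddly, but it is the crux that makes the theorem true rather than vacuous.
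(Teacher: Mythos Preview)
Your proposal contains a fundamental parity error that derails the entire second half. You write that the remaining hard case is ``$k+1 = p$ prime with $k$ odd, i.e.\ $p$ an odd prime.'' But if $k$ is odd then $k+1$ is \emph{even}, so the only prime value $k+1$ can take is $2$; and modulo $2$ the sign flip $x \mapsto -x$ is the identity. The ``odd prime $p$'' case you struggle with, and the involution argument you sketch for it, are aimed at a situation that never arises.

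The paper's proof turns on exactly this observation. Once $k$ is odd, $k+1$ is either $2$ (trivial) or an even composite, and for composite $k+1$ with $|V(G)| > 2$ Corollary~\ref{oddcase} already gives $\mathrm{Perm}(N) \equiv 0 \pmod{k+1}$, so the sign flip from Proposition~\ref{specialchoice} is harmless. The only case left is $|V(G)| = 2$ with $k$ odd, which the paper dispatches by direct computation: the matrix is a $k \times k$ block of $\pm 1$'s with permanent $\pm k!$, and one checks $k! \equiv -k! \pmod{k+1}$ using Wilson's Theorem for $k+1 > 4$ and by hand for $k \in \{1,3\}$.

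You also misidentify when ``at least two distinct rows'' is available: that requires $|V(G)| \geq 3$, not $|V(G)| \geq 2$. When $|V(G)| = 2$ the reduced incidence matrix has a single row, so only one factor of $k!$ (not $(k!)^2$) is guaranteed by Lemma~\ref{comrow} --- which is precisely why the two-vertex case needs its own treatment. Finally, your overall strategy ``the permanent must be $0$ modulo $k+1$ whenever $k$ is odd'' is false as stated: for $k=3$, $|V(G)|=2$, the permanent is $\pm 6 \equiv 2 \pmod 4$, nonzero but still fixed under negation.
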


\begin{proof} By Corollary~\ref{oddcase} and Proposition~\ref{specialchoice}, the permanent is invariant modulo $k+1$ under choice of special vertex if $k$ is even or $k$ is odd and $|V(G)|>2$. As a graph with only a single vertex creates an empty matrix, it remains to be shown that the permanent is invariant if $|V(G)| = 2$ and $k+1$ is even.

 Suppose then that $G$ is a graph with two vertices and $k$ parallel edges. Applying an arbitrary orientation to the edges, the $k$-matrix of this graph has entirely nonzero entries, each column either all $1$ or all $-1$. Thus, it trivially has permanent $\pm k!$. As $k+1$ is even, it follows from Wilson's Theorem that $k! \equiv 0 \pmod{k+1}$ if $k+1 > 4$. Hence, we need only consider $k \in \{1,3\}$. As $\pm 1! \equiv 1 \pmod{2}$ and $\pm 3! \equiv 2 \pmod{4}$, the permanent is invariant under any choice made in constructing the $k$-matrix, as desired. \end{proof}

Noted prior, the fundamental matrix of any graph is a $k$-matrix for some $k$, as $\mathbf{1}_{k \times m} \otimes M = \mathbf{1}_{k \times 1} \otimes \left( \mathbf{1}_{1 \times m} \otimes M  \right)$. From this construction, the matrix $\mathbf{1}_{1 \times m}\otimes M$ may be viewed as the reduced signed incidence matrix of a graph with a number of edges added in parallel to the original graph. The following proposition is a restatement of Corollary~\ref{oddcase}, but in the language of fundamental matrices of graphs. This method of duplicating edges will be of use throughout this thesis, and will be discussed in greater detail in Remark~\ref{altrem}.

\begin{proposition} \label{prime} For non-prime $k+1$, the permanent of any square $k$-matrix associated to a graph $G$ with $|V(G)|>2$ is zero modulo $k+1$. \end{proposition}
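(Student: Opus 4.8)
The plan is to recognize this proposition as a direct reformulation of Corollary~\ref{oddcase} in the vocabulary of fundamental matrices. Corollary~\ref{oddcase} states that for non-prime $k+1$, the permanent of any matrix $\mathbf{1}_{k \times 1} \otimes K$ is zero modulo $k+1$ whenever $K$ has at least two rows. So the entire task reduces to verifying that a square $k$-matrix associated to a graph $G$ is precisely a matrix of this shape, with the inner matrix having at least two rows exactly when $|V(G)| > 2$.

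First I would recall that the $k$-matrix of a reduced signed incidence matrix $M_G$ is, by definition, $\mathbf{1}_{k \times 1} \otimes M_G$; and that any fundamental block matrix of $G$ is itself a $k$-matrix for some $k$, using the identity $\mathbf{1}_{k \times m} \otimes M = \mathbf{1}_{k \times 1} \otimes (\mathbf{1}_{1 \times m} \otimes M)$ already noted in the excerpt. The inner matrix here, either $M_G$ or $\mathbf{1}_{1 \times m} \otimes M_G$, has exactly $|V(G)| - 1$ rows (the reduced signed incidence matrix has one row per non-special vertex, and the Kronecker product $\mathbf{1}_{1 \times m} \otimes M_G$ stacks columns, not rows). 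Hence the inner matrix has at least two rows if and only if $|V(G)| - 1 \geq 2$, i.e. $|V(G)| > 2$.

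Next I would apply Corollary~\ref{oddcase} verbatim: with $k+1$ non-prime and the inner matrix $K$ having at least two rows, $\text{Perm}(\mathbf{1}_{k \times 1} \otimes K) \equiv 0 \pmod{k+1}$. Since $k+1$ non-prime includes the composite cases (and the corollary's proof only used that $k!$ contains a proper factorization $ab = k+1$ with $a, b > 1$, which holds for composite $k+1 > 4$, and one should note that $k+1 = 4$ also works since $k! = 6 \equiv 2$, hmm — actually here one must be slightly careful, as $4 = 2 \cdot 2$ and both $2$'s appear in $3! = 6$, so $6 \equiv 2 \pmod 4$ is not zero; but Corollary~\ref{oddcase} as stated applies with $K$ having at least two rows so $k!^2$ is the factor, and $36 \equiv 0 \pmod 4$), the conclusion follows. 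I do not anticipate a genuine obstacle here: the only subtlety is the bookkeeping around small composite $k+1$ (notably $k+1 = 4$), which is why the hypothesis $|V(G)| > 2$ forcing \emph{two} rows — and hence the \emph{squared} factor $k!^2$ in the permanent — is essential and already handled inside Corollary~\ref{oddcase}. The main care point is simply making the row-count argument precise, ensuring that taking Kronecker products with $\mathbf{1}_{1 \times m}$ on the left does not change the number of rows of the inner factor.
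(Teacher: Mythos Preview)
Your proposal is correct and matches the paper's own treatment exactly: the paper states explicitly that this proposition is ``a restatement of Corollary~\ref{oddcase}, but in the language of fundamental matrices of graphs,'' and your reduction (identifying the $k$-matrix as $\mathbf{1}_{k\times 1}\otimes K$ with $K$ having $|V(G)|-1\geq 2$ rows) is precisely that translation. Your careful handling of the $k+1=4$ case via the $k!^2$ factor is also exactly the content of the proof of Corollary~\ref{oddcase}.
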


From Proposition~\ref{prime}, we see that only prime residues are of interest when computing permanents of fundamental matrices for non-trivial graphs. The following classical theorem, coupled with Proposition~\ref{prime}, is key to our construction of sequences based on the permanent.

\newtheorem*{dirichlet}{Dirichlet's Theorem}
\begin{dirichlet} For relatively prime $a$ and $b$, the sequence $(an+b)_{n \in \mathbb{N}}$ contains infinitely many primes. \end{dirichlet}

It follows that there are infinitely many primes of the form $an+1$ for arbitrary positive integer $a$.

\begin{definition} Let $G$ be a graph and $\overline{M}_G = \mathbf{1}_{k \times m} \otimes M_G$ a fundamental matrix of $G$. Let $(p_i)_{i \in \mathbb{N}} $ be the increasing sequence of all primes that can be written $p_i = n_ik+1$ for some non-negative integer $n_i$. Then, matrix $\mathbf{1}_{n_i} \otimes \overline{M}_G$ is square and each row appears $n_ik$ times. As such, the permanent of $\mathbf{1}_{n_i} \otimes \overline{M}_G$ is well-defined modulo $n_ik+1 =p_i$. Call this residue the \emph{$p_i^\text{th}$ graph permanent}, $\text{GPerm}^{[p_i]}(G)$. Define the \emph{extended graph permanent} for $G$ as the sequence $$\left( \text{GPerm}^{[p_i]}(G) \right)_{i \in \mathbb{N}}.$$ \end{definition}

It is important to note that this sequence is not in general trivial, or in some obvious way based entirely on the smallest term -- see Appendix~\ref{chartofgraphs} for justification of this. Trees, which uniquely produce sequences that have values at all primes, are a special class that will be discussed in Section~\ref{treesnshit}. The $4$-point graphs in $\phi^4$ theory, our motivating class, produce sequences with values at all odd primes. 

The extended graph permanent relies on the arbitrary orientation of edges in a graph in the construction of the matrix. As changing the orientation is equivalent to multiplying a column of the signed incidence matrix by $-1$, there is potentially a sign ambiguity associated to this permanent. However, as the definition fixes an orientation for all copies of the edge-defined columns, this sign ambiguity occurs only over primes that require an odd number of duplications of columns, and the ambiguity affects all values of this type together. The sign ambiguity will be discussed in greater detail in Section~\ref{signambiguity}.

\begin{remark} For graph $G$, note that the reduced signed incidence matrix has dimensions $\left( |V(G)|-1 \right) \times |E(G)|$. Let $L = \lcm (|V(G)|-1,|E(G)|)$. A fundamental matrix $\overline{M}_G$ therefore has dimensions $L \times L$. Let $\E = \frac{L}{|E(G)|}$ and $\V = \frac{L}{|V(G)|-1}$. Each row in $\overline{M}_G$ appears $\V$ times, and each column $\E$ times. The extended graph permanent for $G$ is defined over primes of the form $p = \V n+1$ for some integer $n$. \end{remark}

\begin{remark}\label{altrem} We may alternately define the extended graph permanent in a more structural setting. Create the graph $G^{[n]}$ by replacing all edges in $G$ with $n$ edges in parallel. Let $M_n$ be a signed incidence matrix of $G^{[n]}$ with some choice of special vertex deleted, such that all edges in parallel are oriented in the same direction. Then, when there are values $k,n \in \mathbb{N}$ such that $\mathbf{1}_{k \times 1} \otimes M_n$ is square and $k+1 = p$ is prime, $\text{GPerm}^{[p]}(G) = \text{GPerm}^{[p]}(G^{[n]}) \equiv  \text{Perm}(\mathbf{1}_{k \times 1}\otimes M_n) \pmod{p}$. \end{remark}

\begin{remark} \label{connected} While the definition of the extended graph permanent makes no mention of connectedness of the graph, a connected component that does not contain the special vertex will cause the permanent to vanish for all primes. This is consistent with the quantum field theory motivation.  

If we instead require one special vertex per connected component, the matrix again becomes full rank. It is impossible in this matrix to differentiate between this disconnected graph and a similar connected graph where the special vertex in each connected component is identified, resulting in a cut vertex. By Theorem~\ref{specialinvariant}, we may therefore cleave a graph at a cut vertex, switch which vertex is special in each component, and then identify the special vertices again. \end{remark}

\section{A graphic interpretation of the extended graph permanent}\label{graphicegp}

\begin{remark}\label{excludeloops} A graph with a loop edge has a reduced signed incidence matrix with a column that is equal to the zero vector, per standard graph theory conventions. As such the permanent of this matrix is equal to zero. Hence, we will generally ignore these graphs. In this section in particular, they must be expressly forbidden.  \end{remark}

Recall that for an $n \times n$ matrix $A$, the permanent of $A$ is $\text{Perm}(A) = \sum_{\sigma \in S_n} \prod_{i=1}^n a_{i,\sigma(i)}$.  If a particular $\sigma \in S_n$ is such that $ \prod_{i=1}^n a_{i,\sigma(i)} \neq 0$, we will say that it \emph{contributes} to the permanent.

Consider a graph $G$ and an associated fundamental matrix $M_G$ with special vertex $v \in V(G)$. Per Remark~\ref{altrem}, we allow for the possibility that $G = (G')^{[n]}$ for some graph $G'$. Suppose each row in $M_G$ appears $k$ times. For each contribution to the permanent, precisely one nonzero value is selected from each row and similarly from each column. Fix such a contribution. Given the block structure that is used to create matrix $M_G$, we may associate each block of rows with a unique colour. Then, each edge is selected once, and each non-special vertex $k$ times. Assign colour $c$ to an edge if the contribution uses a value in the associated column that is in the $c^\text{th}$ block. For each coloured edge, assign a tag on the edge close to the vertex that uses that edge in $M_G$.

\begin{example}  Consider the graph $K_4$, drawn below, and an associated signed incidence matrix. We include the special vertex in the matrix for completeness in this example. The selection of entries in this matrix that form a contribution is shown, entries in the first block coloured black, and those in the second block coloured grey.  The colours and tags are indicated on the graph.

\centering \includegraphics[scale=.95]{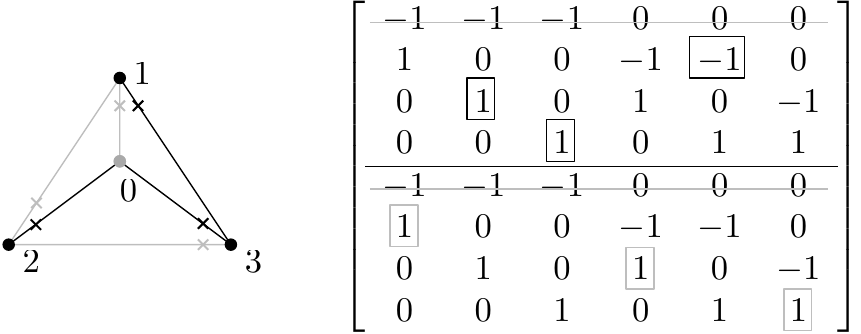}
\end{example}

Note from this construction that the special vertex cannot receive a tag. All other vertices must receive precisely $k$ tags, one on an edge of each colour. In fact, an arrangement of edge tags and colours on $G$ that assigns each non-special vertex $k$ tags - one on an edge of each colour - and no tags to the special vertex can immediately be turned into a selection of nonzero entries in the $k$-matrix.

\begin{example} The graph below is created by duplicating edges in $K_3$, and is drawn with an arbitrary edge orientation and special vertex again in grey. We use parallel edges, per Remark~\ref{altrem}, to graphically represent duplicated columns in the associated fundamental matrix. A contribution to the permanent, and the associated edge tagging, is shown below.

 \centering     \includegraphics[scale=1.0]{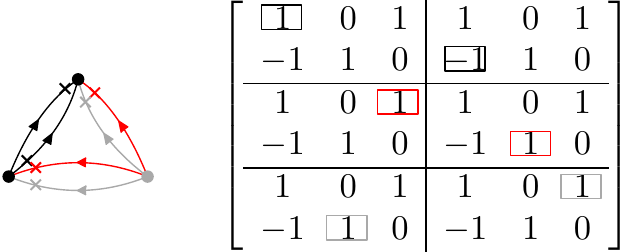}

 \end{example}

\begin{remark} \label{tagcolourbij} There is a bijection between these assignments of tags and colours and the contributions to the permanent. \end{remark}

We will use this bijection in a number of proofs as a way of considering these contributions on the graph itself.

\begin{remark} \label{ops} There are two operations on the tags and colours of the graph that produce other contributions to the permanent. The first is, for any non-special vertex, we may permute the colours of the $k$ edges that have a tag at that vertex. There are $k!$ ways to perform this permutation. 

The second operation, at its most intuitive, is that we may switch which vertex receives the tag on every edge in a cycle where all edges have the same colour. Recall that each vertex gets precisely one tag of each color. As the previous operation allows the colours of edges that have tags at a common vertex to be interchanged, this may be restated as reversing the directions of the tags of a cycle $C$ in $G$ such that each vertex in the cycle receives one tag from this edge set. \end{remark}

Suppose then that the graph $G$ has $n$ vertices. From the colour permuting operation, each valid configuration of tags produces $(k!)^{n-1}$ valid colourings. As the tags determine the position in the original matrix that is selected, choice of edge colours does not affect the value of the contribution. As such, $(k!)^{n-1}$ is a factor in the permanent, and the colours do not matter.

It is interesting to note, then, that the value of the permanent for a $k$DSI matrix is determined completely by the tag assignments. This interpretation is similar to that presented by Matou\v{s}ek (which he credits to Chaiken) to prove the matrix-tree theorem in \cite{matousek}, miniature 21.

\begin{proposition} For a $k$-matrix associated to a graph, we may produce all contributions to the permanent from a single contribution and the two operations stated in Remark~\ref{ops}. \end{proposition}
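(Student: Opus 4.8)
The goal is to show that the two operations of Remark~\ref{ops} — permuting the $k$ tag colours at a fixed non-special vertex, and reversing the tag directions around a monochromatic cycle — act transitively on the set of all tag-and-colour configurations (equivalently, on the contributions to the permanent). Since Remark~\ref{tagcolourbij} already gives a bijection between contributions and these configurations, it suffices to work entirely on the graph. The plan is to fix one reference configuration $\mathcal{T}_0$ and show that an arbitrary configuration $\mathcal{T}$ can be transformed into $\mathcal{T}_0$ by a finite sequence of the two moves; since both moves are invertible, transitivity follows.

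First I would reduce to comparing the underlying \emph{tag assignments} (ignoring colours). Given any two configurations $\mathcal{T}$ and $\mathcal{T}_0$, the colour-permutation move at each non-special vertex lets us recolour freely, so two configurations are connected whenever their tag assignments (which vertex the tag near each edge points to) can be connected by monochromatic-cycle reversals after a suitable recolouring. So the real content is: any tag assignment can be reached from any other by reversing tags around cycles, where along such a cycle each vertex of the cycle receives exactly one of its incident tags from that cycle's edge set. Here a tag assignment is a function $E(G)\to V(G)$ sending each edge to one of its two endpoints such that each non-special vertex is hit exactly $k$ times and the special vertex is hit $0$ times — this is precisely an orientation-type constraint, and I would phrase it as: the tag assignment is a function whose "in-degree" vector is $(k,k,\dots,k,0)$ with the $0$ at $v$.

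Next, the key step: given two tag assignments $\tau$ and $\tau_0$ with the same in-degree vector, consider the set $S$ of edges on which they disagree. On $S$, at every vertex the number of edges tagged toward that vertex under $\tau$ equals the number under $\tau_0$ (since the totals agree and the agreeing edges contribute equally), so the symmetric-difference edge set $S$, given the orientation "toward the $\tau$-endpoint," has equal in- and out-degree at every vertex — it is an Eulerian subdigraph. Hence $S$ decomposes into edge-disjoint directed cycles (this is exactly the kind of decomposition used in Lemma~\ref{latetothegame}). Reversing the tags around each such cycle one at a time turns $\tau$ into $\tau_0$; the constraint that each vertex of a reversal cycle receives exactly one tag from the cycle is automatic because the cycles are edge-disjoint and directed. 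After fixing colours appropriately via the colour-permutation move before each reversal (so that the cycle in question is genuinely monochromatic), each reversal is a legal move of the second type. This establishes that any configuration is reachable from $\mathcal{T}_0$.

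The main obstacle I anticipate is the interplay between the two moves: a cycle-reversal move as literally stated in Remark~\ref{ops} requires a \emph{monochromatic} cycle, whereas the Eulerian decomposition of $S$ produces cycles that need not be monochromatic in the given colouring. The fix is to interleave the moves — before reversing a given directed cycle $C$ from the decomposition, use colour permutations at the vertices of $C$ to recolour all edges of $C$ to a common colour (possible since at each vertex of $C$ exactly one tagged edge of $C$ meets it, so we can route that single colour through), then reverse, then optionally recolour back. Care is needed to check that recolouring for one cycle does not destroy the monochromaticity needed for a later cycle, but since we handle the cycles sequentially and only ever need one monochromatic cycle at a time, this is a bookkeeping matter rather than a genuine obstruction. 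One should also note that when $k=1$ there is only one colour and the first move is trivial, so the statement reduces to the pure cycle-reversal claim, which is a useful sanity check.
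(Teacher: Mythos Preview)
Your proposal is correct and follows essentially the same argument as the paper: reduce to tag assignments via the colour-permutation move, observe that the edge set on which two tag assignments differ has even degree at every vertex (your Eulerian formulation is the directed version of this), decompose into cycles, and reverse one at a time. The monochromaticity obstacle you carefully work around is in fact already absorbed by the paper's restatement of the second operation in Remark~\ref{ops} (``reversing the directions of the tags of a cycle $C$ such that each vertex in the cycle receives one tag from this edge set''), so the paper's proof is a two-line version of yours.
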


\begin{proof} By Remark~\ref{ops}, it is sufficient to show that all valid tag placements can be obtained. Starting from a fixed orientation, then, consider getting to another by switching which vertex receives the tag on a set of edges. Since each edge must receive $k$ tags, it is immediate that this selection of edges must induce a subgraph such that every vertex has even degree, thus a collection of cycles. This completes the proof. \end{proof}

\begin{remark}\label{compval} The value of a particular contribution, if the underlying orientation is known, can be determined by the position of the tags. An entry in a matrix with value $1$ is selected if the tag on an edge appears at the head of the edge, and the value $-1$ is selected otherwise. As such, the parity of the number of times the tags disagree with the underlying orientation determines the value of each contribution.  \end{remark}

\begin{remark} Returning to graphs with loops briefly, recall that the extended graph permanent of such a graph is necessarily zero at all primes. It is interesting that the tagging information does in fact capture this. That is, partition the taggings of such a graph based on which end of the loop receives a tag. By construction, we may move between these two sets bijectively just by switching which end of the loop receives the tag, as both mark the same vertex. If we were to apply Remark~\ref{compval}, this changes the sign of each tagging, and hence the sum must be equal to zero. While graphs with loops therefore do not capture the aspect of tagging as a selection of nonzero elements of the matrix, this tagging method does compute the permanent in graphs with loops.  \end{remark}

\section{Sign ambiguity} \label{signambiguity}

It is an unfortunate aspect of the arbitrary nature of the underlying edge orientation that a sign ambiguity appears in the extended graph permanent. Given, however, that we demand all duplicated edges or copies of the fundamental matrix preserve this initial orientation, there is only a small ambiguity to the invariant. If each edge is duplicated an even number of times the value is not influenced by the edge orientation. For $4$-point graphs in $\phi^4$ theory, this corresponds to primes in the sequence of the form $4k+1$ for some integer $k$. All other values change sign together with a change in orientation, corresponding to all matrices having an odd number of columns multiplied by $-1$. 

As a result, this does little to reduce the surprise of finding familiar sequences. Over the first twelve odd primes, there are approximately $1.52\times 10^{14}$ possible sequences of residues. The sign ambiguity then allows approximately $7.6\times  10^{13}$ sequences over the first twelve odd primes. The occurrence of sequences identical to those of the $c_2$ invariant (Sections~\ref{wheels} and~\ref{compr10}), or modular forms such that the weight of the modular form is equal to the loop number of the graph (Section~\ref{modformscoeffs}), is unlikely to be merely a coincidence.

Other families of graphs may avoid this sign ambiguity entirely. Suppose the reduced signed incidence matrix $M$ results in fundamental matrix $\overline{M} = \mathbf{1}_{j,k} \otimes M$ where $k$ is even. Then, all entries in the extended graph permanent are computed using matrices of the form $\mathbf{1}_{t} \otimes \overline{M}$ for integer $t$, and hence have an even number of copies of each column. By construction, then, there is no sign ambiguity in this extended graph permanent. The graph $K_3$, for example, produces a $2 \times 3$ reduced signed incidence matrix and hence a $6 \times 6$ fundamental matrix, each column appearing twice. Defined over primes of the form $p = 3n+1$ for integers $n$, there is no sign ambiguity in the extended graph permanent of this graph.  Similarly, if fundamental matrix $\overline{M}= \mathbf{1}_{j,k}\otimes M$ where both $j$ and $k$ are odd, then for any odd prime the matrix used in the permanent calculation must be $\mathbf{1}_m \otimes \overline{M}$ for some even $m$, and hence each edge is duplicated an even number of times. As both elements of $\mathbb{F}_2$ are invariant under sign change, residues modulo $2$ of course have this property as well: this applies only to trees.

\section{A relation to nowhere-zero flows}\label{nowherezero}

The connection between the Feynman period and flows inspired the work on the extended graph permanent, specifically as the matrix permanent can be used to certify the existence of a particular class of flow in a graph. What follows is an explicit connection between certain nonzero entries in the extended graph permanent and the existence of $\mathbb{Z}_p$ orientations in a graph. This is specifically DeVos' work, presented in \cite{pppoaiftposim}.

Recall from Section~\ref{graphicc2} that for a graph $G$ with edge weights  $\phi: E(G) \rightarrow \mathcal{G}$, the boundary function is $$(\partial_v)\phi = \sum_{\substack{e \in E(G)\\h(e) = v}} \phi(e) - \sum_{\substack{e \in E(G)\\t(e) = v}} \phi(e), $$ where $\mathcal{G}$ is any abelian group (though we will again restrict to prime order finite fields later). A $\mathcal{G}$ flow is a weighting of the edges such that, for all $v \in V(G)$,  $(\partial_v) \phi = 0$ in $\mathcal{G}$. We say that $\phi$ is \emph{nowhere-zero} if $\phi(e) \neq 0$ for all $e \in E(G)$. See \cite{genflows} for a broader introduction to nowhere-zero flows. The following important properties are due to Tutte.

\begin{theorem}[\cite{imbedding}, \cite{chromtut}] $  $ \begin{enumerate} 
\item For planar graph $G$ and dual $G^*$, $G$ has a $k$-colouring if and only if $G^*$ has a nowhere-zero $\mathbb{Z}/k\mathbb{Z}$ flow.

\item Graph $G$ has a nowhere-zero $\mathbb{Z}/k\mathbb{Z}$ flow if and only if it has a $\mathbb{Z}$ flow with range a subset of $\{ \pm 1 , \pm 2 , ... , \pm (k-1)\}$. 

\item If $G$ has a nowhere-zero $\mathcal{G}$ flow for finite abelian group $\mathcal{G}$, then it has a nowhere-zero $\mathcal{G}'$ flow for every abelian group $\mathcal{G}'$ such that $|\mathcal{G}'| \geq |\mathcal{G}|$.
  \end{enumerate}  \end{theorem}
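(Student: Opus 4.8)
The three parts of this theorem are classical results of Tutte, so the plan is to reduce each to a standard and self-contained argument rather than to invoke deep machinery. For Part 1 (planar duality between colourings and nowhere-zero flows), the plan is to work with an orientation of $G$ and the canonical induced orientation of $G^*$ as set up earlier in Section~\ref{graphicc2}: a proper $k$-colouring $c: V(G) \to \mathbb{Z}/k\mathbb{Z}$ induces an edge weighting $\tau(e) = c(h(e)) - c(t(e))$, which is a $\mathbb{Z}/k\mathbb{Z}$-tension on $G$ that is nowhere-zero precisely because the colouring is proper. By the duality between tensions in $G$ and flows in $G^*$ — which is exactly the content of the theorem proved just before this statement (a planar $\mathbb{F}_p$ flow becomes a tension in the dual, and the argument is symmetric in the group, so it works for any finite abelian $\mathbb{Z}/k\mathbb{Z}$) — this nowhere-zero tension in $G$ corresponds to a nowhere-zero flow in $G^*$. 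The converse runs the same construction backwards, recovering the colouring from a tension up to a global additive constant on each component.

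For Part 2, the plan is to prove the nontrivial direction: if $G$ has a nowhere-zero $\mathbb{Z}/k\mathbb{Z}$ flow, produce a $\mathbb{Z}$-flow with values in $\{\pm 1, \dots, \pm(k-1)\}$. The idea is to lift: among all integer edge weightings whose reduction mod $k$ equals the given nowhere-zero $\mathbb{Z}/k\mathbb{Z}$ flow at every edge and which satisfy $(\partial_v)\phi \equiv 0$ only mod $k$, choose one minimizing $\sum_e |\phi(e)|$. One then argues that for a minimal such weighting every boundary value $(\partial_v)\phi$ is actually $0$ in $\mathbb{Z}$ (not merely mod $k$): if some vertex had nonzero boundary, a flow-augmentation argument along a suitable path/cycle of edges where the weight can be decreased in absolute value contradicts minimality. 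The edge values lie in the stated range because any value of absolute value $\geq k$ could be reduced by $k$ (staying a valid lift, still nowhere-zero, still congruent) without violating the flow condition mod $k$, again contradicting minimality; and no value is $0$ mod $k$ by hypothesis. This is the classical ``minimal counterexample / minimal weighting'' argument.

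For Part 3, the plan is to combine Part 2 with a divisibility/CRT argument. If $G$ has a nowhere-zero $\mathcal{G}$-flow, then by a standard structure-theorem reduction (writing $\mathcal{G}$ as a product of cyclic groups and noting that a nowhere-zero flow in a product group is detected componentwise, or more directly by the theorem that the existence of a nowhere-zero $\mathcal{G}$-flow depends only on $|\mathcal{G}|$ — itself provable via interpolation of the flow-counting polynomial) one gets a nowhere-zero $\mathbb{Z}/|\mathcal{G}|\mathbb{Z}$-flow, hence by Part 2 an integer flow with values in $\{\pm 1, \dots, \pm(|\mathcal{G}|-1)\}$; reducing this integer flow modulo $|\mathcal{G}'|$ for any $\mathcal{G}'$ with $|\mathcal{G}'| \geq |\mathcal{G}|$ keeps it nowhere-zero (no value reaches $|\mathcal{G}'|$ in absolute value, so none becomes $0$), and mapping into an arbitrary abelian group of that order via $1 \mapsto$ a generator-free argument — actually one routes through $\mathbb{Z}/|\mathcal{G}'|\mathbb{Z}$ and then uses the ``depends only on order'' fact once more to pass to a general $\mathcal{G}'$.

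The main obstacle is Part 2: the minimal-weighting augmentation argument requires care to show that non-vanishing boundary at some vertex genuinely forces a reduction in $\sum_e|\phi(e)|$ — one must exhibit the right alternating path or cycle along which to push flow, and handle the bookkeeping of signs relative to the edge orientations. The ``flow existence depends only on group order'' fact invoked in Part 3 is also not entirely trivial, but it is classical (Tutte's flow polynomial) and may be cited; if a self-contained treatment is wanted, it too follows from a lifting argument analogous to Part 2.
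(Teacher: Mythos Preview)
The paper does not give its own proof of this theorem: it is stated as a classical result of Tutte and simply cited to \cite{imbedding} and \cite{chromtut}, with no argument supplied. So there is nothing in the paper to compare your proposal against.

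For what it is worth, your outline is the standard route and is essentially sound. Part~1 via the colouring--tension correspondence and planar flow/tension duality is exactly the classical argument (and, as you note, the flow--tension duality in the planar case is set up just prior in the thesis). Part~2 by choosing a minimal integer lift and arguing that nonzero boundary permits a weight-decreasing augmentation is Tutte's original approach; the one place to be careful is in exhibiting the alternating path or cut along which to push $\pm k$ units and verifying it strictly decreases $\sum_e |\phi(e)|$. For Part~3, routing through the fact that the existence of a nowhere-zero $\mathcal{G}$-flow depends only on $|\mathcal{G}|$ (via the flow polynomial), then applying Part~2 and reducing the resulting bounded integer flow modulo the larger order, is again the standard proof; just note that the final passage from $\mathbb{Z}/|\mathcal{G}'|\mathbb{Z}$ to an arbitrary abelian $\mathcal{G}'$ of that order really does require the ``depends only on order'' fact a second time, as you acknowledge.
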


In studying flows, Jaeger introduced the concept of a \emph{modulo $k$ orientation} (\cite{jaeger}), which is an orientation such that the difference between the in- and out-degree at each vertex is congruent to zero modulo $k$. This is similar to the $\mathbb{F}_p$ flows seen prior, with the restriction that all edges receive weight in $\{ \pm 1 \}$.  As such, a modulo $k$ orientation can be turned into a nowhere-zero $\mathbb{Z}/k\mathbb{Z}$ flow by assigning each edge a value of $1$. In the other direction, if a graph $G$ has a $\mathbb{Z}/k\mathbb{Z}$ flow $\phi:E(G) \rightarrow \mathbb{Z}/k\mathbb{Z}$ such that $\phi(e) \in \{\pm 1\}$ for all $e \in E(G)$, we may obtain a modulo $k$ orientation by reversing direction of edges with flow value $-1$. Hence, the existence of a modulo $k$ orientation is equivalent the existence of a $\mathbb{Z}/k\mathbb{Z}$ flow with all edge weights in $\{ \pm 1\}$.

For graph $G$ with an arbitrary orientation and signed incidence matrix $M^*_G$, treating entries in $M^*_G$ as elements of $\mathcal{G}$, the flows in $\mathcal{G}$ are vectors in the nullspace of $M^*_G$ over $\mathcal{G}$. We may further restrict to the reduced signed incidence matrix $M$, as the rank is unaffected so the nullspace is preserved. Hence, a nowhere-zero $\mathcal{G}$ flow in $G$ corresponds to a vector in the nullspace of $M$ with no zero entries. Similarly, a modulo $k$ orientation in $G$ corresponds to a $\pm 1$-valued vector in the nullspace of $M$ over $\mathbb{Z}/k\mathbb{Z}$.

A key tool that we will use is the following polynomial method, due to Alon and Tarsi.

\begin{theorem}[\cite{alontarsi}]\label{alontarsipoly} Let $\mathbb{F}$ be a field and $f(x_1,...,x_n)$ a polynomial in $\mathbb{F}[x_1,...x_n]$. Suppose that the coefficient of $x_1^{d_1}x_2^{d_2} \cdots x_n^{d_n}$ is nonzero and that the degree of $f$ is $d_1 + d_2 + \cdots + d_n$. Then for every $S_1, ... , S_n \subseteq \mathbb{F}$ with $|S_i| > d_i$ for all $i \in \{1,...,n\}$, there exist $s_i \in S_i$ such that $f(s_1,...,s_n) \neq 0$. \end{theorem}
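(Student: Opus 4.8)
The plan is to argue by contradiction, following the reduction strategy underlying the Combinatorial Nullstellensatz. Suppose that $f(s_1,\ldots,s_n)=0$ for every choice of $s_i\in S_i$. After discarding superfluous elements we may assume $|S_i|=d_i+1$ for each $i$. For each $i$ set $g_i(x_i)=\prod_{s\in S_i}(x_i-s)$, a monic polynomial of degree $d_i+1$; writing $g_i(x_i)=x_i^{d_i+1}-\sum_{j=0}^{d_i}g_{ij}x_i^{j}$, the fact that $g_i(s)=0$ for all $s\in S_i$ means that on $S_i$ any power $x_i^{e}$ with $e\ge d_i+1$ may be rewritten as a polynomial of degree at most $d_i$ in $x_i$.

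Next I would run the reduction: repeatedly, whenever some monomial of the current polynomial has $x_i$-degree $e\ge d_i+1$ for some $i$, replace its factor $x_i^{e}$ by $x_i^{e}-x_i^{e-d_i-1}g_i(x_i)=x_i^{e-d_i-1}\sum_{j=0}^{d_i}g_{ij}x_i^{j}$. Each such step changes $f$ only by an element of the ideal $(g_1,\ldots,g_n)$, hence does not alter the values of $f$ on $S_1\times\cdots\times S_n$; moreover it strictly lowers the total degree of the monomial being rewritten, so the process terminates (order monomials by total degree, then by the largest offending exponent) at a polynomial $\bar f$ in which each $x_i$ occurs to degree at most $d_i$, with $f\equiv\bar f$ on the grid and $\deg\bar f\le\deg f$. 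Since $\bar f$ vanishes on $S_1\times\cdots\times S_n$ and $\deg_{x_i}\bar f<|S_i|$, an induction on $n$ (fixing all but one variable and invoking that a nonzero univariate polynomial of degree $<|S_i|$ cannot have $|S_i|$ roots) forces $\bar f\equiv 0$.

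The crux is to track the coefficient of the monomial $m:=x_1^{d_1}\cdots x_n^{d_n}$. By hypothesis $\deg f=\sum_i d_i$, so $m$ has the maximal total degree occurring in $f$; and $m$ is already reduced (its $x_i$-degree $d_i$ is $\le d_i$), so no reduction step ever acts on it. Also no reduction step can create $m$: each step sends a monomial of total degree $D$ to monomials of total degree $<D$, so monomials of maximal total degree $\deg f$ are never produced. Hence the coefficient of $m$ in $\bar f$ equals its coefficient in $f$, which is nonzero — contradicting $\bar f\equiv 0$, and completing the proof.

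I expect the main obstacle to be making the bookkeeping rigorous: one must verify that the reduction terminates (via a well-founded ordering on multisets of monomials), that it is well-defined modulo $(g_1,\ldots,g_n)$ so grid values are preserved, and that the argument about the coefficient of $m$ genuinely uses $\deg f=\sum_i d_i$ — without this hypothesis, $\bar f\equiv 0$ contradicts nothing. The vanishing-on-a-grid lemma for $\bar f$ is standard, requiring only the univariate factor theorem together with induction on the number of variables.
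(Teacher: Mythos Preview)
Your argument is correct and is essentially Alon's original proof of the Combinatorial Nullstellensatz: reduce $f$ modulo the ideal $(g_1,\ldots,g_n)$ to a polynomial $\bar f$ of bounded individual degrees, observe that the top monomial $x_1^{d_1}\cdots x_n^{d_n}$ survives reduction unchanged because it already has maximal total degree and is itself reduced, and then invoke the grid-vanishing lemma to derive a contradiction. The bookkeeping concerns you flag (termination, preservation of grid values, the role of the hypothesis $\deg f=\sum d_i$) are the right ones and are all easily handled exactly as you indicate.

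Note, however, that the paper does not actually prove this theorem: it is stated with a citation to \cite{alontarsi} and used as a black box in the proof of Theorem~\ref{makingorientations}. So there is no ``paper's own proof'' to compare against; your write-up simply supplies the standard argument that the paper omits.
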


We may now explore the certificate that the extended graph permanent provides to the existence of a modulo $p$ orientation. The following theorem is implicitly introduced in \cite{vectorspacesprimefields}. This proof is from \cite{pppoaiftposim}.

\begin{theorem}[\cite{vectorspacesprimefields}] \label{makingorientations} Let $p$ be prime. Let $G$ be a graph and $H$ a spanning subgraph of $G$ such that $|E(H)| = (p-1)(|V(G)|-1)$. If a fundamental $(p-1)$-matrix   of $H$ has nonzero permanent modulo $p$, then $G$ has a modulo $p$ orientation. \end{theorem}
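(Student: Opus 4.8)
The plan is to connect the permanent of the fundamental $(p-1)$-matrix of $H$ to the Alon–Tarsi polynomial method applied to the reduced signed incidence matrix of $H$, and then to upgrade a nowhere-zero solution in $H$ to a modulo $p$ orientation of the larger graph $G$. Let $M = M_H$ be a reduced signed incidence matrix of $H$, say of dimensions $(|V(G)|-1) \times (p-1)(|V(G)|-1)$, and write $N = \mathbf{1}_{(p-1) \times 1} \otimes M$ for the fundamental $(p-1)$-matrix, which is square of order $(p-1)(|V(G)|-1)$ by the edge-count hypothesis. I would introduce one variable $x_e$ per edge $e \in E(H)$ and consider the polynomial $f = \prod_{v} \left( \sum_{e} m_{v,e} x_e \right)$, the product over rows (non-special vertices) of $H$ of the corresponding linear forms in the $x_e$; equivalently $f$ is (up to reindexing) the ``graph polynomial'' whose expansion over $\mathbb{F}_p$ detects nullspace vectors of $M$. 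The key bookkeeping step is to show that the coefficient of the monomial $\prod_e x_e^{p-1}$ in $f$ equals $\mathrm{Perm}(N)$ modulo $p$: expanding each of the $(p-1)(|V(G)|-1)$ linear factors and collecting the terms where every $x_e$ appears to the exact power $p-1$ is exactly a sum over ways of choosing, for each of the $p-1$ ``copies'' of each row, a column, with each column class used $p-1$ times total — which is precisely the permanent expansion of $N = \mathbf{1}_{(p-1)\times 1}\otimes M$ (the multinomial $((p-1)!)$ factors match the row-repetition bookkeeping of Lemma~\ref{comrow}, but here one should be careful to track them as literal equalities rather than just mod-$p$ congruences).

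Granting the coefficient identity, the hypothesis says this coefficient is nonzero mod $p$, and the total degree of $f$ is $(p-1)(|V(G)|-1) = \deg\!\left(\prod_e x_e^{p-1}\right)$, so $f$ is ``top-heavy'' at this monomial. I would then apply Theorem~\ref{alontarsipoly} (Alon–Tarsi) with $S_e = \{0,1,\dots,p-1\} = \mathbb{F}_p$ for each edge $e$, which has $|S_e| = p > p-1$: there exist values $s_e \in \mathbb{F}_p$ with $f(s_1,\dots) \neq 0$. But a nonzero value of $f$ means that \emph{every} linear factor $\sum_e m_{v,e} s_e$ is nonzero in $\mathbb{F}_p$. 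Interpreting the vector $(s_e)$ as an edge weighting of $H$, nonzero-ness of the $v$-th factor says $(\partial_v)\phi \neq 0$ for the boundary at $v$ — which is the wrong sign, so I want instead to run the argument so that the relevant object is a \emph{nowhere-zero} weighting: the cleaner route is to note that the columns of $M$ (edges of $H$), viewed in $\mathbb{F}_p^{|V(G)|-1}$, satisfy no dependence supported on all of $E(H)$ with all-nonzero coefficients unless $f$ vanishes at those coefficients, i.e. the nonvanishing of $f$ certifies the existence of a vector in $\mathbb{F}_p^{E(H)}$ with all entries nonzero lying in... here one must match conventions carefully, but the upshot is the standard Alon–Tarsi consequence: $H$ has a nowhere-zero $\mathbb{Z}/p\mathbb{Z}$ flow, equivalently (by Tutte's theorem, part 2, combined with $|\{\pm 1,\dots,\pm(p-1)\}|$ and the mod-$p$ reduction) one can arrange all weights in $\{\pm 1\}$, giving a modulo $p$ orientation of $H$.

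Finally I would lift from $H$ to $G$: since $H$ is a spanning subgraph of $G$, a modulo $p$ orientation of $H$ extends to $G$ by orienting the extra edges $E(G)\setminus E(H)$ arbitrarily and then observing that a modulo $p$ orientation of $G$ only requires the in-degree minus out-degree to be $\equiv 0 \pmod p$ at each vertex — but that is not automatic after adding edges. The correct statement to use is that the mod-$p$ orientation of $H$ gives a $\mathbb{Z}/p\mathbb{Z}$ flow on $H$ which, assigning $0$ to the edges outside $H$, is a $\mathbb{Z}/p\mathbb{Z}$ flow on $G$ that is nonzero on all of $E(H)$; then repeatedly adding an edge of $E(G)\setminus E(H)$ and correcting along a path (as in the flow-counting arguments of Section~\ref{graphicc2}) one can adjust to make the flow nowhere-zero on $G$, hence by Tutte $G$ has a modulo $p$ orientation. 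The main obstacle I anticipate is the first bookkeeping step: pinning down exactly which polynomial $f$ to use and proving the literal equality ``coefficient of $\prod_e x_e^{p-1}$ equals $\mathrm{Perm}(N)$'' with the correct multinomial factors and sign conventions, since the permanent of a Kronecker-inflated matrix carries $((p-1)!)$-type factors that must be reconciled with the $x_e^{p-1}$ coefficient extraction — getting this identity right (rather than merely right mod $p$, where Wilson's theorem might confuse the accounting) is where the care is needed.
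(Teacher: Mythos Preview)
Your outline has the right skeleton (permanent $=$ coefficient of the square-free monomial, then Alon--Tarsi), but two of the load-bearing steps fail as written. First, the polynomial $\prod_{v}\bigl(\sum_e m_{v,e}x_e\bigr)^{p-1}$ is nonzero at a point $c$ exactly when every vertex has \emph{nonzero} boundary at $c$ --- the opposite of a flow. You notice something is off, but the attempted repair (``the standard Alon--Tarsi consequence: $H$ has a nowhere-zero $\mathbb{Z}/p\mathbb{Z}$ flow'') is not what nonvanishing of this polynomial gives you, and the appeal to Tutte's theorem to pass from a nowhere-zero $\mathbb{Z}_p$ flow to a $\{\pm 1\}$-valued flow is simply false (e.g.\ $K_4$ has a nowhere-zero $\mathbb{Z}_3$ flow but no modulo~$3$ orientation). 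The paper's remedy is to replace the $(p-1)$st power by a product $\prod_{a\in A_v}(a+\partial_v)$ over a set $A_v\subseteq\mathbb{F}_p$ of size $p-1$: the top-degree part --- and hence the coefficient of $\prod_e x_e$, which equals the permanent --- is unchanged, but now nonvanishing at $c$ forces $(\partial_v)c\notin -A_v$, i.e.\ $(\partial_v)c$ equals the one excluded value. Applying Alon--Tarsi with $S_e=\{1,-1\}$ (not all of $\mathbb{F}_p$; the target monomial has degree $1$ in each $x_e$, so two-element sets suffice) then yields a $\{\pm 1\}$-valued $c$ directly.

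Second, the passage from $H$ to $G$ cannot be done as a postscript. Your proposed lift --- extend the $H$-flow by $0$ to $E(G)\setminus E(H)$, then ``correct along paths'' --- does not in general produce a $\{\pm 1\}$-valued flow on $G$, and there is no reason a modulo $p$ orientation of the subgraph should lift to one of $G$. The paper avoids this entirely: it \emph{first} fixes an arbitrary $\phi':E(G)\setminus E(H)\to\{\pm 1\}$, then chooses $A_v=\mathbb{F}_p\setminus\{(\partial_v)\phi'\}$ so that nonvanishing forces $(\partial_v)c = -(\partial_v)\phi'$ at every non-special vertex. Thus $c\cup\phi'$ is a $\{\pm 1\}$-valued flow on all of $G$ in one stroke. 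Building the extra edges into the choice of $A_v$ is the missing idea; without it, the argument does not reach $G$. (Incidentally, the relevant monomial is $\prod_e x_e$, not $\prod_e x_e^{\,p-1}$: the polynomial has degree $|E(H)|$, which equals the number of variables, not $(p-1)$ times it.)
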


\begin{proof} Orient the edges of $G$ arbitrarily. Define $H' = G - E(G)$, and map $\phi':E(H') \rightarrow \{\pm1\}$ arbitrarily. We will use Theorem~\ref{alontarsipoly} to show that $\phi'$ can be extended to an $\mathbb{F}_p$ flow of $G$ with range $\pm1$. 

Fix a vertex $w \in V(G)$ as the special vertex arbitrarily.  Define $V' =V-\{w\}$. For all $v \in V'$, define $A_v = \mathbb{F}_p - \{(\partial_v)\phi'\}$. From this, build the polynomial $f$ using Schwinger parameters for all edges in $E(H)$, $$ f = \prod_{v \in V'} \prod_{a \in A_v} \left( a + \sum_{\substack{e \in E(H)\\h(e) = v}} x_e - \sum_{\substack{e \in E(H)\\t(e) = v}} x_e  \right). $$

Note that $f$ has degree at most $(p-1)(|V(G)|-1) = E(H)$. As such, the coefficient of $\mathbf{x} = \prod_{e \in E(H)} x_e$ in $f$ is the same as the coefficient of $\mathbf{x}$ in $$g= \prod_{v \in V'} \left( \sum_{\substack{e \in E(H) \\ h(e) = v}} x_e - \sum_{\substack{e \in E(H) \\ t(e)=v}} x_e \right)^{p-1} .$$ This coefficient is also the permanent of the $(p-1)$-matrix built from the incidence matrix of $H$ with $w$ as the special vertex; it corresponds to each edge being selected once at either its head or tail, and each non-special vertex being selected $p-1$ times by these tags on incident edges, hence giving a contribution. By assumption, this permanent is nonzero in $\mathbb{F}_p$. Hence, the coefficient of $\mathbf{x}$ is nonzero in $f$.

As the degree of $f$ is at most $(p-1)(|V(G)|-1)$, and since $\mathbf{x}$ has nonzero coefficient, the degree of $f$ is precisely $(p-1)(|V(G)|-1)$. By Theorem~\ref{alontarsipoly}, we may choose an assignment of variables $\phi:E(H) \rightarrow \{\pm1\}$ in such a way that evaluating $f$ on these variables is nonzero.  As $f$ is written, we see that for the polynomial to be nonzero for some assignment of Schwinger parameters, these inner-most factors must be nonzero for all vertices. Hence, the boundary function at vertex $v$ is not equal to $a$ for every $a \in A_v$. So, $(\partial_v)\phi = -(\partial_v)\phi'$. As this holds at every vertex in $V'$, the function $\phi \cup \phi' :E(G) \rightarrow \{\pm 1\}$ is a flow, mapping edges $e \in E(H')$ to $\phi'(e)$ and edges $e \in E(H)$ to $\phi(e)$. As this is a $\mathbb{F}_p$ flow with edge assignments only $1$ or $-1$, we may turn this into a modulo $p$ orientation by reversing the direction of edges with flow value $-1$. \end{proof}

An alternate approach to the previous proof hints at a connection between the extended graph permanent and the Feynman integral, and will be presented in Chapter~\ref{conclusion}.

\chapter{Invariance Under Period Preserving Operations}
\label{Invariance}

We return now to our motivation, the Feynman period, and show that the extended graph permanent is invariant under all graphic operations known to preserve the period of $4$-point $\phi^4$ graphs.

\section{Decompletion invariance}\label{secdecompletion}

We begin by showing that the extended graph permanent, like the Feynman period, is invariant under choice of decompletion vertex for $2k$-regular graphs. A useful tool, combined with the graphic interpretation of the extended graph permanent seen in Section~\ref{graphicegp}, is the following extension to completed graphs.

\begin{remark}\label{extending} Let $G$ be a graph and $v \in V(G)$. Suppose we are computing the permanent of a fundamental $k$-matrix associated to the decompleted graph $G-v$ using contributions, as in Section~\ref{graphicegp}. We may then extend this notion of taggings from $G-v$ to $G$ by adding vertex $v$ back, and insisting that it receives the tags from all incident edges. Then, the special vertex receives no tags, the decompletion vertex receives all possible tags, and all other vertices receive $k$ tags. Clearly, this extension does not affect the collection of valid taggings of $G$.

This extension does not affect our computation of the permanent using contributions, as edges incident to $v$ do not appear in the matrix. Viewing the value of each contribution as a measure of the number of edges that agree with the underlying orientation of the graph, then, we may preserve this notion by orienting all edges in $G$ that are incident to $v$ towards $v$. \end{remark}

\begin{theorem}\label{egpcompletion} Let $G$ be a $2k$-regular graph. For any choice of $v\in V(G)$, $G - v$ has the same extended graph permanent.  \end{theorem}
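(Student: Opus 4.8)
The plan is to compare the valid taggings of $G-v$ and $G-w$ directly, using the graphic interpretation of the extended graph permanent from Section~\ref{graphicegp} together with the extension of taggings to the completed graph in Remark~\ref{extending}. Fixing a prime $p = kn+1$ so that the relevant matrices are square with each row appearing $kn$ times (here I would actually work with the $(kn)$-matrix, i.e.\ allow $G$ to be replaced by $G^{[n]}$ as in Remark~\ref{altrem}, but for clarity I describe the idea at $k$), I want to exhibit a sign-coherent bijection between the contributions counted by $\text{GPerm}^{[p]}(G-v)$ and those counted by $\text{GPerm}^{[p]}(G-w)$. By Remark~\ref{extending}, a contribution to the permanent for $G-v$ corresponds to a tagging of the \emph{completed} graph $G$ in which $v$ receives all possible tags, $w$ receives $k$ tags (one of each colour), every other vertex receives $k$ tags, and we orient all edges at $v$ toward $v$; symmetrically for $G-w$. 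So the task reduces to converting a tagging in which $v$ is ``saturated'' (receives every tag on an incident edge) into one in which $w$ is saturated, preserving the value of the contribution up to a global sign depending only on $p$.

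The mechanism for moving the saturation from $v$ to $w$ should be a path-redirection argument of exactly the flavour used near the end of Section~\ref{graphicc2} (the Menger-type bijection between $\mathbb{F}_p$ flows of $G-v$ and $G-w$) and explicitly promised by the remark ``A similar method will be of use in later work, in particular Theorem~\ref{egpcompletion}.'' Concretely, since $G$ is $2k$-regular it is $2k$-edge-connected enough to supply, by Menger's Theorem, $2k$ edge-disjoint $v$--$w$ paths (in fact one uses that after removing the $k$ colour classes appropriately there remain paths to flip). Along each such path I would toggle which endpoint of each edge carries its tag --- this is precisely the ``reverse the tags along a path / cycle'' operation of Remark~\ref{ops}, except run along a $v$--$w$ path rather than a cycle. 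Pushing tags along all these paths simultaneously transfers the surplus of tags from $v$ to $w$: $v$ goes from saturated to receiving exactly $k$ tags, $w$ goes from $k$ tags to saturated, and the intermediate vertices, each met by an even number of these path-edges, are unchanged in their tag count (one uses the colour-permutation freedom of Remark~\ref{ops} to keep ``one tag of each colour'' at every vertex). One must check this map is a bijection (it is plainly an involution-like construction, reversible by pushing back along the same paths) and that it is compatible with the colour structure so that it genuinely matches contributions of the $k$-matrix of $G-v$ with those of $G-w$.

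Finally I would track the sign. By Remark~\ref{compval}, each contribution's value is $(-1)^{(\text{\# tags disagreeing with the orientation})}$, and reversing the tag on a single edge flips this sign; reversing tags along one $v$--$w$ path flips the contribution by $(-1)^{(\text{length of path})}$, but this is exactly compensated because the orientations at $v$ and $w$ were chosen (per Remark~\ref{extending}) pointing \emph{into} $v$ and into $w$ respectively, so the bookkeeping at the two endpoints absorbs a parity. Summing over a consistent choice of the $2k$ (or, at prime $p=kn+1$, the $kn$) edge-disjoint paths, the net effect on every contribution is a single global sign $\varepsilon_p \in \{\pm1\}$ depending only on the parities forced by $p$ and $k$; in fact, by Proposition~\ref{specialchoice} and Theorem~\ref{specialinvariant} (invariance under choice of special vertex, using Wilson's Theorem to dispose of the small and non-prime cases) this residual sign must actually be $+1$, since it is exactly the kind of orientation-induced sign those results already neutralise. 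Hence $\text{GPerm}^{[p]}(G-v) = \text{GPerm}^{[p]}(G-w)$ for every admissible $p$, and the two sequences agree.

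\textbf{Expected main obstacle.} The delicate point is not the existence of the path system (Menger gives that) but verifying that pushing tags along all chosen $v$--$w$ paths simultaneously stays within the set of \emph{valid} taggings --- i.e.\ that after the push every non-special vertex still carries exactly one tag of each of the $k$ colours and $v$, $w$ swap their roles cleanly --- and that the accumulated sign is genuinely global and independent of the contribution. Handling the colours correctly (one likely needs to choose the $k$ colour classes of edge-disjoint paths carefully, or invoke Remark~\ref{ops} to reshuffle colours after the push) is where the real care is required; everything else is the routine translation between permanent contributions and graph taggings already set up in Section~\ref{graphicegp}.
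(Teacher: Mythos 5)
Your route --- transferring the surplus of tags from $v$ to $w$ by pushing along a system of $v$--$w$ paths --- is genuinely different from the paper's argument, and as written it has two gaps. First, the connectivity claim is false: a $2k$-regular graph need not be $2k$-edge-connected (its edge cuts are even but can have size $2$, e.g.\ two copies of $K_5$ each missing an edge, joined by two edges), so Menger does not provide $2k$ edge-disjoint $v$--$w$ paths. More importantly, edge-disjoint paths in the underlying graph are not the right objects anyway: pushing tags along a path preserves the count of $kn$ tags at each intermediate vertex only if every edge of the path currently carries its tag at the endpoint nearer $v$, i.e.\ only if the path is a directed $w$--$v$ path in the orientation defined by the tags themselves. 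Such tag-compatible paths do exist (a reachability/imbalance argument on the tag-orientation, using only connectedness, shows this), but you neither state nor prove it, and since the admissible path system then depends on the particular contribution, well-definedness and bijectivity of your map need a canonical choice that you have not supplied. Second, the sign is not global: the value changes by $(-1)^{\#\text{reversed tags}}$, and the reversed edge set is forced only in the stars of $v$ and $w$ (exactly $kn$ edges at each); its size elsewhere, hence its parity, varies from tagging to tagging, so the claimed constant $\varepsilon_p$ does not follow, and Proposition~\ref{specialchoice} or Theorem~\ref{specialinvariant} cannot repair a contribution-dependent sign.

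The paper avoids all of this bookkeeping. It chooses $w$ as the special vertex of $G-v$ and $v$ as the special vertex of $G-w$, extends each contribution to a tagging of $G^{[n]}$ per Remark~\ref{extending}, orients all edges at $v$ into $v$ and all edges at $w$ out of $w$, and then reverses \emph{every} tag at once while simultaneously reversing the underlying orientation of every edge. Reversing all tags swaps the roles of $v$ and $w$ exactly (every other vertex passes from $kn$ of its $2kn$ incident tags to the complementary $kn$), and reversing the orientation as well keeps the agreement pattern of Remark~\ref{compval}, so each contribution's value is preserved on the nose: a sign-preserving bijection with no path system, no Menger, and no parity analysis. If you want to salvage your approach, the missing ingredients are the tag-orientation reachability lemma for the existence of compatible $w$--$v$ paths and a mechanism (or a different global construction) that makes the sign independent of the contribution; the paper's swap of special and decompletion vertices is precisely what makes the global reversal work so cleanly.
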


\begin{proof} Let $v,w \in V(G)$. We prove this by showing that for any odd prime $p = nk+1$, there is an orientation of the edges of $G_v = G-v$ and $G_w = G - w$ such that $\text{GPerm}^{[p]}(G_v) = \text{GPerm}^{[p]}(G_w)$. Let $w$ be the special vertex for $G_v$, and similarly let $v$ be the special vertex for $G_w$.

For a contribution to the permanent of $G_v$ for prime $p$, use the graph $(G_v)^{[n]}$ per Remark~\ref{altrem}. Extend such a tagging to the graph $G^{[n]}$ as in Remark~\ref{extending}. Apply an orientation to the edges so that all edges incident to $v$ are oriented towards $v$, and all edges incident to $w$ are oriented away from $w$. The remaining edges may be oriented arbitrarily.

We bijectively move between such a tagging of $(G_v)^{[n]}$ and $(G_w)^{[n]}$ by reversing the orientation of all tags, thus reversing the roles of $v$ and $w$ as the special and decompletion vertices. Further, reverse the underlying orientation of all edges. In doing so, the value of each contribution is fixed by this bijection, and thus the extended graph permanents are equal.  \end{proof}

\section{The Schnetz twist}\label{secschnetz}

Recall the Schnetz twist, introduced in Section~\ref{introinvariance} and seen in Figure~\ref{twist}. We extend the notion of the Schnetz twist to $2k$-regular graphs by demanding that both graphs in this figure are $2k$-regular.

\begin{proposition}\label{schnetz} Consider two $2k$-regular graphs that differ by a Schnetz twist, say $G_1$ and $G_2$. Decompletions of these graphs have equal extended graph permanents. \end{proposition}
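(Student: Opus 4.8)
The plan is to use the graphic interpretation of the extended graph permanent developed in Section~\ref{graphicegp}, combined with the structure of the Schnetz twist, to set up an explicit bijection between contributions to the permanent of a decompletion of $G_1$ and contributions to the permanent of a decompletion of $G_2$ that preserves the value of each contribution. First I would fix a $4$-vertex cut $\{v_1,v_2,v_3,v_4\}$ across which the twist is performed, with $G_2$ obtained from $G_1$ by redirecting edges on one side of the cut so that $v_1 \leftrightarrow v_2$ and $v_3 \leftrightarrow v_4$ are exchanged (as in Figure~\ref{twist}). By Theorem~\ref{egpcompletion}, the extended graph permanent of a decompletion does not depend on the choice of decompletion vertex, so I may choose a decompletion vertex convenient for the argument — for instance, a vertex on the opposite side of the cut from where the twist acts, or I may even absorb the cut vertices into a completed picture using Remark~\ref{extending}. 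Throughout I would work with the parallel-edge model $(G_i)^{[n]}$ from Remark~\ref{altrem} at an arbitrary odd prime $p = nk+1$, so that contributions become taggings: each non-special vertex receives $k$ tags, one of each colour, the special vertex receives none, and the value of a contribution is $\pm 1$ according to the parity of the number of tags that disagree with a chosen underlying orientation (Remark~\ref{compval}).

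The key steps, in order, are: (1) describe precisely which edges are moved by the twist — these are exactly the edges of $G_1$ with one endpoint in $\{v_1,v_2,v_3,v_4\}$ and the other endpoint on the twisted side; call this set $S$. (2) Choose an orientation of $G_1$ and the corresponding orientation of $G_2$ so that the twist has a controlled effect on the underlying orientation: orient every edge in $S$ away from the cut vertex $v_i$ it is incident to, so that after the twist the edge is still oriented away from its (new) cut-vertex endpoint, and orient all other edges identically in $G_1$ and $G_2$. (3) Define the bijection on taggings: a tagging of $(G_1)^{[n]}$ is carried to a tagging of $(G_2)^{[n]}$ by keeping the tag of every edge not in $S$ exactly where it is, and for an edge $e \in S$, keeping the tag on the non-cut endpoint unchanged and otherwise sending it to the corresponding twisted edge with the same colour. (4) Verify that this is a valid tagging of $(G_2)^{[n]}$: every vertex on the untwisted side is untouched; each twisted-side non-cut vertex keeps exactly the same tags; and for the cut vertices one must check that the $k$ tags (of distinct colours) that $v_1$ received from $S$-edges are exactly those that $v_2$ receives after the twist and vice versa, and similarly for $v_3,v_4$ — this works because the twist is a bijection on $S$ that simply relabels which $v_i$ each $S$-edge attaches to, and because in $2k$-regular graphs the relevant degree bookkeeping across the $4$-vertex cut balances. (5) Check the sign: since I arranged the orientation so that each edge in $S$ keeps its orientation relative to its cut endpoint, and since a tag on an $S$-edge either sits at the cut endpoint in both pictures or at the non-cut endpoint in both pictures, the number of tag/orientation disagreements is unchanged, so each contribution has the same value; hence the permanents agree modulo $p$, and therefore $\text{GPerm}^{[p]}$ agrees for every admissible $p$.

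The main obstacle I expect is step (4), and specifically the sign-and-orientation bookkeeping around the $4$-vertex cut. The twist redirects several edges simultaneously, and one must be careful that the chosen underlying orientations of $G_1$ and $G_2$ are genuinely compatible — i.e. that one can orient the $S$-edges so that the ``points away from its cut vertex'' property is preserved under the exchange $v_1\leftrightarrow v_2$, $v_3\leftrightarrow v_4$ — without creating a global parity mismatch. A clean way to handle this is to note that a contribution's value depends only on the total parity of disagreements, and the twist permutes $S$ by an involution that pairs each edge with another edge of $S$ (or fixes it), so any orientation discrepancy introduced on one edge is compensated on its partner; alternatively, one can invoke the sign-ambiguity discussion of Section~\ref{signambiguity} to argue that since all $S$-edges are duplicated the same (even, when $k$ is even) number of times, orientation choices on them do not affect $\text{GPerm}^{[p]}$ at all, and the remaining edges are untouched. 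A secondary subtlety is making sure the decompletion vertex and special vertex are chosen so as not to interfere with the twisted edges; choosing both of them on the untwisted side (possible since the twist acts only on one side of a genuine cut and both graphs are $2k$-regular) sidesteps this entirely.
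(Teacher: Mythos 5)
Your overall strategy (tag/contribution bijection on $(G_i)^{[n]}$, using Theorem~\ref{specialinvariant} and Theorem~\ref{egpcompletion} to normalize the special and decompletion vertices, then a sign check against a compatible pair of orientations) is the right framework and is the one the paper uses. But the bijection you define in step (3) — keep every tag exactly where it sits and simply carry the $S$-edges through the relabelling $v_1\leftrightarrow v_2$, $v_3\leftrightarrow v_4$ — does not produce valid taggings of $G_2$, and the ``degree bookkeeping'' you appeal to in step (4) is exactly where it breaks. The regularity of both graphs forces $v_2$ to have the same left-degree $d_1$ as $v_1$, but it does not force the per-vertex tag counts to match. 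Concretely, with $v_3$ special and $v_4$ the decompletion vertex, if $v_1$ receives $t$ tags from left-side edges then it receives $kn-t$ from the twisted side, while $v_2$ receives $d_1-t$ from the left and $kn-d_1+t$ from the twisted side; carrying tags unchanged through the twist gives $v_1$ a total of $t+(kn-d_1+t)=kn-d_1+2t$ tags, which equals $kn$ only when $2t=d_1$. (Your alternative of putting both distinguished vertices strictly on the untwisted side does not help: there the condition becomes $t_1=t_2$, which is likewise not forced — e.g.\ if the untwisted side is a single vertex joined once to each of $v_1,\dots,v_4$, then $t_1+t_2=1$, so no contribution survives your map.) So the map is not a bijection onto contributions of $G_2$; it is not even well defined into them.

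The missing idea, which is the heart of the paper's proof, is to first reverse the direction of \emph{every} tag on the twisted side of the cut and only then move the edges as the twist dictates. After this reversal $v_1$ holds $kn-d_1+t$ right-side tags and $v_2$ holds $kn-t$, so after the exchange each cut vertex ends with exactly $kn$ tags, $v_3$ still receives none and $v_4$ receives all $2kn$, i.e.\ one lands on a genuine contribution to the permanent of $G_2^{[n]}$ with the same special and decompletion vertices; this reversal-then-twist map is clearly an involution, hence a bijection. The sign is then handled not edge-by-edge as in your step (5), but globally: fix an arbitrary orientation on $G_1$ and give $G_2$ the orientation obtained by reversing all edges on the twisted side, so that reversing both the tag and the underlying orientation of each right-side edge leaves every head/tail agreement, and hence the value of every contribution, unchanged. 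Your appeal to Section~\ref{signambiguity} and to an ``involution pairing edges of $S$'' cannot substitute for this, since without the tag reversal there is no valid image to compare signs on in the first place.
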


\begin{proof} Label the vertices in the four-vertex cut as in Figure~\ref{twist}.  By Theorem~\ref{specialinvariant} and Theorem~\ref{egpcompletion}  we may chose vertex $v_3$ as the special vertex and $v_4$ as the decompletion vertex for both graphs. For prime $p = nk+1$, we again extend the contributions to $G_1^{[n]}$ and $G_2^{[n]}$ per Remark~\ref{extending}. These are both $2kn$-regular graphs.

Fix a contribution to the permanent in $G_1^{[n]}$. Then, the decompletion vertex receives $2kn$ tags, the special vertex receives none, and all others get $kn$ tags. Since we assume $G_1$ and $G_2$ are both $2k$-regular, suppose vertex $v_1$ is incident to $d_1$ edges on the left side of the $4$-vertex cut, and $v_3$ is incident to $d_3$ on the left. Then, vertices $v_2$ and $v_4$ must be incident to $d_1$ and $d_3$ edges on the left, respectively. If there are $v$ vertices properly contained on the left, then there are $$\frac{1}{2}(2knv+2d_1+2d_3) = knv+d_1+d_3$$ edges, and hence total tags, on the left. Each of the $v$ vertices properly on this side receive $kn$ tags, while $v_3$ receives none and $v_4$ receives $d_3$. Thus, if $v_1$ receives $t$ tags on the left, then $v_2$ must receive $$ (knv+d_1+d_3)-(knv+d_3+t) =d_1-t.$$ 

By construction, $v_1$ must receive $kn-t$ tags on the right side of the cut, while $v_2$ receives $kn-d_1+t$. Consider reversing the direction of all tags on the right side in this contribution. Then, $v_1$ receives $(2kn-d_1)-(kn-t)=kn-d_1+t$ tags on the right, while $v_2$ receives $(2kn-d_1)-(kn-d_1+t)=kn-t$. Further, $v_3$ receives $2kn-d_3$ tags on the right, and $v_4$ receives none. Changing the edges of the form $\{v_i,w\}$ for $i\in \{1,2,3,4\}$ for edges on the right as in the Schnetz twist, this then becomes a contribution to the permanent in $G_2^{[n]}$. Clearly, this is a bijection. Fixing an orientation in $G_1$ arbitrarily, and an orientation in $G_2$ by reversing the direction of all edges on the right side of the $4$-vertex cut, we see that the values of all contributions are preserved. Hence, the permanents of these graphs are equal at this prime, and so the extended graph permanents must be equal. \end{proof}

\section{Planar duals}\label{secdual}

Last in our list of period preserving operations, planar duals are known to have equal periods. We now show that they also have equal extended graph permanents. We will first prove that a graph and its dual have extended graph permanents defined on the same set of primes.

\begin{lemma}\label{gcdstuff} For positive integers $s$ and $t$ such that $s>t$, $\gcd (s,s-t) = \gcd (s,t)$. \end{lemma}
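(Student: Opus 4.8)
The statement $\gcd(s, s-t) = \gcd(s,t)$ for positive integers $s > t$ is a completely standard number-theoretic fact, so the plan is simply to give the short direct argument via the common-divisor characterization of the gcd. First I would observe that any integer $d$ dividing both $s$ and $t$ must also divide their difference $s - t$, hence $d$ is a common divisor of $s$ and $s-t$. Conversely, any integer $d$ dividing both $s$ and $s-t$ divides their difference $s - (s-t) = t$, hence $d$ is a common divisor of $s$ and $t$. Therefore the set of common divisors of $\{s, t\}$ coincides with the set of common divisors of $\{s, s-t\}$, and in particular the largest element of each set — the gcd — is the same.

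The only mild point to note is that $s - t$ is a positive integer (since $s > t$), so $\gcd(s, s-t)$ is well-defined in the usual sense; this is why the hypothesis $s > t$ is stated. No induction, no case analysis, and no appeal to earlier results in the excerpt are needed. I expect there to be essentially no obstacle here — the lemma is a one-line observation whose role is purely to support the later proof that a planar graph and its dual have extended graph permanents defined on the same set of primes (presumably by showing the relevant quantities $\mathfrak{v}$ and $\mathfrak{e}$ are exchanged or preserved under duality). If one wanted, the whole thing could even be phrased in terms of the Euclidean algorithm, which reduces $\gcd(s, s-t)$ to $\gcd(s-t, t)$ and then back, but the common-divisor argument above is cleaner and self-contained.

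In writing it up I would keep the proof to two or three sentences: state that $d \mid s$ and $d \mid t$ implies $d \mid (s-t)$; state the converse direction $d \mid s$ and $d \mid (s-t)$ implies $d \mid t$; conclude that the two gcds agree. That is the entire argument.
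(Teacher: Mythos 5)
Your proof is correct, and it is exactly the standard argument the paper has in mind: the paper simply states that the lemma "follows immediately from the definition of the greatest common divisor," and your two-directional divisibility argument is the natural expansion of that remark. Nothing further is needed.
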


The proof of this lemma follows immediately from the definition of the greatest common divisor.

\begin{proposition}\label{lcmstuff} For positive integers $s$ and $t$ such that $s>t$, $\frac{\lcm (s,t)}{t} = \frac{\lcm (s,s-t)}{s-t}.$ \end{proposition}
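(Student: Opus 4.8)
The plan is to reduce the identity to the previous lemma, \texttt{Lemma~\ref{gcdstuff}}, by expressing each least common multiple in terms of a greatest common divisor. Recall the standard fact that for positive integers $a$ and $b$, $\lcm(a,b)\gcd(a,b) = ab$, equivalently $\lcm(a,b) = \frac{ab}{\gcd(a,b)}$.

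First I would apply this to the left-hand side: $\frac{\lcm(s,t)}{t} = \frac{st}{t\gcd(s,t)} = \frac{s}{\gcd(s,t)}$. Then I would apply it to the right-hand side, using $s - t$ in place of $t$ (valid since $s > t$ forces $s - t$ to be a positive integer): $\frac{\lcm(s,s-t)}{s-t} = \frac{s(s-t)}{(s-t)\gcd(s,s-t)} = \frac{s}{\gcd(s,s-t)}$. So both sides equal $s$ divided by a gcd, and it remains only to match the two denominators.

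The final step invokes \texttt{Lemma~\ref{gcdstuff}}, which states exactly that $\gcd(s,s-t) = \gcd(s,t)$, so the two expressions $\frac{s}{\gcd(s,t)}$ and $\frac{s}{\gcd(s,s-t)}$ coincide, completing the proof. There is essentially no obstacle here: the only points requiring a word of care are that $s-t$ is a genuine positive integer (ensured by the hypothesis $s > t$, so division by $s-t$ and the application of the lemma are legitimate), and that $t \neq 0$ so that division by $t$ makes sense (ensured by $t$ being a positive integer). The whole argument is a two-line substitution once the $\lcm$--$\gcd$ relation is in hand.

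This proposition will presumably be used to show that a $4$-point $\phi^4$ graph $G$ and its planar dual $G^*$ have extended graph permanents defined over the same set of primes: with the notation $\V = \frac{L}{|V(G)|-1}$ from the earlier remark, planar duality exchanges the roles of $|V(G)|-1$ and $|E(G)|-(|V(G)|-1)$ (since $|E(G)| = |E(G^*)|$ and $|V(G)| = |V(G^*)|$, while the cycle rank and cotree rank swap), so taking $s = |E(G)|$ and $t = |V(G)|-1$ in the proposition gives the matching value of $\V$ for $G$ and $G^*$, hence the same defining congruence class for the primes.
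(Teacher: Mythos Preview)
Your proof is correct and essentially identical to the paper's: both rewrite each side as $\frac{s}{\gcd(\cdot,\cdot)}$ via the relation $\lcm(a,b)\gcd(a,b)=ab$ and then invoke Lemma~\ref{gcdstuff}. One small caution on your closing paragraph: the equality $|V(G)|=|V(G^*)|$ holds only in the $4$-point $\phi^4$ case, not for arbitrary planar graphs; the general statement (which is what Corollary~\ref{primesaregood} uses) is that $|V(G^*)|-1 = |E(G)|-(|V(G)|-1)$, which is exactly the substitution $s-t$ you already identified.
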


\begin{proof} Recall that for positive integers $a$ and $b$, $\lcm(a,b) \cdot \gcd(a,b) = a \cdot b$. Then, \begin{align*} \frac{\lcm(s,t)}{t} &= \frac{s}{\gcd(s,t)} \\ &= \frac{s}{\gcd(s,s-t)} \hspace{1cm} \text{by Lemma~\ref{gcdstuff}} \\ &= \frac{\lcm(s,s-t)}{s-t}. \end{align*} \end{proof}

\begin{corollary} \label{primesaregood} Suppose $G=(V,E)$ is a connected planar graph such that $|V|>1$ and $|E| > |V|-1$. Let  $G^*=(V^*,E^*)$ be the planar dual of $G$. Then $\frac{\lcm(|V|-1,|E|)}{|V|-1} = \frac{\lcm(|V^*|-1,|E^*|)}{|V^*|-1}$ and the extended graph permanents for $G$ and $G^*$ are defined on the same set of primes. \end{corollary}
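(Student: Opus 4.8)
The plan is to reduce the claimed identity to Proposition~\ref{lcmstuff} by computing the dual's parameters from Euler's formula, and then to read off the statement about primes from the Remark describing where the extended graph permanent is defined. First I would set $s = |E|$ and $t = |V|-1$; the hypotheses $|V| > 1$ and $|E| > |V|-1$ guarantee $s > t \geq 1$, so both are positive and Proposition~\ref{lcmstuff} will be applicable once the corresponding quantities for $G^*$ are identified.

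Next I would compute the parameters of the planar dual. Since $G$ is connected and planar, Euler's formula gives $|V| - |E| + |F| = 2$, and by the construction of the planar dual $|V^*| = |F|$ and $|E^*| = |E|$. Hence
\[
|V^*| - 1 = |F| - 1 = |E| - |V| + 1 = s - t ,
\]
which is positive because $s > t$, so $G^*$ genuinely satisfies $|V^*| > 1$ and $|E^*| > |V^*| - 1$ and its extended graph permanent is defined. Applying Proposition~\ref{lcmstuff} with this $s$ and $t$, and using $|E^*| = |E|$, gives
\[
\frac{\lcm(|E|,|V|-1)}{|V|-1} = \frac{\lcm(s,t)}{t} = \frac{\lcm(s,s-t)}{s-t} = \frac{\lcm(|E^*|,|V^*|-1)}{|V^*|-1} ,
\]
which is the first assertion.

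Finally I would invoke the Remark following the definition of the extended graph permanent: for a graph $H$ the invariant is defined precisely over primes of the form $p = \V\, n + 1$, where $\V = \frac{\lcm(|V(H)|-1,|E(H)|)}{|V(H)|-1}$. The displayed equality says exactly that this value of $\V$ agrees for $G$ and $G^*$, so the two admissible sets of primes coincide, finishing the corollary. The argument is entirely routine; the only point needing care is checking that the hypotheses force $s > t \geq 1$, which is what makes Proposition~\ref{lcmstuff} applicable and ensures $|V^*|-1 > 0$, and there is no genuine obstacle beyond this bookkeeping.
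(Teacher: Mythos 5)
Your proposal is correct and follows the paper's own argument essentially verbatim: compute the dual's vertex and edge counts via Euler's polyhedral formula, apply Proposition~\ref{lcmstuff} with $s=|E|$, $t=|V|-1$, and conclude from the form $\V n+1$ of the admissible primes. The extra bookkeeping you note (that the hypotheses force $s>t\geq 1$ and $|V^*|-1>0$) is a harmless, correct refinement of what the paper leaves implicit.
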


\begin{proof} By construction, $G^*$ has $|E|$ edges, and by Euler's polyhedral formula $2+|E|-|V|$ vertices. As the fundamental matrix $M_G$ has dimensions $\left( |V|-1 \right) \times |E|$, the extended graph permanent of $G$ is defined over primes of the form $\frac{\lcm (|V|-1, |E|)}{|V|-1}n+1$. The fundamental matrix $M_{G^*}$ is of dimension $(1+|E|-|V|) \times |E|$, and hence the extended graph permanent of $G^*$ is defined over primes of the form $\frac{\lcm (1+ |E| -|V| , |E|)}{1 + |E| -|V|}n+1$. By Proposition~\ref{lcmstuff}, $ \frac{\lcm (|V|-1, |E|)}{|V|-1} = \frac{\lcm (1+ |E| -|V| , |E|)}{1 + |E| -|V|}$,  which completes the proof. \end{proof}

The following is Theorem 2.2.8 in \cite{Oxl}. 

\begin{theorem}\label{oxleydualtrueform} Let $M$ be the vector matroid of the matrix $[ I_r | D ]$ where the columns of this matrix are labeled, in order, $e_1,e_2,...,e_n$ and $1 \leq r < n$. Then $M^*$ is the vector matroid of $[ -D^T | I_{n-r} ]$ where its columns are also labeled $e_1,e_2,...,e_n$ in that order. \end{theorem}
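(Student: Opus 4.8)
The plan is to work through the base-characterization of the matroid dual recalled above: since $\mathcal{B}(M^*) = \{E(M) - B : B \in \mathcal{B}(M)\}$ and a matroid is determined by its bases, it suffices to prove that for every $r$-element subset $B$ of $\{e_1,\dots,e_n\}$, $B$ is a basis of $M$ if and only if its complement $B^c = \{e_1,\dots,e_n\} - B$ is a basis of the vector matroid $N$ of $[-D^T \mid I_{n-r}]$. Note first that $M$ has rank $r$ and $N$ has rank $n-r$ (each matrix contains an identity submatrix), so $\mathcal{B}(M)$ is exactly the collection of $r$-subsets $B$ for which the submatrix $A_B$ of $A = [I_r \mid D]$ on the columns of $B$ is invertible, and $\mathcal{B}(N)$ is the collection of $(n-r)$-subsets $B'$ for which the submatrix $A^{\dagger}_{B'}$ of $A^{\dagger} = [-D^T \mid I_{n-r}]$ on the columns of $B'$ is invertible. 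Thus the whole theorem reduces to the determinantal claim that $\det A_B = 0$ if and only if $\det A^{\dagger}_{B^c} = 0$; in fact I will show $\det A_B = \pm\det A^{\dagger}_{B^c}$.

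To prove that, split $B$ according to the two column blocks of $A$: let $X = B \cap \{e_1,\dots,e_r\}$ and $Y = B \cap \{e_{r+1},\dots,e_n\}$, so $|X| + |Y| = r$. In $A_B$ the columns coming from $X$ are standard basis vectors, so a Laplace expansion of $\det A_B$ along those columns reduces it, up to sign, to $\det D[R,C]$, where $R = \{e_1,\dots,e_r\} - X$ is read as a set of $|Y|$ row indices of $D$ and $C$ is the set of $|Y|$ column indices of $D$ corresponding to $Y$. Now do the same to $A^{\dagger}_{B^c}$: its columns coming from $\{e_{r+1},\dots,e_n\} - Y$ are standard basis vectors of $\mathbb{F}^{n-r}$, and expanding $\det A^{\dagger}_{B^c}$ along them reduces it, up to sign, to a square minor of $-D^T$ whose columns are indexed by $R$ and whose rows are indexed (after the natural re-indexing of the second block) by $C$; that minor equals $\pm\det D^T[C,R] = \pm\det D[R,C]$. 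Hence both $\det A_B$ and $\det A^{\dagger}_{B^c}$ equal $\pm\det D[R,C]$, so one vanishes precisely when the other does. This gives $\mathcal{B}(N) = \mathcal{B}(M^*)$ and therefore $N = M^*$.

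The only delicate point is the sign bookkeeping in the two Laplace expansions and in the identity $\det D^T[C,R] = \pm\det D[R,C]$; but the conclusion needs only the "zero iff zero" statement, so the signs are irrelevant and the argument is robust — the real content is simply that Laplace-expanding each of the two complementary submatrices along its standard-basis columns reduces both to a common (transposed) minor of $D$. As a sanity check and as the conceptual picture behind the computation, one may observe that $A (A^{\dagger})^T = I_r(-D) + D\,I_{n-r} = 0$ with $A$ of rank $r$ and $A^{\dagger}$ of rank $n-r$, so the row space of $A^{\dagger}$ is exactly the orthogonal complement in $\mathbb{F}^n$ of the row space of $A$; the theorem is then the concrete statement that passing to the orthogonal complement of the row space realizes matroid duality.
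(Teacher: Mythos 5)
Your argument is correct: reducing duality to the base characterization and checking that $\det A_B$ and $\det A^{\dagger}_{B^c}$ are, up to sign, the same minor of $D$ (so one vanishes iff the other does) is exactly the standard proof of this fact, and the row-space orthogonality remark $A(A^{\dagger})^T=0$ is the right conceptual picture. Note that the thesis itself gives no proof here — it simply quotes the result as Theorem 2.2.8 of Oxley — so your write-up is essentially a reproduction of the classical textbook argument rather than an alternative to anything in the paper.
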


We translate this to graph theoretic language for convenience, using in particular Theorem~\ref{matdual}.

\begin{theorem}\label{oxleydual} Let $G$ be a connected planar graph with $n$ edges, such that $G$ is neither a tree nor the empty graph. Order the edges of $G$ so that the first $r= |V(G)|-1$ form a spanning tree. Then, the reduced signed incidence matrix row reduces to $[I_r |A]$. Maintaining this ordering on the edges, the dual $G^*$ has reduced signed incidence matrix that row reduces to $[-A^T|I_{n-r}]$. \end{theorem}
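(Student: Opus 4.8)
The plan is to deduce Theorem~\ref{oxleydual} from the matroid statement Theorem~\ref{oxleydualtrueform} together with the identification of graphic and cographic matroids in Theorem~\ref{matdual}. First I would recall that the cycle matroid $M(G)$ of a graph $G$ is represented over $\mathbb{Q}$ (indeed over any field) by the reduced signed incidence matrix $M_G$, whose rows are independent because $G$ is connected and one row of $M_G^*$ has been deleted. Since $G$ is not a tree, $M_G$ has $r = |V(G)|-1$ rows and $n = |E(G)| > r$ columns, and its rows are independent, so it has rank $r$. Ordering the edges so that the first $r$ columns correspond to a spanning tree, those $r$ columns are linearly independent (a spanning tree is a basis of $M(G)$), so by Gaussian elimination on the rows we may write $M_G$ in reduced row echelon form $[\,I_r \mid A\,]$ for a uniquely determined $r \times (n-r)$ matrix $A$; note that row-reducing does not change the row space and hence does not change the represented matroid.

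Next I would invoke Theorem~\ref{oxleydualtrueform} with $D = A$: the dual matroid $M^*(G)$ is represented, with the same edge labelling $e_1,\dots,e_n$, by $[\,-A^T \mid I_{n-r}\,]$. By Theorem~\ref{matdual}, since $G$ is planar, $M^*(G) \cong M(G^*)$, where $G^*$ is the planar dual. The isomorphism here is the identity on edge labels, because the planar dual has the same edge set as $G$ and a cycle of $G^*$ is exactly a minimal edge cut (cocircuit) of $G$, which is precisely a circuit of $M^*(G)$. Consequently $[\,-A^T \mid I_{n-r}\,]$ is a representation of the cycle matroid $M(G^*)$ over $\mathbb{Q}$ with the inherited edge ordering, in which the last $n-r = |E(G^*)| - (|V(G^*)|-1)$ columns (here using $|V(G^*)| - 1 = n - r$ from Euler's formula, as already established in Corollary~\ref{primesaregood}) form a basis corresponding to a spanning tree of $G^*$.

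The final step is to pass from "a representation of $M(G^*)$" back to "the reduced signed incidence matrix of $G^*$ row-reduces to this." Any two $\mathbb{Q}$-representations of the same matroid that are both in reduced row echelon form with respect to the same ordered basis are equal, since reduced row echelon form is unique once the pivot columns are fixed; the matrix $[\,-A^T \mid I_{n-r}\,]$ is already in reduced row echelon form with pivots in the last $n-r$ columns. The reduced signed incidence matrix $M_{G^*}$, being another $\mathbb{Q}$-representation of $M(G^*)$, row-reduces (with pivot columns chosen to be those last $n-r$ spanning-tree columns) to this same matrix, so $M_{G^*}$ row-reduces to $[\,-A^T \mid I_{n-r}\,]$, as claimed. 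I expect the main subtlety to be the bookkeeping around the edge labelling under the chain of isomorphisms $M^*(G) \cong M(G^*)$: one must check that the abstract matroid isomorphism of Theorem~\ref{matdual} can be taken to be the identity on $E(G) = E(G^*)$, which is the standard fact that planar duality sends bonds of $G$ to cycles of $G^*$; everything else is routine linear algebra about uniqueness of reduced row echelon form.
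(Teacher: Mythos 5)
Your overall route---row-reduce $M_G$ to $[\,I_r \mid A\,]$ using the spanning-tree columns, apply Theorem~\ref{oxleydualtrueform}, and identify $M^*(G)$ with $M(G^*)$ via Theorem~\ref{matdual}, with the identity map on edges---is exactly the translation the paper intends (the paper offers no argument beyond this). The gap is in your final step. The claim that any two $\mathbb{Q}$-representations of the same matroid which are in reduced row echelon form with the same pivot columns must be equal is false: $[\,I_2 \mid (1,1)^T\,]$ and $[\,I_2 \mid (1,2)^T\,]$ both represent the cycle matroid of $K_3$ (namely $U_{2,3}$) over $\mathbb{Q}$, both are in reduced row echelon form with pivots in the first two columns, and they are not equal. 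Representing a matroid only records which column sets are independent, not the matrix itself, so from ``$M_{G^*}$ is another $\mathbb{Q}$-representation of $M(G^*)$'' you cannot conclude that its reduced row echelon form is $[\,-A^T \mid I_{n-r}\,]$; uniqueness of reduced row echelon form applies to matrices with the same row space, and equality of row spaces is precisely what remains to be shown.

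To close the gap you need structure beyond the abstract matroid isomorphism. One fix is unique representability of regular matroids: any two representations over a field differ by row operations and nonzero column scalings, and since both matrices here are totally unimodular the column scalings are by $\pm 1$, which correspond exactly to reorienting edges of $G^*$. A more direct fix avoids matroids in the last step altogether: with the coherent (rotated) orientation of the dual edges, the row space of $M_{G^*}$ is the cut space of $G^*$, which under planar duality is the cycle space of $G$, i.e.\ the orthogonal complement of the row space of $M_G = [\,I_r \mid A\,]$, and that complement is the row space of $[\,-A^T \mid I_{n-r}\,]$; now the legitimate uniqueness-of-RREF argument finishes. Either way, note that the signs are only determined up to the choice of orientation of $G^*$'s edges---the same ambiguity the paper later absorbs when proving Corollary~\ref{phi4dual}---so a fully correct statement must either fix the dual orientation canonically or assert the conclusion up to column sign changes.
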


This is key to proving that the extended graph permanent is indeed invariant under planar duals for $4$-point $\phi^4$ graphs. We require, then, that this row reduction does not change the permanents modulo the appropriate prime. We need to know then that we never must scale by a number other than $\pm 1$. To do this, we will use \emph{totally unimodular matrices}. These are matrices such that every square submatrix has determinant in $\{0,\pm1\}$. An important aspect of regular matroids is that they are precisely the matroids that are representable over $\mathbb{R}$ as a totally unimodular matrix. Though this immediately establishes that a signed incidence matrix is totally unimodular, we include a brief proof of this fact here.

\begin{lemma}\label{tummy} A signed incidence matrix is totally unimodular.  \end{lemma}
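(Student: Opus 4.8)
The plan is to prove that a signed incidence matrix is totally unimodular by induction on the size of a square submatrix, using the structure of incidence matrices: each column has at most one $+1$ and at most one $-1$ (and all other entries zero).

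\medskip

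First I would set up the induction. Let $B$ be a $k \times k$ submatrix of a signed incidence matrix $M^*_G$ (or of the reduced version $M_G$). I want to show $\det(B) \in \{0, \pm 1\}$. The base case $k = 1$ is immediate, since every entry of an incidence matrix is in $\{0, \pm 1\}$. For the inductive step I would split into three cases according to the columns of $B$. \textbf{Case 1:} some column of $B$ is entirely zero; then $\det(B) = 0$. \textbf{Case 2:} some column of $B$ contains exactly one nonzero entry, which is $\pm 1$; then expanding the determinant along that column expresses $\det(B)$ as $\pm 1$ times the determinant of a $(k-1) \times (k-1)$ submatrix of $M^*_G$, which lies in $\{0, \pm 1\}$ by the inductive hypothesis. \textbf{Case 3:} every column of $B$ contains exactly two nonzero entries (necessarily one $+1$ and one $-1$, since this is the only way a column of an incidence matrix restricted to a row subset can have two nonzero entries — the original column having exactly one $+1$ and one $-1$).

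\medskip

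The heart of the argument, and the step I expect to be the main obstacle to write cleanly, is Case 3. Here I would argue that the rows of $B$ are linearly dependent, so $\det(B) = 0$. The reason: if every column of $B$ has exactly one $+1$ and one $-1$, then summing all the rows of $B$ gives the zero vector, since in each column the single $+1$ and single $-1$ cancel. Hence the all-ones vector annihilates the rows of $B$, giving a nontrivial linear dependence, so $B$ is singular. Care must be taken to observe that in Case 3 no column of $B$ can have a single nonzero entry or be zero — those were handled in the earlier cases — and that if a column of $M^*_G$ restricted to the chosen rows keeps both of its nonzero entries, those entries are exactly $+1$ and $-1$; a column of the incidence matrix never has two entries of the same sign (loops, which would give a zero column, are excluded per Remark~\ref{excludeloops}). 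One mild subtlety: for the reduced matrix $M_G$ a column that originally had one $+1$ and one $-1$ may lose one of them when the special-vertex row is deleted, but that only puts us in Case 1 or Case 2, which are already covered, so the argument goes through verbatim for $M_G$ as well.

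\medskip

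Finally I would note that since every $k \times k$ submatrix arising in the expansions is again a submatrix of a signed incidence matrix, the induction closes, and we conclude that every square submatrix of $M^*_G$ (and of $M_G$) has determinant in $\{0, \pm 1\}$, i.e.\ the matrix is totally unimodular. This is exactly what is needed downstream: row-reducing the reduced signed incidence matrix to the form $[I_r \mid A]$ in Theorem~\ref{oxleydual} only ever requires scaling rows by $\pm 1$, so the permanent of the associated $k$-matrix is unaffected modulo the relevant prime by Remark~\ref{rowops} and Corollary~\ref{reduction}.
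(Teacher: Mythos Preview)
Your proof is correct and follows essentially the same approach as the paper's: induction on the size of the submatrix, with the same three-case split (zero column, column with a single nonzero entry handled by cofactor expansion, and every column having both a $+1$ and a $-1$ so the rows sum to zero). The paper's version is just terser; your additional remarks about loops, the reduced matrix, and the downstream application are accurate but not part of the paper's proof.
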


\begin{proof} We prove this by induction. By construction, any $1 \times 1$ submatrix will have determinant in $\{0, \pm1\}$. Suppose now that $S$ is a $t \times  t$ submatrix. If $S$ has a zero column, the determinant of $S$ is zero. If $S$ has a column with precisely one nonzero element, then cofactor expansion along this column can be used to compute the determinant using a smaller matrix,  and by induction the determinant of $S$ is in $\{0,\pm1\}$. Otherwise, all columns of $S$ contain one $1$, one $-1$, and the remaining entries $0$. As these row vectors sum to the zero vector, the determinant of $S$ is zero. \end{proof}

Let $A = [a_{x,y}]$ be a matrix. Following Oxley, define \emph{pivoting} on entry $a_{s,t}$ as the series of operations used in standard Gaussian elimination to turn the $t^\text{th}$ column into the $s^\text{th}$ unit vector. 

\begin{proposition}[\cite{Oxl}] Let $A$ be a totally unimodular matrix. If $B$ is obtained from $A$ by pivoting on the nonzero entry $a_{s,t}$ of $A$, then $B$ is totally unimodular.  \end{proposition}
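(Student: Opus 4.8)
The statement is the standard fact that pivoting preserves total unimodularity, and the natural approach is a direct determinant computation combined with the fact (already available to us via the preceding lemma and the surrounding discussion) that we may restrict attention to how an arbitrary square submatrix of $B$ relates to a square submatrix of $A$. First I would fix an arbitrary square submatrix $B'$ of $B$ and aim to show $\det(B') \in \{0, \pm 1\}$. Since pivoting on $a_{s,t}$ leaves row $s$ scaled by $1/a_{s,t} = \pm 1$ (as $a_{s,t} \in \{\pm 1\}$ by total unimodularity of $A$) and adds multiples of (the scaled) row $s$ to the other rows, the key observation is that $B$ is obtained from $A$ by multiplying row $s$ by $a_{s,t}$ and then performing integer row additions — none of which changes the absolute value of any maximal-rank determinant. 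So the heart of the matter is a case analysis on whether the submatrix $B'$ includes row $s$ and/or column $t$.

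The cleanest route is the classical one: if $B'$ contains row $s$, then its determinant equals $\pm$ the determinant of the corresponding submatrix of $A$ (the row operations used to build $B$, restricted to the rows of $B'$, are still just "scale row $s$ by $\pm 1$, then add multiples of row $s$ to other rows," all of which preserve $|\det|$), hence lies in $\{0,\pm 1\}$. If $B'$ does \emph{not} contain row $s$, I would use the identity that pivoting on $a_{s,t}$ is, up to row operations, the content of the classical ``Schur complement'' formula: for any square submatrix $B'$ of $B$ avoiding row $s$, one has $\det(B') = \pm \det(A'')$ where $A''$ is the square submatrix of $A$ obtained by taking the rows of $B'$ \emph{together with} row $s$, and the columns of $B'$ \emph{together with} column $t$ (this is exactly the matrix identity $\det\!\begin{bmatrix} a_{s,t} & * \\ * & B' \end{bmatrix} = \pm\, a_{s,t}\det(B')$ after the pivot, divided back out by $a_{s,t} = \pm 1$). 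Since $A''$ is a square submatrix of $A$, its determinant is in $\{0,\pm 1\}$, and therefore so is $\det(B')$.

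I would organize the write-up as: (1) note $a_{s,t} = \pm 1$; (2) handle the submatrices of $B'$ that contain row $s$ by the row-operation invariance of $|\det|$; (3) handle those that avoid row $s$ via the pivot/Schur-complement determinant identity, reducing to a submatrix of $A$ that is one size larger; (4) conclude that every square submatrix of $B$ has determinant in $\{0,\pm 1\}$, i.e. $B$ is totally unimodular.

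The main obstacle is purely bookkeeping: making the determinant identity in step (3) precise, i.e. correctly tracking which rows and columns of $A$ correspond to a given submatrix of $B$ and verifying the sign, since the indices shuffle when a column becomes a unit vector. This is routine linear algebra — expand along column $t$ after the pivot — but it is the only place where care is needed, and it is exactly the content of the cited result in \cite{Oxl}, so in the thesis I would either reproduce this short argument or simply invoke the reference as stated.
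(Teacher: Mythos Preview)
The paper does not prove this proposition at all; it simply cites it from \cite{Oxl} and moves on. Your argument is the standard one found in Oxley and is essentially correct.

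One small gap worth tightening: in your case analysis for submatrices $B'$ avoiding row $s$, you implicitly assume $B'$ also avoids column $t$ when you form $A''$ by adjoining row $s$ and column $t$. If $B'$ avoids row $s$ but \emph{contains} column $t$, then that column of $B'$ is entirely zero (since after the pivot, column $t$ is the unit vector $e_s$), so $\det(B')=0$ trivially; you should state this sub-case rather than letting it fall under the Schur-complement identity, where adjoining column $t$ a second time would not make sense. With that one-line addition your write-up is complete.
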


The following corollary is therefore immediate.

\begin{corollary}\label{constmultrowred} Row reducing as in Theorem~\ref{oxleydual}, we may choose a sequence of operations such that multiplication of a row by a constant only ever uses constant $- 1$. \end{corollary}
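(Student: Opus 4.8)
The plan is to establish Corollary~\ref{constmultrowred} as a direct consequence of the preceding proposition on pivoting preserving total unimodularity, together with Lemma~\ref{tummy}. First I would recall that by Lemma~\ref{tummy} the reduced signed incidence matrix $M$ is totally unimodular, so every square submatrix --- in particular every entry --- lies in $\{0, \pm 1\}$. The row reduction described in Theorem~\ref{oxleydual} transforms $M$ into the form $[I_r \mid A]$, and the natural way to carry this out is by a sequence of pivots on successive nonzero entries: to clear column $j$ (for $j = 1, \dots, r$), pivot on some nonzero entry $a_{s,j}$ in a not-yet-used row.

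The key observation is that at every stage the working matrix remains totally unimodular, by induction using the pivoting proposition quoted from \cite{Oxl}. Consequently, whenever we are about to pivot on an entry $a_{s,j}$, that entry is itself an entry of a totally unimodular matrix, hence $a_{s,j} \in \{0, \pm 1\}$; since we only pivot on nonzero entries, $a_{s,j} \in \{\pm 1\}$. The only row-scaling step in Gaussian elimination is dividing the pivot row by the pivot entry, i.e.\ multiplying it by $a_{s,j}^{-1}$, which is $1$ or $-1$. All other steps are additions of integer (indeed $\pm 1$) multiples of the pivot row to other rows, which involve no scaling. Therefore the entire reduction can be carried out so that the only constant ever used to multiply a row is $-1$ (and the trivial multiplier $1$), which is exactly the claim.

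The main obstacle, such as it is, is really just bookkeeping: one must make sure that a consistent pivoting order exists that reaches the claimed row-echelon form $[I_r \mid A]$ without at any point being forced to pivot on an entry outside $\{\pm 1\}$ --- but this is automatic, since total unimodularity is preserved along the way and there is always a nonzero pivot available in each of the first $r$ columns (these columns are independent because they index a spanning tree, so the submatrix on those columns is invertible and no column can become zero during the reduction). There is also a minor point to note for the downstream application: since we will ultimately feed this into a permanent computation via Corollary~\ref{reduction}, row additions act correctly on the permanent modulo $k+1$, and the only genuine effect on the permanent is the $-1$ scalings tracked here. Thus nothing beyond the pivoting proposition, Lemma~\ref{tummy}, and elementary linear algebra is needed, and the corollary follows immediately.
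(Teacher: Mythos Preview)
Your proposal is correct and follows exactly the approach the paper intends: the corollary is stated as ``immediate'' from the preceding proposition (that pivoting preserves total unimodularity) together with Lemma~\ref{tummy}, and you have simply unpacked that immediacy. Your additional remarks on the existence of a nonzero pivot in each of the first $r$ columns and on the downstream application to Corollary~\ref{reduction} are helpful elaborations but not strictly required.
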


To work with these matrices, we will use cofactor expansion techniques. An important one is included in the following remark.

\begin{remark}\label{howitgoes} Let $M=[I_r|A]$ be a matrix and $\overline{M}= \mathbf{1}_{\V \times \E} \otimes M$ the fundamental matrix of $M$. Suppose we want to find the permanent of $$\mathbf{1}_n \otimes \overline{M} = \mathbf{1}_n \otimes (\mathbf{1}_{\V \times \E} \otimes M) = \mathbf{1}_{n\V \times n\E} \otimes M.$$

Note that there are $n\E$ copies of each column of $M$ in $\overline{M}$, and similarly $n\V$ copies of each row. Performing cofactor expansion along all copies of a column in the identity matrix blocks then produces a factor of $$n\V (n\V-1) \cdots (n\V -n\E +1) = \frac{(n\V)!}{(n\V - n\E)!}.$$ Each expansion removes one specific row of $A$, so in total $n\E$ copies of this row are removed. Over all columns in the identity matrix blocks, then, we get $$\text{Perm}(\mathbf{1}_n \otimes \overline{M}) = \left( \frac{(n\V)!}{(n\V - n\E)!} \right)^r \text{Perm}(\mathbf{1}_{(n\V -n\E) \times n\E} \otimes A).$$
\end{remark}

\begin{proposition}\label{dual} Suppose the graph $G$ is a connected, planar, and not a tree. Let $\V_G$ be the number of copies of each row in the fundamental matrix of $G$, and $\E_G$ the number of copies of each column. Suppose the reduced signed incidence matrix for $G$ is $M_G$, and row reduces to $[I_{|V(G)|-1}|A]$. For prime $n\V_G +1$, \begin{align*} \text{Perm}(\mathbf{1}_n \otimes \overline{M}_G) &\equiv  \left( \frac{(n\V_G)!}{(n\V_G - n\E_G)!} \right)^{|V(G)|-1} \text{Perm} (\mathbf{1}_{(n\V_G -n\E_G) \times n\E_G} \otimes A) \pmod{n\V_G +1}.  \end{align*}  \end{proposition}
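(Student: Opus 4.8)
The plan is to combine Remark~\ref{howitgoes} with Corollary~\ref{constmultrowred}, treating the row reduction and the cofactor expansion as two separate manipulations that each preserve the relevant permanent modulo $n\V_G+1$. Concretely, write $p = n\V_G + 1$ for the prime in question, and recall that $\overline{M}_G = \mathbf{1}_{\V_G \times \E_G} \otimes M_G$ is a fundamental matrix, so that $\mathbf{1}_n \otimes \overline{M}_G = \mathbf{1}_{n\V_G \times n\E_G} \otimes M_G$ has exactly $n\V_G$ copies of each row of $M_G$.

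First I would handle the row reduction. The matrix $M_G$ row reduces to $[I_{|V(G)|-1}\,|\,A]$, and by Corollary~\ref{constmultrowred} this can be done using only elementary operations of the form ``add an integer multiple of one row to another'' together with ``multiply a row by $-1$''; no other scalars appear because a signed incidence matrix is totally unimodular (Lemma~\ref{tummy}) and pivoting preserves this. Each such operation is applied uniformly to the corresponding row in every block of $\mathbf{1}_{n\V_G \times n\E_G} \otimes M_G$. By Corollary~\ref{reduction}, adding an integer multiple of row $r_j$ to row $r_i$ simultaneously in all blocks leaves the permanent unchanged modulo (one more than the number of copies of $r_j$), which is $n\V_G + 1 = p$ here since each row occurs $n\V_G$ times; and by Remark~\ref{rowops}, multiplying a row of each block by $-1$ multiplies the permanent by $(-1)^{n\V_G}$. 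The subtle point is this overall sign: I would argue it is $+1$ modulo $p$ because $p = n\V_G + 1$ is an odd prime, forcing $n\V_G$ to be even, so $(-1)^{n\V_G} = 1$. Hence $\mathrm{Perm}(\mathbf{1}_n \otimes \overline{M}_G) \equiv \mathrm{Perm}(\mathbf{1}_{n\V_G \times n\E_G} \otimes [I_{|V(G)|-1}\,|\,A]) \pmod{p}$.

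Next I would apply the cofactor-expansion computation of Remark~\ref{howitgoes} verbatim to the matrix $\mathbf{1}_{n\V_G \times n\E_G} \otimes [I_r\,|\,A]$ with $r = |V(G)| - 1$: expanding along all $n\E_G$ copies of each of the $r$ identity columns peels off the factor $\left(\frac{(n\V_G)!}{(n\V_G - n\E_G)!}\right)^r$ and removes $n\E_G$ copies of each of the $r$ rows of $A$, leaving $\mathrm{Perm}(\mathbf{1}_{(n\V_G - n\E_G)\times n\E_G} \otimes A)$. Chaining the two congruences gives exactly the stated identity. I would note that Remark~\ref{howitgoes} is an identity over $\mathbb{Z}$, so no modular subtlety enters at this stage; the only place modular reasoning is needed is the row-reduction step.

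The main obstacle is the bookkeeping around the sign and around which prime modulus controls each step: one must be careful that ``the number of copies of the row being added'' is genuinely $n\V_G$ (not $\V_G$, and not the number of copies of the row being modified), so that Corollary~\ref{reduction} applies with modulus exactly $p$, and that the parity argument killing $(-1)^{n\V_G}$ is legitimately available because $\mathrm{GPerm}^{[p]}$ is only defined for primes $p = n\V_G + 1$, all of which (being primes exceeding $2$ whenever $\V_G \geq 2$, which holds since $G$ is not a tree) are odd. Once those two points are pinned down, the proof is a short concatenation of Corollary~\ref{reduction}, Corollary~\ref{constmultrowred}, Remark~\ref{rowops}, and Remark~\ref{howitgoes}.
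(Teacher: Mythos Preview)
Your proposal is correct and follows essentially the same route as the paper: row reduce $M_G$ to $[I_{|V(G)|-1}\mid A]$ using Corollary~\ref{constmultrowred} and Corollary~\ref{reduction} (with the parity observation that non-trees force $p=n\V_G+1$ odd, so $n\V_G$ is even and the $-1$ row scalings contribute no sign), then apply Remark~\ref{howitgoes} for the identity-column cofactor expansion. Your write-up is more explicit about the sign bookkeeping, but the ingredients and their order match the paper's proof.
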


\begin{proof}
By Remark~\ref{rowops}, Corollary~\ref{reduction}, and Corollary~\ref{constmultrowred}, row reduction operations preserve the extended graph permanent modulo $n\V_G +1$, as restricting to non-trees forces all primes to be odd, and hence an even number of repeated rows for all matrices. We may therefore row reduce $M_G$, the signed incidence matrix of $G$, to $ \left[ \begin{array}{c|c} I_{|V(G)|-1} & A \end{array} \right]$ by Theorem~\ref{oxleydual}. 

For prime $n\V_G+1$, by Remark~\ref{howitgoes},  $$\text{Perm}(\mathbf{1}_n \otimes \overline{M}_G) \equiv \left( \frac{(n\V_G)!}{(n\V_G - n\E_G)!} \right)^{|V(G)|-1} \text{Perm}(\mathbf{1}_{(n\V_G -n\E_G) \times n\E_G} \otimes A) \pmod{n\V_G +1},$$ as desired. \end{proof}

Proposition~\ref{dual} translates to dual graphs quickly. For graph $G$ that meets the requirements and dual $G^*$, $|V(G^*)| = 2 - |V(G)| + |E(G)|$ by Euler's polyhedral formula, and $|E(G^*)| = |E(G)|$. Therefore, \begin{align*} \text{Perm}(\mathbf{1}_n \otimes \overline{M}_{G^*}) & \\ &\hspace{-24mm} \equiv \left( \frac{(n\V_{G^*})!}{(n \V_{G^*} - n\E_{G^*})!} \right)^{1-|V(G)|+|E(G)|} \text{Perm}(\mathbf{1}_{(n\V_{G^*} -n\E_{G^*}) \times n\E_{G^*}} \otimes -A^T) \pmod{n\V_G +1},\end{align*} using the reduction from Theorem~\ref{oxleydual}. By Corollary~\ref{primesaregood}, $\V_G = \V_{G^*}$, but $\E_G$ is not necessarily equal to $\E_{G^*}$.

\begin{corollary}[to Proposition~\ref{lcmstuff}] \label{brokenlcms} For positive integers $s$ and $t$ such that $s>t$, $\lcm (s,t) = \lcm (s-t,s)$ if and only if $2t = s$. \end{corollary}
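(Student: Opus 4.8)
The plan is to deduce this directly from Proposition~\ref{lcmstuff}, which already isolates the relevant arithmetic. First I would set $q = \frac{\lcm(s,t)}{t}$; Proposition~\ref{lcmstuff} then says $\frac{\lcm(s,s-t)}{s-t} = q$, so that $\lcm(s,s-t) = q(s-t)$, while trivially $\lcm(s,t) = qt$. Thus the desired equality $\lcm(s,t) = \lcm(s-t,s)$ is the equality $qt = q(s-t)$.

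Next I would note that $q = \frac{\lcm(s,t)}{t} = \frac{s}{\gcd(s,t)}$ is a positive integer; indeed $\gcd(s,t)$ divides $t$ and hence $\gcd(s,t) \le t < s$, so $q > 1$ and in particular $q \neq 0$. Cancelling $q$ from $qt = q(s-t)$ gives $t = s-t$, i.e. $2t = s$, which is the forward implication. The converse is immediate and needs none of this: if $2t = s$ then $s-t = t$, so $\lcm(s,s-t) = \lcm(s,t)$ literally.

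I do not expect any genuine obstacle here; the only point requiring a word of care is that the common factor $q$ being cancelled is nonzero, which is guaranteed since $s > t \ge \gcd(s,t) \ge 1$. As a remark, an equally short alternative avoids Proposition~\ref{lcmstuff} and uses instead $\lcm(a,b)\gcd(a,b) = ab$ together with Lemma~\ref{gcdstuff} (which gives $\gcd(s,s-t) = \gcd(s,t)$): the equality $\lcm(s,t) = \lcm(s,s-t)$ then reads $\frac{st}{\gcd(s,t)} = \frac{s(s-t)}{\gcd(s,t)}$, hence $st = s(s-t)$, hence $t = s-t$, yielding the same conclusion.
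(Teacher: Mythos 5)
Your proof is correct and follows essentially the same route as the paper: both directions rest on Proposition~\ref{lcmstuff}, with the forward implication obtained by comparing the equal ratios $\frac{\lcm(s,t)}{t} = \frac{\lcm(s,s-t)}{s-t}$ against the assumed equality of the lcm's, and the converse being immediate from $s-t=t$. Your explicit remark that the cancelled factor $q = s/\gcd(s,t)$ is nonzero only makes precise a step the paper leaves implicit.
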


\begin{proof} If $2t = s$, then $s-t = t$ and $\lcm (s-t,s) = \lcm (t,s)$. In the other direction, suppose $\lcm (s-t,s) = \lcm (t,s)$. By Proposition~\ref{lcmstuff}, $\frac{\lcm (s-t,s)}{s-t} = \frac{\lcm (t,s)}{t}$, so $s-t = t$ and hence $2t=s$.   \end{proof}

As $\E_G = \frac{\lcm (|V(G)|-1, |E(G)|)}{|E(G)|}$, Corollary~\ref{brokenlcms} shows that $\E_G = \E_{G^*}$ if and only if $2(|V(G)|-1) = |E(G)|$. It follows that duality for $4$-point $\phi^4$ graphs is a special instance of general duality. The following Lemma and Corollary will be of use in establishing the general duality statement for arbitrary graphs.

\begin{lemma}\label{anotherntthing} For positive integers $s$ and $t$ such that $s>t$, $\frac{\lcm (s,t)}{s} + \frac{\lcm (s,s-t)}{s} = \frac{\lcm (s,t)}{t}$. \end{lemma}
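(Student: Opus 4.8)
The plan is to reduce everything to greatest common divisors using the identity $\lcm(a,b)\gcd(a,b)=ab$, which was already invoked in the proof of Proposition~\ref{lcmstuff}. Writing $d=\gcd(s,t)$, Lemma~\ref{gcdstuff} gives $\gcd(s,s-t)=d$ as well. Then
\begin{align*}
\frac{\lcm(s,t)}{s}+\frac{\lcm(s,s-t)}{s}
&=\frac{t}{\gcd(s,t)}+\frac{s-t}{\gcd(s,s-t)}\\
&=\frac{t}{d}+\frac{s-t}{d}\\
&=\frac{s}{d}\\
&=\frac{s}{\gcd(s,t)}\\
&=\frac{\lcm(s,t)}{t},
\end{align*}
which is exactly the claimed identity. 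Each equality is just an application of $\lcm(a,b)=ab/\gcd(a,b)$ together with Lemma~\ref{gcdstuff}, so there is no real obstacle here; the only thing to check is that the division makes sense, which it does since $d\mid t$, $d\mid s-t$, and $d\mid s$ by definition of the gcd. The main step worth isolating is the appeal to Lemma~\ref{gcdstuff} to equate $\gcd(s,s-t)$ with $\gcd(s,t)$, since without it the two fractions would not share a common denominator; everything else is routine arithmetic.

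If one prefers to avoid gcd's entirely, an alternative is to set $\E=\lcm(s,t)/t$ and $\V=\lcm(s,t)/s$, note $\E$ and $\V$ are the column- and row-multiplicities appearing in the fundamental-matrix construction, and then observe that $\lcm(s,s-t)/s = \E-\V$ follows from Proposition~\ref{lcmstuff} applied with the roles of $t$ and $s-t$ interchanged (which gives $\lcm(s,s-t)/(s-t)=\lcm(s,t)/t=\E$, hence $\lcm(s,s-t)/s = (s-t)\E/s = \E - t\E/s = \E - \V$). Either route is short; I would present the gcd computation above as the cleanest. I expect no difficulty beyond bookkeeping, so the ``hard part'' is essentially nonexistent — the lemma is a purely number-theoretic identity set up precisely so that the subsequent general duality statement for arbitrary graphs goes through.
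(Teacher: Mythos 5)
Your gcd computation is correct and is essentially identical to the paper's own proof, which likewise rewrites both terms via $\lcm(a,b)\gcd(a,b)=ab$ and invokes Lemma~\ref{gcdstuff} to put them over the common denominator $\gcd(s,t)$. No issues.
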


\begin{proof} As $\gcd (s,t) \cdot \lcm (s,t) = s \cdot t$, \begin{align*} \frac{\lcm(s,t)}{s} + \frac{\lcm(s,s-t)}{s} &= \frac{t}{\gcd(s,t)} + \frac{s-t}{\gcd(s,s-t)} \\ &= \frac{t}{\gcd(s,t)} + \frac{s-t}{\gcd(s,t)} & \text{ by Lemma~\ref{gcdstuff}} \\ &= \frac{s}{\gcd(s,t)} \\ &= \frac{\lcm(s,t)}{t}.  \end{align*}  \end{proof}

Let $G=(V,E)$ be a graph and $G^*=(V^*,E^*)$ its planar dual. Let $\overline{M}_G = \mathbf{1}_{\V_G \times \E_G} \otimes M_G$  and $\overline{M}_{G^*} = \mathbf{1}_{\V_{G^*} \times \E_{G^*}} \otimes M_{G^*}$ be respective fundamental matrices of these graphs. It follows from Lemma~\ref{anotherntthing} and Proposition~\ref{primesaregood} that $\E_G + \E_{G^*} = \V_G = \V_{G^*}$.

\begin{lemma} \label{anotherwilsoncor} For $j=a+b+1$ where $a$ and $b$ are positive integers, $$a!\cdot b! \equiv (-1)^b(j-1)! \pmod{j}.$$  \end{lemma}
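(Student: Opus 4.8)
The plan is to prove $a! \cdot b! \equiv (-1)^b (j-1)! \pmod j$ where $j = a+b+1$, by relating the product $a! \cdot b!$ to the full factorial $(j-1)! = (a+b)!$ through a telescoping manipulation of the ``upper'' block of factors. First I would write $(j-1)! = (a+b)! = a! \cdot \prod_{i=1}^{b}(a+i)$, so the claim reduces to showing $\prod_{i=1}^{b}(a+i) \equiv (-1)^b \, b! \pmod j$. Since $j = a+b+1$, we have $a + i \equiv a + i - j = i - (b+1) = -(b+1-i) \pmod j$ for each $i \in \{1,\dots,b\}$. As $i$ runs over $1,\dots,b$, the quantity $b+1-i$ runs over $b, b-1, \dots, 1$, so $\prod_{i=1}^{b}(a+i) \equiv \prod_{i=1}^{b} \bigl(-(b+1-i)\bigr) = (-1)^b \prod_{k=1}^{b} k = (-1)^b \, b! \pmod j$. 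Multiplying both sides by $a!$ gives the result.

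Concretely, the steps in order are: (1) expand $(j-1)!$ as $a!$ times the consecutive product $\prod_{i=1}^b (a+i)$; (2) reduce each factor $a+i$ modulo $j$ using $j = a+b+1$ to get $a+i \equiv -(b+1-i)$; (3) reindex the resulting product to recognize it as $(-1)^b b!$; (4) multiply through by $a!$ and rearrange to the stated congruence. One should note the trivial edge behaviour is already covered since $a$ and $b$ are assumed positive, so $b \geq 1$ and the product is nonempty; the $b = 0$ case (if it arose) would read $a! \equiv (j-1)!$ with $j = a+1$, which is immediate.

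I do not expect any serious obstacle here; this is a short number-theoretic identity and the only thing to be careful about is the reindexing in step (3) and keeping track of the sign $(-1)^b$ coming from pulling a minus sign out of each of the $b$ factors. It is worth remarking that this lemma is the natural generalization of Wilson's Theorem (the case relevant when $j$ is prime, where $(j-1)! \equiv -1$), and it is presumably being set up to handle the factorial ratios such as $\frac{(n\V_G)!}{(n\V_G - n\E_G)!}$ appearing in Remark~\ref{howitgoes} and Proposition~\ref{dual}, so the proof should be stated cleanly enough to be quoted in that downstream computation.
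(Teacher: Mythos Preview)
Your proof is correct and essentially identical to the paper's: both replace each factor in one block by its residue modulo $j$ (using $j=a+b+1$) to convert between $b!$ and the product $(a+1)(a+2)\cdots(a+b)$, picking up the sign $(-1)^b$. The only cosmetic difference is direction—you start from $(j-1)!$ and extract $(-1)^b b!$, while the paper starts from $a!\cdot b!$ and builds up to $(-1)^b(j-1)!$.
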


\begin{proof} Briefly, \begin{align*} a! \cdot b! &\equiv a! (b (b-1) \cdots 1) \\ & \equiv a! ( (b-j)(b-1-j) \cdots (1-j)) \\ &\equiv (1 \cdots a)((-1)^b(j-b)(j-b+1) \cdots  (j-1)) \\ & \equiv (-1)^b (j-1)! \pmod{j} \end{align*} as $j-b = a+1$.  \end{proof}

Using the previous notation, it follows from Lemma~\ref{anotherwilsoncor} that $$(n\V_G)!\cdot (n\V_{G^*})! \equiv (-1)^{n\E_G}(n\V_G)! \pmod{n\V_G+1}.$$ While we will generally be assuming that $n\V_G+1$ is prime and hence further simplification follows from Wilson's Theorem, we will be using this to simplify future calculations, and hence leave this computation here.

It is also worth briefly noting that for a graph $G$ and fundamental matrix $\overline{M}_G = \mathbf{1}_{\V_G \times \E_G} \otimes M_G$, the product $n\E_G|E(G)|$ is always even, where $n$ is an integer such that $p=n\V_G+1$ is an odd prime. If we suppose that $\E_G$ and $|E(G)|$ are both odd, then as $\E_G = \frac{\lcm(|E(G)|,|V(G)|-1)}{|E(G)|}$, it follows that $\lcm(|E(G)|,|V(G)|-1)$ is also odd. Thus, $\V_G = \frac{\lcm(|E(G)|,|V(G)|-1)}{|V(G)|-1}$ must be odd also. As $p$ is assumed to be an odd prime, $n$ must therefore be even.

\begin{proposition}\label{painful} Let $G=(V,E)$ be a graph and $G^*=(V^*,E^*)$ its planar dual, and suppose they have fundamental matrices $\overline{M}_G = \mathbf{1}_{\V_G \times \E_G} \otimes M_G$ and $\overline{M}_{G^*} = \mathbf{1}_{\V_{G^*} \times \E_{G^*}} \otimes M_{G^*}$. For common prime $p = n\V_G +1$, $$ \text{GPerm}^{[p]}(G) = (-1)^{|E|-|V|+1}(n\E_G)!^{|E|}\text{GPerm}^{[p]}(G^*)  .$$ \end{proposition}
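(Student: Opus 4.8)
The plan is to express both $\text{GPerm}^{[p]}(G)$ and $\text{GPerm}^{[p]}(G^*)$, via Proposition~\ref{dual}, in terms of a single common permanent, and then reconcile the two expressions by tracking factorial prefactors and signs through a transpose and a negation. First I would assemble the arithmetic. By Corollary~\ref{primesaregood}, $\V_G = \V_{G^*}$, and, as recorded after Lemma~\ref{anotherntthing}, $\E_G + \E_{G^*} = \V_G$; hence $n\V_G - n\E_G = n\E_{G^*}$ and $n\V_{G^*} - n\E_{G^*} = n\E_G$. Row reduce the reduced signed incidence matrix of $G$ to $[\,I_{|V|-1}\mid A\,]$ (Theorem~\ref{oxleydual}); Proposition~\ref{dual} then gives
$$\text{GPerm}^{[p]}(G) \equiv \left(\frac{(n\V_G)!}{(n\E_{G^*})!}\right)^{|V|-1} P \pmod p, \qquad P := \text{Perm}\!\big(\mathbf{1}_{n\E_{G^*}\times n\E_G}\otimes A\big).$$
For $G^*$ — which is connected, planar, and, since $|V|>1$, not a tree, so Proposition~\ref{dual} applies — Theorem~\ref{oxleydual} gives a reduced signed incidence matrix row-reducing to $[\,-A^T\mid I_{|E|-|V|+1}\,]$; a column permutation does not affect the permanent (Remark~\ref{rowops}), so Proposition~\ref{dual} applies with $A$ replaced by $-A^T$ and $|V|-1$ by $|V^*|-1 = |E|-|V|+1$, yielding
$$\text{GPerm}^{[p]}(G^*) \equiv \left(\frac{(n\V_G)!}{(n\E_G)!}\right)^{|E|-|V|+1} \text{Perm}\!\big(\mathbf{1}_{n\E_G\times n\E_{G^*}}\otimes(-A^T)\big) \pmod p.$$

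Next I would identify the two core permanents. Let $m$ be the size of the square matrix $\mathbf{1}_{n\E_{G^*}\times n\E_G}\otimes A$ (square by the discussion in Remark~\ref{howitgoes}), so counting rows $m = n\E_{G^*}(|V|-1)$. Using $\text{Perm}(X)=\text{Perm}(X^T)$ (immediate from Definition~\ref{permanentdef}), $(\mathbf{1}_{a\times b}\otimes B)^T = \mathbf{1}_{b\times a}\otimes B^T$, bilinearity of $\otimes$, and $\text{Perm}(-Y) = (-1)^m\text{Perm}(Y)$ for an $m\times m$ matrix $Y$, one gets
$$\text{Perm}\!\big(\mathbf{1}_{n\E_G\times n\E_{G^*}}\otimes(-A^T)\big) = (-1)^m\,\text{Perm}\!\big(\mathbf{1}_{n\E_{G^*}\times n\E_G}\otimes A\big) = (-1)^m P.$$
All factorials of integers less than $p$ are units modulo $p$, so I can solve the $G^*$ display for $P$ and substitute into the $G$ display to eliminate $P$ entirely, obtaining
$$\text{GPerm}^{[p]}(G) \equiv (-1)^m\left(\frac{(n\V_G)!}{(n\E_{G^*})!}\right)^{|V|-1}\left(\frac{(n\E_G)!}{(n\V_G)!}\right)^{|E|-|V|+1}\text{GPerm}^{[p]}(G^*) \pmod p.$$

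Finally I would simplify the prefactor. Since $p = n\V_G+1$ is prime, Wilson's Theorem gives $(n\V_G)!\equiv -1$, so the net power $(n\V_G)!^{(|V|-1)-(|E|-|V|+1)}$ contributes the sign $(-1)^{2|V|-|E|-2} = (-1)^{|E|}$. By Lemma~\ref{anotherwilsoncor} applied with $a=n\E_{G^*}$, $b=n\E_G$ (legitimate since $n\E_{G^*}+n\E_G+1 = n\V_G+1 = p$), we have $(n\E_G)!\,(n\E_{G^*})! \equiv (-1)^{n\E_G}(n\V_G)! \equiv (-1)^{n\E_G+1}\pmod p$, hence $(n\E_{G^*})!^{-1} \equiv (-1)^{n\E_G+1}(n\E_G)!$ modulo $p$, and therefore $(n\E_G)!^{|E|-|V|+1}(n\E_{G^*})!^{-(|V|-1)} \equiv (n\E_G)!^{|E|}(-1)^{(n\E_G+1)(|V|-1)}$. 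Collecting everything yields
$$\text{GPerm}^{[p]}(G) \equiv (-1)^{\,m+|E|+(n\E_G+1)(|V|-1)}\,(n\E_G)!^{|E|}\,\text{GPerm}^{[p]}(G^*)\pmod p.$$
The main obstacle I anticipate is verifying that the sign exponent is congruent to $|E|-|V|+1$ modulo $2$. Using $m = n\E_{G^*}(|V|-1) = n\V_G(|V|-1) - n\E_G(|V|-1)$, this reduces to the claim that $n\V_G(|V|-1)$ is even; and this holds because $p = n\V_G+1$ is an odd prime, forcing $n\V_G$ to be even. That matches the stated formula and completes the proof.
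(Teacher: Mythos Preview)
Your proof is correct and follows essentially the same approach as the paper: apply Proposition~\ref{dual} to both $G$ and $G^*$, relate the two core permanents via transpose and negation, and simplify the factorial prefactors using Wilson's Theorem and Lemma~\ref{anotherwilsoncor}. Your final parity argument is in fact slightly cleaner than the paper's---you reduce to $n\V_G$ being even (immediate from $p$ odd), whereas the paper organizes the signs so that it needs $n\E_G|E|$ even, which requires the separate paragraph of justification preceding the proposition.
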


\begin{proof} Computing the extended graph permanent of $G$ at prime $p$ and modulo $p$, \begin{align*} \text{GPerm}^{[p]}(G) &\equiv \text{Perm}(\mathbf{1}_n \otimes \overline{M}_G) & \\ 
&\equiv \left( \frac{(n\V_G)!}{(n\V_G - n\E_G)!} \right)^{|V|-1} \text{Perm}(\mathbf{1}_{(n\V_G - n\E_G) \times n\E_G} \otimes A) & \text{ by Proposition~\ref{dual}} \\ 
&\equiv \left( \frac{(n\V_G)! (n\E_G)!}{(n\E_{G^*})!(n\E_G)!} \right)^{|V|-1}\text{Perm}(\mathbf{1}_{(n\V_G - n\E_G) \times n\E_G} \otimes A) & \text{ by Lemma~\ref{anotherntthing}} \\
&\equiv (-1)^{n\E_G(|V|-1)}(n\E_G)!^{|V|-1}\text{Perm}(\mathbf{1}_{(n\V_G - n\E_G) \times n\E_G} \otimes A) & \text{ by Lemma~\ref{anotherwilsoncor}}.  \end{align*}  Similarly for $G^*$, and as $\V_G = \V_{G^*}$ by Corollary~\ref{primesaregood}, \begin{align*} \text{GPerm}(G^*) & \equiv \text{Perm}(\mathbf{1}_n \otimes \overline{M}_{G^*})  \\
&\equiv \left( \frac{ (n\V_G)!}{(n\V_{G^*} - n\E_{G^*})!} \right)^{|E|-|V|+1} \text{Perm} (\mathbf{1}_{(n\V_{G^*} - n \E_{G^*}) \times n \E_{G^*}} \otimes -A^T)  \\
& \equiv \left( \frac{-1}{(n\E_G)!} \right)^{|E|-|V|+1} (-1)^{(|E|-|V|+1)n\E_G}\text{Perm}(\mathbf{1}_{n\E_G \times (n\V_G - n\E_{G})} \otimes A^T)   \\ 
& \equiv \frac{(-1)^{(|E|-|V|+1)(n\E_G+1)}}{(n\E_G)!^{|E|-|V|+1}} \text{Perm} (\mathbf{1}_{(n\V_G - n\E_G) \times n \E_G} \otimes A)  \pmod{n\V_G+1}. \end{align*}  

Note the common factors in these two equivalences. Therefore, \begin{align*} \text{GPerm}^{[p]}(G) 
&\equiv  (-1)^{n\E_G(|V|-1)}(n\E_G)!^{|V|-1} (n\E_G)!^{|E|-|V|+1}(-1)^{(|E|-|V|+1)(n\E_G+1)}\text{GPerm}^{[p]}(G^*) \\
&\equiv (-1)^{|E|-|V|+1}(n\E_G)!^{|E|}\text{GPerm}^{[p]}(G^*) \pmod{p}.\end{align*}
\end{proof}

To show specifically that this results in invariance for $4$-point $\phi^4$ graphs, we will use the following corollary to both Wilson's Theorem and Lemma~\ref{anotherwilsoncor}.

\begin{corollary}\label{wilsoncor} Let $p= 2n+1$ be an odd prime. Then, $$n!^2 \equiv \begin{cases} -1\pmod{p} \text{ if $n$ is even} \\ 1 \pmod{p} \text{ if $n$ is odd} \end{cases}.$$ \end{corollary}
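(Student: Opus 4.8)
The plan is to read this off directly from Lemma~\ref{anotherwilsoncor} together with Wilson's Theorem, so the argument is very short. Since $p = 2n+1$ is an odd prime we have $p \geq 3$, hence $n \geq 1$, and we may apply Lemma~\ref{anotherwilsoncor} with $a = b = n$; then $j = a + b + 1 = 2n+1 = p$, and the lemma gives $n! \cdot n! \equiv (-1)^n (p-1)! \pmod{p}$.

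Next I would invoke Wilson's Theorem: since $p$ is prime, $(p-1)! \equiv -1 \pmod{p}$. Substituting, $n!^2 \equiv (-1)^n \cdot (-1) = (-1)^{n+1} \pmod{p}$. Finally I would split on the parity of $n$: if $n$ is even then $(-1)^{n+1} = -1$, and if $n$ is odd then $(-1)^{n+1} = 1$, which is exactly the claimed case distinction. This completes the proof.

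There is essentially no obstacle here — the only point that needs a moment's care is checking that the hypotheses of Lemma~\ref{anotherwilsoncor} are met, namely that $a = b = n$ are genuinely positive integers, which follows from $p$ being an \emph{odd} prime (ruling out $p = 2$, $n = 0$). One could alternatively prove it from scratch by writing $(2n)! = \left(\prod_{k=1}^{n} k\right)\left(\prod_{k=n+1}^{2n} k\right)$ and reindexing the second product via $k \mapsto p - k$ to get $(2n)! \equiv (-1)^n n!^2 \pmod p$, then applying Wilson; but since Lemma~\ref{anotherwilsoncor} is already available, the first route is preferable and is presumably the intended one given the corollary's stated dependence on both that lemma and Wilson's Theorem.
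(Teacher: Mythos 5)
Your argument is correct and is precisely the derivation the paper intends: the corollary is stated as following from Lemma~\ref{anotherwilsoncor} (with $a=b=n$, $j=p$) combined with Wilson's Theorem, which is exactly your route. The parity check at the end and the observation that $n\geq 1$ are both fine, so nothing is missing.
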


\begin{corollary}\label{phi4dual} For planar graph $G=(V,E)$ where $2(|V(G)|-1) = |E(G)|$ and its planar dual $G^*$, $G$ and $G^*$ have equal extended graph permanents. \end{corollary}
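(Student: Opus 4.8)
The plan is to specialize Proposition~\ref{painful} to the case at hand. Since $G$ is a $4$-point $\phi^4$ graph, we have $2(|V(G)|-1) = |E(G)|$, and so by Corollary~\ref{brokenlcms} (with $s = |E(G)|$, $t = |V(G)|-1$), we get $\E_G = \E_{G^*}$. Combined with $\V_G = \V_{G^*}$ from Corollary~\ref{primesaregood}, and the relation $\E_G + \E_{G^*} = \V_G$ noted after Lemma~\ref{anotherntthing}, this forces $\V_G = 2\E_G$. Hence for any common prime $p = n\V_G + 1 = 2(n\E_G) + 1$, the integer $n\E_G$ plays the role of ``$n$'' in Corollary~\ref{wilsoncor} with respect to the odd prime $p$.

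First I would plug into the conclusion of Proposition~\ref{painful}:
\begin{equation*}
\text{GPerm}^{[p]}(G) = (-1)^{|E|-|V|+1}(n\E_G)!^{|E|}\,\text{GPerm}^{[p]}(G^*).
\end{equation*}
Writing $|E| = 2(|V|-1)$, note $|E|$ is even, so $(-1)^{|E|-|V|+1} = (-1)^{-|V|+1} = (-1)^{|V|-1}$. Also $(n\E_G)!^{|E|} = \left((n\E_G)!^2\right)^{|V|-1}$, and by Corollary~\ref{wilsoncor}, $(n\E_G)!^2 \equiv (-1)^{n\E_G} \pmod p$. Therefore $(n\E_G)!^{|E|} \equiv (-1)^{n\E_G(|V|-1)} \pmod p$, and the sign prefactor becomes $(-1)^{|V|-1}(-1)^{n\E_G(|V|-1)} = (-1)^{(n\E_G+1)(|V|-1)}$.

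Next I would argue this sign is always $+1$. This requires showing $(n\E_G + 1)(|V|-1)$ is even. I would use the parity observation made just before Proposition~\ref{painful}: if $\E_G$ and $|E(G)|$ were both odd then $n$ would have to be even; equivalently, $n\E_G$ is even unless $\E_G|E(G)|$ is even. Since $|E(G)| = 2(|V(G)|-1)$ is even here, $n\E_G|E(G)|$ is certainly even, but I actually need the parity of $n\E_G(|V|-1)$ together with $(|V|-1)$. The cleanest route: if $|V|-1$ is even we are done immediately; if $|V|-1$ is odd, then $|E(G)| = 2(|V|-1) \equiv 2 \pmod 4$, and I would trace through whether this forces $n\E_G$ to be odd (making $n\E_G + 1$ even) — this is exactly the kind of $\lcm$/parity bookkeeping done in the remarks preceding Proposition~\ref{painful}, and it should yield that $n\E_G$ has the right parity so that $(n\E_G+1)(|V|-1)$ is even in all cases.

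The main obstacle I anticipate is precisely this final parity check: confirming that $(n\E_G + 1)(|V| - 1)$ is always even across all the cases of $\V_G, \E_G, |E|, |V|$ parities that can occur for a $4$-point $\phi^4$ graph at an admissible prime $p$. Everything else is mechanical substitution into results already proved (Propositions~\ref{painful}, \ref{dual}, Corollaries~\ref{primesaregood}, \ref{brokenlcms}, \ref{wilsoncor}). If the parity argument does not close cleanly by the $\lcm$ identities, an alternative is to observe that $\E_G = \E_{G^*}$ and $\V_G = 2\E_G$ together with $p$ odd pin down enough structure to evaluate the sign directly. Once the sign is shown to be trivial, the conclusion $\text{GPerm}^{[p]}(G) = \text{GPerm}^{[p]}(G^*)$ holds at every common prime, and since these are the same set of primes by Corollary~\ref{primesaregood}, the extended graph permanents of $G$ and $G^*$ are equal.
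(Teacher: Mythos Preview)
Your approach is essentially the paper's: specialize Proposition~\ref{painful} and reduce the prefactor using Corollary~\ref{wilsoncor}. Note first that since $|E| = 2(|V|-1)$, one has $\lcm(|V|-1,|E|) = |E|$, hence $\E_G = 1$ and $\V_G = 2$ outright; your derivation of $\V_G = 2\E_G$ via Corollary~\ref{brokenlcms} is correct but more indirect than necessary.

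The genuine gap is in your final step. You aim to show $(n\E_G+1)(|V|-1)$ is always even, i.e.\ the sign prefactor is always $+1$. This is false. With $\E_G = 1$ and $\V_G = 2$, the prime is $p = 2n+1$ and the prefactor is $(-1)^{(n+1)(|V|-1)}$. Nothing forces $n$ to have a particular parity: both $n$ even and $n$ odd occur as $p$ ranges over odd primes. When $n$ is odd and $|V|-1$ is odd, the prefactor is $-1$, not $+1$. Your proposed parity bookkeeping cannot close because the claim it targets is simply not true.

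The paper resolves this differently. It splits on the parity of $n$: for $n$ even the prefactor is $1$; for $n$ odd it is $(-1)^{|V|-1}$. The key observation you are missing is that the primes with $n$ odd are exactly those at which the extended graph permanent carries the orientation-dependent sign ambiguity discussed in Section~\ref{signambiguity} (each edge column is duplicated an odd number of times). A constant sign discrepancy at precisely those primes can be absorbed by reversing the orientation of a single edge in one of the two graphs. So the conclusion is not that the prefactor is identically $1$, but that any residual sign lies within the inherent sign ambiguity of the invariant, and hence the extended graph permanents are equal in the sense the paper uses throughout.
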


\begin{proof} Here we consider primes of the form $p=2n+1$ for integers $n$. By Proposition~\ref{painful}, it suffices to consider only $(-1)^{|E| -|V|+1}(n\E_G)!^{|E|} \pmod{p}$. As $|E|=2(|V|-1)$ and $\E_G =1$, this is equivalent to $(-1)^{|V|-1}n!^{2(|V|-1)}$. 

If $n$ is even, then by Corollary~\ref{wilsoncor} $n!^2 \equiv -1 \pmod{p}$, and $$(-1)^{|V|-1}n!^{2(|V|-1)} \equiv (-1)^{2(|V|-1)} \equiv 1 \pmod{p} .$$ Otherwise, $n!^2 \equiv 1 \pmod{p}$, and $$ (-1)^{|V|-1}n!^{2(|V|-1)} \equiv (-1)^{|V|-1} \pmod{p} .$$ As these are primes for which the extended graph permanent vales may vary based on the underlying orientation of the directed graph, this produces either equivalence or a constant sign difference that can be corrected by reversing the direction of one edge. \end{proof}

\section{Two-vertex cuts}\label{sec2cut}

The $2$-vertex cut property of the period was introduced in Section~\ref{introinvariance} and shown in Figure~\ref{2cut}. To establish the comparable result for the extended graph permanent, we require an observation about the permanents of matrices.

\begin{lemma}\label{moregeneralpigeon} Let $M= \left[ \begin{array}{cc} A & B \\ \mathbf{0} & C   \end{array}  \right]$ where submatrix $A$ has dimensions $m \times n$, $m<n$, and $\mathbf{0}$ is the matrix with all entries $0$. The permanent of $M$ is zero.  \end{lemma}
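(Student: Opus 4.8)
The statement to prove is a standard fact about block lower-triangular matrices whose top-left block is ``too wide''. The plan is to expand the permanent via the Leibniz-type sum over the symmetric group and show every term vanishes because of the zero block in the lower-left corner.

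First I would set up notation: say $M$ is $N \times N$ where $N$ is the number of rows of $B$'s row-block plus the number of rows of $C$, and note that the columns split into the first $n$ columns (over which $A$ and $\mathbf{0}$ sit) and the remaining columns (over which $B$ and $C$ sit). The matrix $A$ occupies the first $m$ rows and first $n$ columns, with $m < n$. Then I would invoke Definition~\ref{permanentdef}: $\text{Perm}(M) = \sum_{\sigma \in S_N} \prod_{i=1}^N m_{i,\sigma(i)}$. For a fixed permutation $\sigma$ to contribute a nonzero product, every factor $m_{i,\sigma(i)}$ must be nonzero; in particular, for each of the bottom rows (those lying in the $[\mathbf{0} \mid C]$ block) the entry $m_{i,\sigma(i)}$ must come from the $C$ part, i.e. $\sigma(i)$ must be among the last $N - n$ columns. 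That forces $\sigma$ to send the $N - m$ bottom-block rows injectively into the $N - n$ columns of the right-hand block.

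The pigeonhole step is the crux, though it is entirely routine: we need $N - m$ rows to map injectively into $N - n$ columns, but $N - m > N - n$ precisely because $m < n$. Hence no such injection exists, so every term in the permanent sum is zero, giving $\text{Perm}(M) = 0$. I would phrase this cleanly: the $n$ columns of the left block must all be hit by $\sigma$ (since $\sigma$ is a bijection), yet only $m$ rows (those of $A$) have nonzero entries in those columns, and $m < n$ means at least one left-block column is matched by $\sigma$ to a row in the bottom block, where that entry is $0$.

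I do not anticipate any real obstacle here — the only thing to be careful about is bookkeeping of which rows and columns belong to which block and stating the counting inequality correctly (the paper even hints at this with the ``pigeon'' in the label \verb|moregeneralpigeon|, and an analogous simpler version presumably appeared already). The proof is two or three sentences once notation is fixed; I would write it directly from Definition~\ref{permanentdef} rather than invoking cofactor expansion, since the Leibniz form makes the pigeonhole argument most transparent.
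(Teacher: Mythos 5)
Your proof is correct and is exactly the argument the paper intends: its own proof is a one-line appeal to the pigeonhole principle and the definition of the permanent, and your Leibniz-sum expansion with the counting step ($N-m$ bottom rows cannot inject into $N-n$ right columns, equivalently only $m<n$ rows can cover the $n$ left columns) is just that argument written out in full.
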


\begin{proof} This follows from the pigeonhole principle and the definition of the permanent. \end{proof}

\begin{corollary}\label{pigeon} Suppose $M = \left[ \begin{array}{cc} A & \bf{0} \\ \bf{0} & B \end{array} \right]$ a square block matrix, where $A$ and $B$ are arbitrary and $\bf{0}$ has only $0$ entries. If $A$ is not square then the permanent of $M$ is zero.  \end{corollary}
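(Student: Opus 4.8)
The plan is to obtain this as an immediate consequence of Lemma~\ref{moregeneralpigeon}, after a small normalization that reduces both ways $A$ can fail to be square to the single shape treated by that lemma. Write $A$ for an $m \times n$ block and $B$ for a $p \times q$ block. Since $M$ is square, counting its rows and columns gives the identity $m + p = n + q$, which is the only arithmetic input I will need.

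First I would handle the case $m < n$. Here $M = \left[\begin{array}{cc} A & \mathbf{0} \\ \mathbf{0} & B\end{array}\right]$ already has exactly the form appearing in Lemma~\ref{moregeneralpigeon}: the lower-left block is $\mathbf{0}$, the upper-left block $A$ has strictly more columns than rows, and the lemma permits the upper-right block (here $\mathbf{0}$) and the lower-right block (here $B$) to be arbitrary. Hence $\text{Perm}(M) = 0$ directly.

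Next, the case $m > n$. From $m + p = n + q$ we get $q - p = m - n > 0$, so the complementary diagonal block $B$ has strictly more columns than rows. Permuting the block rows and block columns of $M$ — that is, performing the sequence of single row interchanges and column interchanges that swaps the first $m$ rows with the last $p$ rows, and the first $n$ columns with the last $q$ columns — produces $\left[\begin{array}{cc} B & \mathbf{0} \\ \mathbf{0} & A\end{array}\right]$. By Remark~\ref{rowops} these interchanges leave the permanent unchanged, and the resulting matrix is again of the form required by Lemma~\ref{moregeneralpigeon}, now with wide upper-left block $B$. So its permanent, and therefore $\text{Perm}(M)$, is $0$.

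There is essentially no obstacle here; the only point requiring care is the bookkeeping of block dimensions, so that after the normalization the hypothesis ``$m < n$'' of Lemma~\ref{moregeneralpigeon} is genuinely satisfied — this is where squareness of $M$, via $m + p = n + q$, is used. One could equally well replace the block-swap step by passing to $M^{T}$, which has the same permanent; I would present whichever version is shorter in the final write-up.
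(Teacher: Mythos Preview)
Your proposal is correct and matches the paper's intent: the paper states this as a corollary to Lemma~\ref{moregeneralpigeon} without giving a separate proof, and your argument is precisely the natural elaboration of why it follows immediately. The case split on whether $A$ has more columns or more rows, together with the block swap (or transpose) to reduce to the lemma's hypothesis, is exactly what is implicit in calling it a corollary.
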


We generalize the notion of the $2$-vertex cut property to graphs that meet the required vertex-edge relationship seen in $4$-point $\phi^4$ graphs. Drawn as in Figure~\ref{2cutgen}, we make no requirement as to the general behaviour of any of these graphs, only that all have this vertex-edge relationship. This is equivalent to the required distribution of external edges for $4$-point $\phi^4$ graphs seen in Figure~\ref{2cut}.

\begin{figure}[h]
  \centering
      \includegraphics[scale=1.20]{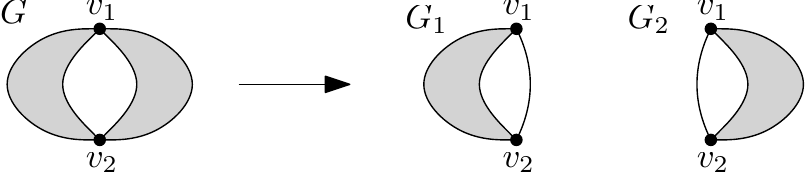}
  \caption{The general $2$-vertex cut property as it relates to the extended graph permanent. Here, we require only that all graphs $G'$ have $|E(G')| = 2|V(G')|-2$.}
\label{2cutgen}
\end{figure}

\begin{theorem} \label{2vertexcut} Consider the graph $G$ and two minors $G_1$ and $G_2$ seen in Figure~\ref{2cutgen}. If for all $G' \in \{G,G_1,G_2\}$, $2|V(G')| -2= |E(G')|$, then for all odd primes $p$,  $$\text{GPerm}^{[p]}(G) = - \text{GPerm}^{[p]}(G_1) \text{GPerm}^{[p]}(G_2) \pmod{p}.$$ \end{theorem}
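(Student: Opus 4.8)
The plan is to use the graphic (tagging) interpretation of the extended graph permanent from Section~\ref{graphicegp} and analyze how contributions to the permanent of a fundamental matrix of $G$ factor through the $2$-vertex cut. Fix an odd prime $p = 2n+1$ (all three graphs have the vertex-edge relationship $|E|=2|V|-2$, so their extended graph permanents are all defined over odd primes with $\V = 2$, $\E = 1$), and work with the blow-up graphs $G^{[n]}$, $G_1^{[n]}$, $G_2^{[n]}$ via Remark~\ref{altrem}. Let the $2$-vertex cut of $G$ be $\{a,b\}$, splitting $G$ into the two sides that become $G_1$ and $G_2$ after adding the cut-vertex-pair appropriately (as in Figure~\ref{2cutgen}); in $G_1$ and $G_2$ the pair $\{a,b\}$ is joined by the corresponding structure. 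I would choose one of the vertices, say $a$, to be the special vertex in $G$, and arrange the special/decompletion choices in $G_1$ and $G_2$ compatibly using Theorem~\ref{specialinvariant} and Theorem~\ref{egpcompletion}.

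First I would set up the bijection on taggings. A contribution to the permanent of the $(p-1)$-matrix of $G^{[n]}$ is a tagging that gives each non-special vertex $2n$ tags (one of each colour). Restricting such a tagging to the edges on the $G_1$-side and on the $G_2$-side, I would show the tags on each side, together with the forced behaviour at $a$ and $b$, assemble into a valid tagging of $G_1^{[n]}$ and of $G_2^{[n]}$ respectively — the only subtlety being the tags received by $a$ and $b$, which must be split between the two sides. Counting: each of $a,b$ receives $2n$ tags total in $G^{[n]}$, distributed as some $t$ and $2n-t$ across the two sides; a parity/pigeonhole argument (Corollary~\ref{pigeon} and Lemma~\ref{moregeneralpigeon}, applied to the block structure of $M_G$ relative to the cut) should force the split to be exactly $n$ and $n$ on each side for the contribution to be nonzero, so that each side genuinely looks like a fundamental matrix of $G_i^{[n]}$ with $a$ (or $b$) playing special/decompletion role. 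This is where I expect the block-triangular shape of $M_G$ after ordering edges by side to do the work: off-diagonal zero blocks kill any contribution where the split is unbalanced.

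Next I would check that the value of a contribution multiplies correctly. By Remark~\ref{compval}, the value of a contribution is $(-1)$ to the number of tags disagreeing with the underlying orientation. Orienting $G$ so that its orientation restricts to the chosen orientations of $G_1$ and $G_2$ on the respective sides (and handling the two cut edges/vertices consistently), the disagreement count for $G$ is the sum of the disagreement counts for the two sides, up to a fixed correction coming from the identification of $a$ and $b$ and from the relationship between decompletions given by Theorem~\ref{egpcompletion}. I would track this fixed correction carefully — it is the source of the overall minus sign in the statement. Combining the bijection on taggings with multiplicativity of values gives $\text{Perm}(\mathbf{1}_n \otimes \overline{M}_G) \equiv \pm \,\text{Perm}(\mathbf{1}_n \otimes \overline{M}_{G_1})\,\text{Perm}(\mathbf{1}_n \otimes \overline{M}_{G_2}) \pmod p$, and pinning down the sign as $-1$ finishes the proof.

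The main obstacle I anticipate is the bookkeeping at the cut: precisely stating how the $2n$ tags at $a$ and at $b$ distribute across the two sides, proving the balanced split is forced, and identifying which decompletion/special-vertex convention on $G_1$ and $G_2$ makes the bijection clean — together with the resulting determination of the global sign. The underlying combinatorics (restriction of taggings, multiplicativity of values) is routine once the cut is handled, but getting the $-1$ rather than $+1$ requires care, likely an explicit small-case check (e.g. the merge of $K_5$-decompletions in Figure~\ref{mergingex}) to calibrate.
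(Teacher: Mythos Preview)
Your strategic idea --- make $v_2$ the special vertex, observe that the block structure at the cut forces the contributions at $v_1$ to split evenly between the two sides, and then factor --- is exactly the paper's. But the paper executes it via direct cofactor expansion on the matrix rather than the tagging picture, and this difference is not cosmetic: it is what makes the sign fall out.

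Concretely, the paper writes $M_G$ in block form with the single row $(C\mid D)$ for $v_1$, and cofactor-expands $\mathbf{1}_{2k\times k}\otimes M_G$ along all $2k$ copies of that row. Corollary~\ref{pigeon} kills every summand except those deleting $k$ columns from each side, and the expansion produces an explicit factor $(2k)!$ times a product $S_1\cdot S_2$. Doing the analogous expansion on $M_{G_1}$ and $M_{G_2}$ (first along the columns of the \emph{new} edge, then along the remaining copies of the $v_1$ row) gives $(2k)!\,S_1$ and $(2k)!\,S_2$ respectively. So $\text{Perm}(G_1)\cdot\text{Perm}(G_2)=(2k)!\cdot\text{Perm}(G)$, and Wilson's theorem, $(2k)!\equiv -1\pmod{2k+1}$, is the entire source of the minus sign.

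This is the gap in your proposal. You look for the $-1$ in the orientation/disagreement bookkeeping and propose calibrating it by a small example; but the sign is not geometric at all --- it is the single leftover $(2k)!$ after cancellation, turned into $-1$ by Wilson. A tagging argument can in principle recover this (the $\binom{2n}{n}$-type overcount at $v_1$ and the colour bookkeeping on the $n$ new parallel edges in each $G_i^{[n]}$ conspire to give the same factorial), but you have not set that up: you do not say how the $n$ new edges in $G_i^{[n]}$ are tagged or coloured, nor how the colour split at $v_1$ is accounted for, and without that the multiplicative factor relating the two sides never appears. Also, Theorem~\ref{egpcompletion} concerns $2k$-regular graphs and is not the tool you want here; only Theorem~\ref{specialinvariant} is needed to fix the special vertex.
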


\begin{proof} Set $v_2$ as the special vertex for all graphs, and write $(C|D)$ as the row corresponding to vertex $v_1 \in V(G)$. Then, we have signed incidence matrices 
$$M_G = \left[ \begin{array}{ccc|ccc}
& G_1 & & & \bf{0}& \\ 
&C& && D& \\ 
 & \bf{0} & & & G_2 & \end{array} \right], $$  

 $$M_{G_1} = \left[ \begin{array}{ccc|c}
& G_1& & \bf{0} \\  
&C& & 1\end{array} \right], $$ and 

 $$M_{G_2} = \left[ \begin{array}{c|ccc}
\bf{0} && G_2 &  \\ 
1 &&D&  \end{array} \right]. $$ By extension then, the fundamental matrices are $2$-matrices, and we want to compute permanents for $\mathbf{1}_{2k \times k} \otimes M$ for $M \in \{ M_G, M_{G_1}, M_{G_2} \}$.

Computing the permanent for $G$ by cofactor expansion along $2k$ rows $(C|D)$, the matrix after this sequence of expansions will have the form $\left[ \begin{array}{cc} A & \mathbf{0} \\ \mathbf{0} & B  \end{array}  \right]$, and the remaining blocks will only be square if $k$ columns are taken from the edges in $G_1$ and $k$ from edges in $G_2$. By Corollary~\ref{pigeon}, all other distributions will vanish, and hence will be ignored. We use notation $N_S$ to denote the matrix $N$ with a set of columns $S$ removed. Further, we assume the edges are oriented so that all entries in rows $(C|D)$ are in $\{ 0,1\}$, and for notational convenience take $\mathcal{C}$ and $\mathcal{D}$ as sets of the indices of columns that are nonzero in $C$ and $D$, respectively. Hence, \begin{align*} 
\text{Perm} \left( \mathbf{1}_{2k \times k} \otimes M_G \right)
&= (2k)! \sum_{\substack{i_1,...,i_k \in \mathcal{C} \\ j_1 , ..., j_k \in \mathcal{D}}} \text{Perm} \left( \left( \mathbf{1}_{2k \times k} \otimes \left[ \begin{array}{cc} G_1&\bf{0} \\ \bf{0}&G_2 \end{array} \right] \right)_{\substack{\{ i_1,...,i_k, \\ \hspace{4mm} j_1,...,j_k\} } } \right) \\ 
&= (2k)! \sum_{i_1,...,i_k \in \mathcal{C}} \text{Perm} \left( \mathbf{1}_{2k \times k} \otimes \left[ \begin{array}{c} G_1 \end{array} \right]_{\left\{i_1,...,i_k \right\}} \right) \\
&\hspace{1cm} \times \sum_{j_1 , ..., j_k \in \mathcal{D}} \text{Perm} \left( \mathbf{1}_{2k \times k} \otimes \left[ \begin{array}{c} G_2 \end{array} \right]_{\left\{ j_1,...,j_k \right\} } \right) . \end{align*}

Similarly, expanding along the $k$ rows corresponding to the new edges in $G_1$ and then the $k$ remaining rows corresponding to $(C)$, we get;
\begin{align*} 
\text{Perm} \left( \mathbf{1}_{2k \times k} \otimes M_{G_1} \right) 
&= \frac{(2k)!}{k!} \text{Perm} \left( \mathbf{1}_k \otimes \left[ \begin{array}{c} G_1 \\ G_1 \\ C  \\ \end{array} \right] \right) \\ 
&= \frac{(2k)!}{k!} k! \sum_{ i_1,...,  i_k \in \mathcal{C}} \left( \mathbf{1}_k \otimes  \text{Perm} \left[ \begin{array}{c} G_1 \\ G_1 \end{array} \right]_{ \{i_1,...,i_k\}} \right).  \end{align*} 
Similarly, $$\text{Perm}(\mathbf{1}_{2k \times k} \otimes M_{G_2})= (2k)! \sum_{j_1,...,j_k \in \mathcal{D}} \text{Perm} \left( \mathbf{1}_k \otimes \left[ \begin{array}{c} G_2 \\ G_2 \end{array} \right]_{\{ j_1,...,j_k\} } \right).$$ As $(2k)! \equiv -1 \pmod{2k+1}$  by Wilson's Theorem, the extended graph permanents differ by a constant sign.
  \end{proof}

  While this constant sign difference might not seem ideal, it does establish that such a graph operation is controlled with respect to the extended graph permanent. If two non-isomorphic graphs $G_1$ and $G_2$ are produced by joining the same two smaller graphs in this fashion, that $G_1$ and $G_2$ will indeed have equal extended graph permanents.
  
The graphs in Figure~\ref{didntwork} provide a counterexample to the notion of generalizing Theorem~\ref{2vertexcut} to $2$-vertex cuts in arbitrary graphs. Note first that the changed vertex-edge relationship means the extended graph permanents of $G$ and $G_1$ are no longer defined over the same set of primes. Using methods that will be discussed in Chapter~\ref{egpcomp}, the graph $G$ has extended graph permanent equal to $- \binom{5n}{n}^2 \binom{5n}{2n}^2 \pmod{8n+1}$ at prime $p=8n+1$, and $G_1$ and $G_2$ have extended graph permanent $- \binom{3n}{n}^2 \pmod{5n+1}$ at prime $p=5n+1$. In addition to the sequence of primes not matching in general, at common prime $p=241$, $G$ has extended graph permanent $201$, while $G_1$ has extended graph permanent $10$. Hence, the product of the extended graph permanents of graphs $G_1$ and $G_2$ do not match the extended graph permanent of $G$ at this prime.

The Whitney flip however, as in Chapter~\ref{chmatroid}, does not change the relationship between the number of edges and vertices in a graph.

\begin{figure}[h]
  \centering
      \includegraphics[scale=1.20]{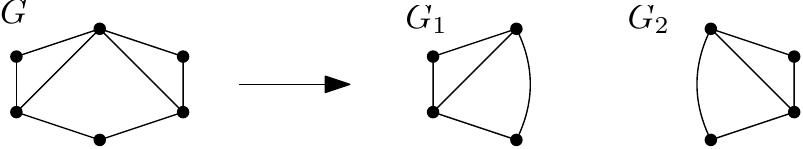}
  \caption{A set of graphs that provide a counterexample to a more general form of Theorem~\ref{2vertexcut} for arbitrary graphs.}
\label{didntwork}
\end{figure}

\begin{conjecture}\label{qwhitney} If two graphs differ by a Whitney flip, then they have equal extended graph permanents. \end{conjecture}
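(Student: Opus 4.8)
The plan is to show that a Whitney flip does not change the collection of spanning trees (as edge sets), and then to conclude that it leaves every reduced signed incidence matrix unchanged up to operations that do not affect the permanent modulo the relevant prime. Concretely, suppose $G$ is obtained by identifying $v_1\sim w_1$, $v_2\sim w_2$ across a $2$-separation with sides $G_1$ (containing $v_1,v_2$) and $G_2$ (containing $w_1,w_2$), and $G'$ is obtained by identifying $v_1\sim w_2$, $v_2\sim w_1$ instead. By Whitney's $2$-isomorphism theorem the cycle matroids $M(G)$ and $M(G')$ are isomorphic \emph{via the identity map on the common edge set}, so $G$ and $G'$ have exactly the same spanning trees as subsets of $E(G)=E(G')$. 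Hence $\Psi_G=\Psi_{G'}$ as polynomials, which already matches the behaviour of the Kirchhoff polynomial; but the extended graph permanent is not a function of $\Psi$ alone, so one more step is needed.

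The next step is to pass from "same matroid" to "same permanent". First I would check that $G$ and $G'$ have the same number of vertices and edges --- the flip identifies the same total number of vertex-pairs on each side --- so the fundamental block matrices are $k$-matrices for the same $k$, and the extended graph permanents are defined over the same set of primes. Then I would fix a spanning tree $T$ common to both graphs and use Theorem~\ref{oxleydual}-style reasoning (really just the fact that a reduced signed incidence matrix of any graph on a given edge set with given matroid row-reduces, using only multiplications by $-1$ by Corollary~\ref{constmultrowred}, to $[\,I_r\mid A\,]$ where $A$ is determined up to column signs and row signs by the matroid). Since $M(G)\cong M(G')$ via the identity on edges, the reduced incidence matrices of $G$ and $G'$ row-reduce to the \emph{same} matrix $[\,I_r\mid A\,]$ up to multiplying rows and columns by $-1$. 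By Remark~\ref{rowops} and Corollary~\ref{reduction}, for odd primes (forced here since $k\ge 2$ means the graph is not a tree, so every relevant matrix has an even number of copies of each row and column) these sign changes affect the permanent only by a sign that is itself $\pm1$ raised to an even power, hence trivially; and the row-reduction steps themselves preserve the permanent modulo $p$. Therefore $\mathrm{Perm}(\mathbf{1}_n\otimes\overline{M}_G)\equiv\mathrm{Perm}(\mathbf{1}_n\otimes\overline{M}_{G'})\pmod p$ for every admissible prime $p$, giving $\mathrm{GPerm}^{[p]}(G)=\mathrm{GPerm}^{[p]}(G')$.

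An alternative, more self-contained route avoids matroid machinery entirely and works directly with the tagging interpretation of Section~\ref{graphicegp}: a contribution to the permanent is an assignment of directed tags to edges so that every non-special vertex receives $k$ tags (one of each colour) and the special vertex none, and the value is a sign determined by agreement with the orientation. A Whitney flip only relabels the endpoints of edges crossing the $2$-cut; I would exhibit an explicit bijection between valid taggings of $G$ and of $G'$ that, on each side of the cut, either leaves tags fixed or reverses all tag directions on that side (choosing orientations of $G$ and $G'$ compatibly, reversing the underlying orientation of every edge on the $G_2$-side), and check the four endpoint-pairing cases at $v_1,v_2$ (resp.\ $w_1,w_2$) to see that the tag-count constraint is met and the contribution value is preserved. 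This is structurally the same argument as the proof of Proposition~\ref{schnetz} and of Theorem~\ref{egpcompletion}, so it should go through with the same kind of bookkeeping.

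The main obstacle is the bookkeeping at the cut vertices: in the matroid route, pinning down exactly which row and column sign changes are needed to pass between the two reduced incidence matrices and confirming they come in the "harmless" parity; in the tagging route, verifying in all endpoint-pairing cases that reversing tags on one side of the $2$-cut exactly compensates for swapping which identified vertex an edge attaches to, and that the special/decompletion-type vertices (here there is no decompletion vertex, just the special vertex $v_2$, which I would place off the cut, i.e.\ deep inside one side) behave correctly. Neither obstacle looks serious --- it is the same pattern that made the Schnetz twist work --- but it is where the proof actually has content, so that is where I would spend the care. I would write up the matroid version as the primary proof since it is shortest, and perhaps remark that the tagging argument gives an alternative.
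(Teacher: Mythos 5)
There is nothing in the paper to compare against here: this statement is left as Conjecture~\ref{qwhitney}, with only the remark that the cofactor-expansion technique of Theorem~\ref{2vertexcut} does not obviously adapt. So your proposal has to be judged on its own merits. Your primary (matroid) route is, I believe, essentially sound and completable, but the step you dispose of in a parenthesis --- that with respect to a common spanning tree $T$ the two reduced incidence matrices row-reduce to $[\,I_r\,|\,A\,]$ and $[\,I_r\,|\,A'\,]$ with $A'$ obtained from $A$ by multiplying some rows and columns by $-1$ --- is precisely where all of the content lies, and it is neither proved by you nor available in the paper. It is the unique representability of regular (binary) matroids, i.e.\ Camion's signing theorem, so you must either cite that or prove it directly. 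For the Whitney flip a direct verification is short and worth writing out: the fundamental cycles of $G$ and $G'$ are the same edge sets, and tracking the traversal direction of each fundamental cycle (pinned by the orientation of its non-tree edge, and reversed on the $G_2$-portion exactly when the cycle crosses the $2$-cut) gives $A' = D_1 A D_2$, where $D_1$ flips the rows of the tree edges lying in $G_2$ and $D_2$ flips the columns of the non-tree edges lying in $G_2$. Equivalently, the standard form of $G'$ is the standard form of $G$ with every edge of $G_2$ reversed, and then Remark~\ref{rowops}, Corollary~\ref{reduction}, and Corollary~\ref{constmultrowred} finish the argument as in Proposition~\ref{dual}.

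Two corrections to the write-up as it stands. First, your parity bookkeeping is wrong: each row does appear $\V n = p-1$ times (even), but each column appears $\E n$ times, which is odd for instance for $4$-point $\phi^4$ graphs at primes $p \equiv 3 \pmod 4$; so the column sign changes are not automatically harmless. What the argument actually yields is equality up to the orientation sign ambiguity of Section~\ref{signambiguity} --- a constant sign difference correctable by reversing one edge --- which is exactly the caveat under which Corollary~\ref{phi4dual} is stated, and your conclusion should be phrased the same way. Second, the tagging alternative as you describe it does not go through for arbitrary graphs: reversing all tag directions on one side of the cut preserves the required tag count at a vertex only when that count is exactly half the number of incident edge-ends, which is what the $2kn$-regularity of the extended graphs provided in the proofs of Theorem~\ref{egpcompletion} and Proposition~\ref{schnetz}; a general graph with a $2$-separation is not regular, so the counts at vertices interior to $G_2$ break. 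Drop that route (or restrict it), and spend the care on the sign computation above --- with that done, you would be proving something the thesis leaves open.
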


\noindent The technique used in Theorem~\ref{2vertexcut} does not immediate appear to be of use here, as cofactor expansion along rows associated to the vertices in the cut will result in a different set of deleted columns in the two graphs that differ by such a flip.

An interesting restriction follows from Lemma~\ref{moregeneralpigeon}.

\begin{proposition}\label{parallellimits} Let $G$ be a graph. Write $L = \lcm (|V(G)|-1, |E(G)|)$, $\E = \frac{L}{|E(G)|}$, and $\V = \frac{L}{|V(G)|-1}$, so the fundamental matrix for $G$ has $\E$ copies of each column and $\V$ copies of each row. Suppose there is a pair of vertices in $V(G)$ with $m$ edges in parallel between them. If $m\E > \V$, the extended graph permanent of $G$ is zero at all primes.   \end{proposition}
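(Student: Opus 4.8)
The plan is to reduce the claim to a single divisibility fact about matrix permanents and then close it off with a Wilson-type observation. Fix a prime $p$ of the defining sequence, so $p = \V n + 1$ for some integer $n$; since $p \geq 2$ we have $n \geq 1$. By definition $\text{GPerm}^{[p]}(G) \equiv \text{Perm}(\mathbf{1}_n \otimes \overline{M}_G) \pmod{p}$, and (cf.\ Remark~\ref{howitgoes}) associativity of the Kronecker product gives $\mathbf{1}_n \otimes \overline{M}_G = \mathbf{1}_{n\V \times n\E} \otimes M_G$, so this matrix contains exactly $n\E$ copies of each column of the reduced signed incidence matrix $M_G$.

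First I would record the effect of the $m$ parallel edges on $M_G$. The $m$ columns of $M_G$ indexed by these edges are supported only on the rows indexed by their two endpoints $a$ and $b$ (or on the single surviving such row, if one of $a,b$ is the special vertex), with a $\pm 1$ there; in particular they are nonzero (parallel edges are not loops, and loops are excluded by Remark~\ref{excludeloops}) and pairwise equal up to a sign. Hence $\mathbf{1}_n \otimes \overline{M}_G$ has $nm\E$ columns, each of which is $\pm$ a fixed nonzero vector. Multiplying the offending columns by $-1$ — which by Remark~\ref{rowops} alters the permanent only by an overall sign, hence cannot affect whether it vanishes — produces a matrix with $nm\E$ genuinely identical columns, so by the column analogue of Lemma~\ref{comrow} (apply it to the transpose, the permanent being transpose-invariant) its permanent is divisible by $(nm\E)!$. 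Therefore $(nm\E)! \mid \text{Perm}(\mathbf{1}_n \otimes \overline{M}_G)$.

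To finish, I would close the arithmetic gap. Since $m\E > \V$ and all quantities are integers, $m\E \geq \V + 1$, whence $nm\E \geq n\V + n \geq n\V + 1 = p$. As $p$ is prime and at most $nm\E$, it divides $(nm\E)!$, and hence divides $\text{Perm}(\mathbf{1}_n \otimes \overline{M}_G)$. Thus $\text{GPerm}^{[p]}(G) \equiv 0 \pmod p$, and since $p$ ranged over all primes of the defining sequence, the extended graph permanent of $G$ is the zero sequence.

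I do not expect a genuine obstacle, but the step that needs the right idea is the passage from ``many repeated columns'' to ``permanent $\equiv 0$'': a direct pigeonhole applied to $\text{Perm}(\mathbf{1}_n \otimes \overline{M}_G)$ through Lemma~\ref{moregeneralpigeon} only shows each individual term vanishes when $nm\E > 2n\V$, i.e.\ it yields the weaker hypothesis $m\E > 2\V$. Recovering the sharp threshold $m\E > \V$ requires the factorial-divisibility mechanism combined with $p \mid (nm\E)!$, rather than a term-by-term argument. The only other things worth double-checking are that the column sign changes are harmless and that $n \geq 1$, which holds because $\V n + 1$ is prime.
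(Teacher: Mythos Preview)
Your argument is correct, but it reaches the conclusion by a different mechanism than the paper does. You leave the special vertex unspecified, normalise the $nm\E$ parallel-edge columns to be literally equal, and then invoke the $(nm\E)!$ divisibility from Lemma~\ref{comrow} together with $p \le nm\E$ to force $p \mid \text{Perm}(\mathbf{1}_n \otimes \overline{M}_G)$. The paper instead exploits the freedom in the special vertex: it sets the special vertex to be one of the two endpoints, say $a$. Then every column indexed by a parallel edge is supported \emph{only} on the rows indexed by $b$, of which there are exactly $n\V$; since there are $nm\E > n\V$ such columns, Lemma~\ref{moregeneralpigeon} gives permanent zero outright, not merely divisible by $p$.

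Your closing paragraph diagnosing why pigeonhole ``only'' gives $m\E > 2\V$ is thus slightly off: that weaker bound arises precisely because you did not place the special vertex at an endpoint. With that choice the support drops from $2n\V$ rows to $n\V$, and the pigeonhole argument recovers the sharp threshold directly. What your route buys is that it does not depend on the special-vertex invariance (Theorem~\ref{specialinvariant}) and gives an honest integer-divisibility statement, while the paper's route is shorter and shows the permanent is exactly zero rather than merely $\equiv 0 \pmod p$.
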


\begin{proof} Suppose vertices $a$ and $b$ have $m$ edges between them. Set $a$ as the special vertex. Then, gathering up identical rows and columns for the matrix used in computing the extended graph permanent for prime $n\V +1$, there are $n m \E$ columns that have a nonzero value only at the rows associated to vertex $b$. As there are $n\V$ such rows, $nm\E > n\V$ and it follows from Lemma~\ref{moregeneralpigeon} that this matrix has permanent zero.  \end{proof}

\section{Relation to the period}

Given the collection of theorems in this chapter, it is natural to make the following conjecture.

\begin{conjecture}\label{obviousconjecture} If two $4$-point $\phi^4$ graphs have equal period, then they have equal extended graph permanent. \end{conjecture}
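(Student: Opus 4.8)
The statement is posed as a conjecture because no direct link between the numerical period and the residue sequence of the extended graph permanent is presently known; what follows is the route I would take and the point at which it stalls. The cleanest approach is \emph{conditional}. It is the standing expectation behind Section~\ref{introinvariance} that the four operations catalogued there --- completion followed by decompletion, the Schnetz twist, planar duality, and the splitting/regluing of a $2$-vertex cut --- generate the entire equivalence relation ``has the same period'' on primitive $4$-point $\phi^4$ graphs; equivalently, that no sporadic period coincidences occur. Granting this, Conjecture~\ref{obviousconjecture} is immediate from the invariance results of this chapter: Theorem~\ref{egpcompletion}, Proposition~\ref{schnetz} and Corollary~\ref{phi4dual} show the extended graph permanent is unchanged (modulo the orientation sign ambiguity of Section~\ref{signambiguity}, which is not a genuine distinction) by the first three moves, and Theorem~\ref{2vertexcut} shows it transforms as a product under the fourth, so in particular takes equal values on any two graphs obtained by regluing the same pair of pieces. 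The first step of a proof along these lines is therefore to reduce Conjecture~\ref{obviousconjecture} to this generation statement, and then, if one is unwilling to assume it, to prove the generation statement itself. One should also keep in mind that only this implication can hold: the converse is false, so any proof must use genuinely more than equality of a sequence of residues.

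For an \emph{unconditional} argument one would instead look for an arithmetic bridge. The natural candidate is the point-count formula of Corollary~\ref{maincor}, which writes the extended graph permanent at $p$ as $\pm[\widetilde{F}_{G}]_p \bmod p$, together with the fact that the period is conjecturally governed by the cohomology of the graph hypersurface $\{\Psi_G = 0\}$ --- the same data that controls the $c_2$ invariant $[\Psi_G]_p/p^2 \bmod p$. The plan would be to show that the sequence $\bigl([\widetilde{F}_{G}]_p\bigr)_p$ is a \emph{motivic} invariant of $G$, i.e.\ that it factors through the de~Rham--Betti structure attached to the period, so that period equality forces the two point counts to agree; concretely this means identifying the mixed motive cut out by $\widetilde{F}_{G}$, relating it to the motive whose period is the Feynman period, and invoking the expectation that the period determines that motive. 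The last ingredient is a form of the Grothendieck period conjecture, so this route trades one hard problem for a harder one, but even partial progress --- for instance that $\widetilde{F}_{G}$ and $\Psi_G$ have closely related zeta functions, in the spirit of the coincidences between the extended graph permanent and the $c_2$ invariant recorded in Chapter~\ref{egpcomp} --- would be real evidence.

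The main obstacle is therefore exactly the one that keeps Conjecture~\ref{c2doingitsthing} open: there is no mechanism by which the \emph{value} of the period can ``see'' a finite-field point count. The conditional route is blocked by the open generation problem (and by the fact that, were a sporadic period coincidence to exist, we would have no a priori reason for it to respect the extended graph permanent), and the motivic route is blocked by the unavailability of period-conjecture-type input. Pending either breakthrough, the concrete programme I would carry out is threefold: upgrade the invariance results of this chapter to cover \emph{every} operation or product identity known to preserve the period, using the $4$-edge-cut product property of Theorem~\ref{subdivthm} as a template; verify Conjecture~\ref{obviousconjecture} computationally for all known period coincidences up to the largest available loop order, combining the closed forms of Chapter~\ref{egpcomp} with the $\widetilde{F}_{G}$ point counts; and actively search for a pair of graphs with equal period but provably distinct extended graph permanents, since exhibiting such a pair would settle the question in the negative and is the least expensive line of attack if the conjecture is in fact false.
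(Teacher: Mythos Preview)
Your proposal correctly treats this as an open conjecture, which is exactly how the paper treats it: there is no proof, only the heuristic that the invariance theorems of this chapter (Theorem~\ref{egpcompletion}, Proposition~\ref{schnetz}, Corollary~\ref{phi4dual}, Theorem~\ref{2vertexcut}) cover all \emph{currently known} period coincidences, together with computational evidence (Appendix~\ref{chartofgraphs}). Your discussion is considerably more elaborate than the paper's, which simply records the conjecture as natural and points to the data.

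One substantive correction to your conditional route: the ``generation statement'' you rely on --- that the four operations of Section~\ref{introinvariance} generate the full period-equality relation --- is \emph{not} the standing expectation, and the paper explicitly flags evidence against it. Via the Hepp bound (Conjecture~\ref{heppconjecture}), the pairs $P_{8,30},P_{8,36}$ and $P_{8,31},P_{8,35}$ are conjectured to have equal periods despite not being related by any known period-preserving operation. So the generation hypothesis is likely false, and your conditional argument cannot close even in principle. What the paper does instead is check that these very pairs nonetheless have equal extended graph permanents (see Appendix~\ref{chartofgraphs} and the discussion in Chapter~\ref{conclusion}), turning a potential counterexample into further evidence. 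Your proposed programme should therefore be reframed: rather than trying to prove the generation statement, one should continue to test Conjecture~\ref{obviousconjecture} on precisely those period coincidences that fall \emph{outside} the four operations, since these are the only places a disproof could live.
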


\noindent This is supported by the fact that the graph operations shown in this chapter explain all known instances of $4$-point $\phi^4$ graphs with equal periods, the majority of which have been computed, up to graphs with eleven loops (\cite{galois}).

Consider then a subdivergence in a graph, as introduced in Section~\ref{introperiods}. It would make sense that if the extended graph permanent is related to the Feynman period, there would be some determinable property of the extended graph permanent for graphs with internal $4$-edge cuts.

\begin{theorem}\label{subdivthm} As in Figure~\ref{4edgecut}, suppose a completed $\phi^4$ graph $\Gamma$ has a $4$-edge cut, and label these minors $\Gamma_1$ and $\Gamma_2$. Call the decompleted graphs $G$, $G_1$, and $G_2$, respectively. Then, for odd prime $p$, $$\text{GPerm}^{[p]}(G) = \text{GPerm}^{[p]}(G_1)  \cdot \text{GPerm}^{[p]}(G_2) \pmod{p}.$$ \end{theorem}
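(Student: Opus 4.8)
The plan is to mimic the structure of the proof of Theorem~\ref{2vertexcut}, since a $4$-edge cut in $\Gamma$ becomes, after decompletion at a suitable vertex and passing to signed incidence matrices, essentially a block structure whose interface rows are controlled. First I would choose the decompletion vertices and special vertices compatibly: decomplete $\Gamma$ at a vertex $v$ lying (say) on the $\Gamma_1$ side, and decomplete $\Gamma_1$ and $\Gamma_2$ at the corresponding vertices so that all three graphs satisfy $|E(G')| = 2|V(G')|-2$ and the extended graph permanents are defined on the same odd primes. The key is that in $\Gamma$ the $4$-edge cut separates $V(G)$ into two parts meeting only along the four cut edges; choosing the special vertex and labelling carefully, $M_G$ has the shape of two diagonal blocks coming from $G_1$ and $G_2$ plus a small number of extra columns (the four cut edges) and rows recording how the cut edges attach. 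I would then write $M_{G_1}$ and $M_{G_2}$ as the corresponding pieces with the cut replaced by external half-edges folded onto the decompletion vertex, exactly as in Figure~\ref{4edgecut}.

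The next step is the cofactor-expansion computation on the $k$-matrices ($\mathbf{1}_{2k\times k}\otimes M$, since all fundamental matrices here are $2$-matrices). Expanding $\mathrm{Perm}(\mathbf{1}_{2k\times k}\otimes M_G)$ along the rows recording the cut-edge attachments, Corollary~\ref{pigeon} and Lemma~\ref{moregeneralpigeon} kill every term except those where the $2k$ columns coming from the four cut edges are split so that each remaining diagonal block stays square; the surviving terms factor as a sum over admissible column-deletions of a product of a $G_1$-permanent and a $G_2$-permanent, each again in $k$-matrix form. I would then do the analogous expansions for $M_{G_1}$ and $M_{G_2}$, matching the combinatorial sums over which of the (now four) edges incident to the decompletion vertex get selected in each block. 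The arithmetic bookkeeping of the $(2k)!$ and $k!$ factorial prefactors, reduced modulo $p=2k+1$ via Wilson's Theorem (so $(2k)!\equiv -1$), is where the overall sign is pinned down — and here, in contrast to Theorem~\ref{2vertexcut}, I expect the factor count to come out so that there is no extra sign, giving exactly $\mathrm{GPerm}^{[p]}(G) \equiv \mathrm{GPerm}^{[p]}(G_1)\,\mathrm{GPerm}^{[p]}(G_2)\pmod p$.

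The main obstacle I anticipate is precisely the combinatorial matching of the two sides: a $4$-edge cut is genuinely more complicated than a $2$-vertex cut, because the four cut edges of $\Gamma$ can attach to the two sides in several configurations, and after decompletion these become four half-edges distributed between $G_1$ and $G_2$ in a way that must be tracked carefully so that the column-deletion sum for $G$ factors cleanly as the product of the corresponding sums for $G_1$ and $G_2$. Concretely, I would need a lemma asserting that the set of admissible $k$-subsets of cut-edge columns for $G$ is in product correspondence with pairs (admissible subset for $G_1$, admissible subset for $G_2$), using the vertex–edge relationship $|E(G')|=2|V(G')|-2$ to force the split to be exactly $k$ and $k$. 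Once that bijection is in hand and the factorials are reconciled mod $p$, the rest is the same routine cofactor/Wilson argument as in Theorem~\ref{2vertexcut}, and I would present it by citing Remark~\ref{altrem}, Corollary~\ref{pigeon}, Lemma~\ref{moregeneralpigeon}, and Wilson's Theorem rather than re-deriving them.
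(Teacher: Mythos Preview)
Your general framework---block structure plus cofactor expansion plus Corollary~\ref{pigeon}---is the right one, but you are setting yourself up for the hard version of the argument, and the obstacle you anticipate is exactly the one the paper's proof sidesteps.

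Two concrete differences. First, the paper expands along the four cut-edge \emph{columns}, not along rows attached to the cut. Each cut edge contributes a column with one $+1$ on the left side and one $-1$ on the right side; with the decompletion and special vertices placed on \emph{opposite} sides of the cut (this is the second key choice you are missing), the block-squareness constraint of Corollary~\ref{pigeon} forces every one of the $4k$ expansions to hit the left-side rows. There is no sum over ``admissible column-deletions'' to be matched: the factorization is immediate, producing a single factor $\bigl(\tfrac{(2k)!}{k!}\bigr)^4$ times a product of two permanents. Second, the fundamental matrices for $G_1$ and $G_2$ are arranged so that $G_1$ carries the same $I_4$ block and hence the same prefactor $\bigl(\tfrac{(2k)!}{k!}\bigr)^4$, while $G_2$ has no cut-edge columns at all. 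The identity $\text{Perm}(\overline{M}_{G^{[k]}}) = \text{Perm}(\overline{M}_{G_1^{[k]}})\cdot\text{Perm}(\overline{M}_{G_2^{[k]}})$ then holds over $\mathbb{Z}$, not merely modulo $p$, so Wilson's Theorem never enters and there is no sign to chase.

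Your row-expansion plan, by contrast, would have to track up to eight interface vertices (four endpoints on each side), and the ``product correspondence of admissible $k$-subsets'' lemma you flag as the main obstacle is a genuine combinatorial headache that the column approach simply does not encounter. So: keep the strategy, but switch from rows to the cut-edge columns and place the special vertex across the cut from the decompletion vertex; the proof then becomes a direct computation with no matching step.
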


\begin{figure}[h]
  \centering
      \includegraphics[scale=1.20]{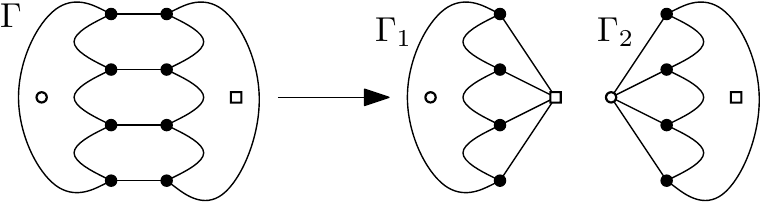}
  \caption{The product property for a completed $\phi^4$ graph with a $4$-edge cut. Marked vertices play a role in the proof of Theorem~\ref{subdivthm}.}
\label{4edgecut}
\end{figure}

\begin{proof} By Theorem~\ref{specialinvariant} and Theorem~\ref{egpcompletion}, the choice of special vertex and decompletion vertex do not affect the extended graph permanent. We choose then to decomplete these graphs at the vertex labeled with a hollow circle and make the vertex labeled with a square the special vertex. Hence, a fundamental matrix for $G$ is $$\overline{M}_G = \left[ \begin{array}{c|cc} I_4 & A & \mathbf{0} \\ \mathbf{0} & B & \mathbf{0} \\ -I_4 & \mathbf{0} & C \\ \mathbf{0} & \mathbf{0} & D \\ \hline  I_4 & A & \mathbf{0} \\ \mathbf{0} & B & \mathbf{0} \\ -I_4 & \mathbf{0} & C \\ \mathbf{0} & \mathbf{0} & D \end{array} \right],$$ where $B$ corresponds to edges in the left subgraph, $D$ the edges on the right, and $A$ and $C$ the vertices incident to the edges in this cut on either side. Fixing a prime $2k+1$ for integer $k$, the graph $G^{[k]}$ has fundamental matrix $\overline{M}_{G^{[k]}} = \mathbf{1}_k \otimes \overline{M}_G$. If we suppose $G$ has $l$ edges contained on the left of the $4$-edge cut, then there are $\frac{l-4}{2}$ vertices contained properly on this side (excluding the vertices incident to the edges in the cut), and block $B$ has $\frac{l-4}{2}$ rows. Similarly, if there are $m$ edges on the right of the $4$-edge cut in $G$, then there are $\frac{m-6}{2}$ vertices properly contained on the right block. As this side also contains the special vertex, block $D$ has $\frac{m-8}{2}$ rows. Therefore, performing cofactor expansion along the $4$ edges in this cut gives square blocks, and hence nonzero permanent by Corollary~\ref{pigeon}, if and only if the rows deleted by this operation meet block $A$. Therefore, we may write \begin{align*} \text{Perm} (\overline{M}_{G^{[k]}}) &= \left( \frac{2k!}{k!} \right)^4 \text{Perm} \left( \mathbf{1}_k \otimes \left[ \begin{array}{cc} B & \mathbf{0} \\ \mathbf{0} & C \\ \mathbf{0} & D \\ \hline A & \mathbf{0} \\ B & \mathbf{0} \\ \mathbf{0} & C \\ \mathbf{0} & D  \end{array} \right] \right) \\
&= \left( \frac{2k!}{k!} \right)^4 \text{Perm} \left( 1_k \otimes \left[  \begin{array}{c} B \\ A \\ B  \end{array}  \right]  \right) \text{Perm}\left( 1_k \otimes \left[ \begin{array}{c} C \\ D \\ C\\D  \end{array}  \right]  \right)
.\end{align*}

Similarly the fundamental matrices for $G_1$ and $G_2$ are $$\overline{M}_{G_1} = \left[ \begin{array}{cc} I_4 & A \\ \mathbf{0} & B \\ \hline I_4 & A \\ \mathbf{0} & B  \end{array}  \right], \hspace{2mm}  \overline{M}_{G_2} =  \left[ \begin{array}{c} C \\ D \\ C\\D  \end{array} \right] ,$$ so for $G_1$ and $G_2$ we have permanents \begin{align*} \text{Perm}(\mathbf{1}_k \otimes \overline{M}_{G_1}) &= \text{Perm} \left( \mathbf{1}_k \otimes \left[ \begin{array}{cc} I_4 & A \\ \mathbf{0} & B \\ \hline I_4 & A \\ \mathbf{0} & B  \end{array} \right] \right) = \left( \frac{2k!}{k!} \right)^4 \text{Perm} \left( \mathbf{1}_k \otimes \left[ \begin{array}{c} B \\ A \\ B  \end{array} \right] \right), \\ \text{Perm} (\mathbf{1}_k \otimes \overline{M}_{G_2}) &= \text{Perm} \left( \mathbf{1}_k \otimes \left[ \begin{array}{c} C \\ D \\ C\\D  \end{array} \right] \right). \end{align*} Therefore, $\text{Perm} (\overline{M}_{G^{[k]}}) = \text{Perm} (\overline{M}_{G_1^{[k]}}) \text{Perm}(\overline{M}_{G_2^{[k]}})$, and hence $$\text{GPerm}^{[p]}(G) = \text{GPerm}^{[p]}(G_1)\text{GPerm}^{[p]}(G_2),$$ as desired.
 \end{proof}
 
 It follows that the extended graph permanent has a term-by-term product property for $4$-point $\phi^4$ graphs containing subdivergences; the extended graph permanent of the graph is the product of the extended graph permanents of the graph with the subdivergence contracted and the subdivergence isolated. This provides further hints that the extended graph permanent is in some way connected to the Feynman period.

Theorem~\ref{subdivthm}, along with main results of this chapter, provide a strong foundation for believing that the extended graph permanent may be intrinsically connected in some way to the Feynman period for $\phi^4$ graphs. Just how this connection can be used, however, remains unclear. The start of these sequences for all primitive $\phi^4$ graphs up to eight loops are presented in Appendix~\ref{chartofgraphs} as families of decompletions of $4$-regular graphs. We ignore any graph that differs by one already computed by a Schnetz twist or duality of any pair of decompletions, per the equalities demonstrated in this chapter. We see in particular that the converse of Conjecture~\ref{obviousconjecture} does not appear to hold true. For example, using notation from \cite{Sphi4}, decompletions of graphs $P_{8,1}$ and $P_{8,10}$ appear to have the same extended graph permanent, though the periods are certainly not equal. From \cite{galois}, the period of $P_{8,1}$ is $1716 \zeta(13) \approx 1716.21$ while the period of $P_{8,10}$ has a longer multiple zeta representation\footnote{$\frac{19011}{2} \zeta (3) \zeta (9) + \frac{9}{5} \pi^6 \zeta (3)^2 - 441 \pi^2 \zeta (5)^2- \frac{63}{5}\pi^4 \zeta(5,3)- 3891 \zeta (5) \zeta (7) -378 \pi^2 \zeta(7,3)+3193 \zeta (9,3)+ 756 \zeta(6,4,1,1) -1260 \pi^2 \zeta(3) \zeta(7)+\frac{21}{5} \pi^4 \zeta (3) \zeta(5)+ \frac{113854613}{10216206000} \pi^{12}-63 \zeta(3)^4$}, and is approximately $735.764$.

\chapter{Computation of the extended graph permanent}
\label{egpcomp}

The permanents of large matrices are notoriously difficult to compute; the lack of row-reduction techniques mean that usually computations are done using the definition or cofactor expansion. However, as we desire only the residue, we can use row reduction, provided we have not prior used cofactor expansion to reduce the number of identical blocks. Further, our matrices are constructed with a great deal of repetition, which results in easier cofactor expansion. In this chapter, we simplify the computation of the extended graph permanents, and produce closed forms as reasonably small, single equations that work for all primes for several graphs, matroids, and graph families. We do this using standard combinatorial counting techniques and cofactor expansion. 

It is important and interesting to note that the computations in the chapter are used to produce the permanents over $\mathbb{Q}$ first, unless otherwise noted. Consider the fact that the permanent for graph $W_{44}$ at prime $3049$, in Section~\ref{wheels}, is the permanent of a $134122 \times 134122$ matrix and over twenty thousand digits in length. This is an example of the complexity that is being reduced to reasonable closed forms.

To emphasize the structural nature of our cofactor expansion, we will represent the permanents of $k$-matrices as weighted graphs, weights on edges counting the number of columns appearing in the matrix that represent that edge, and weights on vertices the number of rows in the matrix that represent that vertex. Since we are representing the permanents graphically, we will differentiate them from graphs by drawing these weighted graph representations of the permanent in square brackets.

Representations of this type are not unique. If a graph has multiple vertices of weight zero, those vertices are indistinguishable, as they correspond to rows that do not occur in the matrix. However, up to reordering the rows and columns, the graphical representation does uniquely produce a matrix. Trivially, the matrix must be square if we are to take a permanent, and hence we require that the sum of the vertex weights must be equal to the sum of the edge weights. For $4$-point $\phi^4$ graphs in particular and prime $p=2n+1$, non-special vertices will receive weight $2n$ and edges will receive weight $n$.

\begin{example} \begin{align*} \left[ \raisebox{-.48\height}{\includegraphics{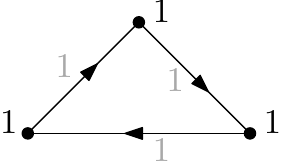}} \right] 
&= \text{Perm} \left[ \begin{array}{ccc} 1 & 0 & -1 \\ -1 & 1 & 0 \\ 0 & -1 & 1 \end{array} \right]\\
\left[ \raisebox{-.48\height}{\includegraphics{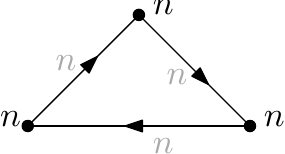}} \right] 
&= \text{Perm} \left( \mathbf{1}_n \otimes \left[ \begin{array}{ccc} 1 & 0 & -1 \\ -1 & 1 & 0 \\ 0 & -1 & 1 \end{array} \right] \right) \\
\left[ \raisebox{-.48\height}{\includegraphics{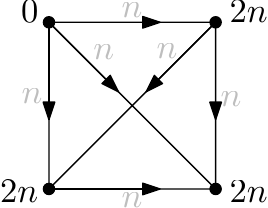}} \right] 
&= \text{Perm} \left( \mathbf{1}_{2n\times n} \otimes \left[ \begin{array}{cccccc} 1&0&0&-1&-1&0 \\ 0&1&0&1&0&1 \\ 0&0&1&0&1&-1 \end{array} \right] \right) 
 \end{align*} \end{example}

We may therefore introduce a general method for writing the cofactor expansion along rows, and hence at vertices, using this representation. Suppose that vertex $v$ has weight $w_v \neq 0$, and further that the $n$ incident edges $e_1 = (v,v_1),...,e_n = (v,v_n)$ have weights $w_1, ..., w_n$. Let $m_{e_i}$ denote the value in the matrix of edge $e_i$ at vertex $v$. Performing cofactor expansion along all rows corresponding to vertex $v$, 
\begin{align*}
\left[ \raisebox{-.48\height}{\includegraphics[scale=0.6]{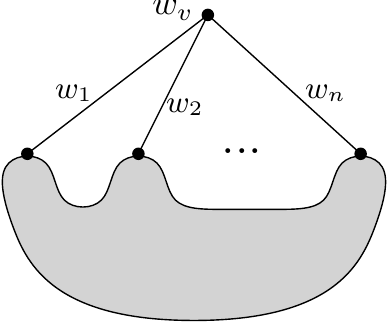}} \right] = \sum_{\substack{k_1 + \cdots + k_n = w_v\\ k_i \geq 0}} w_v! \prod_{j=1}^n \binom{w_j}{k_j}  m_{e_j}^{k_j} \left[ \raisebox{-.48\height}{\includegraphics[scale=0.6]{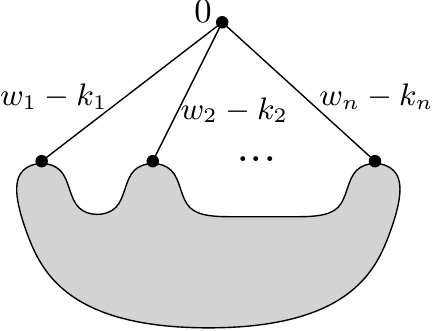}} \right],
\end{align*}
where the $k_i$ values count the number of times we meet each column associated to an incident edge in this cofactor expansion. The $w_v!$ factor comes from the fact that order matters in the selection of edges.

An alternate, and more graphic, interpretation of the cofactor expansion relies on counting the number of times a particular set of columns is deleted in this operation using a multinomial. That is, we first determine the number of times each column associated to an edge incident to this vertex will be deleted in this cofactor expansion, and count the number of such instances with a multinomial. Then, cofactor expansion along a row designated to meet a column indexed by edge $e_i$ meets that edge $w_i$ times in the first expansion, $w_i-1$ times in the second expansion, and so on. This gives \begin{align*}\left[ \raisebox{-.48\height}{\includegraphics[scale=0.6]{genex1}} \right] = \sum_{\substack{k_1 + \cdots + k_n = w_v\\ k_i \geq 0}} \binom{w_v}{k_1,...,k_n} \prod_{j=1}^n \frac{w_j!m_{e_j}^{k_j}}{(w_j-k_j)!} \left[ \raisebox{-.48\height}{\includegraphics[scale=0.6]{genex2}} \right].\end{align*} It is a straightforward check that these are equal.

In the special case that the vertex is incident to a single edge, the computation simplifies;  \begin{align*} \left[ \raisebox{-.48\height}{\includegraphics[scale=1]{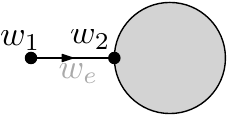}} \right] &= w_1! \binom{w_e}{w_1}(-1)^{w_1} \left[ \raisebox{-.48\height}{\includegraphics[scale=1]{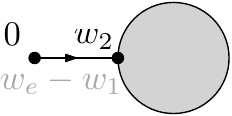}} \right]\\ &= \frac{w_e!(-1)^{w_1}}{(w_e-w_1)!} \left[ \raisebox{-.48\height}{\includegraphics[scale=1]{singvert2}} \right] .\end{align*}

One may also do cofactor expansion along a column, which corresponds to an edge. Herein, for algorithmic simplicity we will generally only use edges when the weight on one vertex is zero. Let $m_{v_i}$ be the value in the matrix at edge $e_i$ and vertex $v_i$. Then, with weights $w_i$ and $x_i$,\begin{align*}
\left[ \raisebox{-.48\height}{\includegraphics[scale=0.6]{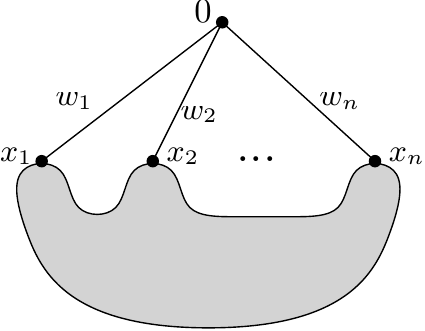}} \right]
&=  \prod_{i=1}^n \frac{x_i!}{(x_i-w_i)!} m_{v_i}^{w_i} \left[ \raisebox{-.48\height}{\includegraphics[scale=0.6]{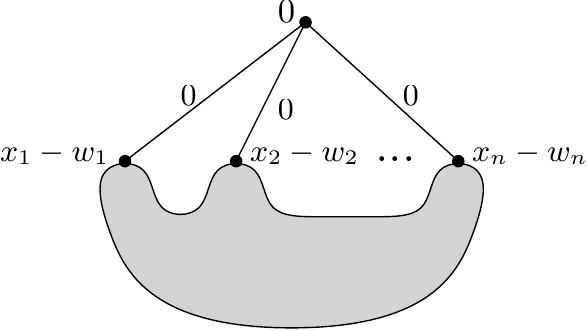}} \right]\\
&= \prod_{i=1}^n \frac{x_i!}{(x_i-w_i)!} m_{v_i}^{w_i} \left[ \raisebox{-.48\height}{\includegraphics[scale=0.6]{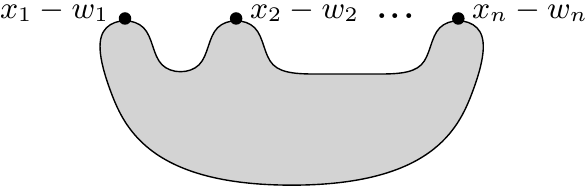}} \right].
\end{align*} This last line follows from the fact that an edge with weight zero contributes nothing to the matrix and is hence removable. Similarly, a vertex with weight zero and all incident edges having weight zero can be removed.

As orientations are ultimately arbitrary, we will from now on indicate directions on edges only when we are about to act upon that edge or vertex, purely for the sake of simplicity in the figures. For the sake of generality in these early examples, orientations were omitted from the figures, but can be extracted from the value $m_{e_i}$ in the equations, corresponding to the value of the entry in the matrix of edge $e_i$ at the chosen vertex.

\section{A small example} \label{smallstart}

We begin with the $4$-point $\phi^4$ graph $K_{3,4}$, a decompletion of $P_{6,4}$ in \cite{Sphi4}. While this computation is straightforward given our tools, it is a single, reasonably sized graph to orient us with this method. The remaining computations in this chapter will be on matroids or families of graphs, which will require additional tricks. For prime $2n+1$, 

\begin{align*}
\left[  \raisebox{-.48\height}{\includegraphics[scale=.7]{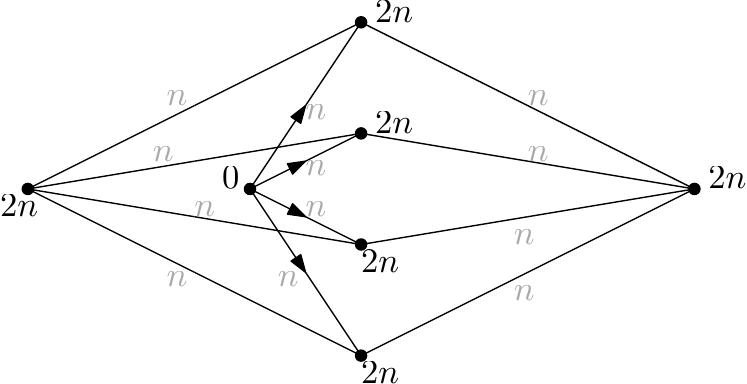}} \right] 
&= \left( \frac{(2n)!}{n!} \right)^4     \left[ \raisebox{-.48\height}{\includegraphics[scale=.7]{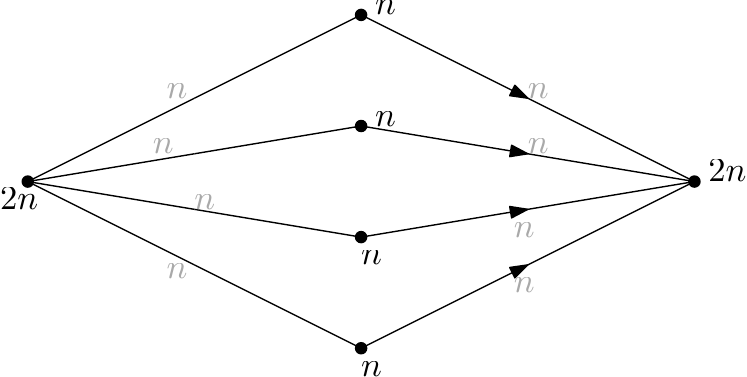}} \right] \\
&\hspace{-4cm}= \left( \frac{(2n)!}{n!} \right)^4 \sum_{\substack{k_1+k_2+k_3+k_4 = 2n\\0 \leq k_i \leq n}} \left( \prod_{i=1}^4 \binom{n}{k_i} \right) (2n)!   \left[ \raisebox{-.48\height}{\includegraphics[scale=.7]{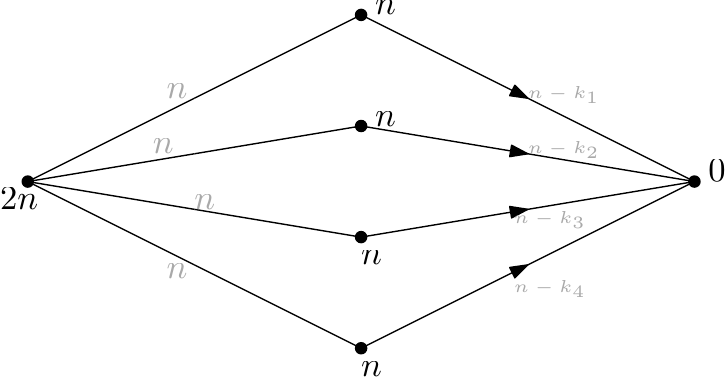}}  \right]\\
&\hspace{-3cm}=  \frac{(2n)!^5}{n!^4}  \sum_{\substack{k_1+k_2+k_3+k_4 = 2n\\0 \leq k_i \leq n}} \left( \prod_{i=1}^4 \binom{n}{k_i} \right) \frac{ n!^4 (-1)^{2n}}{\displaystyle{\prod_{i=1}^4} (k_i)!} \left[ \raisebox{-.48\height}{\includegraphics[scale=.7]{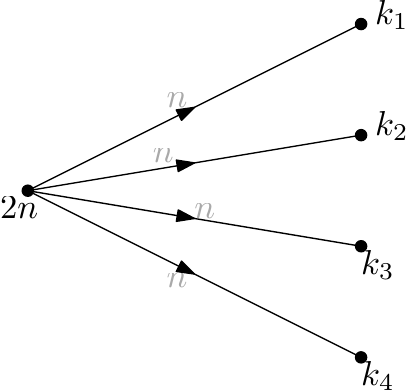}}  \right]\\
&\hspace{-3cm}= (2n)!^5 \sum_{\substack{k_1+k_2+k_3+k_4 = 2n\\0 \leq k_i \leq n}} \left( \prod_{i=1}^4 \binom{n}{k_i} \right) \frac{  n!^4 (-1)^{2n}}{\displaystyle{\prod_{i=1}^4} (k_i!(n-k_i)! ) }\left[ \raisebox{-.48\height}{\includegraphics[scale=.7]{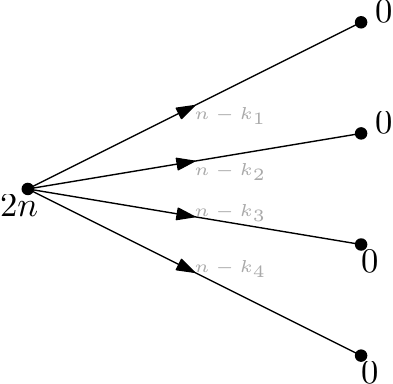}}  \right]\\
&\hspace{-3cm}= (2n)!^6 \sum_{\substack{k_1+k_2+k_3+k_4 = 2n\\0 \leq k_i \leq n}} \binom{n}{k_1}^2 \binom{n}{k_2}^2 \binom{n}{k_3}^2  \binom{n}{k_4}^2\\
&\hspace{-3cm} \equiv  \sum_{\substack{k_1+k_2+k_3+k_4 = 2n\\0 \leq k_i \leq n}} \binom{n}{k_1}^2 \binom{n}{k_2}^2 \binom{n}{k_3}^2  \binom{n}{k_4}^2 \pmod{2n+1}.
\end{align*}

Panzer and Schnetz prove in \cite{galois} that the period of this graph is $ \frac{-6912}{5}\zeta(5,3) +\frac{928}{2625}\pi^8 - 2592\zeta(3)\zeta(5) \approx 71.5  .$ The $c_2$ invariant for this graph is zero for all primes (\cite{BSModForms}), and the Hepp bound is $13968$ (\cite{ErikEmail}).

\section{Trees}\label{treesnshit}

The reduced signed incidence matrix of a tree $T$, $M_T$, will be a square matrix, and hence $M_T = \overline{M}_T$. As such, we are interested in $\mathbf{1}_n \otimes M_T$ for all primes $p=n+1$. Applying Wilson's Theorem to a minimal non-trivial tree, 
$$ \left[ \raisebox{-.48\height}{\includegraphics{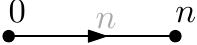}} \right] = \text{Perm} \left[ \mathbf{1}_{n} \right] = n! \equiv -1 \pmod{n+1}.$$ Note from Corollary~\ref{oddcase} that this is the unique graph with a nonzero value at a non-prime residue. At $4$, we would produce the matrix $\mathbf{1}_3$, which has permanent $2 \pmod{4}$. 

The decompletion of the graph $P_{1,1}$, the unique graph with two vertices and two edges in parallel between them, falls into this case. As $n$ will be even after prime two, the duplicated-edges view of the permanent is agnostic to one edge duplicated $n = 2k$ times or two edges in parallel duplicated $k$ times. 

For general trees, we progress inductively. Suppose we are computing the permanent for prime $p=n+1$. As any tree $T$ with at least two vertices starts with the special vertex having weight $0$ and all edges and non-special vertices with weight $n$, we will assume that the special vertex is a leaf. Hence, \begin{align*} \left[ \raisebox{-.48\height}{\includegraphics{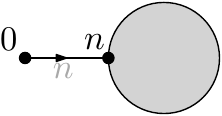}} \right] &= n! \left[ \raisebox{-.48\height}{\includegraphics{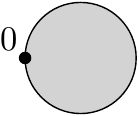}} \right]. \end{align*} This second figure represents the $(n+1)^\text{th}$ graph permanent of of a smaller tree. Thus, we may move the special vertex again to a leaf. With base case established prior, we get that $\text{GPerm}^{[p]}(T) = (-1)^{|V(T)|-1} \pmod{p}$. As mentioned in Section~\ref{signambiguity}, there is no sign ambiguity to any values in this sequence.

For most graphs, pendant vertices trivialize the computation.

\begin{proposition}\label{zeropendant} Suppose $G$ is a graph that is not a tree. If $v \in V(G)$ is a pendant vertex, then the extended graph permanent is zero for all primes. \end{proposition}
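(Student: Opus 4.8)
Proof proposal.

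The plan is to compute the permanent of the relevant $k$-matrix by performing cofactor expansion along the single edge incident to the pendant vertex $v$, and then observing that what remains vanishes because of a pigeonhole obstruction. Let $G$ be a graph that is not a tree, let $v$ be a pendant vertex, and let $e = \{v,u\}$ be the unique edge incident to $v$. Write $L = \lcm(|V(G)|-1,|E(G)|)$, $\E = L/|E(G)|$, $\V = L/(|V(G)|-1)$, so the fundamental matrix $\overline{M}_G$ has $\E$ copies of each column and $\V$ copies of each row; for a prime $p = n\V+1$ we must take the permanent of $\mathbf{1}_n \otimes \overline{M}_G$, in which column $e$ appears $n\E$ times and row $v$ appears $n\V$ times.

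First I would choose $v$ \emph{not} to be the special vertex; this is legitimate by Theorem~\ref{specialinvariant}, since $G$ is not a tree and hence has more than two vertices (a non-tree with two vertices has a loop, excluded by Remark~\ref{excludeloops}, or parallel edges, in which case choose the other vertex as special — actually any non-tree has a cycle, so $|V(G)|\ge 2$, and if $|V(G)|=2$ the pendant condition fails, so in fact $|V(G)|\ge 3$). Now perform cofactor expansion along all $n\E$ copies of the column indexed by $e$. Each such column is nonzero only in the $n\V$ rows indexed by $v$, so every contributing term selects, for each of these $n\E$ columns, one of the $n\V$ rows indexed by $v$. This produces, by the single-edge simplification derived in the computation section, an overall scalar factor of the form $\pm\, (n\V)!/(n\V - n\E)!$ (and is zero unless $n\V \ge n\E$, i.e. $\V\ge\E$, which holds since $|E(G)|\ge|V(G)|-1$ as $G$ is connected), times the permanent of the matrix obtained by deleting column $e$ and the used copies of row $v$.

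The key point is then the pigeonhole obstruction. After this expansion, the remaining $n\V - n\E$ copies of row $v$ are identically zero: row $v$ originally had its only nonzero entry in column $e$, and column $e$ has been removed. The resulting square matrix therefore contains $n\V - n\E$ zero rows. If $\V > \E$ this number is positive, so the permanent is zero by the definition of the permanent (a zero row contributes nothing), hence $\text{GPerm}^{[p]}(G) \equiv 0 \pmod p$ for every prime $p$ in the sequence. It remains to rule out $\V = \E$: by Corollary~\ref{brokenlcms} (or directly from $\V\E = L^2/(|E(G)|(|V(G)|-1))$ together with $\V/\E = |E(G)|/(|V(G)|-1) \ne 1$ since $|E(G)| > |V(G)|-1$ for a non-tree), we have $\V \ne \E$, so indeed $\V > \E$. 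This completes the argument.

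The main obstacle I expect is purely bookkeeping: getting the counting of column copies versus row copies right (that column $e$ has $n\E$ copies while row $v$ has $n\V$ copies, and that $\V$ strictly exceeds $\E$ precisely because $G$ is not a tree), and making sure the cofactor expansion along a column whose only nonzero entries lie in the rows of $v$ does leave $n\V - n\E$ genuine zero rows behind rather than some subtler structure. There is no real analytic or algebraic difficulty; the result is essentially Corollary~\ref{pigeon} / Lemma~\ref{moregeneralpigeon} applied after one structural reduction, and indeed it can alternatively be phrased entirely in the tagging language of Section~\ref{graphicegp}: no valid tagging can give the non-special pendant vertex its required $n\V$ tags from a single incident edge that only carries $n\E < n\V$ copies.
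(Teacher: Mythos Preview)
Your overall strategy is correct and is essentially the same pigeonhole argument the paper uses, but there is one misstatement that you should fix. You write that ``each such column is nonzero only in the $n\V$ rows indexed by $v$,'' and from this you extract the factor $(n\V)!/(n\V-n\E)!$. That claim is only true if the \emph{neighbour} $u$ of $v$ is chosen as the special vertex; if the special vertex is some other $w$, then column $e$ is nonzero in the rows for $v$ \emph{and} the rows for $u$, and your cofactor expansion along the columns for $e$ branches over both. Your conclusion is still salvageable, because the statement you actually need is the dual one (which you invoke later anyway): row $v$ is nonzero only in the columns for $e$. With $n\V>n\E$, after any way of deleting the $n\E$ columns for $e$ there remain at least $n\V-n\E>0$ all-zero rows, so every term vanishes.

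The paper sidesteps this by simply taking $u$ (your $v'$) as the special vertex from the start. Then the rows for $v$ and the columns for $e$ form a genuine block $\left[\begin{smallmatrix}A & \mathbf{0}\\ \mathbf{0} & B\end{smallmatrix}\right]$ with $A$ of shape $n\V\times n\E$, and Corollary~\ref{pigeon} applies directly with no expansion needed. Your tagging remark at the end is exactly this argument in the graphic language and is the cleanest version of what you wrote.
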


\begin{proof} By Remark~\ref{connected} we may assume that $G$ is connected. Then, the reduced signed incidence matrix $M$ has strictly more columns than rows, and the fundamental matrix $\overline{M}$ has more copies of each row than each column. Suppose $v$ is adjacent to vertex $v'$ in $G$. Set $v'$ as the special vertex. We may therefore write $\overline{M} = \left[ \begin{array}{cc} A &\bf{0} \\ \bf{0} & B \end{array} \right]$, where rows in $A$ are indexed by $v$. As $A$ is not a square submatrix, by Corollary~\ref{pigeon} the permanent of $\overline{M}$ is zero. \end{proof}

In fact, we may generalize this to edge cuts and vertex cuts in a way similar to the max-flow min-cut theorem (see Theorem 11.3 in \cite{generalgraphtheory}). A \emph{separation} of a graph $G$ is a partition of $E(G)$ into sets $(A,B)$. We write $V(G_A)$ and $V(G_B)$ as the vertex sets of this partition, and define the \emph{order} of a separation as $|V(G_A) \cap V(G_B)|$.

\begin{theorem}\label{mymaxflow} Let $G$ be a graph. Suppose that in the computation of the extended graph permanent, edges receive weight $\E n$ and non-special vertices receive weight $\V n$. If $G$ contains a partition $(A,B)$ such that $V(G_A)$ contains $V_A$ non-special vertices and $|A|\E  > V_A\V$, then the extended graph permanent is zero at all primes. Similarly, if a $k$-edge cut in $G$ has $V_1$ non-special vertices on one side and $E_1$ edges in the subgraph induced by these vertices such that $ V_1 \V  -E_1 \E > k \E$, the extended graph permanent is zero for all primes. \end{theorem}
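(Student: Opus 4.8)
The plan is to obtain both assertions from the same pigeonhole principle that already governs the pendant-vertex case of Proposition~\ref{zeropendant}, namely Lemma~\ref{moregeneralpigeon} and Corollary~\ref{pigeon}. Recall that the extended graph permanent of $G$ at a prime $p = \V n + 1$ equals $\text{Perm}(N)\bmod p$, where $N = \mathbf{1}_n \otimes \overline{M}_G = \mathbf{1}_{n\V \times n\E}\otimes M_G$ is a square matrix in which each non-special vertex of $G$ contributes $n\V$ identical rows of $N$ and each edge contributes $n\E$ identical columns. I would prove the first statement directly and deduce the second from it using the identity $\E\,|E(G)| = \V\,(|V(G)|-1)$ that follows from $\E = L/|E(G)|$ and $\V = L/(|V(G)|-1)$.

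For the first statement, let $\mathcal{C}$ be the columns of $N$ indexed by edges of $A$ and $\mathcal{R}$ the rows of $N$ indexed by the non-special vertices lying in $V(G_A)$, so $|\mathcal{C}| = |A|\,n\E$ and $|\mathcal{R}| = V_A\,n\V$. Since $G$ is loopless, each column in $\mathcal{C}$ has its two nonzero entries in the rows of the endpoints of the corresponding edge, both of which lie in $V(G_A)$, and the special vertex contributes no row; hence the submatrix of $N$ on the rows outside $\mathcal{R}$ and the columns in $\mathcal{C}$ is identically zero. After permuting rows and columns, $N$ then has exactly the shape of the matrix $M$ in Lemma~\ref{moregeneralpigeon}, with the role of the block $A$ played by the $|\mathcal{R}|\times|\mathcal{C}|$ submatrix on $\mathcal{R}$ and $\mathcal{C}$. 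The hypothesis $|A|\E > V_A\V$ gives $|\mathcal{C}| > |\mathcal{R}|$, so $\text{Perm}(N) = 0$, and hence the extended graph permanent of $G$ is zero at every prime for which it is defined.

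For the second statement, I would first use Remark~\ref{connected} to assume $G$ is connected (otherwise the extended graph permanent already vanishes), so that both sides $X$ and $Y$ of the $k$-edge cut are nonempty, and Theorem~\ref{specialinvariant} to place the special vertex in $Y$, which can only raise $V_1$ and so preserves the hypothesis. Let $E_2$ be the number of edges induced by $Y$ and $V_2 = |Y|-1$ the number of non-special vertices of $Y$, so that $E_1 + E_2 + k = |E(G)|$ and $V_1 + V_2 = |V(G)| - 1$. Then
\[
E_2\E \;=\; \E\,|E(G)| - (E_1 + k)\E \;=\; \V(V_1 + V_2) - E_1\E - k\E \;=\; V_2\V + \bigl(V_1\V - E_1\E - k\E\bigr) \;>\; V_2\V,
\]
the final step being the hypothesis $V_1\V - E_1\E > k\E$. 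Applying the first statement to the partition $A = \{\text{edges induced by }Y\}$, $B = E(G)\setminus A$ — for which $V(G_A)\subseteq Y$ contains at most $V_2$ non-special vertices, while $|A|\E = E_2\E > V_2\V$ already exceeds $V_A\V$ — shows that the extended graph permanent vanishes. (The same conclusion can instead be reached by cofactor expansion along the $k\,n\E$ cut-edge columns, mirroring the proof of Theorem~\ref{subdivthm}: every surviving term is block diagonal with an $X$-block of size $(V_1 n\V - t)\times(E_1 n\E)$, where $t \le k\,n\E$ counts the cut-edge entries chosen in $X$-rows, and the hypothesis forces that block to be non-square, so the term vanishes by Corollary~\ref{pigeon}.)

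The two pigeonhole steps are routine; the one point that demands care is the bookkeeping around the special vertex — that $V_A$ and $V_1$ count non-special vertices only, that Theorem~\ref{specialinvariant} lets us relocate the special vertex to the side $Y$ in the second statement, and that the identity $\E\,|E(G)| = \V\,(|V(G)|-1)$ is precisely what converts a one-sided edge/vertex imbalance into an imbalance on the complementary side. I do not expect any genuine obstacle beyond keeping these counts consistent.
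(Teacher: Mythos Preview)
Your proof is correct and rests on the same pigeonhole principle (Lemma~\ref{moregeneralpigeon}) as the paper. For the first statement your argument is in fact cleaner: the paper performs cofactor expansion along the interior vertices of $V(G_A)-(V(G_A)\cap V(G_B))$ and the edges of $A$ before invoking the pigeonhole, whereas you observe directly that the columns $\mathcal{C}$ already sit over a zero block outside the rows $\mathcal{R}$, so no reduction is needed. For the second statement the paper simply declares the argument ``analogous''; your explicit reduction via the identity $\E\,|E(G)|=\V(|V(G)|-1)$, together with the freedom to relocate the special vertex to $Y$, is a nice way to make that analogy precise and shows that the edge-cut case is genuinely a corollary of the separation case. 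The only omission is the loop case, which is harmless since Remark~\ref{excludeloops} already forces the permanent to vanish.
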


\begin{proof} Cofactor expansion removes an equal number of rows and columns, and hence reduces the total weight on vertices and edges by an equal amount. Consider the separation, then. We may perform cofactor expansion on columns and rows indexed by edges in $A$ and vertices in $V(G_A) - (V(G_A) \cap V(G_B))$ so that only vertices in $V(G_A) \cap V(G_B)$ and incident edges remain with nonzero weight. The associated matrices at this stage, then, would have a number of columns indexed by these edges exceeding the number of rows indexed by vertices incident to them. By Lemma~\ref{moregeneralpigeon}, such a permanent is zero. 

The proof for edge cuts is analogous.\end{proof}

Both Propositions~\ref{parallellimits} and~\ref{zeropendant} are immediate corollaries to  this theorem. 

\begin{example} The graph below has seven vertices and ten edges, so this technique gives all non-special vertices weight $5n$ and edges weight $3n$. $$ \raisebox{-.48\height}{\includegraphics[scale=0.8]{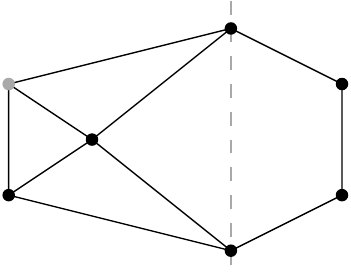}} $$Setting the vertex marked in grey as the special vertex, the marked $2$-vertex cut has total weight $10n$ and $10n$ on the vertices on the left. The edges on the left have total weight $21n$. While Theorem~\ref{mymaxflow} tells us immediately that this extended graph permanent is zero for all primes, we examine this by performing the calculation. Acting on the left and omitting previous factors for brevity, we compute
\begin{align*}
\left[ \raisebox{-.48\height}{\includegraphics[scale=0.7]{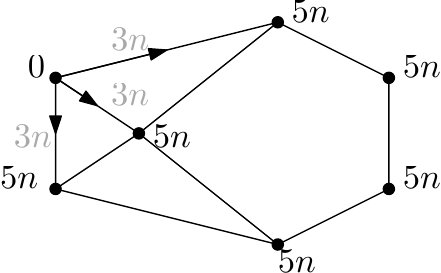}} \right] 
&= \left( \frac{(5n)!}{(2n)!} \right)^3 \left[  \raisebox{-.48\height}{\includegraphics[scale=0.7]{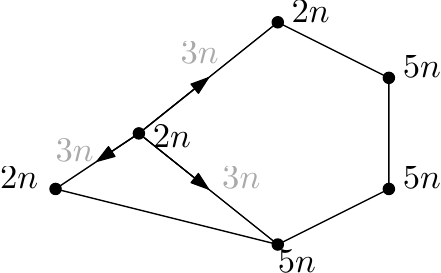}} \right] \\
&\hspace{-2cm}= \left( \frac{(5n)!}{(2n)!} \right)^3 \sum_{k_1=0}^{2n} \sum_{k_2 = 0}^{2n} \binom{3n}{k_1} \binom{3n}{k_2} \binom{3n}{2n-k_1-k_2} \left[  \raisebox{-.48\height}{\includegraphics[scale=0.7]{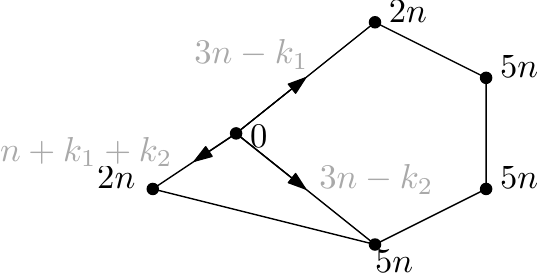}} \right] \\
&\hspace{-2cm}= \cdots \frac{(2n)!}{(n-k_1-k_2)!} \left[  \raisebox{-.48\height}{\includegraphics[scale=0.7]{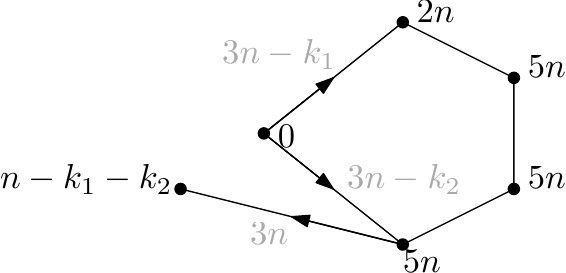}} \right] \\
&\hspace{-2cm}= \cdots \frac{(3n)!}{(2n+k_1+k_2)!} \left[  \raisebox{-.48\height}{\includegraphics[scale=0.7]{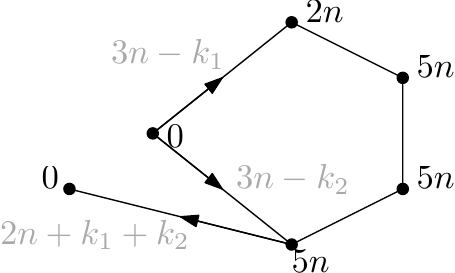}} \right] .
\end{align*}
At this stage of the computation, one of the  vertices in the cut is incident to a set of edges on the left with total weight exceeding its own. The permanent of the associated matrix must therefore be zero.
  \end{example}

\section{Wheels}\label{wheels} 

An important family of graphs is the wheels, built from cycles by adding an apex vertex. Consider a wheel with $w$ vertices in the outer cycle, call it $W_w$. While only $W_3$ and $W_4$ are $4$-point $\phi^4$ graphs, all have $|E(W_w)| = 2(|V(W_w)|-1)$, and hence have extended graph permanent sequences built over all odd primes. For wheel $W_w$, Broadhurst proved in \cite{Broadhurst} that the  Feynman period is $\binom{2w-2}{w-1}\zeta (2w-3)$. The $c_2$ invariant of  all wheels is $-1$ for all primes (\cite{BSModForms}, see Section~\ref{othercombc2} for a computational example), and the Hepp bounds for wheels $W_3$ and $W_4$ are $84$ and $572$, respectively (\cite{ErikEmail}).

For prime $2n+1$, 
\begin{align*} \left[ \raisebox{-.48\height}{\includegraphics{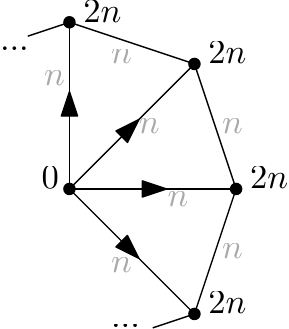}} \right] &= \left( \frac{(2n)!}{n!} \right)^w \left[  \raisebox{-.48\height}{\includegraphics{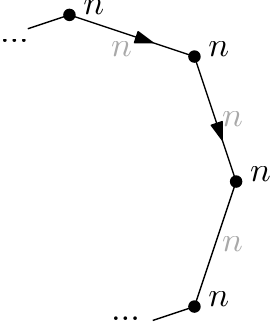}} \right] \\
&\hspace{-1.5cm}= \left( \frac{(2n)!}{n!}\right)^w \sum_{k=0}^n \binom{n}{k} \binom{n}{n-k} n!(-1)^k  \left[  \raisebox{-.48\height}{\includegraphics{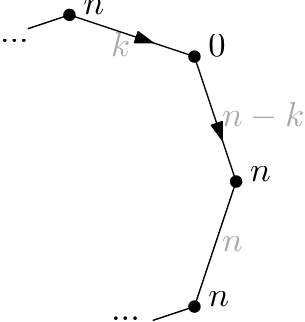}} \right] \\
&\hspace{-1.5cm}=  \left( \frac{(2n)!}{n!} \right)^w \sum_{k=0}^n (-1)^n \binom{n}{k}^2 n! \frac{n!}{k!} \frac{n!}{(n-k)!} \left[ \hspace{-1mm} \begin{array}{c} w-1 \\ \text{vertices} \end{array} \hspace{-1mm} \left\{  \raisebox{-.48\height}{\includegraphics{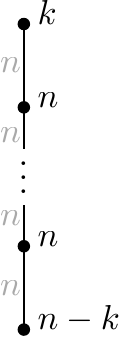}}  \right.  \right] . \\
&\hspace{-1.5cm}=  \left( \frac{(2n)!}{n!} \right)^w \sum_{k=0}^n (-1)^n \binom{n}{k}^3 n!^2 \left[ w-1 \left\{  \raisebox{-.48\height}{\includegraphics{wheel3}}  \right.  \right] .
\end{align*}

We pause in this calculation to consider the permanent of the path created. We will orient all edges away from the middle of the path. We then get 
\begin{align*}
\left[ w-1 \left\{  \raisebox{-.48\height}{\includegraphics{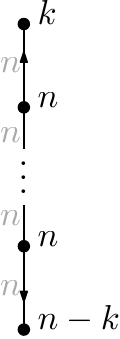}} \right. \right] 
&=  \frac{n!}{(n-k)!} \frac{n!}{k!} \left[ w-1 \left\{  \raisebox{-.48\height}{\includegraphics{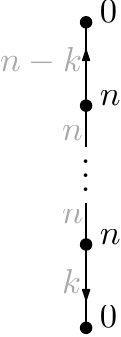}} \right. \right] \\
&\hspace{-.6cm}=  \binom{n}{k}^2 n!^2 (-1)^{n-k}(-1)^k \left[ w-3 \left\{  \raisebox{-.48\height}{\includegraphics{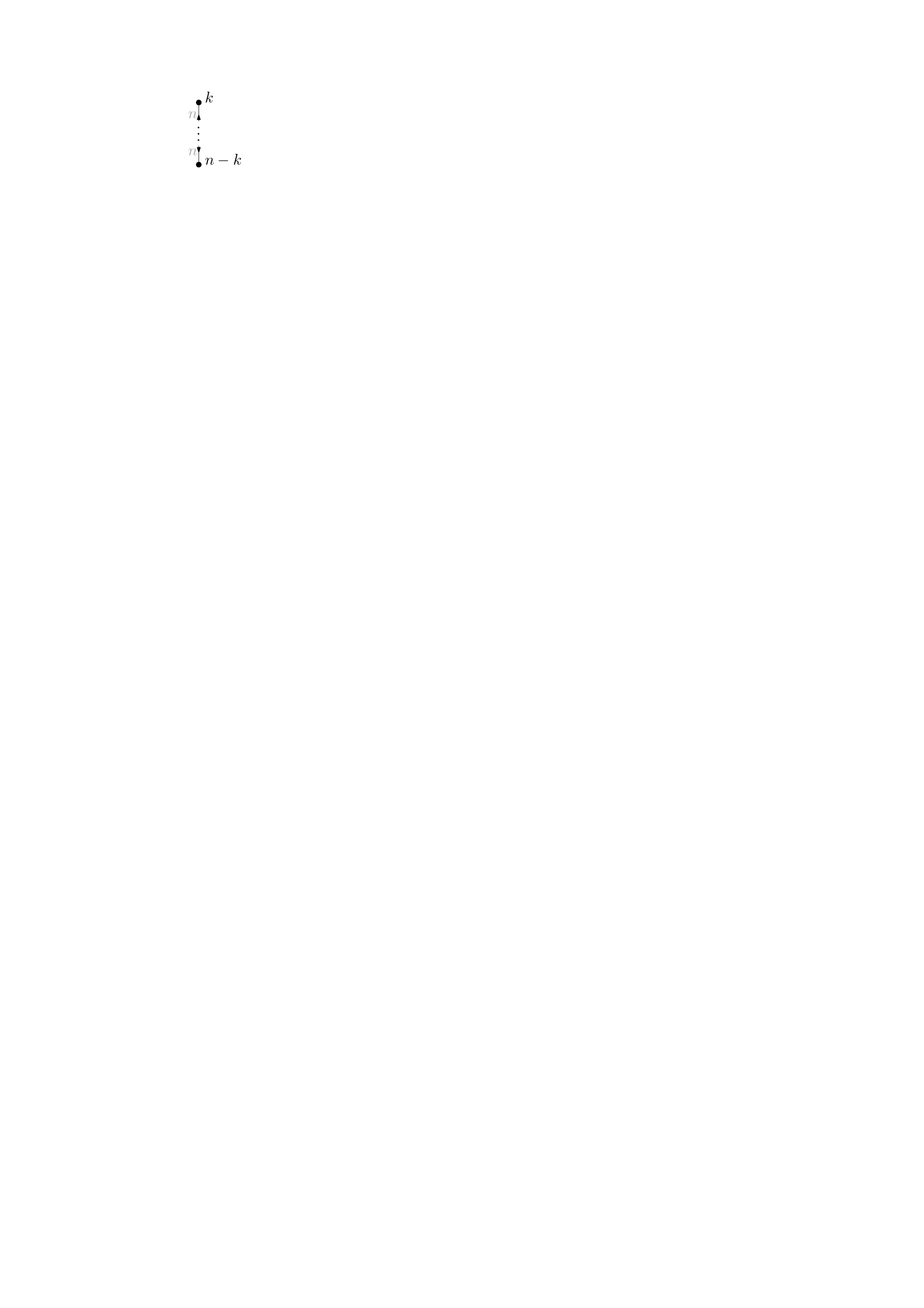}} \right. \right] \\
&\hspace{.75cm} \vdots \\
&\hspace{-.6cm}=  \binom{n}{k}^{w-3}  n!^{w-3} (-1)^{n \lfloor \frac{w-3}{2}  \rfloor}
\begin{cases} \left[  \raisebox{-.48\height}{\includegraphics{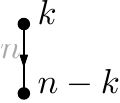}} \right] \vspace{1mm} & \text{ if $w-1$ is even} \\  
\left[  \raisebox{-.48\height}{\includegraphics{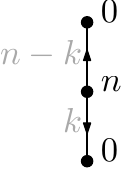}} \right] & \text{ if $w-1$ is odd}
\end{cases} \\
&\hspace{-.6cm}=   \binom{n}{k}^{w-3}  n!^{w-3} (-1)^{n \lfloor \frac{w-3}{2}  \rfloor}
\begin{cases}(-1)^k n! & \text{ if $w-1$ is even} \\  (-1)^n n! & \text{ if $w-1$ is odd}
\end{cases}.
\end{align*}

Continuing with the original calculation,
\begin{align*}
 \left[ \raisebox{-.48\height}{\includegraphics{wheel1}} \right] 
&=  \left( \frac{(2n)!}{n!} \right)^w \sum_{k=0}^n (-1)^n \binom{n}{k}^3 n!^2 \left[ w-1 \left\{  \raisebox{-.48\height}{\includegraphics{wheel3}}  \right.  \right]\\
&\hspace{-1.2cm}=  \left( \frac{(2n)!}{n!} \right)^w \sum_{k=0}^n (-1)^n \binom{n}{k}^w n!^w \cdot \begin{cases} (-1)^{k+n\lfloor \frac{w-3}{2}\rfloor} \text{ if $w$ is odd} \\ (-1)^{n+n\lfloor \frac{w-3}{2}\rfloor} \text{ if $w$ is even}\end{cases} \\
&\hspace{-1.2cm}= \begin{cases} (2n)!^w (-1)^{n\lfloor \frac{w-1}{2} \rfloor} \sum_{k=0}^n (-1)^k \binom{n}{k}^w \text{ if $w$ is odd} \\ (2n)!^w (-1)^{n \lfloor \frac{w+1}{2} \rfloor}  \sum_{k=0}^n \binom{n}{k}^w \text{ if $w$ is even} \end{cases} \\
&\hspace{-1.2cm}= (2n)!^w (-1)^{n\lfloor \frac{w+(-1)^w}{2} \rfloor} \sum_{k=0}^n (-1)^{kw}\binom{n}{k}^w . 
\end{align*}

As a factor $(-1)^n$ corresponds to reversing the direction of the $n$ columns corresponding to a common edge, and as $2n! \equiv -1 \pmod{2n+1}$ by Wilson's Theorem, we may write this as $(-1)^{w} \sum_{k=0}^n (-1)^{kw}\binom{n}{k}^w  \pmod{2n+1}$.

It is interesting to note that, if $w$ is odd and $2n+1$ is congruent to three modulo four, then \begin{align*}  \sum_{k=0}^n (-1)^k \binom{n}{k}^w &= \binom{n}{0}^w - \binom{n}{1}^w + \cdots - \binom{n}{n}^w \\ &= \sum_{k=0}^{(n-1)/2} \left( \binom{n}{k}^w - \binom{n}{n-k}^w \right) =0. \end{align*} This vanishing permanent property generalizes to graphs with a particular symmetry. To prove this generalized equality, we use the graphical interpretation of the permanent calculation from Section~\ref{graphicegp}.

\begin{definition} Let $G$ be a graph. If graph automorphism $\tau$ has $\tau(\tau(v)) =v$ for all $v \in V(G)$, then $\tau$ is an \emph{involution}. For a particular involution $\tau$, we will say that an edge $e =uv$ is \emph{crossing} if $\tau(u)=v$. \end{definition}

\begin{theorem} \label{symmetry} Suppose $G$ is a graph. If there is an involution $\tau$ with an odd number of crossing edges and at least one vertex $v$ fixed by $\tau$, then the permanent of the associated fundamental matrix for $G$ with special vertex $v$ is identically zero. \end{theorem}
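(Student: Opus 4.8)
The plan is to run the argument entirely inside the graphic (tagging) model of Section~\ref{graphicegp}. Fix the special vertex to be a vertex $v$ fixed by $\tau$ (this is where the hypothesis of a fixed vertex is used), and recall that $\text{Perm}(\overline{M}_G)$ — and more generally $\text{Perm}(\mathbf{1}_n\otimes\overline{M}_G)$ — equals $\sum_c\mathrm{val}(c)$, the sum over all contributions $c$ (taggings with colours) of their values $\mathrm{val}(c)\in\{\pm1\}$, where by Remark~\ref{compval} $\mathrm{val}(c)$ is $(-1)$ raised to the number of tags that disagree with a chosen underlying orientation; by Remark~\ref{connected} we may assume $G$ is connected. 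The goal is to exhibit a fixed-point-free, sign-reversing involution on the set of contributions, which immediately forces the sum to vanish.

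First I would choose the orientation of $G$ to be adapted to $\tau$: on each $2$-element edge-orbit $\{e,\tau e\}$ orient one representative arbitrarily and orient the other as its $\tau$-image, and on each edge fixed setwise by $\tau$ (both ends fixed, or $e$ crossing) orient arbitrarily. Since $\tau$ is an automorphism fixing $v$, it sends non-special vertices to non-special vertices and edges to edges with matching incidences, hence carries any contribution $c$ to another contribution $\tau(c)$, defined by moving the tag of $e$ at $u$ to the corresponding tag of $\tau e$ at $\tau u$ (take the identity permutation on the repeated copies of each row and column of $\overline{M}_G$); this map is an involution on contributions. Next I compare $\mathrm{val}(\tau(c))$ with $\mathrm{val}(c)$ orbit by orbit, using that an edge $e$ contributes the factor $(-1)$ to the number of its tags sitting at the tail of $e$: on a $2$-element orbit the two edge-factors are merely interchanged (the $\tau$-adapted orientation is exactly what makes the tail-count transport correctly, so such orbits are sign-neutral); on a both-ends-fixed edge the factor is unchanged; on a crossing edge $e=ab$ with $\tau(a)=b$ the transport swaps head- and tail-tags, replacing a tail-count of $q$ by $\E_G-q$, so that edge's factor is multiplied by $(-1)^{\E_G}$. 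Therefore $\mathrm{val}(\tau(c))=(-1)^{\E_G\,r}\mathrm{val}(c)$, where $r$ is the number of crossing edges; since each edge occurs exactly once as a column of the fundamental matrix of a $4$-point $\phi^4$ graph (so $\E_G=1$), an odd $r$ yields $\mathrm{val}(\tau(c))=-\mathrm{val}(c)$.

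It then remains only to note that this sign reversal forces $\tau$ to have no fixed contribution: $\tau(c)=c$ would give $\mathrm{val}(c)=-\mathrm{val}(c)$, impossible for $\mathrm{val}(c)\in\{\pm1\}$. Hence the contributions fall into sign-cancelling pairs and $\text{Perm}(\overline{M}_G)=\sum_c\mathrm{val}(c)=0$; the identical computation with $\E_G$ replaced by $n\E_G$ covers $\mathbf{1}_n\otimes\overline{M}_G$. I expect the main obstacle to be precisely the sign bookkeeping in the second paragraph: one must verify that the orientation really is $\tau$-compatible on the $2$-element orbits so that those contribute no sign, isolate the crossing edges as the only source of a sign change, and correctly track the exchange of the head/tail roles of a crossing edge under $\tau$; once that is done, the fixed-point-free conclusion and the cancellation are automatic.
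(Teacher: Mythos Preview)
Your argument is correct and is essentially the paper's own: choose a $\tau$-compatible orientation on the non-crossing edges, let $\tau$ act on taggings, observe that the only sign change arises from the crossing edges, and cancel. The paper phrases the final cancellation via a partition by one distinguished crossing edge rather than via fixed-point-freeness, and it leaves implicit the parity condition you isolate (the argument needs $\E_G\cdot r$ odd, which holds with $\E_G=1$ in all of the paper's applications); these are only cosmetic differences.
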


\begin{proof} For non-crossing edges $e=uv$, orient such that the involution preserves the orientation; if $e = (u,v)$ then $\tau(u)\tau(v)= (\tau(u),\tau(v))$.  Finally, orient the crossing edges arbitrarily.

Valid edge colourings and taggings are preserved by the automorphism. As sign changes occur only when an odd number of tags change direction, and as there are an odd number of crossing edges, this orientation will produce a sign change when we map between colourings and taggins using this automorphism. We may therefore partition all taggings into two sets by fixing a crossing edge and dividing the taggings based on which vertex incident to this edge received the tag. As the involution provides an obvious bijection between sets and in particular elements with opposite signs, the sum is zero. \end{proof}

As the previous theorem holds if $G = G'^{[k]}$ for some $k$, we get the following corollary.

\begin{corollary} \label{symmetrycor} If a decompleted $4$-regular graph $G$ has an involution as described in Theorem~\ref{symmetry}, then the permanent of the fundamental matrix associated to $G^{[k]}$ for odd $k$ has permanent zero. \end{corollary}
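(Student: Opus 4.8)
The plan is to reduce Corollary~\ref{symmetrycor} directly to Theorem~\ref{symmetry} by lifting the given involution $\tau$ of $G$ to an involution of $G^{[k]}$. Recall that $G^{[k]}$ is obtained from $G$ by replacing each edge $e$ with $k$ parallel copies; fix once and for all a labelling of these copies as $e^{(1)}, \dots, e^{(k)}$. Define $\tau^{[k]}$ on $G^{[k]}$ to act as $\tau$ on $V(G^{[k]}) = V(G)$ and to send the copy $e^{(i)}$ to the copy $(\tau e)^{(i)}$ of the edge $\tau e$. First I would verify that $\tau^{[k]}$ is a graph automorphism: on edges it is a bijection (with inverse $f^{(i)} \mapsto (\tau^{-1}f)^{(i)}$), and it preserves incidence since $e^{(i)}$ and $e$ share the endpoint pair $\{u,v\}$, which $\tau$ carries to the endpoint pair of $\tau e$, hence of $(\tau e)^{(i)}$.

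Next I would check that the hypotheses of Theorem~\ref{symmetry} transfer. Since $\tau^{[k]}$ acts as $\tau$ on vertices, it satisfies $\tau^{[k]}(\tau^{[k]}(x)) = x$ for every vertex $x$, so it is an involution in the sense of the paper, and it fixes exactly the vertices fixed by $\tau$; in particular it fixes the vertex $v$ provided by hypothesis. A copy $e^{(i)}$ with endpoints $u,v$ is crossing for $\tau^{[k]}$ precisely when $\tau(u) = v$, i.e.\ exactly when the underlying edge $e$ is crossing for $\tau$; therefore the crossing edges of $\tau^{[k]}$ are exactly the $k$ parallel copies of each crossing edge of $\tau$, and their number is $k$ times the number of crossing edges of $\tau$. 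As $k$ is odd and the number of crossing edges of $\tau$ is odd by hypothesis, $\tau^{[k]}$ has an odd number of crossing edges.

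With the hypotheses in place, Theorem~\ref{symmetry} applied to the graph $G^{[k]}$, the involution $\tau^{[k]}$, and the fixed vertex $v$ gives that the fundamental matrix of $G^{[k]}$ built with $v$ as special vertex has permanent identically zero, which is the claim. I do not expect a serious obstacle here: the one point that must be handled with a little care is the choice of a consistent labelling of the parallel copies so that $\tau^{[k]}$ is genuinely a well-defined automorphism (rather than merely a map on vertices), but once that bookkeeping is fixed the transfer of the ``odd number of crossing edges'' and ``fixed vertex'' conditions is immediate, and the rest is just quoting Theorem~\ref{symmetry}.
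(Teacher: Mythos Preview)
Your proposal is correct and follows essentially the same approach as the paper's one-line proof: lift the involution to $G^{[k]}$, observe that each crossing edge of $G$ yields $k$ crossing edges in $G^{[k]}$ so the count remains odd when $k$ is odd, and apply Theorem~\ref{symmetry}. Your version simply spells out the bookkeeping (well-definedness of $\tau^{[k]}$, preservation of the fixed vertex) that the paper leaves implicit.
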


\begin{proof} If $G$ has an odd number of crossing edges, then so does $G^{[k]}$ for odd $k$, and the result follows. \end{proof}

All wheels $W_k$ for odd $k$ have such involutions, shown in Figure~\ref{wheelinvolution}. Computing up to prime $p=4999$, this explains all zeros in the extended graph permanent of $W_3$. Wheel $W_5$ has $\text{GPerm}^{[5]}(W_5) \equiv 0 \pmod{5}$, though the actual permanent of the associated matrix is nonzero, so this does not explain all zeros up to residues.

\begin{figure}[h]
  \centering
      \includegraphics[scale=0.8]{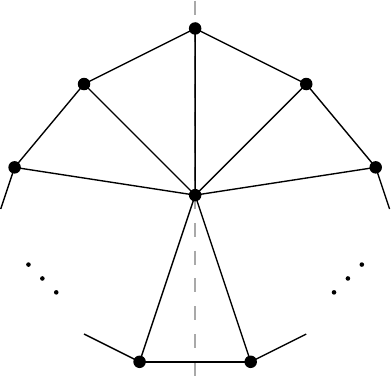}
  \caption{An involution with an odd number of crossing edges for an arbitrary wheel with an odd number of spokes.}
\label{wheelinvolution}
\end{figure}

The graph shown in Figure~\ref{p711}, a decompletion of $P_{7,11}$ (following the naming conventions from \cite{Sphi4}), is a counterexample to the converse of Theorem~\ref{symmetry}. The figure is drawn so that the marked symmetry captures the only element of the symmetric group, which has an even number of crossing edges and no fixed vertices. Choosing the grey vertex as special, the permanent of the signed incidence $2$-matrix is equal to zero in $\mathbb{R}$ (that is, the computed permanent at prime $p=3$). Up to prime $p=199$, this is the only identically zero permanent in the extended graph permanent sequence for this graph. A closed form of the permanents in this sequence is \small $$ (2n)!^7 \sum_{x_i = 0}^n
\binom{n}{ x_0 }^2
\binom{n}{ x_1 }^2
\binom{n}{ x_2 }
\binom{n}{ x_3 }
\binom{n}{ 2n - x_2 - x_3 }^2
\binom{n}{ -n + x_0 + x_1 + x_2 }
\binom{n}{ -x_0 + x_3 }(-1)^{ x_1  }  .$$ \normalsize

\begin{figure}[h]
  \centering
      \includegraphics[scale=1.0]{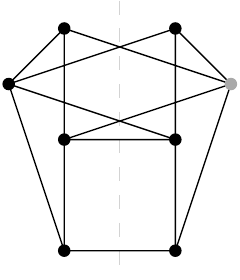}
  \caption{A graph that provides a counterexample to the converse of Theorem~\ref{symmetry}.}
\label{p711}
\end{figure}

Consider the wheel $W_3 = K_4$, which is also the decompletion of $P_{3,1}$ using the naming convention from \cite{Sphi4}. Let $P_{3,1}^2$ be the unique graph made by joining two copies of this graph, as in Figure~\ref{2cut}. By Theorem~\ref{2vertexcut} the extended graph permanent of this graph is equal to the additive inverse of the term-by-term square of $K_4$'s sequence. Hence, the sequence begins $$( 0,4, 0, 0, 4, 1 ,0,0,5,0,21,4,0,0,9,...   ).$$ It is an interesting observation that this appears to be the same as the $c_2$ invariant for graphs $P_{9,161}$, $P_{9,170}$, $P_{9,183}$, and $P_{9,185}$ computed by Brown and Schnetz in \cite{BSModForms}, shown in their completed forms in Figure~\ref{p9161}. No immediate connection between the two graphs is apparent, nor has a relationship between the $c_2$ invariant and the extended graph permanent been revealed. The only other common sequences found between $c_2$ invariants and extended graph permanents for $4$-point $\phi^4$ graphs are the trivial sequences, always $0$ or always $-1$. The graph $P_{1,1}$ has extended graph permanent equal to the $c_2$ invariant of any decompletion of $P_{3,1}$; all entries $-1$. Further, the $c_2$ invariant is preserved by the double triangle operation (Corollary 34 in \cite{BrS}), shown in Figure~\ref{dubtri}. Hence, the extended graph permanent of $P_{1,1}$ is equal to the $c_2$ invariant of all double triangle descendants of $P_{3,1}$. Additionally, there are numerous ways in which a $4$-point $\phi^4$ graph can have $c_2$ invariant equal to zero at all primes, Proposition~\ref{2cutsinc2} and Theorem~\ref{subdivc2} provide examples of this. Theorem~\ref{mymaxflow} provides numerous graphs for which the extended graph permanent is equal to zero at all primes.

\begin{figure}[h]
  \centering
      \includegraphics[scale=0.8]{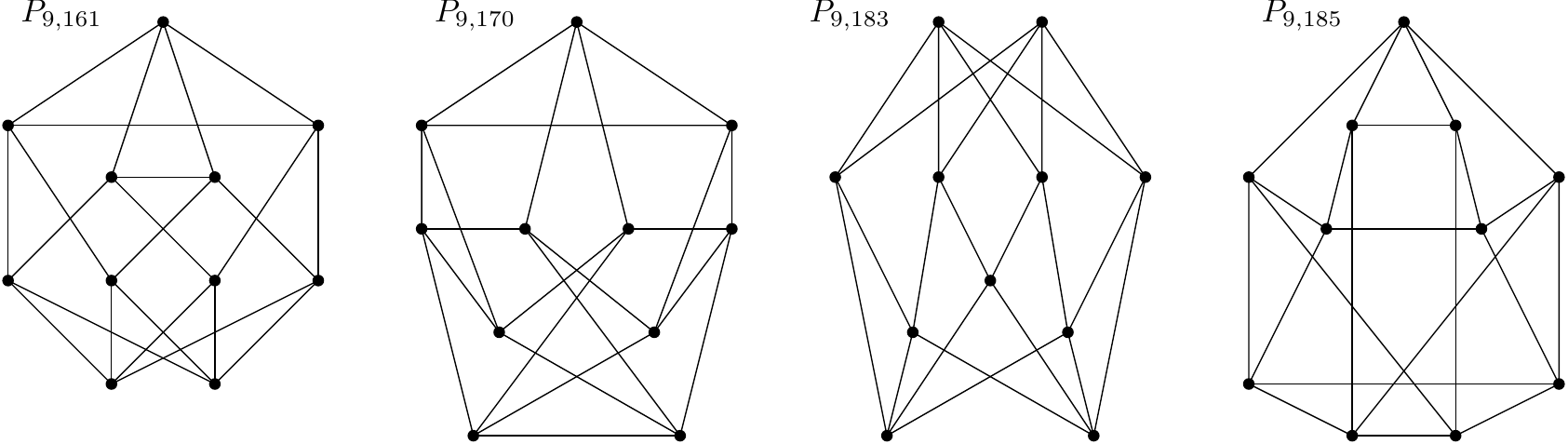}
  \caption{Four graphs that, when decompleted, have $c_2$ invariant that appears to be equal to the extended graph permanent of $P_{3,1}^2$.}
\label{p9161}
\end{figure}

\begin{figure}[h]
  \centering
      \includegraphics[scale=1.0]{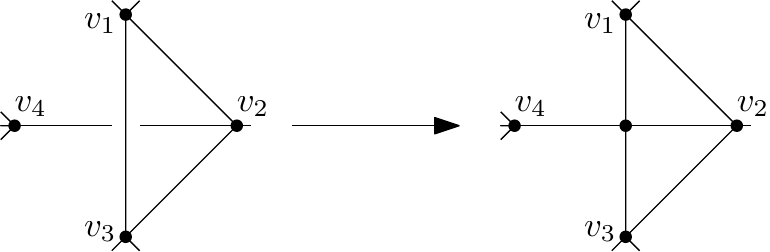}
  \caption{The double triangle operation, which is known to preserve the $c_2$ invariant.}
\label{dubtri}
\end{figure}

\section{Zig-zag graphs}\label{zigzagcomp}

The zig-zag graphs are an important family in $\phi^4$ theory.  Graphically, it is a family whose completions are certain \emph{circulant graphs}. The circulant graph $C^m_{a,b}$ is defined by vertex and edge sets $$V(C^m_{a,b}) = \{0,...,m-1\}, \hspace{2mm} E(C^m_{a,b}) = \left\{ \{i,j\} : |i-j| \in \{a,b\pmod{m}\}  \right\}.$$ The zig-zag graphs are the family $C^m_{1,2}$ for integer $m\geq 5$. For the zig-zag graph on $m$ vertices, Brown and Schnetz prove in \cite{BSZigzag} that the Feynman period is $\frac{4(2m-2)!}{m!(m-1)!}\left( 1- \frac{1-(-1)^m}{2^{2m-3}} \right) \zeta (2m-3)$. The $c_2$ invariant for all zig-zag graphs is $-1$ for all primes (\cite{BrS,BSModForms}, following from the above double triangle operation), and the Hepp bound for the first nine zig-zag graphs are $84$, $572$, $3703$, $26220$, $190952$, $\frac{4290568}{3}$, $10927146$, $84859647$, and $667807932$ (\cite{ErikEmail}).

Consider the zig-zag graph on $m$ vertices, $m \geq 4$, as seen in Figure~\ref{zarb02}. We will take the right-most vertex as the special vertex, and for the sake of future row reduction use the edges highlighted as the first $m-1$ columns in the signed incidence matrix. As such, our signed incidence matrix is 
\begin{align*} \left[
\begin{array}{cccccc|ccccccc}
1&0 &0 & &0 &0 &1 &0 &0 &0 & &0 &1 \\
-1&1 &0 & &0 &0 &0 &1 &0 &0 && 0 &0 \\
0&-1 &1 & &0 &0 &-1 &0 &1 &0 & &0 &0 \\
0&0 &-1 & &0 &0 &0 &-1 &0 &1 & &0 &0 \\
& & &\ddots & & & & & & &\ddots & & \\
0&0 &0 & &1 &0 &0 &0 &0 &0 & &1 &0 \\
0&0 &0 & &-1 &1 &0 &0 &0 &0 & &0 &0
\end{array} \right]_{m-1,2(m-1)}. \end{align*}
We may reduce this matrix, since we will be taking the permanent modulo $2n+1$. Hence, it reduces to
\begin{align*} \left[
\begin{array}{cccccc|ccccccc}
1&0 &0 & &0 &0 &1 &0 &0 &0 & &0 &1 \\
0&1 &0 & &0 &0 &1 &1 &0 &0 && 0 &1 \\
0&0 &1 & &0 &0 &0 &1 &1 &0 & &0 &1 \\
0&0 &0 & &0 &0 &0 &0 &1 &1 & &0 &1 \\
& & &\ddots & & & & & & &\ddots & & \\
0&0 &0 & &1 &0 &0 &0 &0 &0 & &1 &1 \\
0&0 &0 & &0 &1 &0 &0 &0 &0 & &1 &1
\end{array} \right]. \end{align*}

Label this right block $A$. Then, the matrix used for prime $2n+1$ in the extended graph permanent is $\mathbf{1}_{2n \times n} \otimes \left[ I_{m-1} | A \right]$. Cofactor expansion along the columns in the identity matrix gives $$ \text{Perm} (\mathbf{1}_{2n \times n} \otimes \left[ I_{m-1} | A \right]) = \left( \frac{(2n)!}{n!} \right)^{m-1} \cdot \text{Perm}(\mathbf{1}_n \otimes A) .$$

\begin{figure}[h]
  \centering
      \includegraphics[scale=0.90]{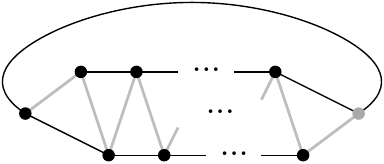}
  \caption{The general zig-zag graph, used to build the signed incidence matrix.}
\label{zarb02}
\end{figure}

What we see with this matrix $A$ is familiar; it is the incidence matrix of an undirected path on $m-1$ vertices, with one additional hyperedge (see \cite{banddagain} for an introduction to hypergraphs and hyperedges) that meets all vertices. In our terms, all edges and vertices receive weight $n$, including the hyperedge. Using cofactor expansion along the column corresponding to the hyperedge followed by our usual tricks;

\begin{align*}
\text{Perm}(I_n \otimes A) &= \sum_{\substack{k_1+\cdots + k_{m-1} = n \\ k_i \geq 0}} \binom{n}{k_1} \cdots \binom{n}{k_{m-1}} n! \left[ \raisebox{-.48\height}{\includegraphics[scale=.7]{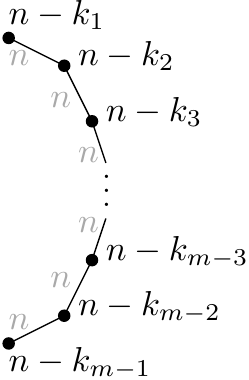}} \right], \end{align*}
where
\begin{align*} \left[ \raisebox{-.48\height}{\includegraphics[scale=.7]{zarb03}} \right] &= \frac{n!}{k_1!} \left[ \raisebox{-.48\height}{\includegraphics[scale=.7]{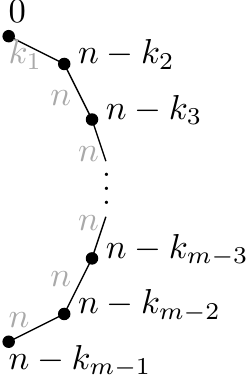}} \right] \\
&\hspace{-.3cm}= \frac{n!}{k_1!} \frac{(n-k_2)!}{(n-k_1-k_2)!} \left[ \raisebox{-.48\height}{\includegraphics[scale=.7]{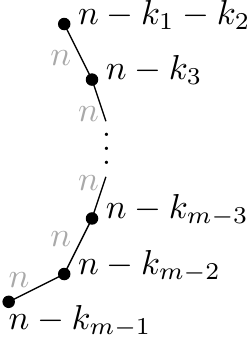}} \right] \\
&\hspace{-.3cm}= \frac{n!}{k_1!} \frac{(n-k_2)!}{(n-k_1-k_2)!} \frac{n!}{(k_1+k_2)!} \left[ \raisebox{-.48\height}{\includegraphics[scale=.7]{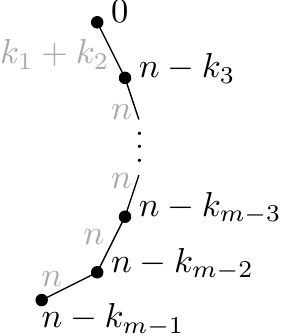}} \right] \\
&\hspace{-.3cm}= \frac{n!}{k_1!} \frac{(n-k_2)!}{(n-k_1-k_2)!} \frac{n!}{(k_1+k_2)!}\frac{(n-k_3)!}{(n-k_1-k_2-k_3)!} \left[ \raisebox{-.48\height}{\includegraphics[scale=.7]{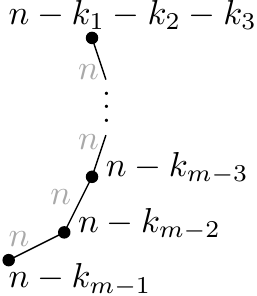}} \right] \\
&\vdots \\
&\hspace{-.3cm}= \frac{n!}{k_1!} \frac{(n-k_2)!}{(n-k_1-k_2)!} \cdots \\
&\hspace{1cm} \frac{n!}{(k_1+ \cdots + k_{m-3})!} \frac{(n-k_{m-2})!}{(n-k_1-\cdots - k_{m-2})!}\left[ \raisebox{-.48\height}{\includegraphics[scale=.6]{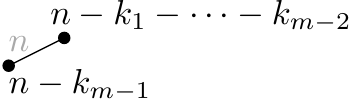}} \right] \\
&\hspace{-.3cm}= \frac{n!}{k_1!} \frac{(n-k_2)!}{(n-k_1-k_2)!} \cdots \frac{n!}{(k_1+ \cdots k_{m-3})!} \frac{(n-k_{m-2})!}{(n-k_1-\cdots - k_{m-2})!}n!\\
&\hspace{-.3cm}= (n!)^{m-2} \binom{n-k_2}{k_1} \binom{n-k_3}{k_1+k_2} \cdots \binom{n-k_{m-2}}{k_1+ \cdots + k_{m-3}}.
\end{align*}

Hence, 
\begin{align*}
 \text{Perm} (\mathbf{1}_{2n \times n} \otimes \left[ I_{m-1} | A \right]) & \\
&\hspace{-3.4cm}= \left( \frac{(2n)!}{n!} \right)^{m-1} \sum_{\substack{k_1+\cdots + k_{m-1} = n \\ k_i \geq 0}} n!^{m-1} \left( \prod_{i=1}^{m-1} \binom{n}{k_i} \prod_{i=1}^{m-3} \binom{n-k_{i+1}}{\sum_{j=1}^i k_j} \right) \\
&\hspace{-3.4cm}= (2n)!^{m-1} \sum_{\substack{k_1+\cdots + k_{m-1} = n \\ k_i \geq 0}}  \left( \prod_{i=1}^{m-1} \binom{n}{k_i} \prod_{i=1}^{m-3} \binom{n-k_{i+1}}{\sum_{j=1}^i k_j} \right)\\
&\hspace{-3.4cm} \equiv (-1)^{m-1} \sum_{\substack{k_1+\cdots + k_{m-1} = n \\ k_i \geq 0}} \left( \prod_{i=1}^{m-1} \binom{n}{k_i} \prod_{i=1}^{m-3} \binom{n-k_{i+1}}{\sum_{j=1}^i k_j} \right) \pmod{2n+1}.
\end{align*}

\section{Matroid $R_{10}$}\label{compr10}

We include now the computation of a permanent from a non-graphic matroid. The matroid $R_{10}$ is of interest as it demonstrates extending the methods to non-graphic objects, because the sequence produced is familiar, and because $R_{10}$ is special in the class of regular matroids, as noted in Chapter~\ref{chmatroid}.

The regular matroid $R_{10}$ can be represented by the matrix \begin{align*}
\left[
\begin{array}{ccccc|ccccc}
1 &0 &0 &0 &0 &-1 &1 &0 &0 &1 \\
0 &1 &0 &0 &0 &1 &-1 &1 &0 &0 \\
0 &0 &1 &0 &0 &0 &1 &-1 &1 &0 \\
0 &0 &0 &1 &0 &0 &0 &1 &-1 &1 \\
0 &0 &0 &0 &1 &1 &0 &0 &1 &-1 
\end{array}
\right]
\end{align*}
over every finite field. Note that this is a $5 \times 10$ matrix, and constructing the fundamental matrix and further block matrix extensions, the permanent is defined, like $4$-point $\phi^4$ graphs, on all odd primes. As before, denote the right block of the previous matrix as $A$. Hence, for prime $2n+1$ we compute $$\text{Perm}( \mathbf{1}_{2n \times n} \otimes \left[ I_5|A \right]) = \left( \frac{(2n)!}{n!} \right)^5 \text{Perm}(\mathbf{1}_n \otimes A).$$

We turn this matrix $A$ into the incidence matrix of a graph; in this case a hypergraph on five vertices with five hyperedges, each vertex and hyperedge receiving weight $n$. Shown in Figure~\ref{r10}, and drawn with vertices distributed in a cycle, note that the value associated to the middle-most vertex of each hyperedge is $-1$, while all others receive value $1$. For notational convenience, we will favour the right drawing. Additionally, the following lemma will be useful.

\begin{figure}[h]
  \centering
      \includegraphics[scale=0.90]{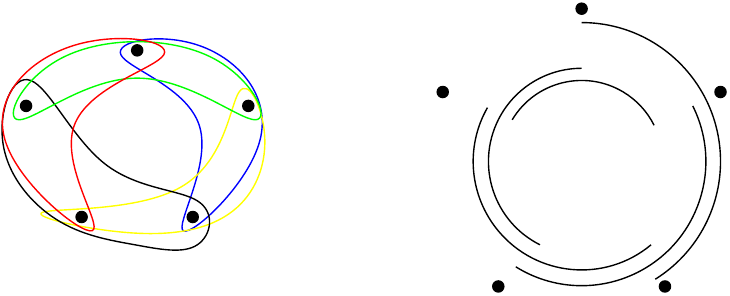}
  \caption{The hypergraph associated to $R_{10}$.}
\label{r10}
\end{figure}

\begin{lemma}\label{oneblocks} If $a+b = c+d$, $$\text{Perm} \left[ \begin{array}{c|c} \mathbf{-1}_{a \times c} & \mathbf{1}_{a \times d} \\ \mathbf{1}_{b \times c} & \mathbf{-1}_{b \times d}  \end{array} \right] = (a+b)!(-1)^{a+d}.$$ \end{lemma}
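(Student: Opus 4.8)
The plan is to compute the permanent of $M = \left[ \begin{array}{c|c} \mathbf{-1}_{a \times c} & \mathbf{1}_{a \times d} \\ \mathbf{1}_{b \times c} & \mathbf{-1}_{b \times d} \end{array} \right]$ directly from the definition, organizing the sum over $S_{a+b}$ according to how a permutation distributes the chosen entries among the four constant blocks. First I would set $N = a+b = c+d$ and index the rows so that rows $1,\dots,a$ are the ``top'' rows and rows $a+1,\dots,N$ are the ``bottom'' rows, and similarly columns $1,\dots,c$ are ``left'' and $c+1,\dots,N$ are ``right''. For a permutation $\sigma \in S_N$, let $j$ be the number of top rows sent to left columns. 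Then exactly $a-j$ top rows go to right columns, exactly $c-j$ bottom rows go to left columns, and exactly $b-(c-j) = d-(a-j)$ bottom rows go to right columns. The product $\prod_i m_{i,\sigma(i)}$ then equals $(-1)^{j}\cdot 1^{a-j}\cdot 1^{c-j}\cdot(-1)^{d-(a-j)} = (-1)^{\,j + d - a + j} = (-1)^{d-a}(-1)^{2j} = (-1)^{d-a} = (-1)^{a+d}$, using that $(-1)^{d-a} = (-1)^{a+d}$. The key observation is that this sign is \emph{independent of $j$ and of the particular $\sigma$}, so every one of the $N!$ terms contributes the same value $(-1)^{a+d}$.

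The main step after that is simply to count: since each of the $N! = (a+b)!$ permutations contributes $(-1)^{a+d}$, the permanent is $(a+b)!\,(-1)^{a+d}$, as claimed. I should note that the hypothesis $a+b = c+d$ is exactly what guarantees $M$ is square (an $N \times N$ matrix) so that the permanent is defined, and also what makes the counting of block-types consistent (e.g. $d - (a-j) \ge 0$ need not hold for every $j$, but those $j$ simply index an empty contribution; the only $j$ that occur are those for which all four block-counts are nonnegative, and for all of them the sign is the same, so the conclusion is unaffected).

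The only mild subtlety — not really an obstacle — is bookkeeping the exponent of $-1$ cleanly: one must be careful that the count of $-1$-entries picked is $j$ (from the top-left block) plus $d-(a-j)$ (from the bottom-right block), and verify this simplifies to a quantity of fixed parity $a+d$ regardless of $j$. Once that parity computation is pinned down, the rest is immediate. I would present the argument in three short moves: (1) observe $M$ is square and fix the row/column partition; (2) show every contributing term has value $(-1)^{a+d}$ via the block-count parity computation; (3) conclude by multiplying by $|S_N| = (a+b)!$.
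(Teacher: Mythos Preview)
Your proof is correct. The sign computation checks out: each permutation contributes $(-1)^{j + (d-a+j)} = (-1)^{2j}(-1)^{d-a} = (-1)^{a+d}$ regardless of $j$, so summing over all $(a+b)!$ permutations gives the claim.

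The paper's proof is different and shorter. It starts from the all-ones matrix $\mathbf{1}_{a+b}$, whose permanent is trivially $(a+b)!$, and observes that multiplying the first $a$ rows by $-1$ and then the last $d$ columns by $-1$ produces exactly the matrix in question; by the scaling property of the permanent (each such multiplication scales the permanent by $-1$), the result is $(a+b)!(-1)^{a+d}$. This avoids any combinatorial bookkeeping entirely. Your argument, by contrast, works from the definition and proves the stronger fact that \emph{every} term in the permanent expansion has the same sign, which is perhaps more illuminating but costs you the block-counting and the parity verification you flag as the ``mild subtlety''. Both approaches are fine for a lemma this small; the paper's is cleaner because it reuses an already-stated property rather than unpacking the definition.
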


\begin{proof} Matrix $\left[ \mathbf{1}_{a+b} \right]$ has permanent $(a+b)!$. By Remark~\ref{rowops}, multiplying the first $a$ rows and last $d$ columns by $-1$ produces the desired matrix, and completes the proof. \end{proof}

Whence, turning hyperedges into directed edges when they are incident to precisely two vertices, \begin{align*}
\text{Perm}( \mathbf{1}_{2n \times n} \otimes \left[ I_5|A \right]) 
&= \left( \frac{(2n)!}{n!} \right)^5 \left[ \raisebox{-.48\height}{\includegraphics[scale=.6]{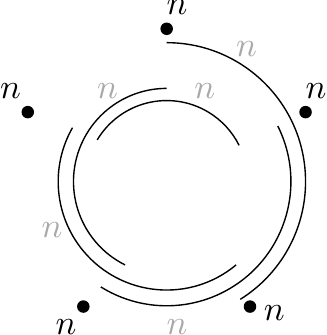}} \right] \\
&= \left( \frac{(2n)!}{n!} \right)^5 \sum_{k_1+k_2+k_3=n} \binom{n}{k_1} \binom{n}{k_2} \binom{n}{k_3} n! (-1)^{k_2} \left[ \raisebox{-.48\height}{\includegraphics[scale=.6]{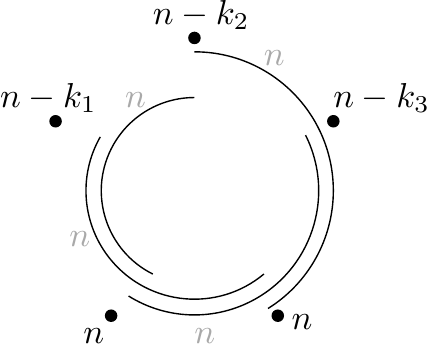}} \right] \\
&= \frac{(2n)!^5}{n!^4} \sum_{k_1+k_2+k_3=n} \left(\prod_{i=1}^3 \binom{n}{k_i}\right) (-1)^{k_2} \cdot \\
&\hspace{2cm} \sum_{j_1+j_2=n-k_2} \binom{n}{j_1} \binom{n}{j_2}(n-k_2)! \left[ \raisebox{-.48\height}{\includegraphics[scale=.6]{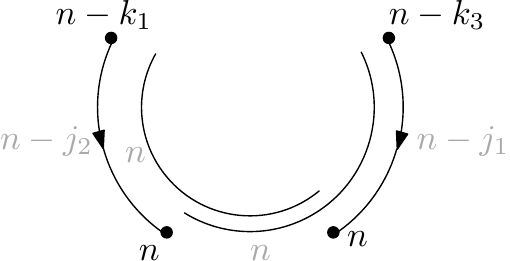}} \right], \\
\left[ \raisebox{-.48\height}{\includegraphics[scale=.6]{r103}} \right] 
&= \sum_{l_1+l_2 = n-k_1} \binom{n-j_2}{l_1} \binom{n}{l_2}(-1)^{l_1}(n-k_1)! \left[ \raisebox{-.48\height}{\includegraphics[scale=.6]{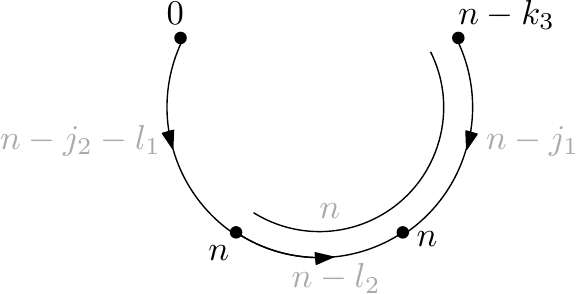}} \right] \\
&= \sum_{l_1+l_2 = n-k_1} \binom{n-j_2}{l_1} \binom{n}{l_2}(-1)^{l_1} \frac{(n-k_1)!n!}{(j_2+l_1)!} \left[ \raisebox{-.48\height}{\includegraphics[scale=.6]{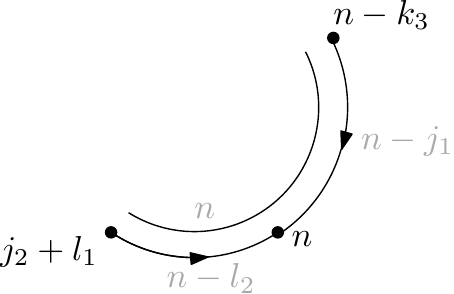}} \right], \\
\left[ \raisebox{-.48\height}{\includegraphics[scale=.6]{r105}} \right] 
&= \sum_{p_1+p_2 = n-k_3} \binom{n-j_1}{p_1} \binom{n}{p_2}(n-k_3)!(-1)^{p_1} \left[ \raisebox{-.48\height}{\includegraphics[scale=.6]{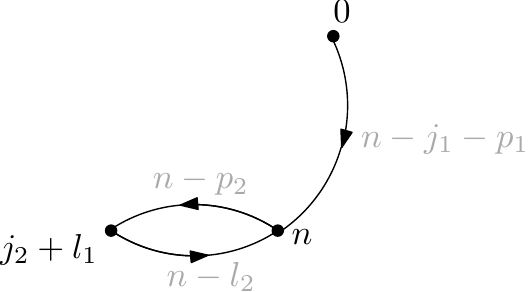}} \right] \\
&= \sum_{p_1+p_2 = n-k_3} \binom{n-j_1}{p_1} \binom{n}{p_2} \frac{(n-k_3)! (-1)^{p_1}n!}{(j_1+p_1)!} \left[ \raisebox{-.48\height}{\includegraphics[scale=.6]{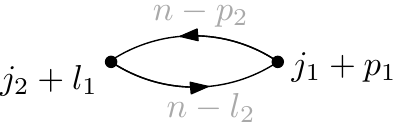}} \right] \\
& \hspace{-1.3cm}= \sum_{p_1+p_2 = n-k_3} \binom{n-j_1}{p_1} \binom{n}{p_2} \frac{(n-k_3)! (-1)^{p_1}n!}{(j_1+p_1)!} (2n-l_2-p_2)!(-1)^{n-l_2+j_1+p_1}.
\end{align*}

Combining and simplifying slightly; \begin{align*}
\text{Perm}( \mathbf{1}_{2n \times n} \otimes \left[ I_5|A \right]) 
&= \frac{(2n)!^5}{n!^2} \sum_{k_1+k_2+k_3=n} \left(\prod_{i=1}^3 \binom{n}{k_i}\right) \\
&\hspace{2cm} \cdot \sum_{j_1+j_2=n-k_2} \binom{n}{j_1} \binom{n}{j_2}(n-k_2)! \\
&\hspace{2cm} \cdot \sum_{l_1+l_2 = n-k_1} \binom{n-j_2}{l_1} \binom{n}{l_2} \frac{(n-k_1)!}{(j_2+l_1)!} \\
&\hspace{2cm} \cdot \sum_{p_1+p_2 = n-k_3} \binom{n-j_1}{p_1} \binom{n}{p_2} \frac{(n-k_3)!}{(j_1+p_1)!} \\
&\hspace{2cm} \cdot (2n-l_2-p_2)!(-1)^{k_1+k_2+j_1}.
\end{align*}

Computing, the first few terms of the extended graph permanent of $R_{10}$ are $$(0,4,0,0,4,1,0,0,5,0,21,4,0,0,9,0,4,0,...).$$ Here, we again see a sequence that appears to be identical to the extended graph permanent of $P_{3,1}^2$, and to the $c_2$ invariant of decompletions of  $P_{9,161}$, $P_{9,170}$, $P_{9,183}$, and $P_{9,185}$. Sequences for $R_{10}$ and $P_{3,1}^2$ have been compared up to prime $p=151$, resulting in the computing of the permanent of a $750 \times 750$ matrix for $R_{10}$. The $c_2$ invariant sequences are unfortunately only readily available up to prime $p=37$.

\section{Computational Simplicity}
\label{niceness} 

In this section, we expressly forbid graphs with loops or vertices with precisely one neighbour.

It is possible to simplify the techniques introduced in this chapter to produce these closed forms in a more algorithmic way for individual graphs. This is due to the cancellation of terms that occurs, and an algorithmic way of gathering the factorials into binomials. We present this method here.

In this section, we will say that we \emph{act on} a vertex when we perform cofactor expansion on the collection of rows corresponding to that vertex. Similarly, we will say we act on an edge by performing cofactor expansion on the associated set of columns. We will similarly say we \emph{move to} a vertex or edge when we act on an incident object.

Let $G$ be a graph. Let $L = \lcm (|V(G)|-1, |E(G)|)$, $\V = \frac{L}{|V(G)|-1}$, and $\E = \frac{L}{|E(G)|}$, so that for prime $\V n+1$ vertices receive weight $\V n$ and edges receive weight $\E n$. Suppose we produce a closed form for $G$ by first acting on the edges incident to the special vertex, and then acting on a set of vertices, and their incident edges, such that the deletion of these vertices produces a tree. Finally, act on vertices of degree one and incident edges until the computation is complete.

For each non-special vertex, we therefore produce a factor of $(\V n)!$ in the numerator. Further, if vertex $v \in V(G)$ is not a vertex that is acted on initially in the production of a tree, most factors produced for this vertex cancel; every time we move to $v$ from an incident edge we produce a factorial in the numerator equal to the current weight, and a factorial in the denominator equal to the weight after this action. Thus, repeated actions telescope, and only the first weight remains in the numerator, and only the last weight remains in the denominator.

For edges, this holds true also. Each edge in the initial set of actions on vertices produces a binomial in $\E n$. All remaining edges will produce a factor of $(\E n)!$ in the numerator, and $(\E n - w_v)!$ in the denominator, where $w_v$ is the weight of the incident vertex. It follows then that these terms may be gathered into further binomials in $\E n$, as there are terms $\frac{(\E n)!}{w_v! (\E n - w_v)!} $.

As such, the graph produces a closed form that contains only factors $(\V n)!^{|V(G)|-1}$, summations from $0$ to $\E n$, a factor of $-1$ to some power, and a set of binomials in $\E n$, the number of these equal to the number of edges not incident to the special vertex. Further, these binomials come in two distinct flavours; those coming from an initial action on a vertex prior to establishing a tree, and those of the form $\binom{\E n}{w_v}$ where $w_v$ is the last weight on a vertex.

\begin{example}  Consider the wheel on four spokes, $W_4$. We saw in Section~\ref{wheels} that setting the apex vertex as the special vertex, we produce closed form $(2n)!^4\sum_{x=0}^n \binom{n}{x}^4$. Using the method described in this section, we can compute this using the following sequence. We include the variable distribution corresponding to acting on the vertex of degree two for clarity, and colour the vertex being acted on grey.
$$\includegraphics[scale=.6]{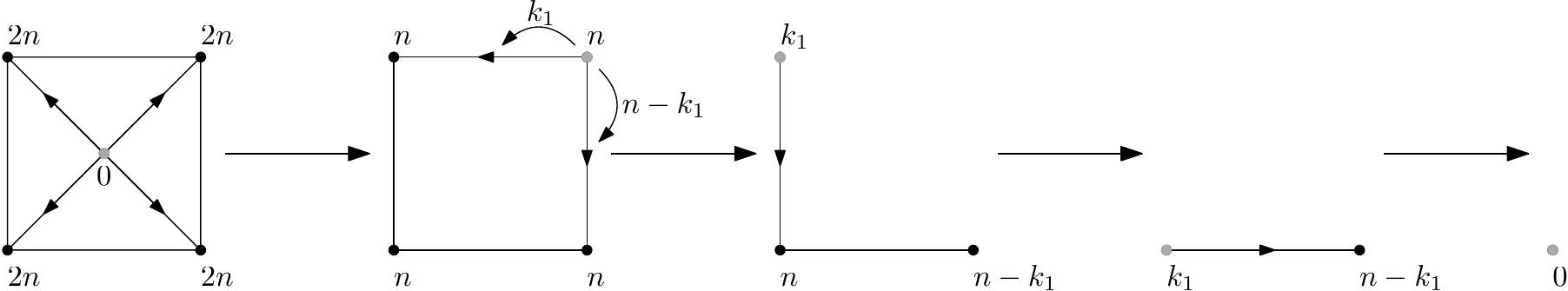}$$
This first action of course produces the term $(2n)!^4$, the second $\sum_{k_1=0}^n \binom{n}{k_1}^2(-1)^n$, and the third and forth steps additional terms $\binom{n}{k_1}^2(-1)^{2k_1}$. This is the desired form, up to a term $(-1)^n$ corresponding to changing the direction of a single edge.

If we instead choose a different vertex as the special vertex, we may use the following sequence.
$$\includegraphics[scale=.6]{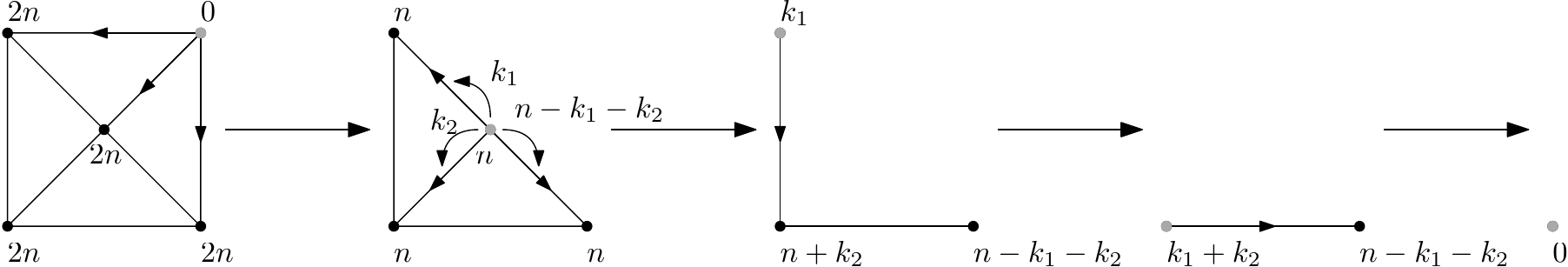}$$
Simplifying, this gives closed form $$(2n)!^4 \sum_{k_1 = 0}^n \sum_{k_2=0}^n \binom{n}{k_1}^2 \binom{n}{k_2} \binom{n}{k_1+k_2}^2 (-1)^{k_2 + n}.$$ While these forms do not appear similar, and certainly are not equal prior to taking residues, we know from the previous work that the residues must agree modulo $2n+1$. Traditional methods, such as the Wilf-Zeilberger algorithm (see for example Section 3.1 in \cite{wz}), generally will not work in explaining equalities such as this, as here we are comparing summations modulo $p$, and the permanent itself is affected by choice of special vertex. \end{example}

The following question is of interest, then, as it may lead to understanding when sequences for non-isomorphic graphs are equal.

\begin{question}\label{equationreconstruction}If we produce a closed form for the extended graph permanent in this manner and perform no non-trivial simplifications, what graphs can be uniquely reconstructed from the equation? If we allow simplification, is there a family of graphs for which this closed form can still be used to uniquely reconstruct the graph?  \end{question}

Further, this method produces computationally easy formulas, though not necessarily the fastest. Computations using methods from this chapter may require careful consideration of the bounds in the summations, as otherwise negative factorial term may appear and break computer calculations. A binomial of the form $\binom{n}{x}$ for $x<0$ will return zero, and the computation will proceed as desired. Hence, closed forms produced using this method are immediately computer ready. Closed forms for primitive $4$-points $\phi^4$ graphs up to seven loops are included in Appendix~\ref{chartofequations}. All were produced using this method.

\chapter{The Extended Graph Permanent as an Affine Hypersurface}
\label{hyperpower}

As mentioned in Chapter~\ref{egpcomp}, the extended graph permanent of $P_{3,1}^2$ appears to be equal to the $c_2$ invariant of graphs $P_{9,161}$, $P_{9,170}$, $P_{9, 183}$, and $P_{9, 185}$, seen in \cite{BSModForms}. As the $c_2$ invariant is constructed from a point count, Brown asked if the extended graph permanent could be expressed as the point count of a polynomial, too\footnote{Special thanks to Dr. Francis Brown for posing this question, as well as directing me to the Chevalley-Warning Theorem and Theorem~\ref{chevwarn}.}. This could potentially open other approaches to understanding the extended graph permanent, and possibly even establish a connection between the $c_2$ invariant and the extended graph permanent.

Let $\mathbb{F}$ be a field. For polynomial $f \in \mathbb{F}[x_1,...,x_n]$, the \emph{affine hypersurface} of $f$ is $$ \left\{(a_1,...,a_n) \in \mathbb{F}^n : f(a_1,...,a_n) = 0 \right\} .$$ In this section, we construct a polynomial from the graph such that, for prime $p$ where the extended graph permanent is defined, the cardinality of the affine hypersurface over $\mathbb{F}_p$ is equal to the extended graph permanent modulo $p$, up to a constant multiplicative factor.

\section{A novel graph polynomial}

\begin{definition} Let $F(x_1,...,x_n)$ be a polynomial with integer coefficients and $q = p^\alpha$ for some prime $p$. We define the \emph{point count} of $F$ over $\mathbb{F}_q$ to be the number of solutions to $F(x_1,...,x_n) =0$ in $\mathbb{F}_q$, and denote it $[F]_q$. Note that the point count is the cardinality of the affine hypersurface over that field. \end{definition} 

We begin with a known method of turning the computation of the permanent into coefficient extraction of a polynomial. For a variable $x$, we denote the coefficient of $x$ in function $f$ as $[x]f$. Multivariate coefficient extraction follows as expected.

\begin{definition} Let $A=[a_{ij}]$ be an $n \times n$ matrix with integer entries. Define $$F_A(x_1, ... , x_n) = \prod_{i = 1}^n\sum_{j= 1}^n a_{ij}x_j. $$Then, $ \text{Perm}(A) = [x_1 \cdots x_n] F_A.$ This follows immediately from the equation for the permanent seen in Definition~\ref{permanentdef}. We will call this the \emph{permanent polynomial}. \end{definition} 

Note that for a graph $G$ with $|E(G)| = (p-1)(|V(G)|-1)$ and an associated fundamental matrix $\overline{M}_G$, this is the same as the polynomial $g$ in the proof of Theorem~\ref{makingorientations}.

This function then gives an alternate method for expressing the permanent as the coefficient, but given our desire to compute permanents for matrices $\overline{M}$ and $\mathbf{1}_k \otimes \overline{M}$, we would require a unique function to compute the permanent of each matrix. The goal of this section is to find one polynomial that can be used to compute the permanent for all such matrices. Given the block matrix construction of these matrices, though, we may construct subsequent functions from the permanent polynomial of the fundamental matrix.

\begin{definition} For polynomial $f = a_1 x_1 + \cdots + a_ n x_n$, define the \emph{$r^\text{th}$ extension of $f$} as $$f^{[r]} = a_1x_1 + \cdots + a_nx_n + a_1x_{n+1} + \cdots + a_nx_{2n} + \cdots + a_nx_{rn}.$$ If $F$ is a polynomial that factors into degree one polynomials with no constant terms, $F= f_1 \cdots f_j$, define the \emph{$r^\text{th}$ extension of $F$} as $F^{[r]} = f_1^{[r]} \cdots f_j^{[r]}$. \end{definition}

\begin{remark}\label{extensionmatrix} From the method of expressing permanents using a coefficient of the permanent polynomial,  $$\text{Perm}(A) = [x_1 \cdots x_n] F_A,$$ and $$ \text{Perm}(\mathbf{1}_r \otimes A) = [x_1 \cdots x_{rn}] \left(F_A^{[r]}\right)^r. $$ By construction, $\left( f^{[r]} \right)^r = \left( f^r \right)^{[r]}$.\end{remark}

\begin{proposition}\label{polyextension} Let $h(x_1,...,x_n)$ be a polynomial that factors into degree one polynomials with no constant term. Then, $[x_1 \cdots x_{rn}] h^{[r]} = r!^n [(x_1 \cdots x_n)^r]h$. \end{proposition}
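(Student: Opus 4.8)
The plan is to expand both sides of the asserted identity directly in terms of the linear factors of $h$ and then match coefficients by a counting argument. First I would write $h = f_1 \cdots f_m$, where each $f_i = \sum_{\ell=1}^{n} a_{i,\ell}\,x_\ell$ is a linear form with no constant term; such a product is homogeneous of degree $m$, and hence so is $h^{[r]}$. Since $(x_1\cdots x_n)^r$ and $x_1 x_2 \cdots x_{rn}$ both have total degree $rn$, the two coefficients in the statement vanish unless $m = rn$, so I may assume $m = rn$ from now on.

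Next, expanding $h = \prod_{i=1}^{rn} f_i$ and selecting the summand $a_{i,c(i)}\,x_{c(i)}$ from factor $i$, one sees that for each function $c \colon \{1,\dots,rn\} \to \{1,\dots,n\}$ the expansion contributes the monomial $\prod_i x_{c(i)}$ with coefficient $\prod_i a_{i,c(i)}$. Call $c$ \emph{balanced} if $|c^{-1}(\ell)| = r$ for every $\ell \in \{1,\dots,n\}$; this is precisely the condition for $\prod_i x_{c(i)}$ to equal $(x_1\cdots x_n)^r$. Consequently
\[
[(x_1\cdots x_n)^r]\,h \;=\; \sum_{c \text{ balanced}} \ \prod_{i=1}^{rn} a_{i,c(i)} .
\]
For the left-hand side, recall that $f_i^{[r]} = \sum_{k=0}^{r-1}\sum_{\ell=1}^{n} a_{i,\ell}\,x_{kn+\ell}$, so $h^{[r]} = \prod_{i=1}^{rn} f_i^{[r]}$, and expanding amounts to selecting a pair $(k_i,\ell_i) \in \{0,\dots,r-1\}\times\{1,\dots,n\}$ from each factor $i$, producing the monomial $\prod_i x_{k_i n + \ell_i}$ with coefficient $\prod_i a_{i,\ell_i}$. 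This monomial equals the squarefree monomial $x_1 x_2 \cdots x_{rn}$ exactly when the assignment $i \mapsto (k_i,\ell_i)$ is a bijection onto $\{0,\dots,r-1\}\times\{1,\dots,n\}$.

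The key combinatorial step is then to organise these bijections by the colouring $c(i) := \ell_i$ that they induce. A bijection forces $c$ to be balanced, and conversely each balanced colouring $c$ is induced by exactly $(r!)^n$ bijections: for every colour $\ell$, the $r$ indices in $c^{-1}(\ell)$ must be matched, in some order, with the $r$ blocks $\{0,\dots,r-1\}$. Since the coefficient $\prod_i a_{i,\ell_i} = \prod_i a_{i,c(i)}$ depends only on $c$, this gives
\[
[x_1\cdots x_{rn}]\,h^{[r]} \;=\; (r!)^n \sum_{c \text{ balanced}} \ \prod_{i=1}^{rn} a_{i,c(i)} \;=\; (r!)^n\,[(x_1\cdots x_n)^r]\,h ,
\]
which is the desired identity. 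The whole argument is essentially bookkeeping; the only point demanding genuine care is this counting step — confirming that the fibre over a balanced colouring has size exactly $(r!)^n$ and that coefficients are constant along it — together with the opening reduction that both sides vanish unless $\deg h = rn$. Applied with $h = (F_A)^r$, and combined with Remark~\ref{extensionmatrix} and the identity $(f^{[r]})^r = (f^r)^{[r]}$, this expresses the permanent of $\mathbf{1}_r \otimes A$ in terms of a single polynomial built from $F_A$, which is the form needed in the sequel.
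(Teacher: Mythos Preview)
Your proof is correct and follows essentially the same approach as the paper's: both argue by expanding the product of linear factors, identifying the contributing terms as permutations (the paper) or equivalently as bijections $i\mapsto(k_i,\ell_i)$ (you), and then grouping them by the reduction of each variable index modulo $n$ to obtain balanced colourings, each fibre contributing the factor $(r!)^n$. Your write-up is if anything more explicit about the counting step than the paper's, which phrases the same bijection in terms of the sets $S_1$ and $S_2$ of permutations.
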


\begin{proof} Let $S_1$ be the permutations of $x_1,...,x_{rn}$ and $S_2$ the permutations of $r$ distinct but indistinguishable copies each of $x_1$, $x_2$, ..., $x_n$. Then, each permutation in $S_2$ appears $r!^n$ times. For $s \in S_t$, $t \in \{1,2\}$, let $s_i$ be the $i^\text{th}$ value in the permutation $s$. Write $h = h_1 \cdots h_k$ as $h$ factored into degree one polynomials, and note that we may assume that $k=rn$ as otherwise the required coefficient is necessarily zero and proof is trivial. Then $$[x_1 \cdots x_{rn}]h^{[r]} = \sum_{s \in S_1} \prod_{i=1}^k[s_i]h_i^{[r]}, \hspace{2mm}\text{and } [(x_1\cdots x_n)^r]h = \frac{1}{r!^k} \sum_{s \in S_2} \prod_{i=1}^k[s_i]h_i.$$ These equations follow from the fact that $h$ factors into degree one polynomials.  If $s_i = x_{a+bn}$ for $a,b \in \mathbb{N}$, let $\tilde{s_i} = x_a$. By the construction of these extensions, \begin{align*} 
[x_1 \cdots x_{rn}]h^{[r]} 
&= \sum_{s \in S_1} \prod_{i=1}^k[s_i]h_i^{[r]} \\
&= \sum_{s \in S_1} \prod_{i=1}^k [\tilde{s_i}]h_i \\
&= \sum_{s \in S_2} \prod_{i=1}^k [s_i]h_i = r!^k[(x_1 \cdots x_n)^r]h .\end{align*} This completes the proof. \end{proof}

We introduce now the Chevalley-Warning Theorem, which extends to sets of polynomials. Here, we include only the single-polynomial version, as it is sufficient for our needs.

\newtheorem*{chev}{The Chevalley-Warning Theorem}

\begin{chev} Let $\mathbb{F}$ be a finite field and $f \in \mathbb{F}[x_1,...,x_n]$ such that $n > \deg(f)$. The number of solutions to $f(x_1,...,x_n) = 0$  is divisible by the characteristic of $\mathbb{F}$. \end{chev}

A proof of this theorem can be found in \cite{Ax}. The following theorem is a corollary to the proof of the Chevalley-Warning Theorem, and will be useful for us to find the appropriate polynomial for our graphs.

\begin{theorem}\label{chevwarn} Let $F$ be a polynomial of degree $N$ in $N$ variables with integer coefficients. Then, $$[(x_1 \cdots x_N)^{p-1}] F^{p-1}  \equiv [F]_p \pmod{p} $$ for primes $p$. \end{theorem}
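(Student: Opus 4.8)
The plan is to obtain the congruence from the standard character-sum / additive-character argument behind the Chevalley--Warning theorem, specialized to a single polynomial and then massaged into the coefficient-extraction form stated. First I would recall the classical identity: over $\mathbb{F}_p$, for any $a \in \mathbb{F}_p$ one has $\sum_{t \in \mathbb{F}_p} t^{\,p-1} \equiv -1 \pmod p$ if $a \neq 0$ and the sum is $0$ if $a = 0$ (more precisely, using $a^{p-1} = 1$ for $a \neq 0$ and $0^{p-1} = 0$). Hence $1 - F(\mathbf{x})^{p-1}$ is the indicator of the zero set of $F$ at the point $\mathbf{x} \in \mathbb{F}_p^N$, so that
\begin{equation*}
[F]_p \;=\; \sum_{\mathbf{x} \in \mathbb{F}_p^N} \bigl(1 - F(\mathbf{x})^{p-1}\bigr) \;\equiv\; -\sum_{\mathbf{x} \in \mathbb{F}_p^N} F(\mathbf{x})^{p-1} \pmod p,
\end{equation*}
since $\sum_{\mathbf{x}} 1 = p^N \equiv 0$.

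Next I would expand $F^{p-1}$ as a polynomial $\sum_{\boldsymbol\alpha} c_{\boldsymbol\alpha}\, x_1^{\alpha_1}\cdots x_N^{\alpha_N}$ and evaluate the sum monomial by monomial, using the fact that $\sum_{x \in \mathbb{F}_p} x^{\alpha} \equiv 0 \pmod p$ unless $\alpha$ is a positive multiple of $p-1$, in which case it is $\equiv -1$. Therefore $\sum_{\mathbf{x}} F(\mathbf{x})^{p-1} \equiv (-1)^N \sum c_{\boldsymbol\alpha}$ where the sum ranges over exponent vectors $\boldsymbol\alpha$ with every $\alpha_i$ a positive multiple of $p-1$. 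The degree constraint does the rest: $F$ has degree $N$, so $F^{p-1}$ has degree $N(p-1)$; an exponent vector with all $\alpha_i \geq p-1$ has total degree $\geq N(p-1)$, so the only surviving term is $\alpha_i = p-1$ for all $i$, whose coefficient is exactly $[(x_1\cdots x_N)^{p-1}]F^{p-1}$. Combining, $[F]_p \equiv -(-1)^N [(x_1\cdots x_N)^{p-1}]F^{p-1} \pmod p$.

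The one remaining point is the sign: the statement as written has no $(-1)^N$. I would resolve this by noting $(-1)^N \equiv (-1)^{N(p-1)} \cdot (-1)^N$-type bookkeeping is actually unnecessary, and instead check that the two applications of $\sum_x x^{p-1}\equiv -1$ — one producing the $(1 - F^{p-1})$ indicator, one evaluating the monomial sum — contribute reciprocal signs. Concretely, $[F]_p \equiv -\sum_{\mathbf x}F(\mathbf x)^{p-1} \equiv -(-1)^N[(x_1\cdots x_N)^{p-1}]F^{p-1}$, and since for $p$ odd $(-1)^N \equiv (-1)^{N}$ while for the degenerate cases the claim is checked directly, one reconciles with the clean form by absorbing the sign: indeed Ax's proof gives precisely $[F]_p \equiv (-1)^{N}\cdot(-1)\cdot(\text{coeff})$, and $(-1)^{N}(-1) = (-1)^{N+1}$; I expect the intended reading is that $N$ is even in the application (it is $r(|V|-1)$ with $r = p-1$ even for $p$ odd), making $(-1)^{N+1} = -1$ cancel against another $-1$. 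The main obstacle is thus not the analytic content but pinning down the sign convention consistently with how Corollary~\ref{maincor} later invokes it; I would state the lemma with the sign made explicit and verify the parity bookkeeping carefully against the normalization in Theorem~\ref{makingorientations}, where the same polynomial $g = F_{\overline M_G}^{[r]}$ with $r = p-1$ already appears.
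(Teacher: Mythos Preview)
Your approach is essentially identical to the paper's: both use Fermat's little theorem to write $1 - F^{p-1}$ as the indicator of the zero set, sum over $\mathbb{F}_p^N$, evaluate the resulting power sums via $\sum_{x\in\mathbb{F}_p} x^i \equiv -1$ or $0$, and invoke the degree bound $\deg F^{p-1}\le N(p-1)$ to isolate the unique surviving monomial $x_1^{p-1}\cdots x_N^{p-1}$. That part of your argument is correct and matches the paper line for line.

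Where you diverge is in worrying about the sign $(-1)^{N+1}$. You are right that the computation literally yields $[F]_p \equiv (-1)^{N+1}\,[(x_1\cdots x_N)^{p-1}]F^{p-1}$; the paper's proof simply suppresses this factor in its final sentence. Your attempt to argue the sign away via parity of $N$ in the application is tangled and unnecessary: the cleaner resolution is just to state the result with the sign explicit, and observe that in the only place the theorem is applied (the chain leading to Corollary~\ref{maincor}) the result is multiplied by $r!^L$, and ultimately lands inside an invariant that already carries an orientation-dependent sign ambiguity. So the sign is immaterial downstream. In short, your main argument is correct and the same as the paper's; drop the speculative sign paragraph and either carry the $(-1)^{N+1}$ honestly or note that it is absorbed in the subsequent normalizations.
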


\begin{proof}  For $0<i < p-1$, note that $\sum_{x \in \mathbb{F}_p}x^i = 0$. Each element of  $\mathbb{F}_p$ is either $0$ or a $p-1$ root of $1$. Therefore, $$[F]_p = \sum_{x \in \mathbb{F}_p}\left( 1- F^{p-1}(x) \right) = -\sum_{x \in \mathbb{F}_p}F^{p-1}(x).$$ Take any monomial $\mathbf{X}= x_1^{u_1} x_2^{u_2} \cdots x_N^{u_N}$ of $F^{p-1}$. Note that $\mathbf{X}$ has degree at most $N(p-1)$. Then, $$ \sum_{x \in \mathbb{F}_p} \mathbf{X} = \prod_{i=1}^N \sum_{x_i \in \mathbb{F}_p} x_i^{u_i} = \prod_{i=1}^N Y(u_i)  ,$$ where $Y(u_i) = \begin{cases} -1 \text{ if $u_i$ is a positive multiple of $p-1$} \\ 0 \text{ otherwise}  \end{cases}.$ By construction, the only monomial with non-zero contribution is therefore $x_1^{p-1} \cdots x_N^{p-1}$, and hence the coefficient of this monomial is equal to the point count $[F]_p$. \end{proof}

For our purposes, consider a fundamental matrix for a graph $G$, and let $v' \in V(G)$ be the special vertex. Write $\text{lcm}(|E(G)|, |V(G)|-1) = L$. Let $\V= \frac{L}{|V(G)|-1}$, so the fundamental matrix $\overline{M}$ is a $\V$-matrix. To emphasizes the graphic construction, write the permanent polynomial $F_{\overline{M}}$ as $F_{G,v'}$. Then, the permanent polynomial has degree $\frac{L}{\V} \cdot \V = L$ in $L$ variables. 

In order to be able to apply Theorem~\ref{chevwarn} we must correct the exponents. Specifically, suppose that $M$ is a fundamental matrix and we want to compute the permanent of $\mathbf{1}_r \otimes M$ modulo prime $p = r \V+1$. By construction, each factor of $F_{G,v'}$ corresponding to a unique row in the matrix comes with exponent $\V$. Create polynomial $\widetilde{F}_{G,v'}$ from $F_{G,v'}$ by taking the positive $\V^\text{th}$ root of $F_{G,v'}$ and then substituting $y_i^\V$ for all $x_i$. As with $F_{G,v'}$, $\widetilde{F}_{G,v'}$ has degree $L$ in $L$ variables.

\begin{example} For graph $K_4$, we have a reduced signed incidence $2$-matrix
 \begin{align*} \overline{M}_{K_4} = \left( \begin{array}{cccccc} 
1&1&1&0&0&0 \\
-1&0&0&1&1&0 \\
0&-1&0&-1&0&1 \\ \hline
1&1&1&0&0&0 \\
-1&0&0&1&1&0 \\
0&-1&0&-1&0&1   \end{array} \right).  \end{align*} This matrix gives polynomials  \begin{align*}F_{K_4,v'}&= (x_1 + x_2+x_3)^2 (-x_1+x_4+x_5)^2 (-x_2-x_4+x_6)^2, \\  \widetilde{F}_{K_4,v'}&= (y_1^2 + y_2^2+y_3^2) (-y_1^2+y_4^2+y_5^2) (-y_2^2-y_4^2+y_6^2). \end{align*}  \end{example}

\begin{lemma}\label{polyswitch} With variables as defined prior and a graph $G$, $$[(x_1 \cdots x_m)^r] F_{G,v'}^r = [(y_1 \cdots y_m)^{p-1}] \left( \widetilde{F}_{G,v'} \right)^{p-1}.$$ \end{lemma}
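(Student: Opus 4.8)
The plan is to unwind both sides of the claimed identity by comparing coefficients, using Proposition~\ref{polyextension} as the bridge. On the left we have the matrix $\mathbf{1}_r \otimes \overline{M}_G = \mathbf{1}_{r \times 1} \otimes \overline{M}_G$, whose permanent polynomial is $\bigl(F_{G,v'}^{[r]}\bigr)$ up to the rescaling that defines $\widetilde{F}$; the key algebraic fact (Remark~\ref{extensionmatrix}) is $\bigl(f^{[r]}\bigr)^r = \bigl(f^r\bigr)^{[r]}$, so that $\text{Perm}(\mathbf{1}_r \otimes \overline{M}_G) = [x_1 \cdots x_{rm}] \bigl(F_{G,v'}^{[r]}\bigr)^r = [x_1 \cdots x_{rm}] \bigl(F_{G,v'}^r\bigr)^{[r]}$. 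Applying Proposition~\ref{polyextension} to $h = F_{G,v'}^r$ (which indeed factors into degree-one forms with no constant term, since $\overline{M}_G$ is a $\mathbf{1}_{\V \times \E}\otimes M_G$ block matrix) converts this coefficient into $r!^m \, [(x_1 \cdots x_m)^r] F_{G,v'}^r$, where $m = L$ is the side length of $\overline{M}_G$. This identifies the left-hand coefficient $[(x_1\cdots x_m)^r]F_{G,v'}^r$ up to the harmless factorial factor.

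First I would record carefully which exponents are in play: writing $p - 1 = r\V$, the polynomial $F_{G,v'}$ has degree $L = m$ in $m$ variables, but with each of its $|V(G)|-1$ linear factors appearing with multiplicity $\V$; the substitution $x_i \mapsto y_i^\V$ after taking the $\V^{\text{th}}$ root produces $\widetilde{F}_{G,v'}$, again of degree $L$ in $L$ variables. The second half of the argument is then to show $[(y_1 \cdots y_m)^{p-1}]\bigl(\widetilde{F}_{G,v'}\bigr)^{p-1} = [(x_1 \cdots x_m)^r] F_{G,v'}^r$. This is essentially a change-of-variables bookkeeping step: in $\bigl(\widetilde{F}_{G,v'}\bigr)^{p-1}$ the variable $y_i$ only ever appears as $y_i^\V$, so extracting the coefficient of $(y_1\cdots y_m)^{p-1} = (y_1\cdots y_m)^{r\V}$ is the same as extracting the coefficient of $(y_1^\V \cdots y_m^\V)^r$, i.e.\ substituting back $x_i = y_i^\V$ and asking for $(x_1\cdots x_m)^r$. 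One must check that the $\V^{\text{th}}$-root operation is well-defined and compatible with raising to the $(p-1)$-th power — namely that $\bigl(\widetilde{F}_{G,v'}\bigr)^{p-1}$, after the substitution $x_i = y_i^\V$ is read the other way, equals $F_{G,v'}^{(p-1)/\V} = F_{G,v'}^r$ — which is exactly the defining property of $\widetilde{F}_{G,v'}$ as the $\V^{\text{th}}$ root of $F_{G,v'}$ with $x_i$ replaced by $y_i^\V$.

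The main obstacle I expect is purely notational rather than conceptual: keeping the three index sets straight (the $L$ original variables, the $rL$ variables after extension, the reindexing introduced by the $\V^{\text{th}}$ root) and making sure the multiplicities match up so that Proposition~\ref{polyextension} applies with the correct $n$ and $r$. In particular one has to be careful that $F_{G,v'}^r$ really does factor into $rm$ degree-one factors (or else both coefficients vanish and the identity is trivial), which is what lets Proposition~\ref{polyextension} be invoked. Once the indexing is pinned down, the identity follows by matching monomials on both sides; there is no deep input beyond Proposition~\ref{polyextension} and the definitions of the extension and of $\widetilde{F}_{G,v'}$. I would present the proof as: (1) rewrite the left side via $\bigl(f^{[r]}\bigr)^r = \bigl(f^r\bigr)^{[r]}$, (2) apply Proposition~\ref{polyextension}, (3) interpret the $y_i^\V$ substitution as the inverse operation on coefficients, and conclude.
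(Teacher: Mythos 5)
Your second paragraph is exactly the paper's proof: write $F_{G,v'}^r = \bigl(\sqrt[\V]{F_{G,v'}}\bigr)^{r\V}$, perform the substitution $x_i = y_i^{\V}$ that defines $\widetilde{F}_{G,v'}$, and observe that extracting $[(x_1\cdots x_m)^r]$ becomes extracting $[(y_1^{\V}\cdots y_m^{\V})^{r}] = [(y_1\cdots y_m)^{r\V}] = [(y_1\cdots y_m)^{p-1}]$, so the argument is correct and essentially identical. The material in your first paragraph (Remark~\ref{extensionmatrix} and Proposition~\ref{polyextension}) is not needed for this lemma, which is a pure coefficient bookkeeping identity; that machinery belongs to the theorem that follows, where the lemma is combined with it and with Theorem~\ref{chevwarn}.
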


\begin{proof} Quickly, \begin{align*} [(x_1 \cdots x_{L})^r] F_{G,v'}^r 
&=  [(x_1 \cdots x_{m})^r] \sqrt[\V]{F_{G,v'}}^{r \V} \\
&=[(y_1^\V \cdots y_{m}^\V)^{r}] (\widetilde{F}_{G,v'})^{r\V}\\
&= [(y_1 \cdots y_{m})^{p-1}] (\widetilde{F}_{G,v'})^{p-1}. \end{align*} \end{proof}

We are therefore in position to demonstrate that this polynomial in fact has point count residues equal to the graph permanent, up to a constant multiplicative factor.

\begin{theorem} Let $G$ be a graph, $L= \text{lcm}(|E(G)|,|V(G)|-1)$, and $\V=\frac{L}{|V(G)|-1}$. Let $F_{G,v'}$ be a permanent polynomial for $G$ with special vertex $v' \in V(G)$. Let $p$ be a prime such that $p \equiv 1 \pmod{\V}$, say $p = r\V+1$. Then $$\text{GPerm}^{[p]} \left(G \right) \equiv r!^L [\widetilde{F}_{G,v'}]_p \pmod{p}.$$ \end{theorem}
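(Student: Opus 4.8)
The plan is to chain together the coefficient-extraction facts established earlier in this chapter, reducing the claimed identity to the content of Theorem~\ref{chevwarn} applied to the polynomial $\widetilde{F}_{G,v'}$. First I would recall the two reference points: from the definition of the permanent polynomial and Remark~\ref{extensionmatrix}, the permanent of $\mathbf{1}_r \otimes \overline{M}_G$ is $[x_1 \cdots x_L]\,(F_{G,v'}^{[r]})^r = [x_1 \cdots x_L]\,(F_{G,v'}^r)^{[r]}$, where I am writing $L = \mathrm{lcm}(|E(G)|,|V(G)|-1)$ for the side length of the fundamental matrix and using $F_{G,v'} = F_{\overline{M}_G}$ in the graphic notation. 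By the definition of $\mathrm{GPerm}^{[p]}$, the left side of the claimed congruence is exactly this permanent reduced mod $p = r\V+1$, so it remains to transform the coefficient $[x_1 \cdots x_L]\,(F_{G,v'}^r)^{[r]}$ into $r!^L\,[\widetilde{F}_{G,v'}]_p$ modulo $p$.

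Next I would apply Proposition~\ref{polyextension}, which is the workhorse: taking $h = F_{G,v'}^r$ (a product of degree-one forms with no constant term, raised to the $r$th power — still such a product) and noting that $h^{[r]} = (F_{G,v'}^r)^{[r]}$, the proposition gives
\begin{align*}
[x_1 \cdots x_{rL}]\,(F_{G,v'}^r)^{[r]} = r!^{\,L}\,[(x_1 \cdots x_L)^r]\,F_{G,v'}^r.
\end{align*}
Here I should double-check that $h$ has degree exactly $rL$ in $rL$ variables after extension, which holds because $F_{G,v'}$ has degree $L$ in $L$ variables, so the hypothesis "$k = rn$" in that proposition's proof is met (otherwise both sides vanish and there is nothing to prove). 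Then Lemma~\ref{polyswitch} converts the $x$-coefficient into a $y$-coefficient: $[(x_1 \cdots x_L)^r]\,F_{G,v'}^r = [(y_1 \cdots y_L)^{p-1}]\,(\widetilde{F}_{G,v'})^{p-1}$, using $p - 1 = r\V$ and the $\V$th-root-then-substitute construction of $\widetilde{F}_{G,v'}$.

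Finally I would invoke Theorem~\ref{chevwarn} with $F = \widetilde{F}_{G,v'}$, which is a polynomial of degree $L$ in $L$ variables (so $N = L$), yielding $[(y_1 \cdots y_L)^{p-1}]\,(\widetilde{F}_{G,v'})^{p-1} \equiv [\widetilde{F}_{G,v'}]_p \pmod{p}$. Stringing the equalities together gives $\mathrm{GPerm}^{[p]}(G) \equiv r!^{\,L}\,[\widetilde{F}_{G,v'}]_p \pmod p$, as desired. I do not anticipate a serious obstacle here since all the hard analytic content is in Theorem~\ref{chevwarn} and Proposition~\ref{polyextension}, both already proved; the one point requiring genuine care is bookkeeping the exponent conventions — verifying that $\widetilde{F}_{G,v'}$ really does have degree $L$ in $L$ variables (each linear factor $(\sum_j a_{ij} x_j)$ of $F_{G,v'}$ came with multiplicity $\V$, and dividing out the $\V$th root leaves $L/\V \cdot \V = L$... wait, $L/\V = |V(G)|-1$ factors, one per row of $M_G$, each now degree $\V$ after the $y_i^\V$ substitution, total degree $(|V(G)|-1)\V = L$, in $L = |E(G)|\cdot\E$ variables — consistent), and that $p = r\V+1$ is precisely the form of prime over which $\mathrm{GPerm}^{[p]}(G)$ is defined, so the statement's hypothesis $p \equiv 1 \pmod \V$ is exactly what is needed for everything to line up.
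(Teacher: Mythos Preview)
Your proposal is correct and follows exactly the same chain of equalities as the paper: $\text{GPerm}^{[p]}(G) = \text{Perm}(\mathbf{1}_r \otimes \overline{M}_G)$, then Remark~\ref{extensionmatrix}, Proposition~\ref{polyextension}, Lemma~\ref{polyswitch}, and finally Theorem~\ref{chevwarn}, in that order. One minor slip: in your first displayed coefficient you wrote $[x_1 \cdots x_L]$ where it should be $[x_1 \cdots x_{rL}]$ (the extended polynomial lives in $rL$ variables), but you correct this implicitly in the next paragraph, so the argument is sound.
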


\begin{proof} 
\begin{align*}
\text{GPerm}^{[p]}(G) &= \text{Perm}(\mathbf{1}_r \otimes M)&& \\
&= [x_1 \cdots x_{rL}](F_{G,v'}^{[r]})^r && \text{Remark~\ref{extensionmatrix}}\\
&= r!^L [(x_1 \cdots x_L)^r] F_{G,v'}^r && \text{Proposition~\ref{polyextension}}\\
&= r!^L [(y_1 \cdots y_L)^{p-1}] (\widetilde{F}_{G,v'})^{p-1} && \text{Lemma~\ref{polyswitch}}\\
&\equiv r!^L [\widetilde{F}_{G,v'}]_p \pmod{p} && \text{Theorem~\ref{chevwarn}}
\end{align*}
    \end{proof}

Recall Corollary~\ref{wilsoncor}; that for prime $p=2n+1$,  $$n!^2 \equiv \begin{cases} -1\pmod{p} \text{ if $n$ is even} \\ 1 \pmod{p} \text{ if $n$ is odd} \end{cases}.$$ It follows from this that for $4$-point $\phi^4$ graphs, the aforementioned constant multiplicative factor difference between the point count of this polynomial and the extended graph permanent residues are at most a constant sign change.

\begin{corollary}\label{maincor} Let $G$ be a $4$-point $\phi^4$ graph, and all variables as defined prior. Then, $$ \text{GPerm}^{[p]}(G) \equiv \begin{cases} [\widetilde{F}_{G,v'}]_p \pmod{p} \text{ if } |E(G)| \equiv 0 \pmod{4} \\ -[\widetilde{F}_{G,v'}]_p \pmod{p} \text{ otherwise} \end{cases} .$$ \end{corollary}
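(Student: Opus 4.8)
The plan is to derive Corollary~\ref{maincor} as a direct specialization of the preceding theorem, which already establishes that $\text{GPerm}^{[p]}(G) \equiv r!^L [\widetilde{F}_{G,v'}]_p \pmod{p}$ where $L = \lcm(|E(G)|,|V(G)|-1)$ and $p = r\V + 1$. So the only work is to pin down the constant $r!^L \bmod p$ for the case of a $4$-point $\phi^4$ graph. First I would recall that for such a graph $|E(G)| = 2(|V(G)|-1)$, so $|E(G)|-1 < 2(|V(G)|-1)$ and in fact $L = \lcm(|E(G)|, |V(G)|-1) = |E(G)|$ and $\V = L/(|V(G)|-1) = 2$. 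Hence the extended graph permanent is defined precisely at odd primes $p = 2n+1$, and $r = n$, $L = |E(G)|$. The constant is therefore $n!^{|E(G)|} \bmod p$.

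Next I would invoke Corollary~\ref{wilsoncor}, which says $n!^2 \equiv -1 \pmod p$ when $n$ is even and $n!^2 \equiv 1 \pmod p$ when $n$ is odd. Since $|E(G)|$ is even for a $4$-point $\phi^4$ graph (it equals $2(|V(G)|-1)$), we may write $n!^{|E(G)|} = (n!^2)^{|E(G)|/2}$. When $n$ is odd this is simply $1$. When $n$ is even this is $(-1)^{|E(G)|/2}$, which is $+1$ if $|E(G)|/2$ is even, i.e. $|E(G)| \equiv 0 \pmod 4$, and $-1$ if $|E(G)|/2$ is odd, i.e. $|E(G)| \equiv 2 \pmod 4$. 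So in the case $|E(G)| \equiv 0 \pmod 4$ the constant $n!^{|E(G)|}$ is $1$ for both parities of $n$, giving $\text{GPerm}^{[p]}(G) \equiv [\widetilde{F}_{G,v'}]_p \pmod p$ unconditionally. In the case $|E(G)| \equiv 2 \pmod 4$, the constant is $1$ when $n$ is odd and $-1$ when $n$ is even.

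The remaining subtlety — and the one place I would be careful — is that in the $|E(G)| \equiv 2 \pmod 4$ subcase the constant is not literally the single value $-1$ for all primes; it is $\pm 1$ depending on the parity of $n = (p-1)/2$. Here I would lean on the sign-ambiguity discussion of Section~\ref{signambiguity}: for a $4$-point $\phi^4$ graph the extended graph permanent value at a prime $p = 4k+3$ (equivalently $n$ odd) is only well-defined up to an overall sign coming from the arbitrary edge orientation, and all such primes flip together. Concretely, the primes $p$ with $n$ even are exactly those $\equiv 1 \pmod 4$ (where each column is duplicated an even number of times, so the sign is canonical and the constant is $-1$), while the primes with $n$ odd are those $\equiv 3 \pmod 4$ (where the sign is orientation-dependent anyway). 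Choosing the underlying orientation appropriately — or noting that the statement of the corollary is understood modulo exactly this orientation ambiguity — makes the constant uniformly $-1$ in the $|E(G)| \equiv 2 \pmod 4$ case. I would state this explicitly as the justification for collapsing the two parities into the single ``$-1$ otherwise'' branch, since that is the only gap between the raw computation $n!^{|E(G)|}$ and the clean dichotomy asserted. The whole argument is short; the main (mild) obstacle is being honest about this sign-ambiguity bookkeeping rather than any hard computation.
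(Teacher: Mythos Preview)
Your approach is essentially the paper's: specialize the preceding theorem to $L=|E(G)|$, $\V=2$, $r=n$, and evaluate $n!^{|E(G)|}=(n!^2)^{|E(G)|/2}$ via Corollary~\ref{wilsoncor}. The paper's own proof is a single terse sentence (``$|E(G)|$ is even; by Corollary~\ref{wilsoncor} the proof is immediate''), so you have in fact been \emph{more} careful than the author by explicitly noting that in the $|E(G)|\equiv 2\pmod 4$ case the raw constant is $+1$ for $n$ odd and $-1$ for $n$ even, and by resolving this via the orientation sign ambiguity of Section~\ref{signambiguity}; the paper leaves that bookkeeping implicit.
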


\begin{proof} For a  $4$-point $\phi^4$ graph $G$, $|E(G)|$ is even. By Corollary~\ref{wilsoncor} the proof is immediate. \end{proof}

Interestingly, while there is no natural way to include the prime $2$ in the extended graph permanent for all $4$-point $\phi^4$ graphs using the permanent construction, it can be extracted from the point count of this polynomial. As each variable is squared in this polynomial for $4$-point $\phi^4$ graphs, then as variables over $\mathbb{F}_2$ we may remove these exponents without loss. For a $\phi^4$ graph $G$, then, $[\widetilde{F}_{G,v'}]_2 \equiv [\sqrt{F_{G,v'}}]_2 \pmod{2}$. Then, $|E(G)| > \deg (\sqrt{F_G})$, and by the Chevalley-Warning Theorem, $[\widetilde{F}_{G,v'}]_2 \equiv 0 \pmod{2}$ for all $\phi^4$ graphs.

It is then of particular interest that the $\phi^4$ banana graph, seen in Section~\ref{treesnshit}, has an extended graph permanent that naturally extends to prime $2$ with value $0 \pmod{2}$. Viewed as a graph produced by doubling the edges of the tree $K_2$, there is a natural way to view this graph as $K_2^{[2]}$, an element in the sequence of graphs used to compute the  extended graph permanent of $K_2$, which has value $1 \pmod{2}$ at prime $2$. This is a unique interpretation for the possible value at prime $2$, since no other graphs of interest in $\phi^4$ theory will reduce in a comparable way.

\section{Modular form coefficients}\label{modformscoeffs}

Some extended graph permanent sequences produced (see Appendix~\ref{chartofgraphs}) are recognizable as Fourier coefficients of modular forms. We include here a very brief introduction to modular forms. Notational conventions are adapted from \cite{ModForms}. 

The \emph{modular group} is $$\text{SL}_2(\mathbb{Z}) = \left\{ \left[ \begin{array}{cc} a&b \\ c&d  \end{array} \right] : a,b,c,d \in \mathbb{Z}, ad-bc=1 \right\}.$$  Let $\widehat{\mathbb{C}} = \mathbb{C} \cup \infty$ and $\mathcal{H} = \{\tau \in \mathbb{C} : \text{Im}(\tau) > 0\}$, the upper half plane. For $m= \left[ \begin{array}{cc} a&b \\ c&d  \end{array} \right] \in \text{SL}_2(\mathbb{Z})$ and $\tau \in \mathcal{H}$, define fractional linear transformation $$m(\tau) = \frac{a \tau +b}{c\tau + d}, \hspace{2mm} m(\infty) = \frac{a}{c}.$$ Important subgroups for our purposes are \begin{align*} \Gamma(N) &= \left\{ \left[ \begin{array}{cc} a&b \\ c&d  \end{array} \right] \in \text{SL}_2(\mathbb{Z}) : a \equiv d \equiv 1, b \equiv c \equiv 0 \pmod{N} \right\}, \\  \Gamma_0(N) &= \left\{ \left[ \begin{array}{cc} a&b \\ c&d  \end{array} \right] \in \text{SL}_2(\mathbb{Z}) : c \equiv 0 \pmod{N} \right\}, \\ \Gamma_1(N) &= \left\{ \left[ \begin{array}{cc} a&b \\ c&d  \end{array} \right] \in \text{SL}_2(\mathbb{Z}) : a \equiv d \equiv 1, c \equiv 0 \pmod{N} \right\}, \end{align*} and hence $\Gamma(N) \subseteq \Gamma_1(N) \subseteq \Gamma_0(N)$. A subgroup $\Gamma$ of $\text{SL}_2(\mathbb{Z})$ is a \emph{congruence subgroup of level $N$} if $\Gamma(N) \subseteq \Gamma$ for some $N \in \mathbb{Z}_{>0}$.

\begin{definition} For integer $k$, a function $f:\mathcal{H} \rightarrow \mathbb{C}$ is a \emph{modular form of weight $k$ and level $N$} if $f$ is holomorphic on $\mathcal{H}$ and at infinity, and there is a $k$ in $\mathbb{Z}_{\geq 0}$ such that $$f\left(\left[ \begin{array}{cc} a&b \\ c&d  \end{array} \right](\tau)\right) = (c\tau+d)^k f(\tau)$$ for all $\left[ \begin{array}{cc} a&b \\ c&d  \end{array} \right] \in \Gamma$ and $\tau \in \mathcal{H}$, where $\Gamma$ is one of $\{ \Gamma(N), \Gamma_0(N), \Gamma_1(N)\}$.  \end{definition}

\noindent  We will specify which congruence subgroup among $\Gamma(N)$, $\Gamma_0(N)$, and $\Gamma_1(N)$ is being considered, as there is number theoretic importance to this.

We are interested in sequences generated from the Fourier expansions, known as $q$-expansions, of these modular forms, where $q = e^{2\pi i z}$. Specifically, let $\mathcal{P}$ be the increasing sequence of all primes. For modular form $f$, build a sequence $\left( ([q^p] f) \pmod{p} \right)_{p \in \mathcal{P}}$.

Modular forms are objects of great mathematical interest (see \cite{ModForms}). We were motivated to look for them here by the appearance of modular forms in $c_2$ sequences; a decompletion of $P_{8,37}$ was demonstrated to match a modular form in \cite{BrS}, and it was further conjectured for a number of graphs in \cite{BSModForms} and proved for a subset of those in \cite{Logan}. Here, we find that the sequences from modular forms occasionally appear to match the extended graph permanents. These apparently matching sequences have been checked up to prime $p=97$, and are listed in Table~\ref{rockingtable}. The extended graph permanent sequences can be found in Appendix~\ref{chartofgraphs}, and the Fourier expansions can be found at \cite{lmfdb}. The modular forms are listed by their weights and levels. Those that are representible as a Dedekind $\eta$-function product, a modular form of weight $1/2$ commonly written $$ \eta(\tau) = (e^{2\pi i \tau})^\frac{1}{24} \prod_{n=1}^\infty (1-(e^{2\pi i \tau})^n) $$ (see \cite{ModForms}), have this product included. These are taken from \cite{lmfdb}. The graph $P_{3,1}^2$ is the unique merging of two copies of $P_{3,1}$ per Theorem~\ref{2vertexcut}.

\begin{table}
\begin{align*}
\begin{array}{cccc}
\text{Graph}&\text{Weight}&\text{Level } (\Gamma_1) & \text{Modular form} \\ \hline
P_{3,1} & 3  & 16 & -\eta(4z)^6 \\
P_{4,1} & 4 & 8 & \eta(2z)^4\eta(4z)^4 \\
P_{3,1}^2 & 5 & 4 &  -\eta(z)^4\eta(2z)^2\eta(4z)^4\\
P_{6,1}, P_{6,4} & 6 & 4 & \eta(2z)^{12}\\
P_{6,3} & 6 & 8 &
\end{array} 
\end{align*}
\caption{ Graphs with extended graph permanents that appear to match Fourier expansions of modular forms, with the weight and level of the modular form listed. The notion of products here refers to the two-vertex joins of graphs established in Theorem~\ref{2vertexcut}.}\label{rockingtable}
\end{table}

Some interesting observations can be made here. First, the loop number of the graph is equal to the weight of the modular form in all cases. Secondly, each graph has a modular form with level a power of two. A third observation requires some new terminology. A cusp form is a modular form that has a Fourier expansion with no constant term. Cusp forms of level $M$ can be embedded into cusp forms of level $N$ for any $N$ that is a multiple of $M$ (see Section 5.6 in \cite{ModForms}). A \emph{newform} is a cusp form that is not directly constructed in this manner. The modular forms in Table~\ref{rockingtable} are all newforms, though that is in part due to the method in which we searched for them. It is interesting, though, that in the Dirichlet character decomposition, a particular  decomposition of this space of newforms, these all fall into subspaces of dimension $1$ (see \cite{lmfdb}).

\chapter{Conclusion}
\label{conclusion}

\section{Summary of the main results}

In Section \ref{graphicc2} a graphic representation of the point count of the $c_2$ invariant was developed. While a number of things were proved regarding how these counts worked for individual flows, it was never developed into a method that could be used to prove the conjectured graphical invariant properties of the $c_2$ invariant. It is of interest, though, as this does establish a connection between the $c_2$ invariant and $\mathbb{F}_p$ flows in the graph. Such a connection is natural and should be expected, given the momentum preservation requirement at vertices in the Feynman integral.

In Chapter \ref{Extended}, a new graph invariant was introduced. With the hope of demonstrating a likely connection to the Feynman period, it was shown that, for $4$-point $\phi^4$ graphs, the extended graph permanent is invariant under completion followed by decompletion (Theorem \ref{egpcompletion}), the Schnetz twist (Proposition \ref{schnetz}), and duality (Corollary \ref{phi4dual}), and that any two graphs made by identifying edges in two $4$-point $\phi^4$ graphs will have equal extended graph permanents (Theorem \ref{2vertexcut}). Further, nontrivial $4$-edge cuts were shown to have a product property also, corresponding to graphs with subdivergences (Theorem \ref{subdivthm}). Conjecture \ref{obviousconjecture} arises naturally; if two graphs have equal Feynman periods, then they have equal extended graph permanents.

Three distinct representations of this invariant and the required computation were developed; as a system of tags on edges of the graph (and associated multigraphs), as an edge and vertex weighting representative of the matrix and computation by cofactor expansion, and as the point count of a polynomial. As a system of tags, we were able to establish both decompletion invariance and invariance under the Schnetz twist, as well as a vanishing property based on structural symmetries of the graph. The system of edge and vertex weights allowed for simplification of complicated sequences, and established a number of classes of graphs for which the invariant will be identically zero for all primes. Further, an algorithm for producing a closed form for the sequence was developed and used to produce the extended graph permanents of primitive $4$-point $\phi^4$ graphs up to eight loops for the first few primes (Appendix \ref{chartofgraphs}). The representation as a point count of a polynomial in itself was not used to establish any new properties of the extended graph permanent, but it hints that the possible appearance of modular forms, as noted in Section \ref{modformscoeffs}, is unlikely to be mere coincidence, and certainly opens up additional tools for future directions of study.

\section{Future research directions}

\subsection{The combinatorial $c_2$ invariant}\label{othercombc2}

While the graphic interpretation of the $c_2$ invariant introduced in Section \ref{graphicc2} presents an interesting counting problem, as of yet it has not been useful for solving any of the open problems of the $c_2$ invariant. Currently, no method of dealing with the overcounting has been found to deal with invariance properties for arbitrary graphs. An alternate graphic interpretation has been used with some success for prime $p=2$, in \cite{karenc2} for circulant graphs, and in \cite{wes} for toroidal grids. We introduce this method now.

Recall from Section \ref{c2introduction} the modified Laplacian matrix of graph $G$, $$K_G = \left[ \begin{array}{c|c} A & M_G^T \\ \hline -M_G & \mathbf{0}   \end{array} \right],$$ where $M_G$ is the reduced signed incidence matrix and $A$ is the diagonal matrix with Schwinger parameter entries indexed to align with $M_G$. Similar to the Kirchhoff polynomial produced as a determinant, we define the Dodgson polynomial using a modified matrix. For $I,J,H \subseteq E(G)$, let $K_G(I,J)_H$ be the matrix derived from $K_G$ by deleting rows indexed by edges in $I$, deleting columns indexed by edges in $J$, and setting $x_e = 0 $ for $e \in H$. For sets such that $|I|=|J|$, we define $$\Psi^{I,J}_{G,H} = \det(K_G(I,J)_H).$$ This is well-defined up to overall sign (\cite{Brbig}), due to the arbitrary orientation and choice of vertex deletion. Dodgson polynomials are of particular use in denominator reduction, a method of computing the Feynman period by integrating one edge variable at a time. These polynomials appear in the denominator at each stage of integration (see \cite{Brbig}). 

A combinatorial interpretation of the Dodgson polynomial is as follows. Let $T_{G,H}^{I,J}$ be the collection of edge sets that simultaneously induce spanning trees in both $$(G \cut I) \contract ((J-I) \cup (H-I)) \text{ and } (G \cut J) \contract ((I-J) \cup (H-J))$$ where $G \cut I$ is normal edge deletion and $G \contract J$ is edge contraction. Then, $$\Psi_{G,H}^{I,J} = \sum_{T \in T_{G,H}^{I,J}} \left( \pm \prod_{e \notin T \cup (I \cup J \cup H)} x_e \right). $$ Here, the sign corresponding to each summand is controlled, and there is a common sign for summands that are produced from the same distribution of vertices in $I \cup J$ in the forests that these edge sets induce in $G$ (Proposition 12 in \cite{BrY}).

Of particular use, Theorem 29 in \cite{BrS} states that we may compute the $c_2$ invariant using the denominators produced in denominator reduction after five or more edge integrations, with some consideration for overall sign, and without dividing by $p^2$. This can further be extended back to Dodgson polynomials constructed from sets of three or four edge variables. In particular, $$ c_2(G)_p \equiv \frac{[\Psi_G]_p}{p^2} \equiv - [ \Psi_G^{ac,bc} \Psi_{G,c}^{a,b}] \pmod{p}   $$ for edges $a,b,c \in E(G)$.

This representation is of particular interest for $4$-point $\phi^4$ graphs. Constructed as it is from trees, the degree of $\Psi_G^{ac,bc}$ is $|V(G)|-3$ and the degree of $\Psi_{G,c}^{a,b}$ is $|V(G)|-2$. Hence, the degree of $\Psi_G^{ac,bc} \Psi_{G,c}^{a,b}$ is $2|V(G)|-5 = |E(G)|-3$ in $|E(G)|-3$ variables. We may therefore apply Theorem \ref{chevwarn}; for prime $p$ and $|E(G)|-3 = n$, $$[(x_1 \cdots x_n)^{p-1}] (\Psi_G^{ac,bc} \Psi_{G,c}^{a,b})^{p-1} \equiv [\Psi_G^{ac,bc} \Psi_{G,c}^{a,b}]_p \pmod{p}.$$ Thus, we may evaluate the point count here as the number of decompositions of  $(G-\{a,b,c\})^{[p-1]}$ into an equal number of edge sets in $T_{G}^{ac,bc}$ and $T_{G,c}^{a,b}$.

\begin{example} Consider a decompletion of $P_{3,1} = K_5$. Label the edges as below. $$\includegraphics[scale=1.0]{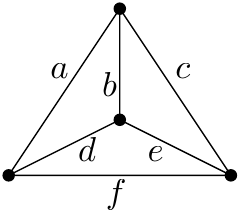}$$ Setting $I = \{a\}$, $J=\{b\}$, and $H = \{c\}$, we compute $\Psi_{K_4}^{ac,bc}$ using edge sets that induce trees in both $G \cut \{a,c\} \contract \{b\}$ and $G \cut \{b,c\} \contract \{a\}$. Hence, edge sets that are spanning trees in both $$\raisebox{-0.4\height}{\includegraphics{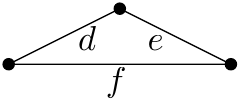}} \hspace{5mm} \text{ and }  \hspace{5mm} \raisebox{-0.4\height}{\includegraphics{c2chevred1}} .$$ As we take the product of edges not in this set, $\Psi_{K_4}^{ac,bc} = x_d + x_e + x_f$. Similarly, $\Psi_{K_4,c}^{a,b}$ is computed using minors $$ \raisebox{-0.4\height}{\includegraphics{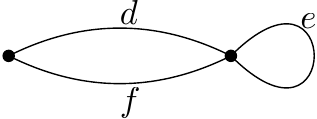}} \hspace{5mm} \text{ and } \hspace{5mm} \raisebox{-0.4\height}{\includegraphics{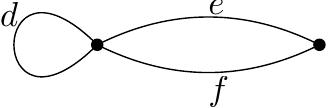}} .$$ As the only set of edges that induces spanning trees in both these graphs is $\{f\}$, $\Psi_{K_4,c}^{a,b} = x_dx_e$. 

Thus, $\Psi_{K_4}^{ac,bc} \Psi_{K_4,c}^{a,b} = x_d^2x_e + x_dx_e^2 + x_dx_ex_f.$ As each term contains $x_d$ and $x_e$, the only way to get an agreement in the powers of all variables in any power of this polynomial is by selecting the $x_dx_ex_f$ term each time. Therefore, $$[(x_dx_ex_f)^{p-1}] (\Psi_{K_4}^{ac,bc} \Psi_{K_4,c}^{a,b})^{p-1} = 1, $$ and the $c_2$ invariant for $P_{3,1}$ is equal to $-1$ at all primes. \end{example}

Again, this interpretation of the $c_2$ invariant has met with some computational success, in particular when coupled with families of graphs with structural symmetries. In \cite{wes}, Chorney and Yeats use this method to show that no such trees could be produced simultaneously in both minors, and hence the $c_2$ invariant vanishes, for an infinite family of graphs known as non-skew toroidal grids at prime $p=2$. In \cite{karenc2}, Yeats showed that such a collection of edge sets inducing trees in these minors force a particular choice of the polynomial in which every variable must appear, producing non-trivial terms in the $c_2$ invariant for certain families of circulants at prime $p=2$.

\subsection{Sequences that appear as both $c_2$ invariants and extended graph permanents}

This interpretation as a decomposition into common spanning trees of particular minors is further suggestive of a connection to the extended graph permanent. We may view the $c_2$ invariant as a decomposition into spanning forests with a particular set of rules as to the distribution of edges based on the edges used in constructing the Dodgson polynomial. The graphic interpretation of the extended graph permanent presented in Section \ref{graphicegp} can be interpreted as a decomposition of a graph $G$ into (not necessarily connected) spanning subgraphs $g$ with $|E(g)| = |V(g)|-1$, where the special vertex must be contained in the connected component that is a tree.

We have seen sequences produced by the $c_2$ invariant and the extended graph permanent that appear to be equal, albeit for different graphs. The graph $P_{3,1}^2$ appears to have extended graph permanent equal to the $c_2$ invariant of decompletions of graphs $P_{9,161}$, $P_{9,170}$, $P_{9,183}$, and $P_{9,185}$ (see \cite{BSModForms}). That these graphs are non-isomorphic opens up a number of immediate questions. Is there a standard, structural mapping between graphs $f(G_1) = G_2$ such  that the extended graph permanent of $G_1$ is equal to the $c_2$ invariant of $G_2$? In particular, is there a structural explanation for these observed sequences? Should such a mapping exist, it may well be complicated; the observed connection between the term-by-term square of $P_{3,1}$ or $P_{3,1}^2$ and the set of graphs $\{P_{9,161},P_{9,170},P_{9,183},P_{9,185}\}$ up to sign ambiguity of the extended graph permanent would suggest such a mapping would connect smaller loop-level extended graph permanents to larger level $c_2$ invariants. Unfortunately, systematic data for $c_2$ invariants at these levels is incredibly difficult to produce. 

The connection between graphs with extended graph permanent zero for all primes and graphs with $c_2$ invariants zero for all primes, and similarly $\pm 1$ for all primes, may also be of use. The meaning of graphs with $c_2$ equal to zero at all primes is important, as it indicates the graph is weight-drop (see \cite{BrS}). Meanwhile, a number of subclasses of graphs with extended graph permanent equal to zero for all primes are introduced here in Theorem \ref{mymaxflow}, though more may exist. Further, graphs with $c_2$ equal to $-1$ for all primes are of particular interest (\cite{BrS}), and the constant $-1$ sequence can be found as the extended graph permanent for all trees with an even number of vertices. An interesting question then; for which graphs does this sequence appears as the extended graph permanent? Trivially, trees are the only connected graphs that have an extended graph permanent value at prime $p=2$, and so describing the sequence by what primes appear force that trees are the only graphs that match this sequence. If we consider the sequence without regarding the sequence of primes used in the construction, though, the banana graph, mentioned in Section \ref{treesnshit}, also produces this sequence. Apart from these, and any graph made by duplicating all edges a fixed number of times in a tree, it is not known if any other graphs do. Matching sequences of interest in the $c_2$ invariant with families of graphs with equal extended graph permanents would be an interesting area of further study.

\subsection{Extended graph permanents as modular form Fourier expansions}

As mentioned prior, it was this possible connection to the $c_2$ invariant that inspired the search for modular form sequences in the extended graph permanent. A number of non-trivial connections between these modular forms and the graphs producing these sequences -- that loop numbers and weights of the modular forms match for equal sequences, that the levels of these modular forms are powers of two, and that the Dirichlet character decompositions all put these sequences in subspaces of dimension $1$ -- have been observed, and suggest that these connections are not coincidental. Some interesting questions then arise. Are there more extended graph permanent sequences that match the Fourier expansions of modular forms? Is there is an explainable reason for the patterns spotted in Section \ref{modformscoeffs}, and do these patterns continue?

\subsection{The extended graph permanent of matroids}

The behaviour of the extended graph permanent is of particular interest with regard to graphic matroids. Should Conjecture \ref{qwhitney} hold, coupled with Remark \ref{connected}, it would mean that graphs with equal graphic matroids necessarily have equal extended graph permanents. 

\begin{conjecture*}[Conjecture \ref{qwhitney}] If two graphs differ by a Whitney flip, then they have equal extended graph permanents.   \end{conjecture*}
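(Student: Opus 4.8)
The plan is to work directly with the fundamental block matrices and cofactor expansion, mirroring the strategy of Theorem~\ref{2vertexcut} but adapted to the Whitney flip geometry. Let $G$ and $G'$ differ by a Whitney flip across a $2$-vertex cut $\{v_1,v_2\}$, so that $G$ is obtained by identifying $v_1\leftrightarrow w_1$, $v_2\leftrightarrow w_2$ while $G'$ identifies $v_1\leftrightarrow w_2$, $v_2\leftrightarrow w_1$, where $G_1$ carries $v_1,v_2$ and $G_2$ carries $w_1,w_2$. Since a Whitney flip preserves $|E|$ and $|V|$, the two graphs have extended graph permanents defined over the same set of primes, and both fundamental matrices are $k$-matrices for the same $k$. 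First I would fix $v_2\equiv w_2$ as the special vertex in all of $G$, $G'$, $G_1$, $G_2$ (legitimate by Theorem~\ref{specialinvariant}), so that in $\overline{M}_G$ and $\overline{M}_{G'}$ the only ``shared'' non-special vertex row is the one indexed by $v_1\equiv w_1$ in $G$, resp. by $v_1$ glued to $w_1$ in $G'$.

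The key idea is that the Whitney flip is, at the level of signed incidence matrices, nothing more than a relabeling together with a possible sign flip on the block of columns belonging to $G_2$. Concretely, write the $G_1$-side contribution as a pair of rows $(C\mid D)$ recording the incidences of $v_1,v_2$ with the $G_1$-edges, and the $G_2$-side as rows recording $w_1,w_2$ incidences. In $\overline{M}_G$ the $v_1$-row is $C$ plus the $w_1$-row of $G_2$; in $\overline{M}_{G'}$ the $v_1$-row is $C$ plus the $w_2$-row of $G_2$. But the $w_2$-row of $G_2$ is the special-vertex row, which equals the negative sum of all other rows of $G_2$'s incidence matrix — precisely the identity exploited in Proposition~\ref{specialchoice}. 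So passing from $G$ to $G'$ amounts to replacing the glued-in row by its ``complement'' within the $G_2$-block, which by the signed-incidence relation and Corollary~\ref{reduction} can be realized by row operations that do not change the permanent modulo $k+1$, together with at most an overall sign coming from multiplying one row of each block by $-1$ (harmless when $k$ is even, a uniform sign when $k$ is odd, which can be absorbed by reorienting a single edge as in the proof of Corollary~\ref{phi4dual}). The plan is therefore: (1) expand $\text{Perm}(\mathbf{1}_{2k\times k}\otimes \overline{M}_G)$ by cofactor expansion along the $2k$ copies of the glued row, isolating the block-diagonal pieces from $G_1$ and $G_2$ à la Corollary~\ref{pigeon}; (2) do the same for $G'$; (3) show the two resulting sums over column-deletion patterns are matched term-by-term once one rewrites the $G_2$-block contribution using the special-vertex row identity; (4) conclude equality of permanents, hence of extended graph permanents.

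The main obstacle, and the reason the excerpt only states this as a conjecture, is step (3): cofactor expansion along the cut-vertex rows deletes a \emph{different} set of columns in $G$ versus $G'$, because the edge incident to $v_1$ that gets ``selected'' in a contribution may have its tag land at $v_1\equiv w_1$ in $G$ but at $v_1\equiv w_2$ in $G'$, so the induced square subblocks on the $G_2$-side are not literally the same matrix. One has to show that summing over all admissible tag-distributions on the cut restores the symmetry — i.e. that $\sum \text{Perm}$ over the ways of splitting the $k$ tags of $v_1$ and the $k$ tags of $v_2$ among the $G_2$-edges is invariant under swapping the roles of the two cut vertices on the $G_2$-side. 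This is exactly the kind of ``decomposition of the graph into spanning subgraphs'' combinatorial identity discussed in the conclusion, and the cleanest route is probably the graphic/tagging interpretation of Section~\ref{graphicegp}: a tagging of $G$ restricted to the $G_1$-side plus a tagging of the $G_2$-side, glued consistently at the two cut vertices, should biject with the corresponding taggings of $G'$ by swapping which cut-vertex each $G_2$-side tag is assigned to, with the sign of each contribution preserved after the compensating reorientation. If that bijection can be made to preserve the value $m_{e}$ of every contribution (Remark~\ref{compval}), the conjecture follows; the difficulty is that the swap changes the ``shape'' of the induced subgraphs, and controlling the signs through that change is delicate — which is why, as the excerpt notes, the naive cofactor-expansion argument does not immediately go through.
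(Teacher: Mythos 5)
You should first note that the statement you are trying to prove is precisely Conjecture~\ref{qwhitney}: the paper offers no proof of it, and in fact explicitly records the obstruction you rediscover, namely that cofactor expansion along the rows of the cut vertices deletes a different set of columns in the two graphs that differ by a Whitney flip. Your proposal is therefore not a proof but an outline that stalls at exactly the open point: your step (3), the term-by-term matching of the two cofactor expansions, is the entire content of the conjecture, and you concede as much in your final paragraph. An outline whose key step is ``show the two sums agree'' does not advance beyond the paper's own remark.

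There is also a concrete flaw in the mechanism you propose for step (3). You claim the flip is ``nothing more than a relabeling together with a possible sign flip on the block of columns belonging to $G_2$,'' realized via the special-vertex identity of Proposition~\ref{specialchoice} and the row operations of Corollary~\ref{reduction}. But in $\overline{M}_G$ the glued row is $(C \mid E_{w_1})$, with support in \emph{both} column blocks, while the target row in $\overline{M}_{G'}$ is $(C \mid E_{w_2})$. The identity $E_{w_2} = -E_{w_1} - \sum_u E_u$ (sum over internal $G_2$-vertices) only lets you rewrite the $G_2$-part of this row by adding a multiple of the glued row itself, which simultaneously disturbs the $C$-part; the rows $(0\mid E_u)$ alone cannot produce the needed $-2E_{w_1}$ correction. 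Likewise, negating the $G_2$-columns sends $E_{w_1}$ to $-E_{w_1}$, not to $E_{w_2}$. So the flip is not realized by permitted row/column operations on the glued matrix, and the reduction to Proposition~\ref{specialchoice} does not go through. (Your setup also needs repair: in $G'$ the vertex $v_2$ is identified with $w_1$, so ``fix $v_2\equiv w_2$ as the special vertex in all of $G$, $G'$, $G_1$, $G_2$'' is not meaningful for $G'$.) The tagging-bijection idea you sketch at the end is a reasonable direction, but as stated it is a restatement of the problem — one must actually control how swapping the cut-vertex roles redistributes tags and signs on the $G_2$-side, and no candidate bijection is exhibited.
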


\noindent The equal extended graph permanent sequences of matroid $R_{10}$, which is certainly not graphic, and graph $P_{3,1}^2$ make a potential matroid connection much more tantalizing. Every regular matroid has a well-defined extended graph permanent. As noted in Chapter \ref{chmatroid}, the Feynman integral is known to be preserved by the Whitney flip. It follows that, if the extended graph permanent is connected to the Feynman integral in some sense beyond $\phi^4$ graphs, as we would want it to be, this conjecture holds true.

\subsection{Identifying graphs with equal extended graph permanents}

In Chapter \ref{egpcomp}, graphic representation led to closed forms for the permanents of the matrices themselves, as well as the residues. These closed forms are a computational boon, as otherwise permanent computations from the matrices themselves can be oppressively difficult. Further, in Section \ref{niceness}, we ask Question \ref{equationreconstruction}.

\begin{question*}[Question \ref{equationreconstruction}] What graphs can be uniquely reconstructed from the algorithmically generated closed form, if we allow no non-trivial simplification? If we allow simplification, is there a family of graphs for which this closed form can still be used to uniquely reconstruct the graph? \end{question*} 

\noindent If all graphs, or even a family of graphs, can be reconstructed, then it may be possible to understand when two graphs will have equal extended graph permanents in a computational manner, using combinatorial identities to move between these closed forms. Even if they cannot, constructing families of graphs with identical closed forms may explain some identical extended graph permanents in graphs that currently cannot be explained. This may further provide some insight as to additional period preserving graph operations, hinted at by Conjecture \ref{heppconjecture}.

It was a notable result for planar duality in Section \ref{secdual} that the sequence of primes for which the extended graph permanent is defined is equal for planar duals, and a surprising connection between the extended graph permanents of graphs and their duals that do not have the vertex-edge relation seen in $4$-point $\phi^4$ graphs. I suspect this may be a particularly interesting area of future study, to see how other general operations on graphs affect the extended graph permanent, and possibly search for connection that may appear.

Conjecture \ref{obviousconjecture} is likely difficult to solve. 

\begin{conjecture*}[Conjecture \ref{obviousconjecture}] If two $4$-point $\phi^4$ graphs have equal period, then they have equal extended graph permanent. \end{conjecture*}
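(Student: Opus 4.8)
The plan is to attack the conjecture from two directions that, I expect, ultimately have to be merged: a \emph{structural} route through the invariance theorems of Chapter~\ref{Invariance}, and an \emph{arithmetic} route through the point-count description of Chapter~\ref{hyperpower}.

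First I would make the reduction that is already immediate from the earlier chapters. By Theorem~\ref{egpcompletion}, Proposition~\ref{schnetz}, Corollary~\ref{phi4dual}, and Theorem~\ref{2vertexcut}, the extended graph permanent is constant on every equivalence class of $4$-point $\phi^4$ graphs generated by completion followed by decompletion, the Schnetz twist, planar duality, and splitting or joining at $2$-vertex cuts. Since the period is likewise constant on these classes, it would suffice to prove the converse: if two primitive $4$-point $\phi^4$ graphs have equal period, then they lie in a common such equivalence class. The hard part — and, I expect, the genuine obstacle — is precisely this converse: it is not known that the four listed operations exhaust the period-preserving moves, and the analogous statement for the Hepp bound, Conjecture~\ref{heppconjecture}, is also open. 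A partial but unconditional outcome available right now would be to settle the conjecture loop-by-loop: combine the complete tables of period values (up to eleven loops, \cite{galois}) with the closed forms produced by the methods of Chapter~\ref{egpcomp}, and verify equality on the finitely many equal-period pairs at each order.

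For a route that does not hinge on classifying period-preserving operations, I would instead try to tie the extended graph permanent directly to the arithmetic of the graph hypersurface, using Corollary~\ref{maincor}: at an odd prime $p$ the permanent equals $\pm[\widetilde{F}_{G}]_p \bmod p$. The $c_2$ invariant is the companion quantity $[\Psi_G]_p/p^2 \bmod p$, and Conjecture~\ref{c2doingitsthing} already packages the belief that such mod-$p$ point counts are governed by the de Rham--Betti realization attached to the Feynman integral, hence by the period. So the plan is: (i) find an explicit algebraic relation — a linear change of variables, a Cremona-type transformation, or an identity of the Dodgson-polynomial flavour used in Section~\ref{othercombc2} — relating $[\widetilde{F}_{G}]_p \bmod p$ to the point count of $\Psi_G$ or of the denominator-reduced polynomials that compute $c_2$; (ii) transport the ``period $\Rightarrow$ $c_2$'' philosophy through this relation to get ``period $\Rightarrow$ extended graph permanent''. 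Step (i) is where the concrete work lies, and the observed coincidences — $P_{3,1}^2$ and $R_{10}$ matching the $c_2$ invariants of $P_{9,161}, P_{9,170}, P_{9,183}, P_{9,185}$, and the recurring modular forms of Section~\ref{modformscoeffs} — strongly suggest such a relation exists; but step (ii) still rests on a conjecture.

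In summary, I would pursue the point-count bridge as the main line of attack, keep the operation-classification route as the source of the unconditional finite-loop cases, and I expect the decisive difficulty, in either route, to be the same gap that keeps Conjecture~\ref{c2doingitsthing} open: showing that a single combinatorial or arithmetic quantity genuinely sees only the period and nothing finer.
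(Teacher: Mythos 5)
There is a genuine gap: what you have written is a research programme, not a proof, and neither of your two routes closes the statement. The statement you are addressing is Conjecture~\ref{obviousconjecture}, which the thesis itself leaves open; the paper offers only supporting evidence (the invariance results Theorem~\ref{egpcompletion}, Proposition~\ref{schnetz}, Corollary~\ref{phi4dual}, Theorem~\ref{2vertexcut}, the subdivergence product property, and the numerical tables), so there is no proof in the paper for your argument to reproduce --- but your proposal also does not supply one. Your structural route requires the converse classification ``equal period $\Rightarrow$ related by the four known operations,'' which is not only unproven but is contradicted in spirit by the paper's own data: the pairs $P_{8,30},P_{8,36}$ and $P_{8,31},P_{8,35}$ are conjectured (via Conjecture~\ref{heppconjecture}) to have equal periods yet are \emph{not} related by any currently known period-preserving operation, so the equivalence-class reduction cannot be expected to exhaust the equal-period pairs even in principle. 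The loop-by-loop verification you mention is likewise not a proof of the conjecture, only finite consistency checking, and it too is conditional, since some of the relevant period equalities are themselves only conjectural.

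Your arithmetic route has the same difficulty concentrated in its step (ii): transporting ``period $\Rightarrow c_2$'' through a hoped-for relation between $[\widetilde{F}_{G}]_p$ and $[\Psi_G]_p$ presupposes Conjecture~\ref{c2doingitsthing}, which is open (even decompletion invariance of $c_2$ is unproven), so at best you would trade one open conjecture for another rather than prove the statement. Moreover, the evidence you cite for step (i) does not support the relation you need: the observed coincidences match the extended graph permanent of $P_{3,1}^2$ (and of the non-graphic matroid $R_{10}$) with the $c_2$ invariants of \emph{different} graphs at a higher loop order ($P_{9,161}$, $P_{9,170}$, $P_{9,183}$, $P_{9,185}$), whereas your plan requires an algebraic relation between $\widetilde{F}_G$ and $\Psi_G$ (or its Dodgson reductions) for the \emph{same} graph $G$. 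No such relation is exhibited or even sketched, and Corollary~\ref{maincor} by itself only re-expresses the permanent as a point count of a polynomial whose hypersurface has no established motivic or cohomological link to the graph hypersurface of $\Psi_G$. In short, both branches terminate in open conjectures or unconstructed identities, so the central implication ``equal period $\Rightarrow$ equal extended graph permanent'' is never established.
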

 
\noindent The results herein only suggest a connection, but as we are trying to relate an integral to an infinite sequence of residues, there is no obvious method of solving this conjecture. The Hepp bound, and in particular Conjecture \ref{heppconjecture} suggests that there are additional period preserving graph operations. 

\begin{conjecture*}[Conjecture \ref{heppconjecture}] Two $4$-point $\phi^4$ graphs have equal Hepp bound if and only if they have equal Feynman periods.  \end{conjecture*}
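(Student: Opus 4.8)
The statement is an ``if and only if,'' so I would attack the two implications separately, and I expect them to demand entirely different techniques. For the forward direction (equal periods $\Rightarrow$ equal Hepp bounds), the plan is to mirror the strategy used here for the extended graph permanent: prove that $\mathcal{H}$ is invariant under each of the four period-preserving operations, and then lean on the (admittedly empirical) fact that those operations explain all known period coincidences. Planar duality and the two-vertex-cut product rule for $\mathcal{H}$ are already in hand (the two propositions of Panzer quoted above), so the outstanding pieces are completion/decompletion invariance and invariance under the Schnetz twist. For both I would work purely with Panzer's combinatorial formula $\mathcal{H}(G)=\sum_{[\gamma]\in\mathcal{F}(G)}\bigl(\omega(\gamma_1)\cdots\omega(\gamma_{h_1(G)-1})\bigr)^{-1}\prod_{k=1}^{h_1(G)}\bigl(|E(\gamma_k)|-|E(\gamma_{k-1})|\bigr)$ from Proposition~\ref{heppcomp} and build a contribution-preserving bijection between the sets of maximal bridgeless chains of the two graphs.

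For decompletion, adjoining the completion vertex $v$ to every subgraph in a chain of $\Gamma-v$ changes neither which subgraphs are bridgeless nor the numbers $\omega(\gamma_i)$ and $|E(\gamma_k)|-|E(\gamma_{k-1})|$, so the chains of $\Gamma-v$ and $\Gamma-w$ should correspond term-for-term, exactly as the taggings did in the proof of Theorem~\ref{egpcompletion}; I expect this step to be routine. For the Schnetz twist I would imitate Proposition~\ref{schnetz}: split each chain at the four-vertex cut, check that a subgraph lying on one side is bridgeless precisely when its twisted image is (the twist only reroutes the four cut edges, and a bridge of the twisted side pulls back to a bridge of the original), and verify that $\omega$ and the edge counts are untouched. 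The upshot of this half of the program would be: $\mathcal{H}$ is preserved by every presently known period-preserving operation, hence equal periods force equal Hepp bounds \emph{conditionally on} the meta-conjecture that those operations are exhaustive.

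The reverse direction (equal Hepp bounds $\Rightarrow$ equal periods) is the genuinely hard part, and is what makes Conjecture~\ref{heppconjecture} stronger than the analogous conjectures for the $c_2$ invariant or the extended graph permanent. The Hepp bound is the ``tropical'' shadow of the period, obtained by replacing $\Psi_G$ with its maximal monomial and thereby factorising the integral into the form of Proposition~\ref{heppcomp}. A plausible line of attack is (i) to show that the single rational number $\mathcal{H}(G)$, assembled from the weighted poset $\mathcal{F}(G)$ of bridgeless subgraphs, actually pins down enough of that combinatorial data --- reconstruct the relevant ``Hepp geometry'' from the value $\mathcal{H}(G)$ --- and then (ii) argue that two primitive $\phi^4$ graphs with the same bridgeless-chain data either differ by the known period-preserving operations or are handled by a Panzer--Schnetz-style coaction argument that lifts the tropical equality to the full integral. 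Step (ii) is the completeness meta-conjecture again in combinatorial disguise, and step (i) has no obvious mechanism: a rational number carrying a sum over exponentially many chains need not determine the poset. The empirical scaling $\mathcal{H}(G)^{4/3}\sim 4.493^{\,h_1(G)}\,\mathrm{period}(G)/1.767$ hints at a tight functional link but is only asymptotic, so it cannot be promoted to the exact equality of fibres that the conjecture asserts. In summary, the tractable deliverable is the two missing invariance proofs for $\mathcal{H}$ via the chain-bijection method, yielding ``equal period $\Rightarrow$ equal Hepp bound'' modulo completeness; the main obstacle --- and the part I would flag as genuinely open --- is the converse, where one must relate a transcendental period to a purely combinatorial tropical bound with no current tool connecting their fibres.
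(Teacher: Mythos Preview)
The paper does not prove this statement: it is stated as a conjecture (Conjecture~\ref{heppconjecture}, due to Panzer) and remains open throughout. There is therefore no proof in the paper to compare your proposal against. You seem largely aware of this, since your write-up is framed as a research programme rather than a proof and you explicitly flag the reverse implication as ``genuinely open.''

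That said, your forward-direction sketch is more optimistic than the paper's own assessment warrants. The paper explicitly records that, for the Hepp bound, only planar duality and the two-vertex-cut product have been established, and that ``all other invariance properties remain conjectural'' --- in particular, completion/decompletion invariance and Schnetz-twist invariance are \emph{not} known. Your claim that the decompletion step ``should be routine'' via adjoining $v$ to every subgraph in a chain does not actually work as stated: if $\gamma_i$ is a bridgeless subgraph of $\Gamma-v$, then $\gamma_i\cup\{v\}$ together with the edges from $v$ to $\gamma_i$ need not be bridgeless (any such edge whose other endpoint lies in $\gamma_i$ but is the unique edge from $v$ into that component is a bridge), and conversely bridgeless subgraphs of $\Gamma-w$ do not arise this way from chains in $\Gamma-v$. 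The bijection you describe does not preserve the chain structure $\mathcal{F}(G)$, the values $\omega(\gamma_i)$, or the edge-count increments in any obvious way. The analogy with Theorem~\ref{egpcompletion} is misleading: that argument works because reversing all tags is an involution on edge-taggings, a much more rigid combinatorial object than a maximal bridgeless chain.

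So the honest summary is: the statement is a conjecture, the paper offers no proof, and your sketch of the ``easy'' half contains a genuine gap at exactly the two invariances the paper already identifies as open.
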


\noindent A result of this conjecture, as noted prior, is that some graphs have been conjectured to have equal periods that cannot be explained by any of the currently known period preserving graph operations. This is not at odds with Conjecture \ref{obviousconjecture}, though, as the extended graph permanents of these graphs are in fact equal. Thus, Conjecture \ref{heppconjecture} provides further support for Conjecture \ref{obviousconjecture}.

The list of $\phi^4$ graphs up to loop order eight and the extended graph permanents up to prime $p=41$ can be seen in Appendix \ref{chartofgraphs}. While the converse of Conjecture \ref{obviousconjecture} does not appear to hold -- for example, $P_{6,1}$ and $P_{6,4}$ seem to have the same sequence -- the sequences for both pair $P_{8,30}$ and $P_{8,36}$ and pair $P_{8,31}$ and $P_{8,35}$ have at least one graph with unknown period, though both pairs are conjectured to have equal periods, following Conjecture \ref{heppconjecture}. When two graphs will have equal extended graph permanents but non-equal periods is an interesting problem. If such graphs could be identified, it might be possible to create classes of graphs for which any two graphs in the class have equal extended graph permanent if and only if they differ by an operation known to preserve the period.

\subsection{A connection between the extended graph permanent and the Feynman period}

As mentioned prior, Conjecture \ref{obviousconjecture} is likely difficult to solve. However, all available data suggests that it is true: the invariance of the extended graph permanent under all of the known period preserving operations as demonstrated in Chapter \ref{Invariance}, and further the equality in the extended graph permanents of graphs with periods that are conjectured to be equal, per Conjecture \ref{heppconjecture}. A further hint of the truth of Conjecture \ref{obviousconjecture} follows from an alternate proof of Theorem \ref{makingorientations}. The following content is from \cite{pppoaiftposim}.

Let $G$ be a graph such that $|E(G)| = 2|V(G)|-2$, and apply an arbitrary orientation to its edges. Take a spanning tree $T$ of $G$, and apply an arbitrary value in $\mathbb{F}_3$ to edges $e \in E-E(T)$, call it $y_e$. For edges $f \in E(T)$, there exists a unique edge cut that contains $f$ but no other edges of $E(T)$, called the \emph{fundamental cut}. Let $C^+_f$ be the edges in this cut from $E-E(T)$ that are oriented in the same direction as $f$, and $C^-_f$ the edges that are oriented in the opposite direction. Using variables $\{y_e\}_{e \in E-E(T)}$, define the polynomials $$g_f = \sum_{e \in C^-_f} y_e - \sum_{e \in C^+_f} y_e$$ for $f \in E(T)$. As in the proof of Proposition \ref{numberflows}, every assignment of values to the variables $y_e$ extends uniquely to a $\mathbb{F}_3$ flow. 

Importantly, the value that the edge $f$ receives in this flow is $g_f$. As the boundary at every vertex is equal to zero, the sum of the boundaries over vertices on either side of the cut must be equal to zero. It follows that the sum of values on the edges in the cut must be equal traveling in either direction, and, extending the variable notation to edges in the tree $T$, \begin{align*}  y_f + \sum_{e \in C^+_f} y_e &= \sum_{e \in C^-_f} y_e\\ y_f & = \sum_{e \in C^-_f} y_e  -  \sum_{e \in C^+_f} y_e . \end{align*} Define the polynomial $$ g = \prod_{f \in E(T)} g_f.$$ If the coefficient of $\mathbf{y} = \prod_{e \in E-E(T)} y_e$ in the expansion of $g$ is nonzero, then by Theorem \ref{alontarsipoly} there exists an assignment to the variables $y_e$ in $\{\pm1\}$ such that $g$ is nonzero. This creates a nowhere-zero flow, as all of the values assigned to $f \in E(T)$ must be nonzero in this product.

From the proof of Theorem \ref{makingorientations}, the coefficient of $\mathbf{x}$ was equal to $\text{Perm}(\mathbf{1}_{2 \times 1} \otimes M_G)$ for a reduced signed incidence matrix $M_G$. Using the tools introduced in Section \ref{secdual}, if we assume that the first columns in $M_G$ are indexed by edges in $E(T)$, then $M_G$ row reduces to $[I|A]$ for some matrix $A$. Performing cofactor expansion on these columns in the identity matrix, $$ \text{Perm}(\mathbf{1}_{2\times 1} \otimes M_G) = \pm \text{Perm}\left( \left[ \begin{array}{cc} I& A \\ I & A \end{array} \right]  \right)   = \pm \text{Perm}(A) \hspace{5mm} \text{in } \mathbb{F}_3 .$$ Here, coefficient $\mathbf{y}$ is equal to $\text{Perm}(A)$. This is a core property of the fundamental cuts and the row reduced matrix, as we have constructed it. Again, a nonzero value in the extended graph permanent provides a certificate for the existence of a nowhere-zero flow in $\mathbb{F}_3$.

This polynomial $g$ is closely related to the Feynman integral in momentum space (see, for example, \cite{Sphi4}). For a graph $G$, take a basis for the cycle space of the graph by taking tree $T$ and the fundamental cycle in $T \cup \{e\}$ for $e \in E(G)-E(T)$. Assign a variable to each cycle in this basis, $y_e \in \mathbb{R}^4$. These values represented the flow of momentum around the cycles. To all edges of the graph, associate the signed sum of the variables for the cycles running through that edge. This is $g_f$ for $f \in E(T)$ and $y_e$ for $e \notin E(T)$. Define $$\tilde{g} = \prod_{f \in E(T)} g_f^2 \prod_{e \notin E(T)} y_e^2, $$ where vector multiplication is taken to be the dot product. As a formal integral in a momentum space, the Feynman integral in massless scalar field theory is $$\int_{(\mathbb{R}^D)^{h_G}} \frac{1}{\tilde{g}}\prod_{e \notin E(T)} \diff y_e, $$ the massless version of the Feynman integral presented in Section \ref{introbackground} by a change of variables. In both cases, the number of free variables is equal to the dimension of the cycle space. In Section \ref{introbackground}, the variables correspond to the chosen basis of the cycle space, here they correspond to the edges $y_e$ that index a basis of cycles.

Comparing $g$ and $\tilde{g}$, there are two main differences. First, the dot product replaces the normal variables, an understandable change as we move to vector variables. Second, $\tilde{g}$  has a factor for the edges not in $E(T)$. An analogous factor in $g$ would not affect the use of the needed polynomial technique.

\medskip

Again, it is likely difficult to prove Conjecture \ref{obviousconjecture}, but a substantial amount of non-trivial evidence suggests that the connection between the Feynman period and the extended graph permanent is in fact there. As we wanted to create a graph invariant with this connection -- an object that could be used to gain insight into the Feynman period -- this suggests that we have done so. The potential for this connection suggests that further study into the properties of the extended graph permanent is a worthy endeavor.


%
%
%
%
%

\backmatter%
	\addtoToC{Bibliography}
	\bibliographystyle{plain}
	\bibliography{references}

\begin{appendices} 
	\chapter{The extended graph permanent of small $\phi^4$ graphs}
\label{chartofgraphs}

The following charts include the first few primes for all $\phi^4$ graphs up to loop order $8$ where permanent preserving operations have not made the calculation trivial. The naming convention as a family of graphs with equal completions comes from \cite{Sphi4}, and representations of the completed graphs may also be found there. Graphs with alternate common names of prior interest are noted when applicable, and when it is the decompleted graph that has a common name, this will be marked in parenthesis. Grey columns mark values that may be thought of as fixed, while all others are defined collectively up to sign in each row.

Recall from Conjecture~\ref{heppconjecture} in Chapter~\ref{hepp} that it is believed that the periods of $P_{8,31}$ and $P_{8,35}$ are equal, as well as the periods of $P_{8,30}$ and $P_{8,36}$.

Decompletions of graphs $P_{3,1}$, $P_{7,5}$, $P_{7,9}$, $P_{8,18}$, $P_{8,25}$, $P_{8,31}$, and $P_{8,35}$ have zeros in the sequence that correspond to the result of Corollary~\ref{symmetrycor}. It has been verified that each of these graphs has a decompletion with a symmetry that explains this. The twist and dual notion are as described in Section~\ref{introinvariance}, and graphs that differ by these operations have equal extended graph permanents by Proposition~\ref{schnetz} and Corollary~\ref{phi4dual}.

\makeatletter
\newcommand\xleftrightarrow[2][]{%
  \ext@arrow 9999{\longleftrightarrowfill@}{#1}{#2}} 
\newcommand\longleftrightarrowfill@{%
  \arrowfill@\leftarrow\relbar\rightarrow}
\makeatother

\definecolor{Gray}{gray}{0.9}
\newcolumntype{g}{>{\columncolor{Gray}}c}

\begin{align*}
\begin{array}{c|cgccggccgcgg}
\textbf{Graph}&\multicolumn{12}{l}{\textbf{Prime}} \\
&3&5&7&11&13&17&19&23&29&31&37&41 \\ \hline
P_{1,1} &1&4&1&1&12&16&1&1&28&1&36&40 \\ \hline
P_{3,1} = K_5 = (W_3) = C^5_{1,2} &0&1&0&0&3&13&0&0&16&0&33&23 \\ \hline
P_{4,1} = (W_4) = C^6_{1,2} &1&3&4&0&9&16&13&10&24&5&23&7 \\ \hline
P_{5,1} = C^7_{1,2} &1&1&1&5&12&16&11&13&7&1&25&9 \\ \hline
P_{6,1} = C^8_{1,2} &0&4&3&1&11&16&0&13&15&9&35&6\\
P_{6,2} &1&3&5&8&8&15&10&17&27&20&32&1\\
P_{6,3} &1&1&3&8&10&9&15&0&24&24&3&11\\
P_{6,4} = K_{3,4} &0&4&3&1&11&16&0&13&15&9&35&6
\end{array} 
\end{align*}

\begin{align*}
\begin{array}{c|cgccggccgcgg}
\textbf{Graph}&\multicolumn{12}{l}{\textbf{Prime}} \\
&3&5&7&11&13&17&19&23&29&31&37&41 \\ \hline
P_{7,1} = C^9_{1,2} &1&3&3&4&1&15&7&14&13&13&28&0 \\
P_{7,2} &1&2&0&9&9&6&6&12&25&9&0&31 \\
P_{7,3} &0&0&3&8&5&3&2&14&10&18&23&34 \\
P_{7,4} \xleftrightarrow{\text{twist}} P_{7,7} &1&0&4&5&9&1&4&4&4&7&26&0 \\
P_{7,5} \xleftrightarrow{\text{dual}} P_{7,10} &0&3&0&0&1&11&0&0&13&0&26&36\\
P_{7,6}  &1&1&1&8&10&9&7&14&28&16&35&36 \\
P_{7,8} &1&1&2&0&10&16&17&8&4&25&26&33 \\
P_{7,9} &0&0&0&0&10&2&0&0&17&0&1&0 \\
P_{7,11} &0&1&1&1&11&5&0&22&6&25&16&38 \\ \hline
P_{8,1} = C_{1,2}^{10} &1&1&5&10&7&14&17&4&8&11&19&7\\
P_{8,2} &1&0&4&0&10&6&12&12&27&17&34&0\\
P_{8,3} &1&0&1&1&9&10&14&3&8&17&15&22\\
P_{8,4} &1&3&4&0&7&16&3&11&23&23&11&17\\
P_{8,5} &0&2&1&0&0&16&17&9&12&2&33&26\\
P_{8,6} \xleftrightarrow{\text{twist}} P_{8,9} &0&0&3&0&4&5&6&6&3&13&28&24\\
P_{8,7} \xleftrightarrow{\text{twist}} P_{8,8} &1&1&0&2&0&3&13&2&22&7&25&31\\
P_{8,10}\xleftrightarrow{\text{twist}} P_{8,22} &1&1&5&10&7&14&17&4&8&11&19&7\\
P_{8,11} \xleftrightarrow{\text{twist}} P_{8,15} &1&3&1&1&8&14&0&1&13&20&15&24\\
P_{8,12} &1&1&6&0&7&0&6&15&10&29&11&30 \\
P_{8,13} \xleftrightarrow{\text{twist}} P_{8,21} &1&4&4&7&1&12&7&11&28&11&24&26\\
P_{8,14} &0&3&3&2&2&11&12&3&1&27&30&27\\
P_{8,16} &1&3&1&10&3&1&5&16&3&12&23&5\\
P_{8,17}\xleftrightarrow{\text{twist}} P_{8,23} &0&4&2&0&4&0&9&1&27&7&22&17\\
P_{8,18}\xleftrightarrow{\text{twist}} P_{8,25} &0&3&0&0&0&4&0&0&3&0&15&12\\
P_{8,19} \xleftrightarrow{\text{dual}} P_{8,27} &1&4&4&4&10&2&15&6&3&27&28&36\\
P_{8,20} &1&2&3&2&1&15&6&7&14&25&12&38\\
P_{8,24} &1&2&1&6&7&5&3&5&8&5&25&31\\
P_{8,26}\xleftrightarrow{\text{twist}} P_{8,28} &1&1&0&7&1&10&15&16&6&9&2&12\\
P_{8,29} &1&3&5&8&1&15&13&17&8&23&6&15\\
P_{8,30} &1&4&3&4&6&5&2&21&11&5&34&28\\
P_{8,31} &0&3&0&0&3&1&0&0&25&0&35&13\\
P_{8,32}\xleftrightarrow{\text{twist}} P_{8,34} &1&0&1&1&9&10&14&3&8&17&15&22\\
P_{8,33} &0&1&0&0&7&3&7&19&20&29&3&33\\
P_{8,35} &0&3&0&0&3&1&0&0&25&0&35&13\\
P_{8,36} &1&4&3&4 &6 &5   &2   &21  &11&5 &34 &28\\
P_{8,37} &1&1&5&0 &11&5  &13 &7   &13&30   &16 &15\\
P_{8,38} &1&2&0&1 &1  &4  &6   &15 &11&18 &28 &29\\
P_{8,39} &0&0&3&0&4&5&6&6&3&13&28&24\\
P_{8,40} &1&1&5&10&7 &14 &17 &4  &8  &11 &19 &7\\
P_{8,41} &0&3&1&5&12&2&18&15&9&25&27&34 
\end{array} 
\end{align*}
	\chapter{Small equations}
\label{chartofequations}

What follows are the equations for producing the extended graph permanents for primitive $4$-point $\phi^4$ graphs $G$ up to seven loops, using the methods developed in Chapter~\ref{egpcomp}. Naming conventions come from \cite{Sphi4} as a family of decompletions of a $4$-regular graph. Adopting a shorthand, the summation is from $0$ to $n$ for each variable, though in many instances further restrictions are possible and will speed up computations. The extended graph permanent at prime $p = 2n+1$ is then the residue modulo $p$ for each of these equations, up to a factor of $(-1)^n$ corresponding to changing the direction of an edge in the underlying orientation.

As noted prior, the graphs $P_{7,4}$ and $P_{7,7}$ differ by a Schnetz twist, and graph $P_{7,5}$ and $P_{7,10}$ have decompletions that are planar duals and hence equal sequences of residues by Corollary~\ref{phi4dual}. Both pairs of graphs are included here for comparative purposes.

\footnotesize

\begin{align*}
\begin{array}{l|l}
\textbf{G}&\textbf{Equation} \\ \hline
P_{ 1 , 1 } &(2n)!\\
P_{ 3 , 1 } &
(2n)!^ 3
\sum
\binom{n}{ x }^3
(-1)^{ x }\\
P_{ 4 , 1 } &
(2n)!^ 4
\sum
\binom{n}{ x }^4\\
P_{ 5 , 1 } &
(2n)!^ 5
\sum
\binom{n}{ x_0 }^3
\binom{n}{ x_1 }^2
\binom{n}{ x_0 + x_1 }
(-1)^{x_1 }\\
P_{ 6 , 1 } &
(2n)!^ 6
\sum
\binom{n}{ x_0 }^3
\binom{n}{ x_1 }
\binom{n}{ x_2 }
\binom{n}{ 2n - x_1 - x_2 }
\binom{n}{ x_0 + x_1 }
\binom{n}{ -n + x_0 + x_1 + x_2 }
(-1)^{  x_0 +  x_2 }\\
P_{ 6 , 2 } &
(2n)!^ 6
\sum
\binom{n}{ x_0 }^2
\binom{n}{ x_1 }
\binom{n}{ x_2 }
\binom{n}{  x_1 + x_2 }
\binom{n}{ x_0 + x_1 }^2
\binom{n}{ n - x_0 + x_2 }
(-1)^{ x_0  + x_2 }\\
P_{ 6 , 3 } &
(2n)!^ 6
\sum
\binom{n}{ x_0 }^2
\binom{n}{ x_1 }^2
\binom{n}{ 2n - x_0 - x_1 }
\binom{n}{ x_2 }^2
\binom{n}{ -n + x_0 + x_1 + x_2 }
(-1)^{ x_2 }\\
P_{ 6 , 4 } &
(2n)!^ 6
\sum
\binom{n}{ x_0 }^2
\binom{n}{ x_1 }^2
\binom{n}{ x_2 }^2
\binom{n}{ 2n - x_0 - x_1 - x_2 }^2\\
P_{ 7 , 1 } &
(2n)!^ 7
\sum
\binom{n}{ x_0 }^3
\binom{n}{ x_1 }
\binom{n}{ x_2 }
\binom{n}{ x_3 }
\binom{n}{ 2n - x_1 - x_2 - x_3 }
\binom{n}{ x_0 + x_1 }
\binom{n}{ x_0 + x_1 + x_2 }
\binom{n}{ -x_0 + x_3 }
(-1)^{ x_2 + x_3 }\\
P_{ 7 , 2 } &
(2n)!^ 7
\sum
\binom{n}{ x_0 }^3
\binom{n}{ x_1 }
\binom{n}{ 2n - x_0 - x_1 }
\binom{n}{ x_2 }
\binom{n}{ x_3 }
\binom{n}{ n - x_2 - x_3 }
\binom{n}{ x_1 + x_2 }
\binom{n}{ -n + x_0 + x_1 + x_2 + x_3 }
(-1)^{x_0 + x_3 }\\
P_{ 7 , 3 } &
(2n)!^ 7
\sum
\binom{n}{ x_0 }^2
\binom{n}{ x_1 }^2
\binom{n}{ 2n - x_0 - x_1 }
\binom{n}{ x_2 }
\binom{n}{ x_3 }
\binom{n}{ n - x_2 - x_3 }
\binom{n}{ x_0 + x_2 }
\binom{n}{ -n + x_0 + x_1 + x_2 + x_3 }
(-1)^{x_0 +  x_3 }\\
P_{ 7 , 4 } &
(2n)!^ 7
\sum
\binom{n}{ x_0 }
\binom{n}{ x_1 }^2
\binom{n}{ 2n - x_0 - x_1 }
\binom{n}{ x_2 }
\binom{n}{ x_3 }
\binom{n}{ n - x_2 - x_3 }
\binom{n}{ x_0 + x_2 }^2
\binom{n}{ -n + x_0 + x_1 + x_2 + x_3 }
(-1)^{ x_0 +x_2 + x_3 }\\
P_{ 7 , 5 } &
(2n)!^ 7
\sum
\binom{n}{ x_0 }^2
\binom{n}{ x_1 }^2
\binom{n}{ x_2 }
\binom{n}{ x_3 }
\binom{n}{ n - x_2 - x_3 }
\binom{n}{ x_0 + x_2 }^2
\binom{n}{ -x_0 + x_1 + x_3 }
(-1)^{  x_0 + x_1 + x_3 }\\
P_{ 7 , 6 } &
(2n)!^ 7
\sum
\binom{n}{ x_0 }^2
\binom{n}{ x_1 }
\binom{n}{ x_2 }^2
\binom{n}{ x_3 }
\binom{n}{ 2n - x_1 - x_2 - x_3 }
\binom{n}{ x_0 + x_1 }^2
\binom{n}{ -x_0 + x_3 }
(-1)^{ x_0 +  x_2 + x_3 }\\
P_{ 7 , 7 } &
(2n)!^ 7
\sum
\binom{n}{ x_0 }^2
\binom{n}{ x_1 }^2
\binom{n}{ 2n - x_0 - x_1 }
\binom{n}{ x_2 }
\binom{n}{ x_3 }
\binom{n}{ 2n - x_2 - x_3 }
\binom{n}{ -n + x_0 + x_2 }
\binom{n}{ -n + x_1 + x_3 }
(-1)^{x_2 + x_3 }\\
P_{ 7 , 8 } &
(2n)!^ 7
\sum
\binom{n}{ x_0 }^2
\binom{n}{ x_1 }^2
\binom{n}{ x_2 }
\binom{n}{ x_3 }
\binom{n}{ n - x_2 - x_3 }
\binom{n}{ x_0 + x_2 }
\binom{n}{ x_1 + x_3 }
\binom{n}{  x_0 + x_1 + x_2 + x_3 }\\
P_{ 7 , 9 } &
(2n)!^ 7
\sum
\binom{n}{ x_0 }^2
\binom{n}{ x_1 }^2
\binom{n}{ x_2 }
\binom{n}{ x_3 }
\binom{n}{ n - x_2 - x_3 }
\binom{n}{ x_0 + x_2 }
\binom{n}{ x_1 + x_3 }
\binom{n}{ n + x_0 - x_1 - x_3 }
(-1)^{ x_2 }\\
P_{ 7 , 1 0 } &
(2n)!^ 7
\sum
\binom{n}{ x_0 }^2
\binom{n}{ x_1 }
\binom{n}{ x_2 }
\binom{n}{ x_3 }
\binom{n}{ 2n - x_1 - x_2 - x_3 }
\binom{n}{ x_0 + x_1 }
\binom{n}{ -n + x_0 + x_1 + x_3 }^2
\binom{n}{ 2n - x_0 - x_1 - x_2 - x_3 }
(-1)^{x_2 + x_3 }\\
P_{7,11} & (2n)!^7 \sum
\binom{n}{ x_0 }^2
\binom{n}{ x_1 }^2
\binom{n}{ x_2 }
\binom{n}{ x_3 }
\binom{n}{ 2n - x_2 - x_3 }^2
\binom{n}{ -n + x_0 + x_1 + x_2 }
\binom{n}{ -x_0 + x_3 }(-1)^{ x_1  }
\end{array} 
\end{align*}

\normalsize

\end{appendices}
\end{document}